\newtheorem{theorem}{Theorem}[section]
\newtheorem{proposition}[theorem]{Proposition}
\newtheorem{lemma}[theorem]{Lemma}
\newtheorem{corollary}[theorem]{Corollary}
\theoremstyle{definition}
\newtheorem{definition}[theorem]{Definition}
\theoremstyle{remark}
\newtheorem{remark}[theorem]{Remark}
\newtheorem*{notation}{Notation}
\g@addto@macro\bfseries{\boldmath}
\numberwithin{equation}{section}
\newcommand{\eqbreak}[1][2]{\\&\hspace{#1em}}
\newcommand{\eqand}[1][1]{\hspace{#1em}\textnormal{and}\hspace{#1em}}
\newcommand{\eqcond}[2][2]{\hspace{#1em}\textnormal{#2}}
\newcommand{\any}{\,\cdot\,}
\newcommand{\hook}{\mathbin{\lrcorner}}
\newcommand{\bC}{\mathbb{C}}
\newcommand{\bR}{\mathbb{R}}
\newcommand{\bN}{\mathbb{N}}
\newcommand{\lie}[1]{\mathfrak{#1}}
\newcommand{\mfa}{\lie{a}}
\newcommand{\mfg}{\lie{g}}
\newcommand{\mfh}{\lie{h}}
\newcommand{\mfr}{\lie{r}}
\newcommand{\mfp}{\lie{p}}
\newcommand{\aff}{\lie{aff}}
\newcommand{\gl}{\lie{gl}}
\newcommand{\derg}{\mfg'}
\newcommand{\spa}[1]{\operatorname{span}(#1)}
\renewcommand{\Re}{\operatorname{Re}}
\renewcommand{\Im}{\operatorname{Im}}
\newcommand{\cA}{\mathcal{A}}
\newcommand{\cH}{\mathcal{H}}
\newcommand{\Vc}{V \otimes \bC}
\DeclareMathOperator{\End}{End}
\DeclareMathOperator{\Hom}{Hom}
\DeclareMathOperator{\ad}{ad}
\DeclareMathOperator{\codim}{codim}
\DeclareMathOperator{\diag}{diag}
\DeclareMathOperator{\im}{im}
\newcommand{\id}{\mathrm{id}}
\newcommand{\inc}{\mathrm{inc}}
\DeclarePairedDelimiterX{\inp}[2]{\langle}{\rangle}{#1,#2}
\DeclarePairedDelimiter{\abs}{\lvert}{\rvert}
\DeclarePairedDelimiter{\norm}{\lVert}{\rVert}
\setlist{nosep,font=\upshape}
\begin{document}

\title{Two-step solvable SKT shears}

\author[M.~Freibert]{Marco Freibert}

\address[M.~Freibert]{Mathematisches Seminar\\
Christian-Albrechts-Universit\"at zu Kiel\\
Ludewig-Meyn-Strasse 4\\
D-24098 Kiel\\
Germany}

\email{freibert@math.uni-kiel.de}

\author[A.~F. Swann]{Andrew Swann}

\address[A.~F. Swann]{Department of Mathematics, and DIGIT\\
Aarhus University\\
Ny Munkegade 118, Bldg 1530\\
DK-8000 Aarhus C\\
Denmark}

\email{swann@math.au.dk}

\date{}

\begin{abstract}
  We use the shear construction to construct and classify a wide range
  of two-step solvable Lie groups admitting a left-invariant SKT
  structure.
  We reduce this to a specification of SKT shear data on Abelian Lie
  algebras, and which then is studied more deeply in different cases.
  We obtain classifications and structure results for \( \mfg \)
  almost Abelian, for derived algebra \( \derg \) of
  codimension~\( 2 \) and not \( J \)-invariant, for \( \derg \)
  totally real, and for \( \derg \) of dimension at most~\( 2 \).
  This leads to a large part of the full classification for
  two-step solvable SKT algebras of dimension six.
\end{abstract}

\subjclass[2020]{Primary: 53C55, Secondary: 22E25}

\maketitle

\tableofcontents

\section{Introduction}

Recent years have seen a rising interest in non-K\"ahler Hermitian
manifolds \( (M,g,J) \).
The most studied case is \emph{strong K\"ahler with torsion} (SKT)
geometry, which may be characterized by
\( \partial\overline{\partial} \omega=0 \), where \( \omega \) is the
associated fundamental two-form.
An equivalent description is obtained if one uses the Bismut
connection \( \nabla^B \) on \( (M,g,J) \), which is the unique
connection with \( \nabla^B g=0 \), \( \nabla^B J=0 \) and the
associated torsion tensor~\( T^B \) totally skew-symmetric (after
lowering one index).
Then \( T^B= -J^*d\omega \) and \( (M,g,J) \) is SKT if and only if
\( d T^B=0 \).

The latter property is used in physics in the context of
supersymmetric theories, cf.~for example \cite{GHR},
\cite{HP},~\cite{St}.
These structures also appear in generalized K\"ahler geometry
\cite{AG}, \cite{C}, \cite{FP},~\cite{GHR}, and in two complex
dimensions they are ``standard'' in the sense of Gauduchon~\cite{Gau},
i.e.\ any conformal class of any Hermitian metric on a compact complex
surface admits an SKT metric.
Moreover, any compact Lie group possesses a left-invariant SKT
structure, see e.g.~\cite{MaSw}.

However, in higher dimensions, the existence of an SKT metric is not
automatically guaranteed anymore.
For example, looking at the invariant case on nilpotent Lie groups,
only \( 4 \) out of the \( 34 \) six-dimensional nilpotent Lie
algebras admit (non-K\"ahler) SKT structures~\cite{FPS}.
Also the eight-dimensional nilpotent case has been classified
in~\cite{EFV}, and in both cases all nilpotent SKT Lie algebras are
two-step nilpotent, leading to the question whether this is a general
feature in arbitrary dimensions~\cite{FV}.

A more general class of Lie algebras that so far only has partially
been studied, is the solvable one.
Here, a full classification has only been achieved in dimension four
by the second author in cooperation with T.~B.
Madsen~\cite{MaSw}.
This reflects the much more complicated classification of solvable Lie
algebras.
In fact, in dimension six, the classification of all solvable Lie
algebras is already very long and complicated \cite{Mu2},~\cite{Tu}
and in higher dimensions there is no classification at all, nor is
there hope that there will be a full list in the future.
Hence, so far, only classifications of solvable SKT Lie algebras under
severe extra conditions in dimensions six and higher are known.
For example, \cite{FOU}~classifies the six-dimensional solvable Lie
algebras admitting an SKT structure and a non-zero holomorphic
\( (3,0) \)-form, and in~\cite{FKV} it is shown that no complex
solvable Lie group may admit an invariant SKT structure.
Moreover, \cite{AL}~provides a classification of the almost Abelian
Lie algebras admitting an SKT structure, cf.\ also~\cite{FP} for a
more explicit list in the six-dimensional case.

In this article, we determine, for various natural classes of two-step
solvable (almost Hermitian) Lie algebras, the ones which are SKT\@.
This includes, in particular, also a new proof for the classification
in the almost Abelian case leading to a more compact description of
the almost Abelian Lie algebras admitting an SKT structure in
Corollary~\ref{co:almostAbelian}.
Moreover, our results eventually lead to a classification of all but
one class of six-dimensional two-step solvable Lie algebras admitting
an SKT structure in Theorem~\ref{th:6d2stepsolvSKT}.

To obtain the classification results, we apply the shear construction
introduced in our joint paper~\cite{FrSw}.
This construction is a generalisation of the twist construction
invented by the second-named author in~\cite{Sw}.
The twist looks at double fibrations of the form
\( M\leftarrow P \rightarrow W \) of two principal \( A \)-bundles
with commuting \( A \)-actions, \( A \)~being a connected Abelian Lie
group, and transfers \( A \)-invariant tensor fields from~\( M \)
to~\( W \) by requiring that their pullbacks to \( W \) agree on the
horizontal distribution in~\( P \).
The principal \( A \)-bundles and all other data may be obtained by
``twist data'' on \( M \) and the twist is well-adapted to nilpotent
Lie groups and nilmanifolds in the sense that any nilmanifold may be
constructed from a torus by several twists increasing the nilpotent
step length of the associated nilpotent Lie group by one with each
step.
Note that the second author used the twist construction in the above
mentioned paper~\cite{Sw} to construct new examples of compact
simply-connected SKT manifolds out of well-known ones.

The shear construction now replaces the group actions of the twist by
flat connections and the principal bundles by certain types of
submersions and is well-adapted to solvable Lie groups in the sense
that any simply-connected solvable Lie group may be obtained from
\( \bR^n \) by consecutive shears increasing the solvable step length
by one.
Again, the shear construction can be completely encoded by ``shear
data'' on \( M \) and so any two-step solvable SKT Lie algebra can be
obtained by one shear from the standard flat (left-invariant) K\"ahler
structure on~\( \bR^n \) and appropriate shear data on~\( \bR^n \).

Let us now give a more detailed overview of the results in this paper
and outline its organisation.

In Section~\ref{sec:2stepsolvable}, we recall the shear construction
on arbitrary Lie algebras and give a full description of the possible
shear data on Abelian Lie algebras.

Section~\ref{sec:data2stepsolvable} first derives the general
conditions for shear data on \( \bR^{2n} \) to shear the flat K\"ahler
structure to an SKT-structure.
For this, we decompose \( \mfg \) into
\( \mfg'_J\oplus \mfg'_r\oplus J\mfg'_r\oplus U_J \), where
\( \mfg'_J:=\mfg'\cap J\mfg' \), \( \mfg'_r \) is the orthogonal
complement of \( \mfg'_J \) in \( \mfg' \) and \( U_J \) is the
orthogonal complement of \( \mfg'_J\oplus \mfg'_r\oplus J\mfg'_r \) in
\( \mfg \).
Afterwards, we derive some general consequences on the different
components of the shear data with respect to this splitting.
In particular, we show that \( \ad(J\mfg'_r) \) preserves
\( \mfg'_J \) and is diagonalisable over the complex on this subspace.

Section~\ref{sec:UJ=0} deals with the case \( U_J=0 \), which is
equivalent to \( \mfg'\oplus J\mfg'=\mfg \).
We derive the general form of a two-step solvable SKT Lie algebra of
this form and then deal individually with the cases
\( \codim(\mfg')=1 \) and \( \codim(\mfg')=2 \).
The first case is the almost Abelian one and we give a full
classification of all SKT almost Abelian Lie algebras in
Theorem~\ref{th:AlmostAbelianSKTLAs} as well as an easy
characterisation of the almost Abelian Lie algebras admitting an SKT
structure in Corollary~\ref{co:almostAbelian}.
The second case may equivalently be described as \( \mfg' \) being of
codimension two and \( \mfg' \) not being \( J \)-invariant.
We classify all such two-step solvable SKT Lie algebras in
Theorem~\ref{th:codim2notJinv}.

In Section~\ref{sec:totallyreal}, we consider the case
\( \mfg'_J=\{0\} \), so \( \mfg' \) is totally real.
Note that then the complex structure of an SKT structure is Abelian
and classify such SKT Lie algebras in
Theorem~\ref{th:totallyrealgeneralresult} up to some minor remaining
equation.
In particular, we show that then the Lie algebra is of the form
\( r\aff_{\bR}\oplus \mfh \) for some nilpotent Lie algebra
\( \mfh \), \( r:=\dim([\mfg',J\mfg']) \).
If \( [\mfg',J\mfg'] \) is of codimension \( \ell \) at most two in
\( \mfg' \), we are also able to solve the remaining equation and
\( \mfg \) is isomorphic to \( r \aff_{\bR}\oplus \bR^{2n-2r} \)
(\( \ell=0 \)), \( r\aff_{\bR}\oplus \mfh_3\oplus \bR^{2n-2r-3} \)
(\( \ell=1 \)) or to
\( r\aff_{\bR}\oplus 2\mfh_3\oplus \bR^{2n-2r-6} \) (\( \ell=2 \)),
respectively.
We use our results to obtain a classification of all SKT Lie algebras
with one-dimensional commutator or with two-dimensional commutator
which is not \( J \)-invariant.
The corresponding Lie algebras are isomorphic to
\( \aff_{\bR}\oplus \bR^{2n-2} \), \( \mfh_3\oplus \bR^{2n-3} \) or to
\( 2\aff_{\bR}\oplus \bR^{2n-4} \),
\( \aff_{\bR}\oplus \mfh_3\oplus \bR^{2n-5} \),
\( 2\mfh_3\oplus \bR^{2n-6} \), respectively.

Section~\ref{sec:2dcomplexcommutator} determines all SKT Lie algebras
with \( \dim(\mfg')=2 \) and \( \mfg' \) being \( J \)-invariant in
Theorem~\ref{th:2dJinv}.
The associated Lie algebra is isomorphic either to
\( \mfr_{3,0}'\oplus \bR^{2n-3} \) or to \( \mfh\oplus \bR^{2n-2k} \)
with a six- or eight-dimensional (i.e.\ \( k\in \{3,4\} \)) nilpotent
SKT Lie algebra.
Hence, we obtain a full list of these Lie algebras using the results
in~\cite{FPS} and~\cite{EFV}.

Finally, in Section~\ref{sec:6d2stepsolvSKT} we put our previous
results together, solve one additional case in
Theorem~\ref{th:6dwith3dnottotallyrealcommutator} (for
\( \dim(\mfg)=6 \), \( \dim(\mfg')=3 \), \( \mfg'\neq \mfg'_r \)) and
so are able to give a full classification of six-dimensional two-step
solvable Lie algebras admitting an SKT structure in
Theorem~\ref{th:6d2stepsolvSKT} up to one particular case, where we
write down the resulting equations.

\section{Two-step solvable algebras}
\label{sec:2stepsolvable}

The general shear construction~\cite{FrSw}, is motivated by and
specialises to constructions of Lie groups \( H \) from left-invariant
data on a Lie group \( G \) of the same dimension.
The construction is set-up so that if \( G \) is solvable, then
\( H \) is also solvable with in general step length one greater than
that of~\( G \).
The construction may be understood purely at the algebraic level,
using left-invariant shear data, as follows.

Let \( \mfg \) be the Lie algebra of~\( G \).
Suppose \( \mfa_G \) and \( \mfa_P \) are Abelian Lie algebras of the
same dimension and we have \( (\xi,a,\omega,\gamma,\eta) \), where
\( \gamma\in \mfg^*\otimes \gl(\mfa_G) \),
\( \eta\in \mfg^*\otimes\gl(\mfa_P) \), \( \xi\colon \mfa_G\to G \) a
Lie algebra monomorphism, \( a\colon \mfa_G\to \mfa_P \) a Lie algebra
isomorphism and a two-form
\( \omega\in \Lambda^2 \mfg^*\otimes \mfa_P \) with values in
\( \mfa_P \).
Then \( (\xi,a,\omega,\gamma,\eta) \) defines left-invariant
\emph{shear data} on \( \mfg \) if and only \( \xi(\mfa_G) \) is an
ideal in \( \mfg \) and the following conditions are satisfied:
\begin{enumerate}[(Sa)]
\item\label{item:pullbackzero} \( \xi^*\omega = 0 \),
\item\label{item:omegadNclosed} \( d\omega+\eta\wedge \omega=0 \),
\item\label{item:etaandgamma}
  \( \eta=a \gamma a^{-1}-(\xi\hook \omega)a^{-1} \),
\item\label{item:gammazero} \( \gamma(\xi Y)=0 \) for all
  \( Y\in \mfa_G \),
\item\label{item:gammaandad} \( \xi\circ \gamma(X)=\ad(X)\circ \xi \)
  for all \( X\in \mfg \), and
\item\label{item:etaflat} \( d\eta+\eta\wedge\eta=0 \).
\end{enumerate}

The \emph{shear} \( H \) of \( G \) then has Lie algebra
\( \mfh = \mfp/\mathring\xi(\mfa_{G}) \), where \( \mfp \) is the
extension of \( \mfg \) by \( \mfa_{P} \) via the two-form
\( \omega \) and \( \mathring\xi\colon \mfa \to \mfp \) is a lift of
\( \xi \).
More precisely, as a vector space \( \mfp = \mfg \oplus \mfa_{P} \)
with \( \mfa_{P} \) an Abelian ideal,
\( [X,Y]_{\mfp} = [X,Y]_{\mfg} - \omega(X,Y) \),
\( [X,Z]_{\mfp} = \eta(X)Z \).
The lift \( \mathring\xi \) is given as \( \tilde\xi + \rho\circ a \),
where \( \rho\colon \mfa_{P} \to \mfp \) is the inclusion and
\( \tilde\xi\colon \mfa_{G} \to \mfp \) is the horizontal lift
of~\( \xi \) with respect to the flat connection on
\( E = G \times \mfa_{G} \) defined by~\( \gamma \).

The above construction and data simplifies considerably if we start
with \( G \) Abelian, as we will now explain.
So take \( \mfg = \bR^{N} \) for some~\( N \).
Then as \( \xi \) is injective, we may assume that
\( \mfa = \mfa_{G} \) is a Lie subalgebra of \( \mfg=\bR^N \), so an
arbitrary subspace, and that \( \xi=\inc \) is the inclusion.
Next, since \( a \) is invertible, we may take \( \mfa_P=\mfa \) and
\( a=\id_{\mfa} \), the identity map.
It turns out that given the form \( \omega \) the remaining pieces of
shear data \( \gamma \) and \( \eta \) are uniquely determined.
To explain this, let \( U \) be a vector space complement to
\( \mfa \) in \( \mfg \):
\begin{equation*}
  \mfg = \mfa + U.
\end{equation*}
Then
\begin{equation*}
  \Lambda^{2}\mfg^{*} = \Lambda^{2}\mfa^{*} \oplus U^{*} \wedge
  \mfa^{*} \oplus \Lambda^{2}U^{*},
\end{equation*}
and we correspondingly decompose
\begin{equation*}
  \omega = \omega_{-1} + \omega_{0} + \omega_{1},
\end{equation*}
with \( \omega_{-1} \in \Lambda^{2}\mfa^{*}\otimes \mfa \),
\( \omega_{0} \in (U^{*} \wedge \mfa^{*}) \otimes \mfa \subset U^{*}
\otimes \End{\mfa} \) and
\( \omega_{1} \in \Lambda^{2}U^{*} \otimes \mfa \).

\begin{lemma}
  \label{le:AbelianSKT}
  For \( \mfg = \bR^{N} \) Abelian,
  \( (\inc,\id_{\mfa},\omega = \omega_{-1} + \omega_{0} +
  \omega_{1},\gamma,\eta) \) defines left-invariant shear data if and
  only if \( \omega_{-1} = 0 \), \( \gamma = 0 \),
  \( \eta = -\omega_{0} \) and
  \begin{equation}
    \label{eq:Abeliansheardata}
    \cA(\omega(\omega(\any,\any),\any))=0,
  \end{equation}
  where \( \cA \) is the anti-symmetrisation map over all arguments.
\end{lemma}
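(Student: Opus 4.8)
The plan is to run through the six conditions (Sa)--(Sf) in the definition of shear data one at a time, exploiting that for \( \mfg = \bR^{N} \) Abelian we have \( \ad \equiv 0 \) and the Chevalley--Eilenberg differential \( d \) vanishes on all the (scalar-, \( \mfa \)- and \( \gl(\mfa) \)-valued) forms that occur. The ideal hypothesis on \( \xi(\mfa_{G}) = \mfa \) is automatic, since every subspace of an Abelian Lie algebra is an ideal. Condition (Se) reads \( \inc\circ\gamma(X) = \ad(X)\circ\inc = 0 \), and as \( \inc \) is injective this is equivalent to \( \gamma = 0 \); and once \( \gamma = 0 \), condition (Sd) holds vacuously. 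Condition (Sa) asks \( \inc^{*}\omega = 0 \); but \( \inc^{*}\omega \) is by definition the restriction of \( \omega \) to \( \Lambda^{2}\mfa^{*} \), which is precisely the component \( \omega_{-1} \), so (Sa) is equivalent to \( \omega_{-1} = 0 \).

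Next I would use (Sc) to pin down \( \eta \). With \( \gamma = 0 \) and \( a = \id_{\mfa} \), condition (Sc) becomes \( \eta = -(\inc\hook\omega) \). Expanding \( (\inc\hook\omega)(X) \in \End\mfa \) in the splitting \( \mfg = \mfa + U \): for \( Z \in \mfa \) the value \( (\inc\hook\omega)(X)Z \) only involves \( \omega \) with one entry in \( \mfa \), so only the \( \omega_{-1} \)- and \( \omega_{0} \)-parts contribute; imposing \( \omega_{-1} = 0 \) leaves exactly \( \pm\omega_{0}(X)Z \), where \( \omega_{0} \) is read in \( U^{*}\otimes\End\mfa \subseteq \mfg^{*}\otimes\End\mfa \) (it annihilates the \( \mfa \)-component of \( X \)). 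Hence, granted the reductions already made, (Sc) is equivalent to \( \eta = -\omega_{0} \).

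It then remains to convert (Sb) and (Sf). Since \( d\omega = 0 \), condition (Sb) collapses to the cyclic identity \( \sum_{\mathrm{cyc}}\eta(X)\,\omega(Y,Z) = 0 \); substituting \( \eta = -\omega_{0} \), writing out the \( \End\mfa \)-action of \( \omega_{0} \) on \( \omega(Y,Z)\in\mfa \) (namely \( \omega(X,\omega(Y,Z)) \), up to a sign), and using the skew-symmetry of \( \omega \) to reshuffle the cyclic sum, this turns into \( \sum_{\mathrm{cyc}}\omega(\omega(X,Y),Z) = 0 \), i.e.\ \( \cA(\omega(\omega(\any,\any),\any)) = 0 \) (antisymmetrising over all three slots coincides with the cyclic sum because \( \omega \) is already skew in its first two arguments). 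Likewise \( d\eta = 0 \) reduces (Sf) to \( [\eta(X),\eta(Y)] = 0 \) for all \( X,Y \); evaluating this on \( Z\in\mfa \), substituting \( \eta = -\omega_{0} \), and invoking \( \cA(\omega(\omega(\any,\any),\any)) = 0 \) to rewrite the commutator as \( \pm\omega(\omega(X,Y),Z) \), one sees it vanishes because \( \omega(X,Y) \) and \( Z \) both lie in \( \mfa \) and \( \omega_{-1} = 0 \); thus (Sf) is implied by the other conditions. Reading these equivalences in both directions then gives the lemma. I expect no conceptual obstacle here; the only real work is the bookkeeping needed to align the slot and sign conventions in \( \inc\hook\omega \), in \( \eta\wedge\omega \) and in the antisymmetriser \( \cA \), so that (Sb) becomes the stated cubic equation exactly and (Sf) comes for free.
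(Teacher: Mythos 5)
Your proposal is correct and follows essentially the same route as the paper: condition by condition, (Se) forces \( \gamma=0 \), (Sa) is \( \omega_{-1}=0 \), (Sc) then gives \( \eta=-\omega_{0} \), (Sb) becomes the cyclic identity \( \cA(\omega(\omega(\any,\any),\any))=0 \) because \( \omega_{1} \) and \( \omega_{-1} \) kill \( \mfa \)-valued arguments, and (Sf) is automatic under~\eqref{eq:Abeliansheardata}. Your explicit check that \( [\eta(X),\eta(Y)](Z)=\pm\omega(\omega(X,Y),Z)=0 \) is just a spelled-out version of the paper's remark that \( \cA(\omega_0(\omega_0(\any,\any),\any))=0 \) follows from~\eqref{eq:Abeliansheardata}, and the sign bookkeeping you defer is routine.
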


\begin{proof}
  We need to check
  conditions~\ref{item:pullbackzero}--\ref{item:etaflat}.
  The first condition~\ref{item:pullbackzero} is exactly
  \( \omega_{-1} = 0 \).
  Now as \( \mfg \) is Abelian condition~\ref{item:gammaandad}, says
  \( \inc\circ\gamma(X)=\ad(X)\circ \inc = 0 \) for all
  \( X\in \mfg=\bR^N \), so \( \gamma=0 \).
  This implies condition~\ref{item:gammazero} is automatically
  satisfied and condition~\ref{item:etaandgamma} is equivalent to
  \( \eta=-\inc\hook \omega = - \omega_{0} \in U^{*}
  \otimes \End{\mfa} \).
  Considering condition~\ref{item:omegadNclosed}, we get
  \( 0=d\omega+\eta\wedge \omega=-\omega_0\wedge \omega \).
  But if we now regard \( \omega_0 \) as an element of
  \( (U^*\wedge \mfa^*)\otimes \mfa\subseteq \Lambda^2 \mfg^*\otimes
  \mfa \), then this condition reads
  \( \cA(\omega_0(\omega(\any,\any),\any))=0 \).
  As \( \omega_1(\omega(\any,\any),\any)=0 \), this is equivalent
  to~\eqref{eq:Abeliansheardata}.

  Finally, we need to consider condition~\ref{item:etaflat}, i.e.\
  \( 0=d\eta+\eta\wedge\eta=\eta\wedge \eta \).
  If we regard \( \omega_0 \) as an element of
  \( (U^*\wedge \mfa^*)\otimes \mfa\subseteq \Lambda^2 \mfg^*\otimes
  \mfa \), then this condition is equivalent to
  \( \cA(\omega_0(\omega_0(\any,\any),\any))=0 \), which is automatic
  under~\eqref{eq:Abeliansheardata}.
\end{proof}

As all two-step solvable algebras are obtained as shears of Abelian
algebras~\cite[Proposition~2.5]{FrSw}, we see that shear data for such
a two-step algebra consists of \( \mfg = \bR^{N} \), a subalgebra
\( \mfa \leq \mfg \) and
\( \omega = \omega_{0} + \omega_{1} \in (U^{*}\wedge\mfa^{*} +
\Lambda^{2}U^{*}) \otimes \mfa \)
satisfying~\eqref{eq:Abeliansheardata}.

\begin{definition}
  We call \( (\mfa,\omega) \) \emph{two-step shear data} on
  \( \mfg = \bR^{N} \), with \( \mfa \) the \emph{shearing algebra}
  and \( \omega \) the \emph{shearing form}.
  If
  \( \omega \in (U^{*}\wedge\mfa^{*} + \Lambda^{2}U^{*}) \otimes \mfa
  \) does not necessarily satisfy~\eqref{eq:Abeliansheardata}, then we
  call \( (\mfa,\omega) \) \emph{pre-shear data}.
\end{definition}

\section{Data for two-step solvable SKT algebras}
\label{sec:data2stepsolvable}

The shear construction becomes a useful computational tool in the
presence of geometric structures.
In general, provided they satisfy an invariance property with respect
to \( \mfa_{G} \), they may be lifted horizontally to \( \mfp \) and
then descend to the shear \( \mfh = \mfp/\mathring\xi\mfa_{G} \);
starting with a tensor \( \alpha \), we denote the resulting tensor
on~\( \mfh \) by~\( \alpha_{\mfh} \) and say that \( \alpha \) and
\( \alpha_{\mfh} \) are \emph{\( \cH \)-related}, where
\( \cH \)~refers the horizontal distribution of a certain connection
on~\( \mfp \).
In~\cite{FrSw}, we provided a number of simple formulae for computing
various derived properties of~\( \alpha_{\mfh} \) in terms of
properties of \( \alpha \) on~\( \mfg \) and the shear data.

Let \( (\mfa,\omega) \) be two-step shear data on \( \mfg = \bR^N \)
and assume that \( N=2n \) for some \( n\in\bN \).
We equip \( \mfg = \bR^{2n} \) with the standard flat K\"ahler
structure \( (g,J,\sigma=g(J\any,\any)) \).
We will determine the conditions for such a shear~\( \mfh \) to be SKT
(strong K\"ahler with torsion) if we start with the standard on
\( \bR^{2n} \).

First recall, that an SKT structure on an algebra \( \mfh \) consists
of a Hermitian structure \( (g_{\mfh},J_{\mfh}),\sigma_{\mfh} \), in
particular \( J_{\mfh} \) is integrable, such that the torsion
three-form \( c_{\mfh} = -J_{\mfh}^{*}d\sigma_{\mfh} \) is closed.
This then defines a left-invariant SKT structure on the group~\( H \),
and conversely each such left-invariant structure arises this way.
Note that any K\"ahler structure is SKT, and that any almost Hermitian
structure on the Abelian algebra \( \mfg = \bR^{2n} \) is K\"ahler.

\begin{lemma}
  \label{le:SKTshears}
  Let \( (\mfa,\omega) \) be two-step shear data on
  \( \mfg = \bR^{2n} \).
  Let \( (g,J,\sigma) \) be a flat K\"ahler structure on
  \( \mfg = \bR^{2n} \).
  Then the induced almost Hermitian geometry
  \( (g_{\mfh},J_{\mfh},\sigma_{\mfh}) \) on the shear \( \mfh \) is
  SKT if and only if the following two equations hold on~\( \mfg \):
  \begin{gather}
    J^*\omega =\omega-J\circ J.\omega, \label{eq:AbelianSKTshear1}\\
    \cA\Bigl(g\bigl(J^*\omega(\any,\any), \omega(\any,\any)\bigr) +
    2g\bigl(J^*\omega(\omega(\any,\any),\any),\any \bigr)\Bigr) = 0,
    \label{eq:AbelianSKTshear2}
  \end{gather}
  where \( (J.\omega)(X,Y) = - \omega(JX,Y) - \omega(X,JY) \), and
  \( \cA \) is anti-symmetrisation over all arguments.
  Furthermore, any two-step solvable SKT algebra~\( \mfh \) may be
  obtained as a shear of a flat K\"ahler structure
  on~\( \mfg = \bR^{2n} \) in this way.
\end{lemma}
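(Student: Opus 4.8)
The plan is to compute $d\sigma_{\mfh}$, apply $-J_{\mfh}^*$, and then $d$ of the result, expressing everything via $\cH$-related tensors on $\mfg$ and the shear data $(\mfa,\omega)$ using the formulae from \cite{FrSw}. Since $(g,J,\sigma)$ is flat K\"ahler on $\mfg=\bR^{2n}$, we have $d\sigma = 0$ on $\mfg$; the interesting part of $d\sigma_{\mfh}$ therefore comes entirely from the shear, more precisely from how $d$ interacts with the extension two-form $\omega$ when one passes to $\mfp$ and quotients. Concretely, on $\mfp = \mfg\oplus\mfa$ the bracket is deformed by $-\omega$ and $\eta = -\omega_0$, so the $\cH$-lift $\sigma_{\mfp}$ of $\sigma$ satisfies $d\sigma_{\mfp} = (\text{correction terms involving }\omega)$, and the same for $J_{\mfp}$. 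The first task is to record the $\cH$-related versions of $g$, $J$, $\sigma$ on $\mfh$ and to note that $J_{\mfh}$ being a (integrable) complex structure is \emph{not} automatic: integrability of $J_{\mfh}$ will turn out to be exactly equation~\eqref{eq:AbelianSKTshear1}. So step one is: show $J_{\mfh}$ is integrable iff $J^*\omega = \omega - J\circ J.\omega$, by writing the Nijenhuis tensor of $J_{\mfh}$ in terms of $J$ (which has vanishing Nijenhuis tensor on $\mfg$) plus $\omega$-terms, and identifying the $\omega$-terms with the $(0,2)+(2,0)$ versus $(1,1)$ decomposition of $\omega$ relative to $J$.

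Next, assuming \eqref{eq:AbelianSKTshear1} so that $(g_{\mfh},J_{\mfh})$ is genuinely Hermitian, I would compute the torsion three-form $c_{\mfh} = -J_{\mfh}^*d\sigma_{\mfh}$. Using the shear formulae, $d\sigma_{\mfh}$ is $\cH$-related to a three-form on $\mfg$ which is a contraction of $\sigma$ (equivalently $g$ and $J$) against $\omega$; schematically its value on $(X,Y,Z)$ is a sum of terms $\sigma(\omega(X,Y),Z)$ antisymmetrised, i.e.\ $g(J\omega(X,Y),Z)$ antisymmetrised. Applying $-J_{\mfh}^*$ and translating $J$ through $\omega$ via \eqref{eq:AbelianSKTshear1} rewrites $c_{\mfh}$ as the $\cH$-related image of a concrete three-form $c$ on $\mfg$ built from $g$, $J$ and $\omega$. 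The key bookkeeping point is that all the $J$'s can be collected so that $c$ is expressed through $J^*\omega$ and $g$; I expect $c$ to be (a multiple of) $\cA\bigl(g(J^*\omega(\any,\any),\any)\bigr)$ up to lower-order pieces, the precise shape being dictated by consistency with the almost Abelian case already in the literature.

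The final step is to impose $dc_{\mfh} = 0$. By the shear calculus, $dc_{\mfh}$ is $\cH$-related to $dc$ on $\mfg$ \emph{plus} a correction term of the form $(\text{something})\wedge\omega$ coming from the deformed differential; since $\mfg$ is Abelian, $dc$ is the honest exterior derivative of the three-form $c$, which vanishes because $\mfg^*$ has trivial differential — so $dc_{\mfh}$ reduces purely to the $\omega$-correction. Writing that correction out as a four-form and antisymmetrising over all four arguments produces a sum of two kinds of quadratic-in-$\omega$ terms: one where $\omega$ is paired with $J^*\omega$ through $g$ in a ``$2+2$'' pattern, $g(J^*\omega(\any,\any),\omega(\any,\any))$, and one of ``$3+1$'' type, $g(J^*\omega(\omega(\any,\any),\any),\any)$; matching coefficients gives exactly \eqref{eq:AbelianSKTshear2}. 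I would also need to verify that \eqref{eq:AbelianSKTshear2} is compatible with, and does not re-derive, the pre-shear constraint~\eqref{eq:Abeliansheardata} (it sits on top of it, analogously to how condition~\ref{item:etaflat} was automatic in Lemma~\ref{le:AbelianSKT}). The last sentence of the lemma — that every two-step solvable SKT algebra arises this way — is immediate from \cite[Proposition~2.5]{FrSw} together with the observation recorded before the definition of two-step shear data, since any such $\mfh$ is a shear of $\bR^{2n}$ and its given SKT structure pulls back to an $\mfa_G$-invariant, hence $\cH$-relatable, flat K\"ahler structure on $\bR^{2n}$.

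The main obstacle I anticipate is the second step: correctly organising the $J$-transport through $\omega$ so that the torsion three-form $c$ comes out in a clean closed form. The shear formula for $-J_{\mfh}^*d\sigma_{\mfh}$ mixes $J$ acting on arguments with $J$ hidden inside $\omega$, and naive antisymmetrisation can obscure cancellations; one really has to use \eqref{eq:AbelianSKTshear1} at the right moment to convert $J\circ J.\omega$ terms into $\omega - J^*\omega$ terms. Once $c$ is pinned down, the computation of $dc_{\mfh}$ is a more mechanical (if lengthy) four-form antisymmetrisation, and the identification with \eqref{eq:AbelianSKTshear2} should be routine.
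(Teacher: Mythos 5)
Your plan follows the paper's proof essentially step for step: integrability of \( J_{\mfh} \) is exactly~\eqref{eq:AbelianSKTshear1} (the paper simply cites \cite[Proposition~3.13]{FrSw} rather than redoing the Nijenhuis computation), the torsion form is the \( \cH \)-related three-form \( -\cA\bigl(g(J^*\omega(\any,\any),\any)\bigr) \) obtained from \( d_{\mfh}\alpha = -(\xi\hook\alpha)\wedge\omega \), its closure gives~\eqref{eq:AbelianSKTshear2} via the same \( \omega \)-correction term, and the converse is \cite[Proposition~2.5]{FrSw} plus the observation that any invariant almost Hermitian structure on the Abelian \( \bR^{2n} \) is K\"ahler. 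The only small simplification you miss is that pinning down \( c \) needs no appeal to~\eqref{eq:AbelianSKTshear1} and produces no lower-order pieces: since \( \sigma = g(J\any,\any) \) is \( J \)-invariant, \( J^{*}((\xi\hook\sigma)\wedge\omega) \) collapses directly to \( -J^{*}\omega\wedge\xi^{\flat} \).
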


\begin{proof}
  As \( \gamma = 0 \), the invariance conditions
  in~\cite[\S\S3.2--3.3]{FrSw} are just that the tensors are
  left-invariant on~\( G \).
  The condition for \( J_{\mfh} \) to be integrable is equivalent
  to~\eqref{eq:AbelianSKTshear1} by~\cite[Proposition~3.13]{FrSw},
  since \( J \)~is integrable.
  If \( \alpha \in \Lambda^{k}\mfg^{*} \), then as \( \mfg \) is
  Abelian, we have \( d\alpha = 0 \).
  It follows from~\cite[Corollary~3.11]{FrSw}, that
  \( d\alpha_{\mfh} \in \Lambda^{k+1}\mfh^{*} \) is \( \cH \)-related
  to the differential form
  \( d_{\mfh}\alpha := - (\xi \hook \alpha) \wedge \omega \in
  \Lambda^{k+1}\mfg^{*} \), where
  \( \xi = \inc \colon \mfa \to \mfg \) acts on the values of
  \( \omega \).
  In other words,
  \( d_{\mfh}\alpha = - \cA \bigl(\alpha(\omega(\any,\any),
  \any,\dotsc,\any)\bigr) \).
  We thus find the three-form on \( c_{\mfg} \) on \( \mfg \) that is
  \( \cH \)-related to the torsion three-form
  \( c_{\mfh} = -J_{\mfh}^{*}d\sigma_{\mfh} \) on~\( \mfh \) is given
  by
  \begin{equation*}
    c_{\mfg}
    = -J^{*}d_{\mfh}\sigma
    = J^{*}((\xi\hook\sigma) \wedge \omega)
    = - J^{*}\omega \wedge \xi^{\flat}
    = -\cA \bigl(g(J^*\omega(\any,\any),\any)\bigr).
  \end{equation*}
  Now the differential \( dc_{\mfh} \) is \( \cH \)-related to
  \( d_{\mfg}c_{\mfg} \) is given by
  \begin{equation*}
    \begin{split}
      d_{\mfg} c_{\mfg}
      &= - \omega \wedge (\xi\hook c_{\mfg})
        =-\cA(c_{\mfg}(\omega(\any,\any),\any,\any))\\
      &=\frac{1}{3}\cA\Bigl(g\bigl(J^*\omega(\any,\any),
        \omega(\any,\any)\bigr)
        + 2g\bigl(J^*\omega(\omega(\any,\any),\any),\any\bigr)\Bigr).
    \end{split}
  \end{equation*}
  So \( dc_{\mfh} = 0 \) is equation~\eqref{eq:AbelianSKTshear2}.

  Conversely, if \( \mfh \) is two-step solvable, then there is some
  left-invariant shear to \( \mfg = \bR^{2n} \), where
  \( 2n = \dim\mfh \), by~\cite[Proposition~2.5]{FrSw}.
  Now the SKT structure on \( \mfh \) induces is \( \cH \)-related to
  an almost Hermitian structure on~\( \mfg \).
  But \( \mfg \) is Abelian, so this structure is K\"ahler, and the
  structure on \( \mfh \) comes from the above construction.
\end{proof}
\begin{definition}
  Two-step shear data \( (\mfa,\omega) \)
  satisfying~\eqref{eq:AbelianSKTshear1}
  and~\eqref{eq:AbelianSKTshear2} will be called \emph{two-step SKT
  shear data}.
\end{definition}
\begin{notation}
  We set
  \begin{equation*}
    \begin{split}
      \nu_1&:=\nu_1(\omega):=\mathcal{A}\Bigl(g\bigl(J^*\omega(\any,\any), \omega(\any,\any)\bigr)\Bigr)\in \Lambda^4 \mfg^*,\\
      \nu_2&:=\nu_2(\omega):=\mathcal{A}\Bigr(g\bigl(J^*\omega(\omega(\any,\any),\any),\any \bigr)\Bigr)\in \Lambda^4 \mfg^*,\\
      \nu&:=\nu(\omega):=\nu_1(\omega)+2\nu_2(\omega).
    \end{split}
  \end{equation*}
  for two-step shear data \( (\mfa,\omega) \).
  Note that~\eqref{eq:AbelianSKTshear2} holds if and only if
  \( \nu=0 \).
\end{notation}
Summarizing, to find two-step solvable SKT algebras we need to solve
equations~\eqref{eq:Abeliansheardata},~\eqref{eq:AbelianSKTshear1}
and~\eqref{eq:AbelianSKTshear2}.
To do this, let us reformulate the conditions using information from
both the data \( (\mfa,\omega) \) and the K\"ahler structure
\( (g,J,\sigma) \) on \( \mfg=\bR^{2n} \).
For \( V\leq \mfg \) a linear subspace, we write
\begin{equation*}
  V_{J} = V \cap JV \eqand V_{r} = (V_{J})^{\bot} \cap V.
\end{equation*}
We say that \( V \) is \emph{complex} if \( V = V_{J} \) or
\emph{totally real} for \( V = V_{r} \).
Putting \( U:=J \mfa_r\oplus (\mfa\oplus J \mfa_r)^{\perp} \), we thus
have
\begin{equation*}
  \mfg = \mfa \oplus U = \mfa_{J} \oplus \mfa_{r} \oplus U_{J} \oplus U_{r}.
\end{equation*}
We note that
\begin{equation*}
  U_{r}=J\mfa_{r} , U_J= (\mfa\oplus J \mfa_r)^{\perp}.
\end{equation*}
In this setting, it is very natural to decompose also
\( \omega_0 \in (U^{*}\wedge\mfa^{*}) \otimes \mfa \) and
\( \omega_1 \in \Lambda^{2}U^{*} \otimes \mfa \) further, using the
decompositions \( \mfa=\mfa_J\oplus \mfa_r \), \( U=U_J\oplus U_r \)
and the associated dual decompositions.
We then write \( (\omega_0)_{rJ}^J \) for the
\( U_r^*\otimes \mfa_J^*\otimes \mfa_J \)-component of \( \omega_0 \),
etc.
Similarly, we write \( (\omega_1)_{Jr}^r \) for the
\( U_J^*\otimes U_r^*\otimes \mfa_r\cong U_J^*\wedge U_r^*\otimes
\mfa_r \)-part of \( \omega_1 \), etc.

It turns out to be useful to consider the endomorphism
\( (\omega_0)(Y,\any)\in \End(\mfa) \) for \( Y\in U_r \).
More conveniently, we define
\begin{equation}
  \label{eq:definitionofA}
  A_X:=-\omega_0(JX,\any)\in \End(\mfa),\eqcond{for \( X\in \mfa_{r} \),}
\end{equation}
and decompose this further as
\begin{equation}
  \label{eq:definitionocompsofA}
  \begin{split}
    A_X
    &= F_X + G_X + H_X + K_X \\
    &\in \End(\mfa_r) + \Hom(\mfa_J,\mfa_r)
      + \Hom(\mfa_r,\mfa_J) + \End(\mfa_J),
  \end{split}
\end{equation}
so \( F_{X} = -(\omega_0)_{rr}^r(JX,\any) \), etc.
Associated to \( F_{X} \) and \( H_{X} \), we have bilinear maps
\begin{alignat*}{2}
  f&\colon \mfa_r\otimes\mfa_r\to \mfa_r&\qquad f(X,Y)&:=F_X(Y)\\
  h&\colon \mfa_r\otimes\mfa_r\to \mfa_J&h(X,Y)&:=H_X(Y)
\end{alignat*}
for \( X,Y\in\mfa_r \).

\begin{lemma}
  \label{le:Jintegrable}
  The shearing form \( \omega \) satisfies the two-step integrability
  equation~\eqref{eq:AbelianSKTshear1} if and only if
  \begin{enumerate}[(i)]
  \item\label{item:G-zero} \( G_X=0 \) for all \( X\in \mfa_r \),
  \item\label{item:J-KX} \( [J,K_X]=0 \) for all \( X\in\mfa_r \),
  \item\label{item:f-sym}
    \( f\in \mfa_r^*\otimes \mfa_r^*\otimes \mfa_r \) is symmetric in
    its \( \mfa_r^* \)-factors,
  \item\label{item:o1-rrr} \( (\omega_1)_{rr}^r=0 \),
  \item\label{item:o1-rrJ}
    \( J^*(\omega_1)_{rr}^J=-2 J\circ \cA(h) \),
  \item\label{item:o1-Jrr}
    \( (\omega_0)_{Jr}^r=J^*(\omega_1)_{Jr}^r \),
  \item\label{item:o01-J-val}
    \( (\omega_1)_{Jr}^J(JX,JY)-J((\omega_1)_{Jr}^J(X,JY)) =
    (\omega_0)_{Jr}^J(X,Y)+J((\omega_0)_{Jr}^J(JX,Y)) \) for all
    \( X\in U_J \) and all \( Y\in \mfa_r \),
  \item\label{item:o0-JJr} \( (\omega_0)_{JJ}^r \) is of type
    \( (1,1) \) and
    \( J^*(\omega_0)_{JJ}^J=(\omega_0)_{JJ}^J-J(J.(\omega_0)_{JJ}^J)
    \), and
  \item\label{item:o1-JJr} \( (\omega_1)_{JJ}^r \) is of type
    \( (1,1) \) and
    \( J^*(\omega_1)_{JJ}^J=(\omega_1)_{JJ}^J-J(J.(\omega_1)_{JJ}^J)
    \).
  \end{enumerate}
\end{lemma}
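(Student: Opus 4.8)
The plan is to unwind equation~\eqref{eq:AbelianSKTshear1}, namely \( J^*\omega = \omega - J\circ J.\omega \), by evaluating both sides on all pairs of basis-type vectors coming from the decomposition \( \mfg = \mfa_J \oplus \mfa_r \oplus U_J \oplus U_r \) with \( U_r = J\mfa_r \). The key observation is that \( J^*\omega \) is the pullback by \( J \) in all \emph{form} arguments (so it re-sorts the \( U^* \wedge \mfa^* \) and \( \Lambda^2 U^* \) pieces among each other, since \( J\mfa = \mfa_J \oplus U_r \) and \( JU = \mfa_r \oplus U_J \) mix \( \mfa \) and \( U \)), whereas \( J \circ J.\omega \) acts by \( J \) on the \( \mfa \)-\emph{values} together with the operator \( J.\,\). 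Concretely I would first record how \( J \) permutes the four summands: it swaps \( \mfa_r \leftrightarrow U_r \) and \( \mfa_J \to \mfa_J \), \( U_J \to U_J \). Hence \( J^* \) sends an \( \mfa_r^* \)-slot to a \( U_r^* \)-slot and vice versa, fixes the \( \mfa_J^* \) and \( U_J^* \) slots up to composition with \( J \). This lets me split~\eqref{eq:AbelianSKTshear1} into independent equations, one for each pair of slots.

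Next I would evaluate slot by slot, using the definitions \( A_X = -\omega_0(JX,\any) \) for \( X \in \mfa_r \) and its block decomposition \( A_X = F_X + G_X + H_X + K_X \), together with the maps \( f,h \). For instance, testing~\eqref{eq:AbelianSKTshear1} on arguments in \( \mfa_r \times \mfa_r \): the left side involves \( \omega \) on \( J\mfa_r \times J\mfa_r = U_r \times U_r \), i.e.\ the \( (\omega_1)_{rr} \) component, while the right side involves \( \omega_0(JX,\any) = -A_X \) and its \( J \)-twist, and comparing \( \mfa_r \)- and \( \mfa_J \)-values yields exactly~\ref{item:o1-rrr} together with~\ref{item:f-sym} (symmetry of \( f \)) from the \( \mfa_r \)-part and~\ref{item:o1-rrJ}, i.e.\ \( J^*(\omega_1)_{rr}^J = -2J\circ\cA(h) \), from the \( \mfa_J \)-part; the factor \( 2 \) and the antisymmetrisation \( \cA(h) \) come from the two terms in \( J.\omega(X,Y) = -\omega(JX,Y) - \omega(X,JY) \). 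Testing on \( \mfa_r \times \mfa_J \): here \( G_X \in \Hom(\mfa_J,\mfa_r) \) appears, and since \( J \) acts trivially on the \( \mfa_J \)-slot but the value-side \( J \) exchanges \( \mfa_r \)- and \( U_r \)-valued contributions in an incompatible way, the only consistent solution forces \( G_X = 0 \), giving~\ref{item:G-zero}, and the leftover \( \mfa_J \)-valued piece (involving \( K_X \)) yields the commutation \( [J,K_X] = 0 \), i.e.~\ref{item:J-KX}. Testing on \( U_J \times \mfa_r \) produces the relations~\ref{item:o1-Jrr} and the more elaborate identity~\ref{item:o01-J-val} linking \( (\omega_1)_{Jr} \) and \( (\omega_0)_{Jr} \): the \( \mfa_r \)-valued part gives \( (\omega_0)_{Jr}^r = J^*(\omega_1)_{Jr}^r \), and the \( \mfa_J \)-valued part, after carefully pushing the \( J \)'s through (note \( JX \in U_J \) when \( X \in U_J \), and \( JY \in U_r \) when \( Y \in \mfa_r \)), gives the four-term equation in~\ref{item:o01-J-val}. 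Finally, testing on \( U_J \times U_J \): here \( J \) fixes \( U_J \), so \( J^* \) acts as the identity on the form arguments, and~\eqref{eq:AbelianSKTshear1} collapses to the statement that \( (\omega_0)_{JJ} \) and \( (\omega_1)_{JJ} \) each satisfy \( J^*\beta = \beta - J(J.\beta) \) separately (the \( \mfa \)-components of \( \omega_0 \) and \( \omega_1 \) do not mix on this pair of slots); splitting each into \( \mfa_r \)- and \( \mfa_J \)-values and observing that the \( \mfa_r \)-value equation \( J^*\beta^r = \beta^r - J(J.\beta^r) \) with \( J \) acting on \( \mfa_r \) is equivalent to \( \beta^r \) being of type \( (1,1) \), while the \( \mfa_J \)-value equation is literally the stated identity, gives~\ref{item:o0-JJr} and~\ref{item:o1-JJr}.

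The remaining slot pairs — \( \mfa_J \times \mfa_J \), \( \mfa_J \times U_r \), \( \mfa_r \times U_r \), \( \mfa_r \times U_J \), \( U_r \times U_r \), \( U_r \times U_J \) — are not independent: under \( X \mapsto JX \) in one or both arguments they map to the pairs already treated (e.g.\ \( \mfa_r \times U_r \) goes to \( U_r \times \mfa_r \)), and one checks that~\eqref{eq:AbelianSKTshear1} on them is equivalent to (a \( J \)-transform of) an equation already extracted, so no new condition arises; I would state this compatibility explicitly rather than redo each computation. Assembling the per-slot equivalences then gives the ``if and only if'' of the lemma. I expect the main obstacle to be bookkeeping in the mixed slot \( U_J \times \mfa_r \): there \( J \) acts nontrivially and differently on the two arguments, so one must track six separate \( J \)-insertions to see that the \( \mfa_r \)-valued and \( \mfa_J \)-valued parts decouple into~\ref{item:o1-Jrr} and~\ref{item:o01-J-val}; getting the placement of every \( J \) and \( J^* \) right there (rather than in the more symmetric slots) is where a sign or a transpose could easily slip.
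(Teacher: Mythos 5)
Your overall strategy coincides with the paper's: note that if \eqref{eq:AbelianSKTshear1} holds at \( (X,Y) \) then it also holds at \( (JX,Y) \), \( (X,JY) \), \( (JX,JY) \), so it suffices to test the equation on pairs of the subspaces \( \mfa_J,\mfa_r,U_J,U_r \) taken modulo \( J \); your treatment of the slots \( \mfa_r\times\mfa_J \), \( \mfa_r\times\mfa_r \) and \( U_J\times\mfa_r \) matches the paper's cases and correctly produces \ref{item:G-zero}--\ref{item:o01-J-val}.

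There is, however, a genuine gap at the end. Condition \ref{item:o0-JJr} concerns \( (\omega_0)_{JJ} \), the \( U_J^*\otimes\mfa_J^*\otimes\mfa \)-component of \( \omega_0 \), and this component simply does not appear when both arguments lie in \( U_J \): for \( X,Y\in U_J \) every term of \eqref{eq:AbelianSKTshear1} involves only \( (\omega_1)_{JJ} \), so the slot \( U_J\times U_J \) yields \ref{item:o1-JJr} alone, not \ref{item:o0-JJr} as you assert (also, \( J \) preserves \( U_J \) but is not the identity there, so ``\( J^* \) acts as the identity on the form arguments'' is not correct, although \( J^* \) does preserve this slot type). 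The pair that produces \ref{item:o0-JJr} is \( \mfa_J\times U_J \) --- the paper's case (d) --- and this pair occurs neither among the slots you compute nor in your list of ``remaining'' pairs; since both \( \mfa_J \) and \( U_J \) are \( J \)-invariant, it cannot be reduced to a previously treated slot and must be evaluated directly. As written, your case analysis therefore never derives \ref{item:o0-JJr} in the forward direction and never verifies \eqref{eq:AbelianSKTshear1} on \( \mfa_J\times U_J \) in the converse direction. The repair is routine: evaluate at \( X\in\mfa_J \), \( Y\in U_J \), where only \( (\omega_0)_{JJ} \) enters, and split into \( \mfa_r \)-, \( U_r \)- and \( \mfa_J \)-valued parts to obtain exactly \ref{item:o0-JJr}. (A minor point: in the \( \mfa_r\times\mfa_r \) case the symmetry of \( f \) comes from the \( U_r \)-valued component of the equation rather than the \( \mfa_r \)-valued one; the conclusion is unaffected.)
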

\begin{proof}
  First of all, observe that if~\eqref{eq:AbelianSKTshear1} holds for
  \( X,Y\in\mfg=\bR^{2n} \), then this equation is also true when
  evaluated on the pairs \( (JX,Y) \), \( (X,JY) \) and \( (JX,JY) \).
  Surely, all tensors in~\eqref{eq:AbelianSKTshear1} are additionally
  anti-symmetric.
  Moreover, as \( \omega|_{\Lambda^2 \mfa^*}=0 \),
  \eqref{eq:AbelianSKTshear1}~is trivially satisfied for
  \( X,Y\in\mfa_J \), as then both sides of that equation are zero.
  Hence, we are left with the following cases, which correspond
  directly to the conditions of the Lemma:
  \begin{enumerate*}[(a)]
  \item \( X\in \mfa_r \), \( Y\in \mfa_J \) gives~\ref{item:G-zero}
    and~\ref{item:J-KX},
  \item \( X,Y\in \mfa_r \) gives \ref{item:f-sym}, \ref{item:o1-rrr}
    and~\ref{item:o1-rrJ},
  \item \( X\in \mfa_r \), \( Y\in U_J \) leads to \ref{item:o1-Jrr}
    and~\ref{item:o01-J-val},
  \item \( X\in \mfa_J \), \( Y\in U_J \) gives~\ref{item:o0-JJr}, and
    finally
  \item \( X,Y\in U_J \) leads to~\ref{item:o1-JJr}.
  \end{enumerate*}
\end{proof}
Using Lemma~\ref{le:Jintegrable}~\ref{item:G-zero}
and~\ref{item:o1-rrr}, \( \mfa_J\perp (\mfa_r\oplus U_r) \) and
\( U_J\perp \mfa \), one observes

\begin{lemma}
  \label{le:partsofnuequalzero}
  Let \( (\mfa,\omega) \) be two-step shear data.  Then
  \begin{enumerate}[(i)]
  \item \( \nu_1(\omega)=0 \) on
    \( \Lambda^4 \mfa+\Lambda^4(\mfa_J\oplus U_r)+\Lambda^3
    \mfa_J\wedge \mfg \),
  \item \( \nu_2(\omega)=0 \) on
    \( \Lambda^4 \mfa+\Lambda^4(\mfa_J\oplus U_r)+\Lambda^3
    \mfa_r\wedge U_r+\Lambda^4 U_J \), and
  \item \( \nu(\omega)=0 \) on
    \( \Lambda^4 \mfa+\Lambda^4(\mfa_J\oplus U_r) \).
  \end{enumerate}
\end{lemma}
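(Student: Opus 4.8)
The plan is to unwind the definitions of $\nu_1$, $\nu_2$ and $\nu$ on the various pieces of $\Lambda^4\mfg^*$, and in each case observe that the defining expression involves only terms that vanish by the orthogonality relations $\mfa_J\perp(\mfa_r\oplus U_r)$, $U_J\perp\mfa$, together with the integrability constraints $G_X=0$ and $(\omega_1)_{rr}^r=0$ from Lemma~\ref{le:Jintegrable}. Recall that $\nu_1=\cA\bigl(g(J^*\omega(\any,\any),\omega(\any,\any))\bigr)$, so $\nu_1(W,X,Y,Z)$ is a signed sum of terms $g(J^*\omega(W,X),\omega(Y,Z))$. Since $\omega\in(U^*\wedge\mfa^*+\Lambda^2 U^*)\otimes\mfa$ takes values in $\mfa$, and $J^*\omega$ takes values in $J\mfa=\mfa_J\oplus J\mfa_r=\mfa_J\oplus U_r$, such a pairing is nonzero only when one factor lands in $\mfa_J$ paired against $\mfa_J$, or $U_r$ against $\mfa_r$ (the $U_r$-part of the $\mfa$-valued factor being zero). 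Similarly $\nu_2=\cA\bigl(g(J^*\omega(\omega(\any,\any),\any),\any)\bigr)$, whose entries are $g(J^*\omega(\omega(W,X),Y),Z)$: here $\omega(W,X)\in\mfa$, so the inner $\omega(\omega(W,X),Y)$ requires $\omega(W,X)$ to pair in the $\mfa$-slot with $Y\in U$; the outer $J^*$-value lies in $\mfa_J\oplus U_r$ and is paired against $Z$.

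First I would treat part~(iii), which is the cleanest: on $\Lambda^4\mfa$ every argument lies in $\mfa$, and since $\omega$ annihilates $\Lambda^2\mfa^*$ we get $\omega(\any,\any)=0$ on all pairs, hence both $\nu_1$ and $\nu_2$ vanish termwise. On $\Lambda^4(\mfa_J\oplus U_r)$, write each argument in $\mfa_J$ or $U_r$; for $\nu_1$, a term $g(J^*\omega(W,X),\omega(Y,Z))$ needs $\omega(Y,Z)\ne 0$, but $\omega$ vanishes on $\Lambda^2\mfa_J^*$, on $\mfa_J^*\wedge U_r^*$ (as $U_r\subseteq U$ but $\omega_0$'s $U$-slot paired with $\mfa_J$ gives $(\omega_0)_{JJ}^{\bullet}$... wait — need to be careful), and on $\Lambda^2 U_r^*$: the relevant components are $(\omega_1)_{rr}^{\bullet}$ which vanish in the $r$-value by~\ref{item:o1-rrr}, but may be nonzero in the $J$-value. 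This is the subtle point: I would check that $(\omega_1)_{rr}^J$ contributes to $J^*\omega$ a value in $\mfa_J$, paired in $\nu_1$ against another $\omega$-value; running through the index bookkeeping shows the surviving pairings force a factor that is again of the excluded type, so the total telescopes to zero. For part~(i), the extra piece $\Lambda^3\mfa_J\wedge\mfg$ is handled by noting that with three arguments in $\mfa_J$ and using $\omega|_{\Lambda^2\mfa_J^*}=0$ and $G_X=0$ (which kills the $\Hom(\mfa_J,\mfa_r)$-part of $\omega_0$), every term in the antisymmetrised sum has a factor $\omega(\cdot,\cdot)$ with both entries in $\mfa_J$, or an $\mfa_J$-entry feeding into a slot where the relevant component vanishes. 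For part~(ii), $\nu_2$ on the additional pieces $\Lambda^3\mfa_r\wedge U_r$ and $\Lambda^4 U_J$: on $\Lambda^4 U_J$, $\omega(W,X)\in\mfa$ paired with $Y\in U_J$ in the $\mfa$-slot of the outer $\omega$ needs $\mfa\not\perp U_J$ — false — so the inner evaluation vanishes; on $\Lambda^3\mfa_r\wedge U_r$, the same mechanism using $\omega(\any,\any)\in\mfa$ versus the $U_r$-argument, together with $(\omega_1)_{rr}^r=0$ and $f$-symmetry, makes each antisymmetrised term cancel.

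The main obstacle is the bookkeeping for $\nu_1$ on $\Lambda^4(\mfa_J\oplus U_r)$ and $\nu_2$ on $\Lambda^3\mfa_r\wedge U_r$, where the naive ``one factor is zero'' argument does not immediately apply because $(\omega_1)_{rr}^J$ need not vanish and $J^*$ moves $U_r$-values into $\mfa_J$; one must use the antisymmetrisation and the precise index types to see the surviving contributions cancel in pairs. I expect this to reduce, after expanding $\cA$ over the (at most) twelve signed terms, to recognising that the only nonzero summands come with opposite signs under swapping the two $\mfa_r$-type arguments, given $f$-symmetry (\ref{item:f-sym}) and $(\omega_1)_{rr}^r=0$ (\ref{item:o1-rrr}); alternatively one checks directly that $J^*\omega$ restricted to the relevant subspace pairs trivially. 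I would organise the whole argument as a short case table mirroring the statement, citing $G_X=0$, $(\omega_1)_{rr}^r=0$, and the orthogonality relations at each line, and leave the purely mechanical sign-checking to the reader.
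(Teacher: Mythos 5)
Your proposal does not go through as written, and the root cause is a misreading of the central object. In this paper \( J^*\omega \) is the \emph{pullback} of \( \omega \) by \( J \), i.e.\ \( (J^*\omega)(X,Y)=\omega(JX,JY) \), so it is \( \mfa \)-valued exactly like \( \omega \); it is \( J\circ J.\omega \) that takes values in \( J\mfa \) (the paper says this explicitly in the proof of Proposition~\ref{pro:twostepSKTsheardatatotallyreal}). Your opening claim that ``\( J^*\omega \) takes values in \( J\mfa=\mfa_J\oplus U_r \)'' is therefore false, and all the type bookkeeping built on it is unreliable (even in the easy case \( \Lambda^4\mfa \) your reason ``\( \omega(\any,\any)=0 \) on all pairs'' is not quite right, since \( J^*\omega(W,X)=\omega(JW,JX) \) need not vanish for \( W,X\in\mfa \); the case survives only because each summand also contains a plain \( \omega \)-factor on two \( \mfa \)-vectors). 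Worse, the misreading hides the one-line observation that makes precisely the cases you single out as ``the subtle point'' immediate: since \( J(\mfa_J\oplus U_r)=\mfa_J\oplus\mfa_r=\mfa \) and \( \omega|_{\Lambda^2\mfa^*}=0 \), the factor \( J^*\omega \) vanishes identically on \( \Lambda^2(\mfa_J\oplus U_r) \), so every summand of \( \nu_1 \) on \( \Lambda^4(\mfa_J\oplus U_r) \) and on \( \Lambda^3\mfa_J\wedge\mfg \) is zero — no cancellation and no use of \( f \)-symmetry is needed. Similarly for \( \nu_2 \): on \( \Lambda^4(\mfa_J\oplus U_r) \), Lemma~\ref{le:Jintegrable}\ref{item:G-zero} and \ref{item:o1-rrr} give \( \omega(\Lambda^2(\mfa_J\oplus U_r))\subseteq\mfa_J \), so the outer factor is \( \omega \) evaluated on two vectors of \( \mfa \), hence zero; on \( \Lambda^3\mfa_r\wedge U_r \) each term either has the inner \( \omega \) on two \( \mfa \)-vectors (zero) or an outer value in \( \mfa_J \) paired by \( g \) against a vector of \( \mfa_r \) (zero since \( \mfa_J\perp\mfa_r \)); on \( \Lambda^4 U_J \) the outer value lies in \( \mfa \) and is paired against a vector of \( U_J \), which vanishes because \( U_J\perp\mfa \).

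Concretely, the gaps are these. First, at the two places you yourself identify as the main obstacle (\( \nu_1 \) on \( \Lambda^4(\mfa_J\oplus U_r) \), \( \nu_2 \) on \( \Lambda^3\mfa_r\wedge U_r \)) you offer no argument, only the expectation that the terms ``telescope'' or ``cancel in pairs under swapping the two \( \mfa_r \)-arguments, given \( f \)-symmetry'', leaving the checking to the reader; since these are the only non-trivial cases, that is exactly where a proof was required, and the mechanism you predict is not the one that operates (the terms vanish individually, and \( f \)-symmetry plays no role in this lemma). Second, your justification of \( \nu_2=0 \) on \( \Lambda^4 U_J \) is invalid as stated: the evaluation \( \omega(\omega(W,X),Y) \) with \( \omega(W,X)\in\mfa \) and \( Y\in U_J \) does \emph{not} vanish — this is precisely the \( (\omega_0)_{JJ} \) and \( (\omega_0)_{Jr} \) part of \( \omega \), and orthogonality of \( U_J \) and \( \mfa \) has nothing to do with whether \( \omega \) can be evaluated there; the term dies only at the final metric pairing \( g(\,\any\,,Z) \), because the value lies in \( \mfa \) and \( Z\in U_J\perp\mfa \). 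The lemma itself is true and the intended method is the direct case-by-case check from Lemma~\ref{le:Jintegrable}\ref{item:G-zero}, \ref{item:o1-rrr} and the orthogonality relations that the paper indicates, but your write-up does not establish it: the central type premise is wrong, the decisive cases are left unproved, and one of the cases you do argue rests on an incorrect reason.
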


Next, let us look at~\eqref{eq:Abeliansheardata}.
We observe that, evaluated on \( X_1,X_2\in U \), \( Y\in \mfa \), the
equation is satisfied if and only if the endomorphisms
\( \omega_0(X_1,\any) \), \( \omega_0(X_2,\any) \) of \( \mfa \)
commute.
Thus, \eqref{eq:Abeliansheardata}~is true on
\( \Lambda^2 U_r\wedge \mfa \) if and only if for all
\( X_1,X_2\in \mfa_r \) we have \( [A_{X_1},A_{X_2}]=0 \), which is
equivalent to \( [K_{X_1},K_{X_2}]=0 \),
\( K_{X_1} H_{X_2}+H_{X_1} F_{X_2}=K_{X_2} H_{X_1}+H_{X_1} F_{X_1} \)
and \( [F_{X_1},F_{X_2}]=0 \) for all \( X_1,X_2\in\mfa_r \), the
latter equation also may be reformulated, due to \( f \) being
symmetric, as \( f(f(\any,\any),\any) \) being totally symmetric.

The other parts of~\eqref{eq:Abeliansheardata} are more difficult and
we will evaluate them explicitly below in the different cases that we
are going to consider in more detail.
Thus, let us look at~\eqref{eq:AbelianSKTshear2}, i.e.\ at
\( \nu(\omega) \).
Here, the \( \Lambda^2 \mfa_J\wedge \mfa_r\wedge U_r \)-part of
\( \nu(\omega) \) gives equations which solely contain different
components of \( A_X \).
A straightforward computation using the fact that \( f \) is
symmetric, that \( K_X \) commutes with \( J \) and all other
\( K_{X'} \) gives us the second part of

\begin{lemma}
  \label{le:conditionsonFX}
  Let \( \mfa \) be a subspace of \( \bR^{2n} \), let
  \( \omega\in \Lambda^2 (\bR^{2n})^*\otimes \mfa \) with
  \( \omega|_{\Lambda^2 \mfa^*}=0 \)
  fulfilling~\eqref{eq:AbelianSKTshear1} and let \( (g,J) \) be the
  standard K\"ahler structure on \( \bR^{2n} \).
  Then, with the decompositions and notations from above, the two-form
  \( \omega \) satisfies~\eqref{eq:Abeliansheardata} on
  \( \Lambda^2 U_r\wedge \mfa \) and~\eqref{eq:AbelianSKTshear2} on
  \( \Lambda^2 \mfa_J\wedge \mfa_r\wedge U_r \) if and only if for all
  \( X,W\in\mfa_r \) all of the following conditions are valid
  \begin{equation}
    \label{eq:AXcommutes}
    \begin{gathered}
      [K_{X},K_{W}]=0,\qquad [F_{X},F_{W}]=0,\\
      K_{X} H_{W}+H_{X} F_{W}=K_{W} H_{X}+H_{W} F_{X}
    \end{gathered}
  \end{equation}
  and
  \begin{equation}
    \label{eq:F2ndordergsymmetric}
    K_{X}^T K_{W}+K_{X} K_{W}+K_{f(X,W)}
  \end{equation}
  is \( g \)-antisymmetric.
  Here, the superscript ``T'' denotes the \( g \)-transpose of an
  endomorphism of \( \mfa_J \).
\end{lemma}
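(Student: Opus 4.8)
The plan is to establish the two displayed conditions separately. For~\eqref{eq:AXcommutes}: as noted just above the statement, \eqref{eq:Abeliansheardata}~evaluated on \( X_1,X_2\in U \), \( Y\in\mfa \) amounts to the commuting of the endomorphisms \( \omega_0(X_1,\any) \) and \( \omega_0(X_2,\any) \) of~\( \mfa \) (the \( \omega_1 \)-part of the inner \( \omega \) lies in \( \mfa \) and is annihilated by the outer \( \omega \), since \( \omega|_{\Lambda^2\mfa^*}=0 \)). Taking \( X_1=JX \), \( X_2=JW \) with \( X,W\in\mfa_r \), so that \( X_1,X_2 \) sweep out \( U_r=J\mfa_r \), this reads \( [A_X,A_W]=0 \) for all \( X,W\in\mfa_r \). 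Using~\eqref{eq:definitionocompsofA}, \( G_X=0 \) (Lemma~\ref{le:Jintegrable}~\ref{item:G-zero}) and reading off the blocks of \( [A_X,A_W]=0 \) in the splitting \( \mfa=\mfa_J\oplus\mfa_r \), this is equivalent to \( [K_X,K_W]=0 \), \( [F_X,F_W]=0 \) and \( K_XH_W+H_XF_W=K_WH_X+H_WF_X \), i.e.\ to~\eqref{eq:AXcommutes}. It therefore remains to show that, granting~\eqref{eq:AXcommutes} and the two-step integrability~\eqref{eq:AbelianSKTshear1} (hence all conclusions of Lemma~\ref{le:Jintegrable}), equation~\eqref{eq:AbelianSKTshear2} holds on \( \Lambda^2\mfa_J\wedge\mfa_r\wedge U_r \) if and only if~\eqref{eq:F2ndordergsymmetric} does.

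Since \eqref{eq:AbelianSKTshear2} on a subspace is the vanishing of \( \nu(\omega)=\nu_1(\omega)+2\nu_2(\omega) \) there, I would evaluate \( \nu(\omega) \) on a general \( (Z_1,Z_2,X,JW) \) with \( Z_1,Z_2\in\mfa_J \), \( X,W\in\mfa_r \), using \( U_r=J\mfa_r \). Expanding every \( J^*\omega \) by \( J^*\omega(Y_1,Y_2)=\omega(Y_1,Y_2)+J\omega(JY_1,Y_2)+J\omega(Y_1,JY_2) \) (from~\eqref{eq:AbelianSKTshear1}) and using \( \omega|_{\Lambda^2\mfa^*}=0 \) together with the \( J \)-invariance of~\( \mfa_J \), one sees that \( \omega \) is only ever applied to pairs having exactly one slot in \( U_r \) and one in \( \mfa \); in particular \( (\omega_1)_{rr} \) and the map~\( h \) cannot occur. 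By the definitions~\eqref{eq:definitionofA}, \eqref{eq:definitionocompsofA} together with \( G_X=0 \) and \( [J,K_X]=0 \) (Lemma~\ref{le:Jintegrable}~\ref{item:J-KX}), the non-zero such values are \( \omega(p,JW)=A_Wp \) for \( p\in\mfa \), \( J^*\omega(z,X)=K_XJz \) and \( J^*\omega(v,z)=-K_vJz \) for \( z\in\mfa_J \), \( v\in\mfa_r \), while \( J^*\omega(z,z')=0 \) for \( z,z'\in\mfa_J \); thus the \( \mfa_J \)-component of \( A_WX \) drops out as soon as it re-enters~\( J^*\omega \).

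It then remains to collect the surviving terms of \( \nu_1 \) and \( 2\nu_2 \) with the correct permutation signs. Using \( J^T=-J \), \( [J,K_X]=[J,K_X^T]=0 \) and the symmetry of~\( f \) (Lemma~\ref{le:Jintegrable}~\ref{item:f-sym}) to bring all contributions to a common shape, they combine to
\begin{equation*}
  \nu(\omega)(Z_1,Z_2,X,JW)=c\;g\bigl((B_{X,W}+B_{X,W}^T)JZ_1,Z_2\bigr),
\end{equation*}
where \( B_{X,W}:=K_X^TK_W+K_XK_W+K_{f(X,W)} \) and \( c\neq0 \) is a universal constant. As \( g \) is non-degenerate and \( J \) restricts to a bijection of~\( \mfa_J \), this vanishes for all \( Z_1,Z_2\in\mfa_J \) and all \( X,W\in\mfa_r \) exactly when \( B_{X,W}+B_{X,W}^T=0 \) for all \( X,W\in\mfa_r \), i.e.\ exactly when each \( B_{X,W} \) is \( g \)-antisymmetric; this is~\eqref{eq:F2ndordergsymmetric}, so together with the first part the lemma follows.

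The main obstacle is this last combinatorial step: carrying out the two nested antisymmetrisations over four arguments, keeping only the surviving permutations, and reassembling them — with signs, and with the weights \( 1 \) and \( 2 \) for \( \nu_1 \) and \( \nu_2 \) — into the single operator \( B_{X,W}+B_{X,W}^T \). A convenient check throughout is that \( \nu(\omega)(Z_1,Z_2,X,JW) \) must be alternating in \( Z_1\leftrightarrow Z_2 \); this is consistent with the displayed formula, since \( B_{X,W}+B_{X,W}^T \) is \( g \)-symmetric and commutes with~\( J \), so that \( g((B_{X,W}+B_{X,W}^T)JZ_1,Z_2) \) changes sign under the interchange of \( Z_1 \) and~\( Z_2 \).
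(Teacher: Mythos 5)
Your proposal is correct and essentially reproduces the paper's argument: the first displayed condition is obtained, exactly as in the paper, by reading \eqref{eq:Abeliansheardata} on \( \Lambda^2 U_r\wedge\mfa \) as the commuting of the block-triangular endomorphisms \( A_X \) (with \( G_X=0 \)), and the second by evaluating \( \nu=\nu_1+2\nu_2 \) on quadruples \( (Z_1,Z_2,X,JW) \), which the paper dismisses as a ``straightforward computation''. Your stated closed form \( \nu(\omega)(Z_1,Z_2,X,JW)=c\,g\bigl((B_{X,W}+B_{X,W}^T)JZ_1,Z_2\bigr) \) with \( B_{X,W}=K_X^TK_W+K_XK_W+K_{f(X,W)} \) is indeed what that computation yields -- only the pairings with one slot from \( \{Z_1,Z_2\} \) and one from \( \{X,JW\} \) (respectively an inner pair containing \( JW \)) survive, and the relative weights \( 1 \) and \( 2 \) of \( \nu_1 \) and \( \nu_2 \) combine precisely to \( B_{X,W}+B_{X,W}^T \) -- so the combinatorial step you flag as the main obstacle does go through.
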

Before we specialize to the different cases, we will use
\eqref{eq:AXcommutes} and~\eqref{eq:F2ndordergsymmetric} to simplify
the form of the endomorphisms \( K_X \) of \( \mfa_J \) with respect
to a certain basis of \( \mfa_J \).

For this, note that by Lemma~\ref{le:Jintegrable}\ref{item:J-KX} these
endomorphism may be considered as complex endomorphisms on the complex
vector space \( (\mfa_J, J) \).
From now on, we will often consider \( \mfa_J \) as a complex vector
space and note that the scalar multiplication satisfies
\( (a+ib)X=a X+ b JX \) for \( a,b\in \bR \) and \( X\in \mfa_J \).
Even more, we will identify the real Hermitian vector space
\( (\mfa_J,g,J) \) with the complex unitary vector space
\( (\mfa_J,g+i \omega) \).

The rest of this section aims at showing that all complex
endomorphisms \( K_X \) are simultaneously complex diagonalizable by a
unitary basis.
For this, note that Lemma~\ref{le:conditionsonFX} tells us that all
these endomorphisms commute and so we need to show that all these
endomorphisms are diagonalizable.
This will eventually follow from~\eqref{eq:F2ndordergsymmetric} and
for this it turns out to be useful to find a good basis
\( (v_1,\ldots,v_m) \) of \( \mfa_r \) in which \( f(v_i,v_i) \) is
not too complicated.

\begin{lemma}
  \label{le:fspecialbasis}
  Let \( (V,\inp\any\any) \) be an inner product space of
  dimension~\( n \) and let \( f\colon S^2 V\to V \) be linear.
  Then there exists an orthonormal basis \( (v_1,\ldots,v_n) \) of
  \( V \) with \( f(v_i,v_i)\in \spa{v_1,\ldots,v_i} \) for all
  \( i=1,\ldots,n \).
\end{lemma}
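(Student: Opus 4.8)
\emph{Approach.}
The plan is to prove the statement by induction on \( n=\dim V \), peeling off one orthonormal basis vector at a time.
The case \( n=1 \) is trivial, since then \( \spa{v_1}=V \).
For the inductive step the crucial ingredient, which I would isolate first as a separate claim, is: \emph{for every linear \( f\colon S^2 V\to V \) there is a unit vector \( v\in V \) with \( f(v,v)\in\spa v \)} (i.e.\ \( f(v,v) \) is a real multiple of \( v \), possibly zero).
Granting this, choose such a \( v_1 \), set \( H:=v_1^{\bot} \), and define the compressed map \( \bar f\colon S^2 H\to H \) by \( \bar f(x,y):=f(x,y)-\inp{f(x,y)}{v_1}v_1 \); this is symmetric and linear, so the inductive hypothesis applies to \( (H,\inp\any\any|_{H},\bar f) \) and yields an orthonormal basis \( (v_2,\dots,v_n) \) of \( H \) with \( \bar f(v_i,v_i)\in\spa{v_2,\dots,v_i} \) for \( i\ge 2 \).
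Then \( (v_1,\dots,v_n) \) is orthonormal in \( V \), and for \( i\ge 2 \)
\[
  f(v_i,v_i)=\bar f(v_i,v_i)+\inp{f(v_i,v_i)}{v_1}v_1\in\spa{v_2,\dots,v_i}+\spa{v_1}=\spa{v_1,\dots,v_i},
\]
while \( f(v_1,v_1)\in\spa{v_1} \) by the choice of \( v_1 \); this is exactly the assertion of the lemma.

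\emph{Proof of the claim.}
If \( f(v,v)=0 \) for some unit \( v \) we are done, so assume \( f(v,v)\ne 0 \) for all \( v\in S^{n-1} \) and set \( \Phi\colon S^{n-1}\to S^{n-1} \), \( \Phi(v):=f(v,v)/\norm{f(v,v)} \).
Since \( f \) is quadratic we have \( \Phi(-v)=\Phi(v) \), and an even self-map of a sphere has even degree (for \( n-1 \) even this is immediate from \( \Phi=\Phi\circ(-\id) \), which gives \( \deg\Phi=(-1)^n\deg\Phi \); for \( n-1 \) odd one factors \( \Phi \) through the orientable projective space \( \bR\mathrm{P}^{n-1} \), where the double cover contributes an even factor).
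On the other hand, if \( \Phi(v)\ne v \) for every \( v \), then \( \Phi \) is homotopic to the antipodal map, so \( \deg\Phi=(-1)^n \), which is odd --- contradicting that \( \deg\Phi \) is even.
Hence \( \Phi(v)=v \), i.e.\ \( f(v,v)=\norm{f(v,v)}\,v\in\spa v \), for some unit vector \( v \), proving the claim.

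\emph{Main obstacle.}
The genuine difficulty is the claim itself: the ``eigenvector-type'' conclusion \( f(v,v)\in\spa v \) is not accessible by a naive variational argument, since a critical point of the cubic function \( v\mapsto\inp{f(v,v)}{v} \) on \( S^{n-1} \) only controls the totally symmetric component of \( f \), not \( f \) itself, so some topological input seems unavoidable.
One may instead argue via the hairy-ball theorem when \( n \) is odd and the even-degree argument when \( n \) is even, but the uniform formulation above (comparing \( \deg\Phi=(-1)^n \) with the evenness of \( \deg\Phi \)) is the cleanest.
Everything after the claim --- the compression identity, the bookkeeping of the flag \( \spa{v_1,\dots,v_i} \), and orthonormality --- is routine linear algebra.
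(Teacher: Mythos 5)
Your proposal is correct and follows essentially the same route as the paper: induction with the compressed map \( \pi_{v_1^{\bot}}\circ f \) on \( v_1^{\bot} \), and the key claim that \( \Phi(v)=f(v,v)/\norm{f(v,v)} \) must have a fixed point because a fixed-point-free self-map of \( S^{n-1} \) has degree \( (-1)^n \) while \( \Phi \), being even, has even degree. The only difference is cosmetic: the paper gets evenness of \( \deg\Phi \) by Sard's theorem and counting preimages of a regular value in antipodal pairs, whereas you compose with \( -\id \) (for \( n \) odd) or factor through \( \bR\mathrm{P}^{n-1} \) (for \( n \) even); both are valid.
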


\begin{proof}
  The proof is by induction on~\( n \).
  The case \( n=1 \) holds trivially.

  For \( n>1 \), we first show that there is some unit vector
  \( v_1\in V \) with \( f(v_1,v_1)\in \spa{v_1} \).
  If there is some \( v_1\in V\setminus \{0\} \) with
  \( f(v_1,v_1)=0 \), we are done.
  Otherwise, let \( S^{n-1} = \{v\in V: \norm{v}=1\} \) be the unit
  sphere and consider the function \( F\colon S^{n-1}\to S^{n-1} \),
  \( F(v):=f(v,v)/\norm{f(v,v)} \).
  It suffices to show that \( F \)~has a fixed point.
  Since any fixed point-free map from \( S^{n-1} \) to \( S^{n-1} \)
  has degree equal to \( (-1)^n \), it is enough to show that
  \( F \)~has even degree.
  If \( F \) is not surjective, then it is homotopic to a constant map
  and so has degree \( 0 \), and we are done.
  When \( F \) is surjective, Sard's Theorem gives the existence of a
  regular value \( w \) of~\( F \).
  At each point \( v \) of \( F^{-1}(w) \) the differential of \( F \)
  has full rank, so \( F \) is a diffeomorphism in a neighbourhood of
  \( v \).
  This implies that \( F^{-1}(w) \) is discrete in \( S^{n-1} \) and
  so finite.
  But then the local degree \( \deg(F|v) \) of \( F \) at \( v \) is
  \( 1 \) or \( -1 \) and we conclude that
  \( \deg(F) = \sum_{v \in F^{-1}(w)} \deg(F|v) \equiv \lvert
  F^{-1}(w) \rvert \equiv 0\bmod{2} \), since \( v \in F^{-1}(w) \)
  gives \( -v \in F^{-1}(w) \) too.

  So there is some unit vector \( v_1\in V \) with
  \( f(v_1,v_1)\in \spa{v_1} \).
  Then take the orthogonal complement \( W \) of \( \spa{v_1} \)
  in~\( V \).
  Applying the induction hypothesis to the map
  \( h:=\pi_W\circ f|_{S^2 W}\colon S^2 W\to W \), where
  \( \pi_W\colon V\to W \) is orthogonal projection onto~\( W \),
  yields an orthonormal basis \( v_2,\ldots,v_{n} \) with
  \( h(v_i,v_i)\in \spa{v_2,\ldots,v_i} \).
  This gives \( f(v_i,v_i)\in \spa{v_1,\ldots,v_i} \) for all
  \( i=2,\ldots,n \), and the result follows.
\end{proof}
Looking at~\eqref{eq:F2ndordergsymmetric}, one may imagine that the
following lemma will be useful in proving that all \( K_X \) are
complex diagonalizable:

\begin{lemma}
  \label{le:G-skew}
  If \( P \in \bC^{n\times n} \) has \( G(P) = P^2 + P^*P + aP \)
  skew-Hermitian for some \( a \in \bR \), then \( P \) is
  diagonalisable via a unitary transformation and the real parts
  \( \mu \) of the eigenvalues of \( P \) satisfy
  \( \mu(2\mu+a) = 0 \).
\end{lemma}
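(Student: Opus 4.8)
The plan is to exploit the hypothesis in two stages: first show diagonalisability, then extract the eigenvalue condition. For the first stage, since $G(P)=P^2+P^*P+aP$ is skew-Hermitian, we have $G(P)+G(P)^*=0$, i.e.\ $P^2+(P^*)^2+2P^*P+a(P+P^*)=0$. Writing $P=S+N$ for the real and imaginary Hermitian parts $S=\tfrac12(P+P^*)$, $N=\tfrac1{2i}(P-P^*)$ (both self-adjoint), I would substitute and separate the Hermitian and skew-Hermitian parts of this identity. The Hermitian part should yield a clean relation; the natural candidate is that $2S^2+aS$ is forced to be something controlled, or that $N$ commutes with $S$. Concretely, I expect the computation to give $[S,N]=0$ (or an equivalent statement) after comparing real and imaginary parts, which would immediately make $P=S+iN$ normal, hence unitarily diagonalisable by the spectral theorem.

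Alternatively, and perhaps more robustly, I would argue via the eigenvalues directly. Take any eigenvalue $\lambda=\mu+i\tau$ of $P$. Consider a unit eigenvector, or better, pass to the Schur triangularisation $P=UTU^*$ with $T$ upper triangular; since unitary conjugation sends $G(P)$ to $UT^2U^*+UT^*TU^*+aUTU^*$... wait, $P^*P$ is not conjugation-covariant in the naive sense, but $U^*G(P)U = T^2+T^*T+aT$, which is still skew-Hermitian. So WLOG $P=T$ is upper triangular. Comparing diagonal entries: the $(i,i)$ entry of $T^2+T^*T+aT$ is $\lambda_i^2 + \sum_{j\le i}|T_{ji}|^2 + a\lambda_i$; skew-Hermitian forces its real part to vanish, giving $\Re(\lambda_i^2)+\sum_{j\le i}|T_{ji}|^2 + a\,\Re(\lambda_i)=0$. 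Since $\Re(\lambda_i^2)=\mu_i^2-\tau_i^2$ where $\lambda_i=\mu_i+i\tau_i$, this reads $\mu_i^2-\tau_i^2+\sum_{j\le i}|T_{ji}|^2+a\mu_i=0$. This shows in particular that the strictly-upper entries are constrained, and by inducting on $i$ (using that the $(i,i)$ diagonal entry of the skew-Hermitian condition controls $|T_{ii}|^2=|\lambda_i|^2$ together with off-diagonal terms in that column) one can hope to force $T_{ji}=0$ for $j<i$, giving $T$ diagonal.

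For the eigenvalue relation $\mu(2\mu+a)=0$: once $P$ is unitarily diagonal with eigenvalues $\lambda=\mu+i\tau$, the matrix $G(P)$ is diagonal with entries $\lambda^2+|\lambda|^2+a\lambda = (\mu^2-\tau^2+\mu^2+\tau^2)+a\mu + i(2\mu\tau+a\tau) = (2\mu^2+a\mu)+i\tau(2\mu+a)$. Skew-Hermitian (here, since the matrix is diagonal, this means purely imaginary diagonal entries) forces the real part $2\mu^2+a\mu=\mu(2\mu+a)=0$ for each eigenvalue, which is exactly the claim. So the eigenvalue condition is essentially free once diagonalisability is in hand.

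The main obstacle is the diagonalisability step, specifically ruling out a nontrivial nilpotent/strictly-triangular part. The Schur-form computation above gives, for the last row or via a downward induction, equations mixing $|T_{ji}|^2\ge 0$ with the sign-indefinite quantity $\mu_i^2-\tau_i^2+a\mu_i$; one needs to combine these across all diagonal positions — e.g.\ summing the real-part equations over all $i$ gives $\sum_i(\mu_i^2-\tau_i^2+a\mu_i) + \|T\|_{\mathrm{off}}^2 + \sum_i|T_{ii}|^2\cdot(\text{counted once extra})=0$, and one must show the off-diagonal Frobenius norm is forced to vanish. I would look for a trace identity: $\operatorname{tr}(G(P)) $ skew-Hermitian constrains $\Re\operatorname{tr}(P^2+P^*P+aP)=0$, i.e.\ $\Re\operatorname{tr}(P^2)+\|P\|_F^2+a\Re\operatorname{tr}(P)=0$; combined with $\operatorname{tr}(P^2)=\sum\lambda_i^2$ and $\Re\operatorname{tr}(P)=\sum\mu_i$, this gives $\sum(\mu_i^2-\tau_i^2)+\|P\|_F^2+a\sum\mu_i=0$. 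Since $\|P\|_F^2 = \sum|\lambda_i|^2 + \|P\|_{\mathrm{off}}^2 = \sum(\mu_i^2+\tau_i^2)+\|P\|_{\mathrm{off}}^2$, substituting yields $\sum(2\mu_i^2+a\mu_i) + \|P\|_{\mathrm{off}}^2 = 0$. Now if additionally one knows each $2\mu_i^2+a\mu_i\ge 0$ — which is \emph{not} automatic — the off-diagonal part vanishes. So the real work is to establish $2\mu_i^2+a\mu_i\ge 0$ independently (perhaps from the per-entry Schur equations, where $\mu_i^2-\tau_i^2+a\mu_i = -\sum_{j\le i}|T_{ji}|^2 \le 0$, hence $2\mu_i^2+a\mu_i \le \mu_i^2+\tau_i^2 - $ something... ) — I expect a short but slightly delicate argument juggling these two families of inequalities simultaneously, after which diagonalisability and then the stated eigenvalue identity both drop out.
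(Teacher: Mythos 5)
Your concrete route (Schur form, diagonal entries, trace) has a genuine gap that cannot be closed with the information you allow yourself. The $(i,i)$ real-part identity you derive simplifies, after noting that the $j=i$ term $\lvert T_{ii}\rvert^2=\mu_i^2+\tau_i^2$ cancels $-\tau_i^2$, to $\mu_i(2\mu_i+a)+\sum_{j<i}\lvert T_{ji}\rvert^2=0$; and the trace identity you propose to combine with it is exactly the sum over $i$ of these same identities, so it carries no new information. The diagonal data therefore only yields $\mu_i(2\mu_i+a)\le 0$, and it is genuinely consistent with a nonzero strictly triangular part: for $a=2$ the matrix $T=\begin{psmallmatrix} 0 & t\\ 0 & -\tfrac12 \end{psmallmatrix}$ with $\lvert t\rvert^2=\tfrac12$ satisfies every diagonal-entry condition, yet is non-normal and has an eigenvalue with real part $-\tfrac12\notin\{0,-1\}$. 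So the inequality $\mu_i(2\mu_i+a)\ge 0$ you flag as the ``real work'' cannot be established from the diagonal entries and the trace alone; any proof must use the off-diagonal entries of the skew-Hermiticity condition.

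Your first sketch is in fact the paper's argument, but the key step is not what you expect: writing $P=S+iN$ with $S,N$ Hermitian, the Hermitian part of $G(P)$ is the single identity $2S^2+aS+i[S,N]=0$ between Hermitian matrices, so ``comparing real and imaginary parts'' gives nothing further and does not by itself yield $[S,N]=0$. The missing idea is to diagonalise $S$ unitarily: in that basis $i[S,N]$ has zero diagonal while $2S^2+aS$ is diagonal with entries $\mu_j(2\mu_j+a)$, so the diagonal part forces $\mu_j(2\mu_j+a)=0$ for every $j$, hence $2S^2+aS=0$ and then $[S,N]=0$; thus $P$ is normal and unitarily diagonalisable, with the real parts of its eigenvalues being the $\mu_j$. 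Your final computation extracting $\mu(2\mu+a)=0$ once diagonalisability is known is correct, but as written it duplicates what the diagonal-of-$S$ step already gives.
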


\begin{proof}
  Write \( P = H + S \), where \( H = (P+P^*)/2 \) is Hermitian,
  \( S= (P-P^*)/2 \) is skew-Hermitian.
  Then \( G(P) = 2H^2 + (HS+SH) + (HS-SH) + aH + aS \).
  Noting that \( HS+SH \) is skew-Hermitian and \( [H,S] = HS-SH \) is
  Hermitian, we see that the Hermitian part of \( G(P) \) is
  \( 2H^2 + aH + [H,S] \).
  By applying a unitary transformation, we may assume that \( H \) is
  diagonal, i.e.\ \( H=\diag(\mu_1,\ldots,\mu_n) \) for certain
  \( \mu_1,\ldots,\mu_n\in \bR \).

  Let now \( G(P) \) be skew-Hermitian.
  Then \( 0=2H^2 + aH + [H,S] \).
  But \( 2H^2 + aH=\diag(\mu_1(2\mu_1+a),\ldots,\mu_n(2\mu_n+a)) \)
  and \( [H,S] \) has no diagonal component.
  Hence, both terms have to vanish individually.
  In particular, \( S \) commutes with \( H \) and so preserves the
  eigenspaces of \( H \).
  But \( S \) is also diagonalisable, with eigenvalues in \( i\bR \),
  and the result follows.
\end{proof}
From this, we get

\begin{proposition}
  \label{pro:formofKX}
  Let \( (\mfa,\omega) \) be two-step shear data such that the shear
  is SKT\@.
  Then there is a unitary basis \( (Y_1,\ldots,Y_m) \) of \( \mfa_J \)
  and \( \alpha_1,\ldots,\alpha_m\in\mfa_r^*\otimes \bC \) such that
  \begin{equation}
    \label{eq:formofKX}
    K_X(Y_i)=\alpha_i(X)\cdot Y_i
  \end{equation}
  for all \( X\in \mfa_r \).
\end{proposition}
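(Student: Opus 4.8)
The plan is to combine the three structural facts already assembled: by Lemma~\ref{le:conditionsonFX} the endomorphisms \( K_X \), \( X\in\mfa_r \), all commute, by Lemma~\ref{le:Jintegrable}\ref{item:J-KX} each \( K_X \) commutes with \( J \) and hence is a complex-linear endomorphism of \( (\mfa_J,J) \), and by Lemma~\ref{le:conditionsonFX} the combination \( K_X^T K_W+K_XK_W+K_{f(X,W)} \) is \( g \)-antisymmetric, i.e.\ skew-Hermitian after the identification of \( (\mfa_J,g,J) \) with the unitary space \( (\mfa_J,g+i\omega) \). A commuting family of diagonalisable operators on a complex inner product space is simultaneously unitarily diagonalisable; so the whole statement reduces to showing each individual \( K_X \) is diagonalisable. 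That is exactly what Lemma~\ref{le:G-skew} delivers, provided we can extract, for a single \( X \), an expression of the form \( K_X^2+K_X^*K_X+aK_X \) that is skew-Hermitian for some real \( a \).

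First I would use Lemma~\ref{le:fspecialbasis}, applied to the symmetric map \( f\colon S^2\mfa_r\to\mfa_r \) (symmetric by Lemma~\ref{le:Jintegrable}\ref{item:f-sym}) with the restriction of \( g \) as inner product, to pick an orthonormal basis \( (v_1,\dots,v_m) \) of \( \mfa_r \) with \( f(v_i,v_i)\in\spa{v_1,\dots,v_i} \). Then I would argue by induction on \( i \): having already arranged (via the commuting-diagonalisable family on the span of the previously treated directions) a unitary basis of \( \mfa_J \) on which \( K_{v_1},\dots,K_{v_{i-1}} \) are diagonal, write \( f(v_i,v_i)=\sum_{j\le i}c_jv_j \) and feed \( X=W=v_i \) into \eqref{eq:F2ndordergsymmetric}: the operator \( K_{v_i}^TK_{v_i}+K_{v_i}^2+K_{f(v_i,v_i)} \) is \( g \)-antisymmetric. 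Since \( K_{f(v_i,v_i)}=\sum_{j<i}c_jK_{v_j}+c_iK_{v_i} \) and the \( K_{v_j} \) for \( j<i \) are already diagonal (hence normal, with known eigenvalues), one can peel off their skew-Hermitian parts: the Hermitian part of \( K_{v_i}^2+(K_{v_i})^*K_{v_i}+c_iK_{v_i} \) must then equal minus the (constant, diagonal) Hermitian part coming from the \( j<i \) terms, and a diagonal shift does not affect the conclusion of Lemma~\ref{le:G-skew}. Thus \( G(K_{v_i})=K_{v_i}^2+K_{v_i}^*K_{v_i}+c_iK_{v_i} \) is skew-Hermitian up to an added real-diagonal constant, so \( K_{v_i} \) is unitarily diagonalisable; as it commutes with the already-diagonal \( K_{v_j} \), \( j<i \), I can refine the unitary basis to diagonalise all of \( K_{v_1},\dots,K_{v_i} \) simultaneously. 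After \( i=m \) steps every \( K_{v_i} \), hence every \( K_X \) by linearity in \( X \), is diagonal in one fixed unitary basis \( (Y_1,\dots,Y_m) \), and setting \( \alpha_i(X) \) to be the \( Y_i \)-eigenvalue of \( K_X \) gives \eqref{eq:formofKX} with \( \alpha_i\in\mfa_r^*\otimes\bC \) by linearity.

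I expect the main obstacle to be the bookkeeping in the inductive step: verifying that the cross terms \( K_{v_i}^TK_{v_j} \), \( j<i \), and the shift \( K_{f(v_i,v_i)} \) really only contribute a \emph{diagonal} correction with respect to the basis adapted to the earlier \( K_{v_j} \), so that subtracting a constant real-diagonal matrix reduces \eqref{eq:F2ndordergsymmetric} to precisely the hypothesis of Lemma~\ref{le:G-skew} for \( P=K_{v_i} \). (One should be careful that \( f(v_i,v_i) \) lies in \( \spa{v_1,\dots,v_i} \) and not merely in \( \mfa_r \); this is exactly why Lemma~\ref{le:fspecialbasis} is invoked, and it is what makes \( K_{f(v_i,v_i)} \) block-diagonal at stage \( i \).) Once that reduction is in place, the diagonalisability is immediate from Lemma~\ref{le:G-skew}, and assembling the simultaneous unitary basis is the standard fact about commuting normal operators.
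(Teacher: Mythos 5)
Your overall strategy coincides with the paper's: pick the basis of \( \mfa_r \) via Lemma~\ref{le:fspecialbasis}, feed \( X=W=v_i \) into \eqref{eq:F2ndordergsymmetric}, use Lemma~\ref{le:G-skew} for diagonalisability, and induct using commutativity. But the inductive step as you formulate it has a genuine gap. From \( X=W=v_i \) you obtain that \( K_{v_i}^TK_{v_i}+K_{v_i}^2+c_iK_{v_i}+D \) is skew-Hermitian, where \( D \) is the Hermitian part of \( \sum_{j<i}c_jK_{v_j} \), and you then assert that ``a diagonal shift does not affect the conclusion of Lemma~\ref{le:G-skew}''. That is not a legitimate application of the lemma: \( D \) is diagonal only in the unitary basis adapted to \( K_{v_1},\dots,K_{v_{i-1}} \), which has no a priori relation to the eigenbasis of the Hermitian part \( H \) of \( K_{v_i} \) in which the lemma's diagonal/off-diagonal argument is run; and the abstract statement ``\( P^2+P^*P+aP+D \) skew-Hermitian for some Hermitian \( D \) implies \( P \) diagonalisable'' is false, since for any \( P \) one may take \( D \) to be minus the Hermitian part of \( P^2+P^*P+aP \). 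So, as written, the step fails precisely at the point you flagged as the main obstacle.

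The gap is repairable, and the repair is essentially what the paper does: using the induction hypothesis, restrict \( K_{v_i} \) to a common eigenspace of the already (unitarily) diagonalised \( K_{v_1},\dots,K_{v_{i-1}} \); there the correction \( \sum_{j<i}c_jK_{v_j} \) is a complex scalar, its Hermitian part a real multiple of the identity, and in an eigenbasis of \( H \) the resulting identity \( 2H^2+\textnormal{(diagonal)}+[H,S]=0 \) forces \( [H,S]=0 \), i.e.\ \( K_{v_i} \) is normal on each such eigenspace. Alternatively you could stay global, but then you must first show \( [K_{v_i},D]=0 \) (true, because \( K_{v_i} \) commutes with each \( K_{v_j} \), and \( K_{v_j} \), being diagonal in an orthonormal basis, has \( K_{v_j}^* \) acting as a scalar on the same eigenspaces, so \( K_{v_i} \) commutes with \( K_{v_j}^* \) and hence with \( D \)); splitting into Hermitian and skew-Hermitian parts gives \( [H,D]=0=[S,D] \), and one can rerun the proof of Lemma~\ref{le:G-skew} in a simultaneous eigenbasis of \( H \) and \( D \). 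Finally, your opening reduction ``a commuting family of diagonalisable operators is simultaneously unitarily diagonalisable'' needs ``normal'' in place of ``diagonalisable''; this is harmless here because Lemma~\ref{le:G-skew} actually yields unitary diagonalisability, i.e.\ normality, which you invoke correctly at the end.
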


\begin{proof}
  Let \( m:=\dim(\mfa_r) \).
  By Lemma~\ref{le:fspecialbasis} and
  Lemma~\ref{le:Jintegrable}\ref{item:f-sym}, there is a basis
  \( (X_1,\ldots,X_m) \) of \( \mfa_r \) with
  \( f(X_i,X_i)\in \spa{X_1,\ldots,X_i} \).
  Set \( K_i:=K_{X_i} \) for \( i=1,\ldots, m \) and note that
  \( K_i \) is a complex endomorphism of the complex vector space
  \( (\mfa_J,J) \) by Lemma~\ref{le:Jintegrable}\ref{item:J-KX}.
  We need to show that \( K_1,\ldots,K_m \) are simultaneously complex
  diagonalizable.
  For this, note that these endomorphisms commute
  by~\eqref{eq:AXcommutes} in Lemma~\ref{le:conditionsonFX}.
  We show inductively that \( K_1,\ldots,K_m \) are simultaneously
  complex diagonalizable for all \( m=1,\ldots,n \).

  For \( m=1 \), this follows directly
  from~\eqref{eq:F2ndordergsymmetric} in
  Lemma~\ref{le:conditionsonFX}, Lemma~\ref{le:G-skew} and
  \( f(X_1,X_1)\in \spa{X_1} \).

  Now let us assume that we have shown that \( K_1,\ldots,K_m \) are
  simultaneously complex diagonalizable for some
  \( s\in \{1,\ldots,m-1\} \).
  As \( K_{m+1} \) commutes with \( K_1,\ldots,K_s \), it preserves
  the common complex eigenspaces of \( K_1,\ldots,K_s \) and so we may
  restrict to such an eigenspace, i.e.\ we may assume that
  \( K_i=\lambda_i \id_{\mfa_J} \) for some \( \lambda_i\in \bC \),
  \( i=1,\ldots,s \).
  Decompose now \( K_i=H_i+S_i \) into its Hermitian part \( H_i \)
  and skew-Hermitian part \( S_i \) for \( i=1,\ldots,m \).
  We may find a unitary basis \( (Y_1,\ldots,Y_k) \) of
  \( (\mfa_J,g,J) \), \( k:=\dim_{\bC}(\mfa_J) \) such that
  \( H_{s+1}=\diag(\mu_1,\ldots,\mu_k) \) for certain
  \( \mu_1,\ldots,\mu_k\in \bR \) with respect to that basis.
  By~\eqref{eq:F2ndordergsymmetric} in Lemma~\ref{le:conditionsonFX},
  the Hermitian part of of
  \( K_{s+1}^* K_{s+1}+K_{s+1} K_{s+1}+\sum_{i=1}^{s+1} a_i K_{i} \)
  is zero, where \( a_1,\ldots,a_{s+1}\in \bR \) are such that
  \( f(X_{s+1},X_{s+1})=\sum_{i=1}^{s+1} a_i X_i \).
  This Hermitian part is given by
  \( 2H_{s+1}^2 + [H_{s+1},S_{s+1}] + \sum_{i=1}^s a_1 \Re(\lambda_i)
  \id_{\mfa_{J}} +a_{s+1} H_{s+1} \).
  Since all terms are diagonal in the unitary basis
  \( (Y_1,\ldots,Y_k) \), apart from \( [H_{s+1},S_{s+1}] \), which is
  off-diagonal, we get \( [H_{s+1},S_{s+1}]=0 \).
  But \( S_{s+1} \) is complex diagonalizable and so \( H_{s+1} \) and
  \( S_{s+1} \) are simultaneously complex diagonalizable.
  Hence, \( K_{s+1}=H_{s+1}+S_{s+1} \) is complex diagonalizable as
  well, which shows the statement for \( s+1 \) and so by induction
  also for \( s=m \).
\end{proof}

It is now tempting to assume that certain parts of these
decompositions vanish and this is what we will now do.

\section{Two-step SKT shears with totally real complement}
\label{sec:UJ=0}

In this case, \( U_{J} = 0 \), \( \omega_0 \) is fully given
by~\eqref{eq:definitionofA} and
\( \omega_1=(\omega_1)_{rr}^J+(\omega_1)_{rr}^r \).
Now by Lemma~\ref{le:Jintegrable}~\ref{item:o1-rrr}
and~\ref{item:o1-rrr}, we have \( (\omega_1)_{rr}^r=0 \) and
\( (\omega_1)_{rr}^J=-2 J \circ \mathcal{A}(h) \).
So here everything is already determined by \( A \) as
in~\eqref{eq:definitionofA}.
If we decompose \( h=h_1+h_2 \) with
\( h_1\in S^2 \mfa_r^*\otimes \mfa_J \),
\( h_2\in \Lambda^2 \mfa_r^*\otimes \mfa_J \), then
\( J^*(\omega_1)_{rr}^J=-2 i h_2 \).

Next, looking at the results in Lemma~\ref{le:Jintegrable} and in
Proposition~\ref{pro:formofKX} and using a complex basis
\( Y_1,\ldots,Y_m \) of \( \mfa_J \) as in
Proposition~\ref{pro:formofKX}, \eqref{eq:AXcommutes} may be rewritten
as \( (f(\any,\any),\any)\in S^3 \mfa_r^*\otimes \mfa_r \) and
\begin{equation*}
  \alpha^i(X_1)h^i(X_2,X_3)-\alpha^i(X_2)h^i(X_1,X_3)=h^i(X_2,f(X_1,X_3))-h^i(X_1,f(X_2,X_3))
\end{equation*}
for all \( X_1,X_2,X_3\in \mfa_r \), where
\( h=\sum_{i=1}^r h^i Y_i \) with
\( h^1,\ldots,h^m\in \mfa_r^*\otimes\mfa_r^*\otimes \bC \).
Imposing these conditions, \eqref{eq:Abeliansheardata} is valid (i.e.\
our shear defines a Lie algebra) if and only if
\( \sum_{j=1}^3 \omega(JX_j,\omega(JX_{j+1},JX_{j+2}))=0 \) for all
\( X_1,X_2,X_3\in \mfa_r \), which is easily seen to be equivalent to
\begin{equation*}
  \alpha_i\wedge h_2^i=0
\end{equation*}
for all \( i=1,\ldots,m \), where we have written
\( h_2=\sum_{i=1}^m h_2^i Y_i \) with
\( h_2^i\in \Lambda^2 \mfa_r^*\otimes \bC \), \( i=1,\ldots,m \).
Note that the latter condition is equivalent to \( \alpha_i=0 \) or
\( h_2^i=\alpha_i\wedge \beta_i \) for some
\( \beta_i\in \mfa_r^*\otimes \bC \).
Summarizing, we need to satisfy the following equations for all
\( i=1,\ldots,m \) and all \( X_1,X_2,X_3\in \mfa_r \):
\begin{equation}
  \label{eq:cubicequations}
  \begin{split}
    \alpha_i(X_1)h^i(X_2,X_3)-\alpha_i(X_2)h^i(X_1,X_2)&=h^i(X_2,f(X_1,X_3))-h^i(X_1,f(X_2,X_3)),\\
    \alpha_i\wedge h_2^i&=0
  \end{split}
\end{equation}

Summarizing, all two-step solvable Lie algebras which are shears of
the Abelian Lie algebra with \( U_J=0 \), i.e.\ which fulfil
\( \mfg=\mfg'+J \mfg' \) if we assume that \( \omega \) is surjective,
are of the following form:

\begin{definition}
  \label{eq:defmfgfhalpha}
  Let \( \mfa_J \) be a \( m \)-dimensional complex vector space with
  basis \( Y_1,\ldots,Y_m \), \( \mfa_r \) be a \( k \)-dimensional
  real vector space.
  Moreover, set \( U_r:=\mfa_r \) and let
  \( I_0\colon \mfa_r\rightarrow U_r \) be a linear isomorphism.
  Next, let \( f\in S^2 \mfa_r^*\otimes \mfa_r \),
  \( h_1\in S^2 \mfa_r^*\otimes \mfa_J \),
  \( h_2\in \Lambda^2 \mfa_r^*\otimes \mfa_J \) and
  \( \alpha_1,\ldots,\alpha_m\in \mfa_r^*\otimes \bC \) be such that
  \( f(f(\any,\any),\any)\in S^3 \mfa_r^*\otimes \mfa_r \) and let
  \( g_0\in S^2 \mfa_r^* \) and \( \tau_0\in \Lambda^2 \mfa_r^* \) be
  given.
  Set \( h:=h_1+h_2 \) and define \( h^i \) and \( h_2^i \) for
  \( i=1,\ldots,m \) as above.

  Then put
  \begin{equation}
    \label{eq:defVSUJ0}
    \mfg:=\mfa_J\oplus \mfa_r\oplus U_r
  \end{equation}
  as vector spaces and endow it with the anti-symmetric bilinear map
  \( [\any,\any] \) uniquely defined by
  \begin{equation}
    \label{def:LieBracketUJ0}
    \begin{split}
      [Y_i,Y_j]&=0,\quad [X,Y_i]=0,\quad [I_0 X,Y_i]=\alpha_i(X)\cdot Y_i,\\
      [I_0X,W]&=f(X,W)+h(X,W),\quad [I_0 X, I_0 W]=-2 i h_2(X,W)
    \end{split}
  \end{equation}
  for \( i,j\in \{1,\ldots,m\} \), \( X,W\in \mfa_r \), extended
  complex linearly if one of the arguments is in~\( \mfa_J \).
  Next, define an almost complex structure \( I \) on \( \mfg \) by
  \begin{equation}
    \label{eq:defJUJ0}
    I(Y_j+X+I_0Z):=i Y_j-Z+I_0 X
  \end{equation}
  for all \( j\in \{1,\ldots,m\} \) and all \( X,Z\in \mfa_r \) and a
  Euclidean metric \( \tilde{g} \) on \( \mfg \) such that
  \( Y_1,\ldots,Y_m,i Y_1,\ldots,i Y_m \) is a
  \( \tilde{g} \)-orthonormal basis of (the realification of)
  \( \mfa_J \), \( \mfa_r\oplus U_r \) is \( \tilde{g} \)-orthogonal
  to \( \mfa_J \) and
  \begin{equation}
    \label{eq:defgUJ0}
    \tilde{g}(X,W)=\tilde{g}(JX,JW)=g_0(X,W),\
    \tilde{g}(JX,W)=-\tilde{g}(X,JW)=\tau_0(X,W)
  \end{equation}
  for all \( X,W\in \mfa_r \).

  We write \( \mfg_{f,h,\alpha} \) for the triple
  \( (\mfg,\tilde{g},I) \) or only for \( \mfg \), where
  \( \alpha:=(\alpha_1,\ldots,\alpha_m) \) and we suppress the
  dependence on the other tensors.
\end{definition}
The discussions above and in the last subsection give us:

\begin{proposition}
  \label{pro:SKTUJ0}
  Let \( \mfg_{f,h,\alpha} \) be as in
  Definition~\ref{def:LieBracketUJ0}.
  If the defining data satisfies~\eqref{eq:cubicequations}, then
  \( \mfg_{f,h,\alpha} \) is an almost Hermitian Lie algebra with
  integrable almost complex structure.
  Moreover, any two-step solvable SKT Lie algebra \( (\mfg,g,J) \)
  with \( \mfg'+J\mfg'=\mfg \) is isomorphic to some
  \( \mfg_{f,h,\alpha} \).
\end{proposition}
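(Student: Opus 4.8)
The plan is to prove the two assertions of Proposition~\ref{pro:SKTUJ0} separately, treating the forward direction (the constructed data gives an integrable almost Hermitian structure) and then the converse (every two-step solvable SKT Lie algebra with \( \mfg'+J\mfg'=\mfg \) arises this way).

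For the forward direction, I would first check that \( [\any,\any] \) as defined by~\eqref{def:LieBracketUJ0} actually satisfies the Jacobi identity, so that \( \mfg_{f,h,\alpha} \) is a Lie algebra. The point is that \( \mfg_{f,h,\alpha} \) is by construction the shear of the Abelian algebra \( \bR^{2n} = \mfa_J \oplus \mfa_r \oplus U_r \) with shearing algebra \( \mfa = \mfa_J \oplus \mfa_r \) and shearing form \( \omega \) whose components are read off from \( A_X \), \( h \), etc.\ via~\eqref{eq:definitionofA}--\eqref{eq:definitionocompsofA} and the discussion preceding the definition; Jacobi is then exactly~\eqref{eq:Abeliansheardata}, which by the analysis in this section (the commuting-endomorphism reformulation on \( \Lambda^2 U_r \wedge \mfa \) together with the \( \alpha_i \wedge h_2^i = 0 \) condition) is equivalent to the first half of~\eqref{eq:cubicequations}. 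Then \( I \) as defined in~\eqref{eq:defJUJ0} is \( \cH \)-related to \( J \), and by Lemma~\ref{le:SKTshears} (specifically~\eqref{eq:AbelianSKTshear1}) integrability of \( I \) is equivalent to the conditions collected in Lemma~\ref{le:Jintegrable}; one checks that under \( U_J = 0 \) these reduce precisely to \( G_X = 0 \), \( [J,K_X]=0 \), \( f \) symmetric, \( (\omega_1)_{rr}^r = 0 \), and \( J^*(\omega_1)_{rr}^J = -2J\circ\cA(h) \), all of which are built into Definition~\ref{eq:defmfgfhalpha} and into the shape~\eqref{eq:formofKX} of \( K_X \) from Proposition~\ref{pro:formofKX}. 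The remaining integrability equations of Lemma~\ref{le:Jintegrable} involve components of \( \omega \) indexed by \( U_J \) and are vacuous here. Finally \( \tilde g \) is \( I \)-Hermitian by~\eqref{eq:defgUJ0} together with the orthonormality stipulations, so \( (\mfg_{f,h,\alpha},\tilde g,I) \) is an almost Hermitian Lie algebra with integrable \( I \).

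For the converse, I would start from a two-step solvable SKT Lie algebra \( (\mfg,g,J) \) with \( \mfg' + J\mfg' = \mfg \). By Lemma~\ref{le:SKTshears} it is obtained as a shear of the flat K\"ahler \( \bR^{2n} \) by two-step SKT shear data \( (\mfa,\omega) \); we may take \( \omega \) surjective onto \( \mfa \) (otherwise replace \( \mfa \) by the image, which only shrinks \( \mfa \) and does not affect the resulting \( \mfh \)). The condition \( \mfg' + J\mfg' = \mfg \) translates, under the identification of the horizontal-lift picture, into \( U_J = 0 \), i.e.\ \( \mfg = \mfa_J \oplus \mfa_r \oplus U_r \) with \( U_r = J\mfa_r \). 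Then \( \omega_0 \) is entirely encoded by \( A_X \) as in~\eqref{eq:definitionofA}, and \( \omega_1 = (\omega_1)_{rr}^J + (\omega_1)_{rr}^r \) with \( (\omega_1)_{rr}^r = 0 \) and \( J^*(\omega_1)_{rr}^J = -2J\circ\cA(h) \) by Lemma~\ref{le:Jintegrable}; writing \( h = h_1 + h_2 \) gives \( J^*(\omega_1)_{rr}^J = -2i h_2 \). Applying Proposition~\ref{pro:formofKX} produces a unitary basis \( Y_1,\dots,Y_m \) of \( \mfa_J \) and \( \alpha_1,\dots,\alpha_m \in \mfa_r^*\otimes\bC \) with \( K_X Y_i = \alpha_i(X) Y_i \), so \( [I_0 X, Y_i] = \alpha_i(X) Y_i \) where \( I_0 := J|_{\mfa_r}\colon \mfa_r \to U_r \). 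Reading off \( F_X, H_X \) as the maps \( f, h \), and decomposing the Lie bracket of \( \mfp/\mathring\xi\mfa_G \) in these coordinates, one checks the brackets match~\eqref{def:LieBracketUJ0} exactly, and that \( J \), \( g \) descend to \( I \), \( \tilde g \) of the stated form (with \( g_0, \tau_0 \) the real and imaginary parts of \( g + i\sigma \) restricted to \( \mfa_r \)). The SKT and integrability conditions then give back~\eqref{eq:cubicequations}, so \( (\mfg,g,J) \cong \mfg_{f,h,\alpha} \) for this data.

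The main obstacle I expect is the bookkeeping in the converse: carefully tracking how the abstract shear quotient \( \mfh = \mfp/\mathring\xi(\mfa_G) \) with its horizontally-lifted \( J \), \( g \) becomes, after choosing the complement \( U = J\mfa_r \oplus U_J \) and (here) setting \( U_J = 0 \), exactly the vector space \( \mfa_J \oplus \mfa_r \oplus U_r \) of~\eqref{eq:defVSUJ0} with the brackets~\eqref{def:LieBracketUJ0}. This is a matter of unwinding the identification \( \mfh \cong \mfg = \bR^{2n} \) as vector spaces used implicitly throughout the section, and checking that the \( \cH \)-relation of tensors does not introduce correction terms beyond those already appearing in \( \omega \); the integrability and SKT content is then entirely delegated to Lemma~\ref{le:Jintegrable}, Lemma~\ref{le:SKTshears}, and Proposition~\ref{pro:formofKX}, and the Jacobi identity to~\eqref{eq:Abeliansheardata}. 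None of the remaining computations is deep, but the translation between the two descriptions is where care is needed.
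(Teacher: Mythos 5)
Your proposal is correct and follows essentially the same route as the paper, which gives no separate proof but deduces the proposition directly from the preceding discussion: Jacobi via~\eqref{eq:Abeliansheardata} reformulated (through the commuting endomorphisms \( A_X \) and the \( \Lambda^3 U_r \)-component) as~\eqref{eq:cubicequations}, integrability via Lemma~\ref{le:Jintegrable} with \( U_J=0 \), and the converse via Lemma~\ref{le:SKTshears} together with Proposition~\ref{pro:formofKX} and reading off \( f,h,\alpha \) from the shear data. The only quibble is your phrase that Jacobi is ``equivalent to the first half of~\eqref{eq:cubicequations}''; as your own parenthetical indicates, it is equivalent to both equations of~\eqref{eq:cubicequations} (given the built-in condition \( f(f(\any,\any),\any)\in S^3\mfa_r^*\otimes\mfa_r \)), but this is a wording slip rather than a gap.
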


Actually, we can say a tiny bit more which will help us classifying
all two-step solvable SKT Lie algebras \( (\mfg,g,J) \) with
\( \mfg'+J\mfg'=\mfg \) and \( \dim(\derg_r)\leq 2 \).

For this, recall that the proof of Lemma~\ref{le:fspecialbasis} yields
some \( v_1\in V\setminus \{0\} \) such that either \( f(v_1,v_1)=0 \)
or \( f(v_1,v_1)=v_1 \).
In fact, the latter case is always true if we additionally have that
\( \im(f)=V \) and \( f(f(\any,\any),\any)\in S^3 V^*\otimes V \).
Even more, \( f(v_1,\any)=\id_V \):

\begin{proposition}
  \label{pro:fidentityendomorphism}
  Let \( V \) be an \( n \)-dimensional real vector space and
  \( f\in S^2 V^*\otimes V \) be such that
  \( f(f(\any,\any),\any)\in S^3 V^*\otimes V \) and \( \im(f) = V \).
  Then there exists a \( v \in V \) with \( f(v,\any) = \id_{V} \).
\end{proposition}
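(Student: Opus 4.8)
The plan is to exploit the associativity-type constraint $f(f(\any,\any),\any)\in S^{3}V^{*}\otimes V$ together with surjectivity to manufacture an element acting as the identity. First I would restate the symmetry condition in operator form: writing $F_{X}:=f(X,\any)\in\End(V)$, the condition that $f(f(\any,\any),\any)$ be totally symmetric says precisely that $F_{F_{X}Y}=F_{F_{Y}X}$ for all $X,Y$, i.e.\ $F_{\any}$ intertwines $f$ in a symmetric way; combined with the symmetry of $f$ in its two lower arguments this is a strong "power-associativity" statement about the commutative (nonassociative) algebra $(V,f)$. In particular, taking $Y=X$ gives $F_{F_{X}X}=F_{X}^{2}$, so the operator $F_{X}$ restricted to the subalgebra it generates behaves like multiplication in a commutative associative algebra.

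Next I would run the argument from the proof of Lemma~\ref{le:fspecialbasis}: pick a unit vector $v_{1}$ with $f(v_{1},v_{1})\in\spa{v_{1}}$. The surjectivity hypothesis $\im(f)=V$ rules out $f(v_{1},v_{1})=0$: if some nonzero $v$ had $f(v,v)=0$, I would want to derive that $v$ lies in a proper $f$-invariant "annihilator-type" subspace contradicting $\im f = V$ — this is where the symmetry of $f(f(\any,\any),\any)$ does the work, since it forces $f(v,f(v,w))=f(f(v,v),w)=0$ for all $w$, and then iterating shows $f(v,V)$ is a proper subspace stable under all $F_{X}$, and one pushes this to contradict surjectivity by a dimension count on the image. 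Hence $f(v_{1},v_{1})=\lambda v_{1}$ with $\lambda\neq 0$, and rescaling we may assume $f(v,v)=v$ for a nonzero $v$, i.e.\ $F_{v}v=v$, so $1$ is an eigenvalue of $F_{v}$.

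The crucial remaining point is to upgrade the eigenvalue $1$ of $F_{v}$ to $F_{v}=\id_{V}$. Here I would argue that $F_{v}$ is diagonalisable with all eigenvalues in $\{0,1\}$ and then that the $0$-eigenspace is trivial. Diagonalisability and the eigenvalue restriction come from the power-associativity identity applied with $X=v$: since $F_{F_{v}v}=F_{v}^{2}$ and $F_{v}v=v$ gives $F_{v}^{2}=F_{v}$ on the relevant invariant pieces; more precisely, using $F_{F_{v}w}=F_{w}F_{v}$ (symmetry of $f(f(\any,\any),\any)$ with one slot equal to $v$) one shows $F_{v}$ commutes with every $F_{w}$ and that $F_{v}$ is idempotent, $F_{v}^{2}=F_{v}$, hence a projection onto its $1$-eigenspace $W:=\ker(F_{v}-\id)$. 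It then remains to show $W=V$. Suppose not; then the complementary $F_{v}$-invariant subspace $W_{0}=\ker F_{v}$ is nonzero, and since $F_{v}$ commutes with all $F_{w}$, both $W$ and $W_{0}$ are subalgebras and $f(W,W_{0})\subseteq W_{0}$, while $f(W_{0},W_{0})\subseteq W_{0}$ as well (because $f(x,y)=F_{v}f(x,y)$ would force it into $W$, but symmetry $f(f(x,y),v)=f(f(x,v),y)=0$ for $x\in W_{0}$ shows $f(x,y)\in\ker F_{v}$). Thus $\im(f)\subseteq W+W_{0}$ with the $W$-component of $\im f$ coming only from $f(W,W)$; a dimension count using surjectivity then forces $W_{0}=0$. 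I expect \textbf{this last step — showing the $0$-eigenspace of $F_{v}$ vanishes} — to be the main obstacle: it is where surjectivity of $f$ is genuinely essential (without it the claim is false, e.g.\ $f\equiv 0$), and making the ideal/dimension bookkeeping rigorous requires carefully tracking which sub-products land in $W$ versus $W_{0}$ using the full strength of the symmetry of $f(f(\any,\any),\any)$, not just its consequence that the $F_{X}$ commute.
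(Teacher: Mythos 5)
There are two genuine gaps, and both steps as written are false, not just unproven. First, surjectivity does \emph{not} rule out a nonzero \( v \) with \( f(v,v)=0 \). Your hypotheses say exactly that \( (V,f) \) is a finite-dimensional commutative associative algebra with \( V\cdot V=V \) (total symmetry of \( f(f(\any,\any),\any) \) together with symmetry of \( f \) gives \( f(u,f(v,w))=f(f(u,v),w) \), i.e.\ \( f_u\circ f_v=f_{f(u,v)} \)). Now take \( V=\bR^2 \) with \( f(e_1,e_1)=e_1 \), \( f(e_1,e_2)=e_2 \), \( f(e_2,e_2)=0 \) (the algebra \( \bR[x]/(x^2) \)): all hypotheses hold, \( \im(f)=V \), yet \( f(e_2,e_2)=0 \) with \( e_2\neq 0 \). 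In particular the vector \( v_1 \) produced by the argument of Lemma~\ref{le:fspecialbasis} may legitimately be this nilpotent \( e_2 \); then \( f(v_1,v_1)\in\spa{v_1} \) is the zero vector, your rescaling to \( f(v,v)=v \) never starts, and the proposed contradiction cannot exist (here \( f(e_2,V)=\spa{e_2} \) is indeed a proper invariant subspace, but this is perfectly compatible with \( \im(f)=V \), so the ``dimension count'' has nothing to bite on).

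Second, even granting \( f(v,v)=v \), so that \( F_v \) is idempotent with \( V=W\oplus W_0 \), \( W=\ker(F_v-\id_V) \), \( W_0=\ker F_v \), surjectivity does not force \( W_0=0 \): take \( V=\bR\times\bR \) with componentwise multiplication and \( v=(1,0) \); then \( \im(f)=V \), \( W_0=\{0\}\times\bR\neq 0 \), and \( F_v \) is a proper projection --- the unit is \( (1,1) \), not \( v \). What the symmetry really gives is \( f(W,W_0)=0 \), \( f(W,W)\subseteq W \), \( f(W_0,W_0)\subseteq W_0 \), whence surjectivity yields that \( f(W_0,W_0) \) spans \( W_0 \); the honest continuation is therefore an induction on dimension (find a unit of \( W_0 \) and add it to \( v \)), which forces you to re-solve the original problem on \( W_0 \) --- and the first gap shows you have no mechanism to start that recursion when the chosen vector is nilpotent. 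The paper's proof sidesteps both issues: it complexifies, splits \( \Vc \) into common generalized eigenspaces of the commuting operators \( f_v \), uses surjectivity to produce a single \( a=f_w \) with all eigenvalues non-zero, and then Cayley--Hamilton expresses \( \id_V=-p_a(a)/c \) as a constant-term-free polynomial in \( a \), hence as some \( f_{w'} \) because \( f_u\circ f_v=f_{f(u,v)} \); taking real parts finishes. (A small further point: your restatement \( F_{F_XY}=F_{F_YX} \) is vacuous, as it already follows from the symmetry of \( f \); the operative identity is \( F_{f(X,Y)}=F_XF_Y \), which you do use correctly later.)
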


\begin{proof}
  The symmetry assumptions on \( f \) imply that the endomorphisms
  \( f_{v} = f(v, \any) \), \( v \in V \), commute and form a closed
  algebra: \( f_{u} \circ f_{v} = f_{f(u,v)} \).
  It is enough to show that \( \id_{V} \) is in the algebra \( A \)
  which is the complex linear span of \( \{f_{v}:v\in V\} \), since
  \( \id_{V} = \sum_{i=1}^{k} a_{i}f_{v_{i}} \) with
  \( a_{i} \in \bC \) implies
  \( \id_{V} = \sum_{i=1}^{k} \Re(a_{i}) f_{v_{i}} \), as
  \( f_{v_{i}} \) are real.
  Extending \( f \) complex linearly, it is enough to prove the
  assertion on the complexification~\( \Vc \).

  Now the operators \( f_{v} \), \( v \in \Vc \), commute, so
  \( \Vc \) is a direct sum of subspaces~\( W \), on which each
  \( f_{v} \) has only a single eigenvalue.
  The assumption that \( f \) is surjective on the real space~\( V \),
  implies that it is also surjective on \( \Vc \).
  So for a given summand \( W \) there is \( u(w) \in \Vc \) for which
  \( f_{u(w)} \) has a non-zero eigenvalue~\( \lambda \).
  It follows there is a linear combination~\( a \) of these
  \( f_{u(w)} \) that has all eigenvalues non-zero.
  Now consider the characteristic polynomial
  \( \Delta_{a}(t) = \det(a - t\,\id_{V}) = c + p_{a}(t) \), where
  \( c = \det(a) \ne 0 \) and \( p_{a}(t) \) is a polynomial in
  \( t \) without constant term.
  By the Cayley-Hamilton theorem,
  \( 0 = \Delta_{a}(a) = c\,\id_{V} + p_{a}(a) \), so
  \( \id_{V} = -p_{a}(a)/c \) which is an element of the
  algebra~\( A \) and hence has the form \( f_{w} \) for some
  \( w \in \Vc \).
  By the remarks above we have \( \id_{V} = f_{v} \) for
  \( v = \Re(w) \), as required.
\end{proof}
Proposition~\ref{pro:fidentityendomorphism} yields:

\begin{corollary}
  \label{co:derg+Jderg=g}
  Let \( (\mfg,g,J) \) be a two-step solvable Lie algebra with
  \( \derg+J\derg=\mfg \).
  Then there exist some \( X\in \derg_r \) and
  \( h_1,h_2\in \End(\derg_r,\derg_J) \) such that for all
  \( W\in \derg_r \) we have
  \begin{equation*} [JX,W]=W+h_1(W),\qquad [JW,X]=W+h_2(W).
  \end{equation*}
\end{corollary}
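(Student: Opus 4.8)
The plan is to deduce Corollary~\ref{co:derg+Jderg=g} directly from Proposition~\ref{pro:fidentityendomorphism} applied to the bilinear map \( f \) that encodes the Lie bracket of the two-step algebra. First I would invoke Proposition~\ref{pro:SKTUJ0} (or rather just the structural discussion preceding it, which only needs integrability of \( J \), not the SKT condition): since \( \mfg = \derg + J\derg \) we are in the situation \( U_J = 0 \), and we may identify \( \derg_r = \mfa_r \), \( U_r = J\mfa_r \), and the bracket is given by~\eqref{def:LieBracketUJ0}, so that in particular for \( X, W \in \derg_r \) we have \( [JX,W] = f(X,W) + h(X,W) \) with \( f \in S^2\mfa_r^*\otimes\mfa_r \) symmetric (by Lemma~\ref{le:Jintegrable}\ref{item:f-sym}) and \( h\colon \mfa_r\otimes\mfa_r\to\mfa_J = \derg_J \). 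The cubic symmetry \( f(f(\any,\any),\any)\in S^3\mfa_r^*\otimes\mfa_r \) holds because \( (\mfa,\omega) \) is genuine shear data (equation~\eqref{eq:Abeliansheardata}), via the reformulation of the \( \Lambda^2 U_r\wedge\mfa \)-part discussed after Lemma~\ref{le:Jintegrable}.

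Next I would check that \( \im(f) = \derg_r \). This should follow from the assumption \( \derg + J\derg = \mfg \): the derived algebra \( \derg \) is spanned by all brackets, and in the present decomposition \( \derg_r \) is precisely the \( \mfa_r \)-component of \( \mfg' \); since the only brackets with a component in \( \mfa_r = U_r^\flat \)-direction are the \( [I_0X, W] = f(X,W) + h(X,W) \), surjectivity of \( \omega \) onto \( \mfa_r \) forces \( f \) together with... — actually more carefully, one argues that if \( \im(f) \subsetneq \derg_r \) then a proper \( J \)-invariant subspace of \( \mfg \) would contain \( \derg \), contradicting \( \derg + J\derg = \mfg \) having \( \derg_r \) of the stated dimension. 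I would spell out that \( \derg_r = \spa{f(X,W), h(X,W) : X,W} \cap \mfa_r \) reduces to \( \im(f) \) after using that \( h \) takes values in \( \mfa_J \), hence \( \im(f) = \derg_r \).

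Having verified the hypotheses of Proposition~\ref{pro:fidentityendomorphism} for \( V = \derg_r \) and this \( f \), I obtain some \( v \in \derg_r \) with \( f(v,\any) = \id_{\derg_r} \). Set \( X := v \). Then for all \( W \in \derg_r \), using symmetry of \( f \),
\begin{equation*}
  [JX, W] = f(X,W) + h(X,W) = W + h(X,W), \qquad
  [JW, X] = f(W,X) + h(W,X) = W + h(W,X),
\end{equation*}
so putting \( h_1(W) := h(X,W) \) and \( h_2(W) := h(W,X) \), both in \( \Hom(\derg_r,\derg_J) \), gives exactly the claimed formulas. The one genuine obstacle is the surjectivity claim \( \im(f) = \derg_r \): one must be a little careful that passing to a shear presentation with \( \omega \) \emph{surjective} (so that \( \mfa \) really equals the shearing image and no spurious directions appear) is legitimate, and that under that normalization \( \mfa_r = \derg_r \) with \( \im(f) = \mfa_r \); the rest is a direct substitution. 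I would therefore open the proof by reducing to the case \( \omega \) surjective — which is harmless since we may always replace \( \mfa_P \) by \( \im\omega + [\text{needed}] \) — citing the analogous normalization already used implicitly before Definition~\ref{eq:defmfgfhalpha}.
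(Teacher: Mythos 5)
Your proposal is correct and takes essentially the same route as the paper: there the corollary is obtained directly from Proposition~\ref{pro:fidentityendomorphism} applied to the symmetric map \( f \) in the \( U_J=0 \) shear description (Definition~\ref{eq:defmfgfhalpha}), with \( \im(f)=\derg_r \) coming from the normalisation \( \im(\omega)=\mfa \). Your write-up merely makes explicit the steps the paper leaves implicit (symmetry of \( f \) from Lemma~\ref{le:Jintegrable}, the cubic symmetry from~\eqref{eq:Abeliansheardata}, and the surjectivity check), and your observation that only integrability, not the full SKT condition, is needed is consistent with the paper.
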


Note that Proposition~\ref{pro:fidentityendomorphism} does not give us
any information on the endomorphism \( f(w,\any) \) for
\( w\notin \spa{v} \) besides \( f(w,v)=w \).
This is one of the reasons why we are restricting in the following to
the case \( \dim(\derg_r)\leq 2 \).
Another reason is that surely~\eqref{eq:cubicequations} gets easier in
this case, in particular, the second equation
in~\eqref{eq:cubicequations} is void in this case.

\subsection{Almost Abelian algebras}

An \emph{almost Abelian} Lie algebra is one with a codimension one
Abelian ideal and the SKT structures on such Lie algebras of dimension
\( 2n \) are precisely those which may be obtained by a shear from the
flat K\"ahler SKT Lie algebra \( (\bR^{2n},g_0,J_0,\sigma_0) \) with
\( \dim(\mfa)=2n-1 \) or, equivalently, with \( U_J=0 \) and
\( \dim(\mfa_r)=1 \).
Note that a characterization of those almost Abelian Lie algebras
which are SKT has already been given in~\cite{AL}, cf.\ also~\cite{FP}
for a more detailed list in the six-dimensional case.

We prove this characterization by a different approach and even solve
the remaining equations in~\cite{AL} completely.
Note further that, in contrast to all other cases in this paper, we
will \emph{not} assume here that \( \im(\omega)=\mfa \), i.e.\ we also
do not necessarily have \( \mfg'+J\mfg'=\mfg \).
Still, one easily sees that we may use all the results from above as
so far we have never used that \( \im(\omega)=\mfa \).

Choose \( X\in \mfa_r \) of norm one.
Then \( f(X,X)=a X \) for some \( a\in \mfa_r \) and
\( h(X,X)=\sum_{i=1}^{n-1} w_i Y_i \) for certain
\( w_1,\ldots,w_{n-1}\in \bC \).
Moreover, \( \alpha_i(X)=z_i \) for \( z_i\in \bC \),
\( i=1,\ldots,n-1 \).
Note that here also the first equation in~\eqref{eq:cubicequations} is
void in total and so we only still need to
impose~\eqref{eq:AbelianSKTshear2}, i.e.\ \( \nu(\omega)=0 \).
But by Lemma~\ref{le:partsofnuequalzero} and for dimensional reasons,
\( \nu(\omega) \) only has a component in
\( \Lambda^2 \mfa_J^*\wedge \mfa_r^*\wedge U_r^* \).
The corresponding equation is given in~\eqref{eq:F2ndordergsymmetric}
and is equivalent to
\begin{equation}
  \label{eq:z-a}
  \overline{z}_i z_i+z_i^2+a z_i=-\overline{\overline{z}_i
  z_i+z_i^2+a z_i}
\end{equation}
This may be written as \( \Re(z_i)(2\Re(z_i)+a)=0 \).
So \( \Re(z_i)\in \{0,-a/2\} \) for any \( i=1,\ldots,n-1 \).
Thus, we have obtained:

\begin{theorem}
  \label{th:AlmostAbelianSKTLAs}
  Let \( (\mfg,g,J) \) be a \( 2n \)-dimensional almost Abelian SKT
  Lie algebra.
  Then \( (\mfg,g,J) \) is isomorphic to \( \mfg_{f,h,\alpha} \) with
  \( \mfa_r=\spa{X} \) for a unit vector \( X \),
  \( h(X,X)=\sum_{i=1}^{n-1} w_i Y_i \), \( f(X,X)=a X \) and
  \( \alpha(X)=\sum_{i=1}^{n-1} z_i Y_i \) for certain \( a\in \bR \),
  \( z=(z_1,\ldots,z_{n-1}) \),
  \( w=(w_1,\ldots,w_{n-1})\in \bC^{n-1} \) with
  \( \Re(z_i)\in \{0,-a/2\} \) for \( i=1,\ldots, n-1 \).
  Conversely, all such \( \mfg_{f,h,\alpha} \) are almost Abelian SKT
  Lie algebras.
\end{theorem}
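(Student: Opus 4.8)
The plan is to run the general machinery of Section~\ref{sec:UJ=0} in the special case $\dim(\mfa_r)=1$, $U_J=0$, and observe which equations survive. First I would recall from the discussion opening the almost Abelian subsection that, by Proposition~\ref{pro:SKTUJ0}, every two-step solvable SKT algebra of this type is isomorphic to some $\mfg_{f,h,\alpha}$; the point is simply to identify the remaining constraints on the data when $\mfa_r=\spa X$ is one-dimensional. With $X$ a unit vector, all of the tensors are determined by the single values $f(X,X)=aX$ with $a\in\bR$ (since $f\in S^2\mfa_r^*\otimes\mfa_r\cong\bR$), $h(X,X)=\sum_i w_iY_i$, and $\alpha_i(X)=z_i$, so the data is exactly $(a,z,w)$ as claimed. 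The condition $f(f(\any,\any),\any)\in S^3\mfa_r^*\otimes\mfa_r$ is automatic in dimension one, and the first equation of~\eqref{eq:cubicequations} (antisymmetric in $X_1,X_2$) and the second (involving $h_2$, an element of $\Lambda^2\mfa_r^*\otimes\bC=0$) are both void. So the integrability half and the Jacobi/\eqref{eq:Abeliansheardata} half impose nothing, and the only surviving SKT condition is~\eqref{eq:AbelianSKTshear2}, i.e. $\nu(\omega)=0$.

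Next I would pin down $\nu(\omega)$ using Lemma~\ref{le:partsofnuequalzero}: since $U_J=0$ and $\dim\mfa_r=1$, the only potentially nonzero component of $\nu(\omega)$ lives in $\Lambda^2\mfa_J^*\wedge\mfa_r^*\wedge U_r^*$, and that is precisely the component governed by Lemma~\ref{le:conditionsonFX}, where the relevant requirement is that~\eqref{eq:F2ndordergsymmetric}, namely $K_X^TK_W+K_XK_W+K_{f(X,W)}$, be $g$-antisymmetric. Taking $X=W$ here and using $f(X,X)=aX$, this says $K_X^TK_X+K_X^2+aK_X$ is $g$-antisymmetric on $\mfa_J$. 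Now apply Proposition~\ref{pro:formofKX}: in the unitary basis $(Y_1,\dots,Y_{n-1})$ we have $K_X=\diag(z_1,\dots,z_{n-1})$ as a complex (hence $g$-orthonormal-basis) matrix, so the condition decouples coordinate-wise into $\overline{z}_iz_i+z_i^2+az_i=-\overline{(\overline{z}_iz_i+z_i^2+az_i)}$, which is~\eqref{eq:z-a}. Writing $z_i=\Re(z_i)+i\Im(z_i)$, the real part of $\overline z_iz_i+z_i^2+az_i$ is $|z_i|^2+\Re(z_i^2)+a\Re(z_i)=2\Re(z_i)^2+a\Re(z_i)=\Re(z_i)(2\Re(z_i)+a)$, and the equation $\Re(w)=-\Re(w)$ forces this to vanish. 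Hence $\Re(z_i)\in\{0,-a/2\}$ for each $i$.

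For the converse, I would simply note that each step above is an equivalence: given any $(a,z,w)$ with $\Re(z_i)\in\{0,-a/2\}$, the tensors $f,h,\alpha$ are well-defined, the vacuous conditions~\eqref{eq:cubicequations} and $f(f(\any,\any),\any)\in S^3$ hold trivially, so Proposition~\ref{pro:SKTUJ0} gives an almost Hermitian Lie algebra with integrable complex structure; and~\eqref{eq:z-a} holding for all $i$ is exactly~\eqref{eq:F2ndordergsymmetric} being $g$-antisymmetric, which by Lemma~\ref{le:partsofnuequalzero} and the dimension count is exactly $\nu(\omega)=0$, i.e.~\eqref{eq:AbelianSKTshear2}; thus $\mfg_{f,h,\alpha}$ is SKT. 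That it is almost Abelian is immediate from~\eqref{def:LieBracketUJ0}: $\mfa_J\oplus\mfa_r$ is a codimension-one subspace and the brackets show it is an abelian ideal (the only nonzero brackets have $I_0X\in U_r$ in one slot). I do not expect a serious obstacle here; the only point demanding care is the bookkeeping that identifies the $\Lambda^2\mfa_J^*\wedge\mfa_r^*\wedge U_r^*$-component of $\nu$ with~\eqref{eq:F2ndordergsymmetric} and the verification that no other component of $\nu$ can be nonzero in this low-dimensional, $U_J=0$ setting — but that is exactly what Lemma~\ref{le:partsofnuequalzero} and Lemma~\ref{le:conditionsonFX} were set up to deliver.
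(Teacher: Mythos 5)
Your proposal follows essentially the same route as the paper: reduce to the \( U_J=0 \) machinery with \( \dim(\mfa_r)=1 \), note that \eqref{eq:cubicequations} is void, use Lemma~\ref{le:partsofnuequalzero} together with a dimension count to see that \( \nu(\omega) \) can only have a component in \( \Lambda^2\mfa_J^*\wedge\mfa_r^*\wedge U_r^* \), and read off \( \Re(z_i)(2\Re(z_i)+a)=0 \) from the \( g \)-antisymmetry of \eqref{eq:F2ndordergsymmetric}; the converse runs the same equivalences backwards, exactly as in the paper. The one step that needs repair is your appeal to Proposition~\ref{pro:SKTUJ0} for the claim that every almost Abelian SKT algebra is isomorphic to some \( \mfg_{f,h,\alpha} \): that proposition is stated under the hypothesis \( \derg+J\derg=\mfg \), i.e.\ \( \im(\omega)=\mfa \) with \( \omega \) surjective, and this fails for almost Abelian SKT algebras such as \( \aff_{\bR}\oplus\bR^{2n-2} \) or \( \mfh_3\oplus\bR^{2n-3} \), which the theorem must also cover (with degenerate data). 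The paper handles this by realising any almost Abelian SKT algebra as a shear with \( \mfa \) the codimension-one Abelian ideal, \( \dim(\mfa)=2n-1 \), and then remarking explicitly that none of the preceding results — in particular the derivation of the \( \mfg_{f,h,\alpha} \) form and Lemmas~\ref{le:Jintegrable}, \ref{le:conditionsonFX} and Proposition~\ref{pro:formofKX} — ever used \( \im(\omega)=\mfa \); with that one-sentence adjustment in place of the citation of Proposition~\ref{pro:SKTUJ0}, your argument coincides with the paper's proof.
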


\begin{remark}
  \label{re:LiebracketalmostAbelianSKT}
  So any almost Abelian SKT Lie algebra \( (\mfg,g,J) \) with
  codimension one Abelian ideal \( \mfa \) has a real basis
  \( Y_1,\ldots,Y_{n-1}, i Y_1,\ldots,iY_{n-1},X,JX \), where
  \( Y_1,\ldots,Y_{n-1} \) is a complex basis of \( \mfa_J \) and
  \( X \) spans \( \mfa_r \), such that
  \( Y_1,\ldots,Y_{n-1}, i Y_1,\allowbreak\ldots,iY_{n-1},X \) is an
  orthonormal system and there is some \( m\in \{0,\ldots,n-1\} \)
  such the only non-zero Lie brackets (up to anti-symmetry and
  complex-linear extension of the bracket to \( \mfa_J \)) are given
  by
  \begin{equation*}
    [JX,Y_j]=\Bigl(-\frac{a}{2}+i b_j\Bigr)Y_j,\quad
    [JX,Y_k]=i b_k Y_k, \quad [JX,X]=a X+ \sum_{i=1}^m w_i Y_i
  \end{equation*}
  for \( j=1,\ldots,m \), \( k=m+1,\ldots,n-1 \) and certain
  \( a\in \bR \), \( b=(b_1,\ldots,b_n)\in \bR^{n-1} \),
  \( w=(w_1,\ldots,w_{n-1})\in \bC^{n-1} \).
\end{remark}

Since two almost Abelian Lie algebras
\( \bR^{2n-1}\rtimes_{f_1} \bR \) and
\( \bR^{2n-1}\rtimes_{f_2} \bR \) are isomorphic if and only if
\( f_1 \) is conjugated to \( f_2 \) up to a non-zero multiple,
Theorem \ref{th:AlmostAbelianSKTLAs} implies

\begin{corollary}
  \label{co:almostAbelian}
  A \( 2n \)-dimensional almost Abelian Lie algebra
  \( \bR^{2n-1}\rtimes_f \bR \) admits an SKT structure if and only if
  \begin{itemize}
  \item[(i)] either \( f \) is diagonalizable over the complex numbers
    and there is one real eigenvalue \( a\in \bR \) such that the
    other complex eigenvalues come \( n-1 \) pairs
    \( (z_i,\overline{z}_i) \) with
    \( \Re(z_i)\in \{0,-\frac{a}{2}\} \), \( i=1,\ldots,n-1 \),
  \item[(ii)] or the complex Jordan normal form of \( f \) has one
    Jordan block of size \( 2 \) with eigenvalue \( 0 \) and all other
    Jordan block are of size \( 1 \) with eigenvalues
    \( i b_1,\allowbreak-i b_1,\ldots, i b_{n-2},-i b_{n-2},0 \) for
    certain real numbers \( b_1,\ldots,b_{n-2}\in\bR \).
  \end{itemize}
\end{corollary}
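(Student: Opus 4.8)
The plan is to deduce Corollary~\ref{co:almostAbelian} from
Theorem~\ref{th:AlmostAbelianSKTLAs} together with the elementary fact,
recalled just before the statement, that two almost Abelian Lie algebras
\( \bR^{2n-1}\rtimes_{f_1}\bR \) and \( \bR^{2n-1}\rtimes_{f_2}\bR \) are
isomorphic precisely when \( f_1 \) is conjugate to a non-zero scalar
multiple of \( f_2 \). So the whole task is: starting from the very
explicit normal form of the bracket in
Theorem~\ref{th:AlmostAbelianSKTLAs} (equivalently
Remark~\ref{re:LiebracketalmostAbelianSKT}), read off the conjugacy class
of \( f = \ad(JX)|_{\mfa} \), and then check that the conjugacy classes
arising are exactly those described by (i) and (ii).

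First I would fix the data of Theorem~\ref{th:AlmostAbelianSKTLAs}: a unit
vector \( X \) spanning \( \mfa_r \), numbers \( a\in\bR \),
\( z_i,w_i\in\bC \) with \( \Re(z_i)\in\{0,-a/2\} \), and the complex
basis \( Y_1,\dots,Y_{n-1} \) of \( \mfa_J \). The endomorphism to
analyse is \( f := \ad(JX) \) restricted to the codimension-one Abelian
ideal \( \mfa = \mfa_J\oplus\mfa_r \); from the bracket relations
\( [JX,Y_i]=z_i Y_i \) and \( [JX,X]=aX+\sum_i w_i Y_i \) (using
Remark~\ref{re:LiebracketalmostAbelianSKT} to normalise \( h(X,X) \)
against the diagonal \( z_i \)) one writes \( f \) in the real basis
\( Y_1,\dots,Y_{n-1},iY_1,\dots,iY_{n-1},X \). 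On the complex line
\( \spa{Y_i}\cong\bC \) the map acts as multiplication by \( z_i \), so
over \( \bC \) it contributes the eigenvalue pair
\( (z_i,\overline{z_i}) \); on \( \spa{X}\oplus\spa{\,\text{the }Y_i\text{
with }w_i\neq0} \) it acts by the ``companion-type'' block
\( X\mapsto aX+\sum w_iY_i \), \( Y_i\mapsto z_iY_i \). Then I would split
into the two cases dictated by the constraint \( \Re(z_i)\in\{0,-a/2\} \):
if \( a\neq 0 \) the eigenvalues \( z_i \) on the \( X \)-summand (those
with \( w_i\neq0 \)) are distinct from \( a \) unless \( z_i \) lies in a
resonance, and a direct computation of the minimal polynomial shows the
mixed block is in fact diagonalisable, giving case~(i) after possibly
rescaling \( f \) by \( 1/a \) — or, when \( a=0 \), a single
\( w_i\neq0 \) forces exactly one size-2 Jordan block with eigenvalue
\( 0 \) (all \( z_i \) with \( w_i\neq 0 \) then have real part \( 0 \),
and one checks the nilpotent part has rank one), and the remaining
eigenvalues are \( \pm i b_j,0 \), which is case~(ii). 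For the converse
direction one simply exhibits, for any \( f \) of type (i) or (ii),
the data \( (a,z_i,w_i) \) reproducing its conjugacy class and invokes
Theorem~\ref{th:AlmostAbelianSKTLAs} to get an SKT structure.

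The main obstacle I expect is the careful bookkeeping at the boundary
between the diagonalisable part and the \( X \)-block: one must show that
when \( a\neq0 \) no genuine Jordan block of size \( \ge 2 \) can survive
(so case (i) is purely diagonalisable), while when \( a=0 \) exactly one
size-2 nilpotent block appears and nothing larger; this amounts to
computing the rank of \( f - \lambda\,\id \) for \( \lambda \) equal to
\( 0 \), to \( a \), and to each \( z_i \), keeping track of which
\( w_i \) vanish, and to verifying that rescaling by a non-zero real
constant is exactly the freedom needed to pass between the
\( z_i \)-normalisations \( \Re(z_i)=0 \) and \( \Re(z_i)=-a/2 \) in the
statement. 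The rest — reducing to the isomorphism criterion, and the
converse — is routine given Theorem~\ref{th:AlmostAbelianSKTLAs}.
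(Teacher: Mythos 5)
Your proposal is correct and follows essentially the same route as the paper: the paper likewise deduces the corollary directly from Theorem~\ref{th:AlmostAbelianSKTLAs} (via Remark~\ref{re:LiebracketalmostAbelianSKT}) together with the stated isomorphism criterion, by reading off the conjugacy class of \( \ad(JX)|_{\mfa} \) and noting that the coupling terms \( w_i \) can be removed unless \( a=0=z_i \), in which case they produce exactly one size-two Jordan block at eigenvalue \( 0 \). The Jordan-form bookkeeping you outline (including the rank-one observation when several \( w_i\neq 0 \) with \( z_i=0 \), and the scale-invariance of classes (i) and (ii)) is precisely the content the paper leaves implicit.
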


In the six-dimensional case, Corollary~\ref{co:almostAbelian} yields,
in agreement with~\cite{FP}, that the following almost Abelian Lie
algebras admit an SKT-structure, where we refer to
Table~\ref{table_LAs} for the Lie brackets of the mentioned Lie
algebras:

\begin{corollary}
  \label{co:6dalmostAbelianSKTLAs}
  Let \( \mfg \) be a six-dimensional almost Abelian Lie algebra.
  Then \( \mfg \) admits an SKT structure if and only if \( \mfg \) is
  isomorphic to one of the following Lie algebras:
  \begin{equation*}
    \begin{gathered}
      \mfg_{6,1}^{-\tfrac{1}{2},-\tfrac{1}{2},-\tfrac{1}{2},-\tfrac{1}{2}},\
      \mfg_{6,8}^{a,-\tfrac{a}{2},-\tfrac{a}{2},-\tfrac{a}{2}},\
      \mfg_{6,8}^{a,-\tfrac{a}{2},-\tfrac{a}{2},0},\
      \mfg_{6,11}^{a,-\tfrac{a}{2},-\tfrac{a}{2},s},\
      \mfg_{6,11}^{a,0,-\tfrac{a}{2},s},\\
      \mfg_{6,11}^{a,-\tfrac{a}{2},0,s},\ \mfg_{6,11}^{a,0,0,s},\
      \mfg_{5,14}^0\oplus
      \bR,\  \mfg_{5,17}^{0,0,s}\oplus \bR,\ \mfr_{4,-1/2,-1/2} \oplus \bR^2,\\
      \mfr_{4,a,-\tfrac{a}{2}}'\oplus \bR^2\:(a>0),\ \mfr_{4,a,0}'\oplus
      \bR^2\:(a>0),\ \mfr_{3,0}'\oplus \bR^3, \ \mfh_3\oplus \bR^3,\
      \aff_{\bR} \oplus \bR^4,\ \bR^6.
    \end{gathered}
  \end{equation*}
\end{corollary}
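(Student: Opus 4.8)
The plan is to apply Corollary~\ref{co:almostAbelian} and then identify, case by case, each of the resulting real Lie algebras with one on the standard six-dimensional lists. Since \( \dim\mfg=6 \), an almost Abelian \( \mfg=\bR^5\rtimes_f\bR \) is determined up to isomorphism by the conjugacy class of \( f\in\gl(5,\bR) \) modulo a nonzero scalar, so I need only enumerate the admissible \( 5\times 5 \) matrices \( f \) allowed by Corollary~\ref{co:almostAbelian} and match each to a name in Table~\ref{table_LAs}.

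First I would treat case~(i) of Corollary~\ref{co:almostAbelian}: \( f \) is complex-diagonalisable with one real eigenvalue \( a \) and two conjugate pairs \( (z_1,\bar z_1),(z_2,\bar z_2) \), each \( \Re(z_j)\in\{0,-a/2\} \). If \( a=0 \) then all eigenvalues are purely imaginary; after scaling this gives \( \mfg_{6,8}^{\dots} \)-type or split products with \( \bR^k \) such as \( \mfr_{4,a,0}'\oplus\bR^2 \) or \( \bR^6 \), depending on how many of the \( b_j \) vanish. If \( a\neq 0 \), rescale so \( a=-\tfrac12 \) or keep \( a \) as a parameter where the list records it; the subcases are governed by whether \( \Re(z_j)=0 \) or \( -a/2 \) and whether \( \Im(z_j)=0 \). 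The fully real-diagonal subcase \( \Re(z_1)=\Re(z_2)=-a/2 \), \( \Im(z_j)=0 \) gives eigenvalues \( (a,-\tfrac a2,-\tfrac a2,-\tfrac a2,-\tfrac a2) \), i.e. \( \mfg_{6,1}^{-1/2,-1/2,-1/2,-1/2} \) after scaling, or its degenerations \( \mfr_{4,-1/2,-1/2}\oplus\bR^2 \), \( \aff_\bR\oplus\bR^4 \) when enough eigenvalues coincide with~\( 0 \); the subcases with one or two nonzero imaginary parts yield the \( \mfg_{6,8} \), \( \mfg_{6,11} \) families with the indicated exponents, and the subcases with a zero real eigenvalue split off an extra \( \bR \) factor, producing \( \mfg_{5,14}^0\oplus\bR \), \( \mfg_{5,17}^{0,0,s}\oplus\bR \), \( \mfr_{4,a,-a/2}'\oplus\bR^2 \), \( \mfr_{3,0}'\oplus\bR^3 \).

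Then I would treat case~(ii): the Jordan form of \( f \) has a single size-\( 2 \) block with eigenvalue \( 0 \) and size-\( 1 \) blocks with eigenvalues \( ib_1,-ib_1,ib_2,-ib_2 \) — but in dimension five we only have room for one conjugate pair \( (ib_1,-ib_1) \) together with one more \( 0 \), so the relevant matrices are the nilpotent \( 3+1+1 \) pattern, giving \( \mfh_3\oplus\bR^3 \) when \( b_1=0 \), and \( \mfg_{5,17}^{0,0,s}\oplus\bR \) or \( \mfg_{6,11}^{a,0,0,s} \)-type (with the nilpotent part) when \( b_1\neq 0 \); I would double-check against \cite{FP} that no new algebra arises here beyond those already listed. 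Finally, collecting all distinct isomorphism types from (i) and (ii) and discarding repetitions yields exactly the list in the statement, and conversely each algebra on the list carries an \( f \) of the stated form, hence an SKT structure by Corollary~\ref{co:almostAbelian}.

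The main obstacle is the bookkeeping: correctly reading off, for each admissible eigenvalue pattern and each Jordan structure, which named Lie algebra in Table~\ref{table_LAs} it corresponds to, keeping track of the scaling freedom (which collapses the real parameter \( a \) wherever the list does not retain it) and of the coincidences of eigenvalues that cause a direct \( \bR \)-factor to split off; I would cross-check the final list against the independent computation in \cite{FP} to make sure nothing is missed or duplicated.
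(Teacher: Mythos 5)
Your strategy coincides with the paper's: there the corollary is obtained exactly by running through the \( 5\times5 \) matrices \( f \) permitted by Corollary~\ref{co:almostAbelian} and naming the resulting algebras via Table~\ref{table_LAs}, in agreement with~\cite{FP}. But since this matching \emph{is} the proof, the concrete identifications have to be right, and several of yours are not. In case~(ii) the only allowed Jordan structure is a single \( 2 \)-block at \( 0 \) together with \( 1 \)-blocks at \( \pm ib_1 \) and \( 0 \) (partition \( 2+1+1+1 \); a size-\( 3 \) block, as in your ``\( 3+1+1 \) pattern'', is excluded by Corollary~\ref{co:almostAbelian}). For \( b_1=0 \) this gives \( \mfh_3\oplus\bR^3 \), as you say, but for \( b_1\neq0 \) it gives \( \mfg_{5,14}^0\oplus\bR \) (a \( 2 \)-block at \( 0 \) plus a rotation block), and not \( \mfg_{5,17}^{0,0,s}\oplus\bR \) nor anything of type \( \mfg_{6,11}^{a,0,0,s} \): both of those are complex-diagonalisable, and \( \mfg_{6,11} \) moreover has a nonzero real eigenvalue, impossible in case~(ii), so they belong to case~(i). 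Conversely, you place \( \mfg_{5,14}^0\oplus\bR \) among the case~(i) degenerations, where it cannot occur since it is not diagonalisable. Similarly, in case~(i) with \( a=0 \) all eigenvalues are purely imaginary, so only \( \mfg_{5,17}^{0,0,s}\oplus\bR \), \( \mfr_{3,0}'\oplus\bR^3 \) and \( \bR^6 \) can arise there; \( \mfg_{6,8} \) (which needs three nonzero real eigenvalues) and \( \mfr_{4,a,0}'\oplus\bR^2 \) (which has the nonzero real eigenvalue \( a \)) occur only for \( a\neq0 \). So the method is the correct one and would yield the statement, but the case-by-case bookkeeping as written misassigns several algebras and would have to be redone carefully (or genuinely checked against~\cite{FP}) before the list is actually established.
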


\subsection{Commutator ideal not complex and of
codimension two}

Note that if \( \mfg' \) is of codimension two, the condition
\( \mfg'+J\mfg'=\mfg \) is equivalent to \( \mfg' \) being
non-complex.

By Proposition~\ref{pro:fidentityendomorphism}, we may choose an
orthonormal basis \( X_1,X_2 \) of \( \mfa_r \) such that
\( f(X_1,\any)=a\id_{\mfa_r} \) for some \( a>0 \) and such that
\( f(X_2,X_2)= b_1 X_1+b_2 X_2 \) for certain \( b_1,b_2\in \bR \)
with \( b_2\geq 0 \).

We set \( z_i:=\alpha_i(X_1), w_i:=\alpha_i(X_2)\in \bC \) for
\( i=1,\ldots,n-2 \) and first look at~\eqref{eq:F2ndordergsymmetric}.
As in the last subsection, this equation, applied to \( X=W=X_1 \),
gives us \( \Re(z_i)\in \{0,-a/2\} \).
Next, applying~\eqref{eq:F2ndordergsymmetric} to \( X=X_1,W=X_2 \), we
obtain
\begin{equation*}
  \Re(w_i)(2\Re(z_i)+a)=0,
\end{equation*}
so that \( \Re(w_i)=0 \) if \( \Re(z_i)=0 \).
Otherwise, the above equation is automatically fulfilled.
Finally, if \( X=W=X_2 \), \eqref{eq:F2ndordergsymmetric}~yields
\begin{equation*}
  2 \Re(w_i)^2+b_2 \Re(w_i)+b_1 \Re(z_i)=0.
\end{equation*}
If \( \Re(z_i)=0 \) and so also \( \Re(z_i)=0 \), this equation is
automatically satisfied.
Otherwise, \( \Re(w_i) \) has to be a solution of the quadratic
equation
\begin{equation*}
  2 x^2+b_2 x-\frac{1}{2} b_1 a=0,
\end{equation*}
which has a solution if \( b_2^2+4 b_1 a\geq 0 \).
So our parameters \( a,b_1,b_2 \) have to fulfil this inequality if
for some \( i\in \{1,\ldots,n-2\} \) we have \( \Re(z_i)\neq 0 \).

Next, we look at the first equation in~\eqref{eq:cubicequations},
which yields
\begin{equation*}
  \begin{split}
    z_i h^i(X_2,X_1)-w_i h^i(X_1,X_1)&=a h^i(X_2,X_1)-a h^i(X_1,X_2),\\
    z_i h^i(X_2,X_2)-w_i h^i(X_1,X_2)&=a h^i(X_2,X_2)-b_1 h^i(X_1,X_1)-b_2 h^i(X_1,X_2).
  \end{split}
\end{equation*}
Since \( z_i-a\neq 0 \) due to \( \Re(z_i)\in \{0,-a/2\} \) and
\( a\neq 0 \), these equations are equivalent to
\begin{equation}
  \label{eq:hi}
  \begin{split}
    h^i(X_2,X_1)&=\frac{w_i}{z_i-a} h^i(X_1,X_1)-\frac{a}{z_i-a} h^i(X_1,X_2),\\
    h^i(X_2,X_2)&=-\frac{b_1}{z_i-a} h^i(X_1,X_1)+\frac{w_i-b_2}{z_i-a} h^i(X_1,X_2).
  \end{split}
\end{equation}
Summarizing the discussion up to this point, we have obtained:

\begin{lemma}
  \label{le:codim2notJinv}
  Let \( (\mfg,g,J) \) be a two-step solvable SKT Lie algebra with
  \( \mfg' \) being non-complex and of codimension two.
  Then \( (\mfg,g,J) \) is isomorphic to \( \mfg_{f,h,\alpha} \) with
  \( X_1,X_2 \) being an orthonormal basis of \( \mfa_r \), \( f \)
  satisfying
  \begin{equation}
    \label{eq:conditionsonfcodim2case}
    f(X_1,X_1)=a X_1,\quad f(X_1,X_2)=f(X_2,X_1)=a X_2,\quad f(X_2,X_2)=b_1 X_1+b_2 X_2
  \end{equation}
  for certain \( a>0 \), \( b_2\geq 0 \), \( h \)
  fulfilling~\eqref{eq:hi} and for all \( i=1,\ldots,n-2 \), and
  \( z_i,w_i\in \bC \) defined by
  \begin{equation*}
    \alpha_i(X_1)=z_i,\quad\alpha(X_2)=w_i
  \end{equation*}
  satisfy either
  \begin{enumerate}[(i)]
  \item \( \Re(z_i)=\Re(w_i)=0 \), or
  \item \( \Re(z_i)=-a/2 \) and
    \( 2\Re(w_i)^2+b_2 \Re(w_i)-\frac{1}{2} b_1 a=0 \), where this
    case may only occur if \( b_1\geq -\tfrac{b_2^2}{4a} \).
  \end{enumerate}
  Moreover, with this data, \( \mfg_{f,h,\alpha} \) is an almost
  Hermitian Lie algebra with integrable almost complex structure
  \( I \).
\end{lemma}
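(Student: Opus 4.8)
The plan is to collect the constraints already derived in the running text of the section and simply verify that together they give the claimed normal form. Concretely, Lemma~\ref{le:codim2notJinv} is a bookkeeping statement: every structural assertion in it has already been established in the paragraphs preceding it, so the proof is mostly a matter of citing those computations in the right order.

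First I would invoke Proposition~\ref{pro:SKTUJ0} to realise \( (\mfg,g,J) \) as some \( \mfg_{f,h,\alpha} \): since \( \mfg' \) has codimension two and is non-complex, we are in the case \( U_J=0 \), \( \dim(\mfa_r)=2 \), and the remark before this subsection identifies \( \mfg'+J\mfg'=\mfg \) with \( \mfg' \) non-complex. Next, Proposition~\ref{pro:fidentityendomorphism} supplies a vector with \( f(v,\any)=\id \); rescaling and using Lemma~\ref{le:fspecialbasis} (equivalently, Gram--Schmidt on the orthogonal complement) produces the orthonormal basis \( X_1,X_2 \) of \( \mfa_r \) with \( f(X_1,\any)=a\,\id_{\mfa_r} \), \( a>0 \), and \( f(X_2,X_2)=b_1X_1+b_2X_2 \), \( b_2\ge 0 \); this is exactly~\eqref{eq:conditionsonfcodim2case} once one records \( f(X_1,X_2)=f(X_2,X_1)=aX_2 \) from \( f(X_1,\any)=a\,\id \) together with symmetry of \( f \). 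Then I would quote the three instances of~\eqref{eq:F2ndordergsymmetric} worked out just above: \( X=W=X_1 \) gives \( \Re(z_i)\in\{0,-a/2\} \); \( X=X_1 \), \( W=X_2 \) gives \( \Re(w_i)(2\Re(z_i)+a)=0 \); and \( X=W=X_2 \) gives \( 2\Re(w_i)^2+b_2\Re(w_i)+b_1\Re(z_i)=0 \). Combining these yields the dichotomy (i)/(ii), with the discriminant condition \( b_1\ge -b_2^2/(4a) \) needed exactly when some \( \Re(z_i)\ne 0 \). Finally, the first equation in~\eqref{eq:cubicequations}, evaluated on the basis \( X_1,X_2 \) and using \( z_i-a\ne 0 \) (which holds since \( \Re(z_i)\in\{0,-a/2\} \) and \( a>0 \)), rearranges to~\eqref{eq:hi}; the second equation in~\eqref{eq:cubicequations} is void because \( \dim(\mfa_r)=2 \) (as already noted at the end of the previous subsection, \( \Lambda^2\mfa_r^* \) being one-dimensional makes \( \alpha_i\wedge h_2^i \) automatically controlled). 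The last sentence, that \( \mfg_{f,h,\alpha} \) is almost Hermitian with integrable \( I \), is precisely the first assertion of Proposition~\ref{pro:SKTUJ0} applied to this data, since~\eqref{eq:cubicequations} holds.

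There is essentially no obstacle here beyond orchestrating the citations: the only place requiring a line of genuine (though routine) algebra is the passage from the two displayed equations preceding Lemma~\ref{le:codim2notJinv} to~\eqref{eq:hi}, i.e.\ solving a linear system for \( h^i(X_2,X_1) \) and \( h^i(X_2,X_2) \) in terms of \( h^i(X_1,X_1) \) and \( h^i(X_1,X_2) \); the invertibility of the relevant coefficient is guaranteed by \( z_i\ne a \). I would therefore write the proof as: apply Proposition~\ref{pro:SKTUJ0}; choose the basis via Proposition~\ref{pro:fidentityendomorphism} and Lemma~\ref{le:fspecialbasis}, obtaining~\eqref{eq:conditionsonfcodim2case}; read off (i)/(ii) from the three specialisations of~\eqref{eq:F2ndordergsymmetric}; solve for \( h \) to get~\eqref{eq:hi}; and note the second part of~\eqref{eq:cubicequations} is vacuous in dimension two. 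The integrability claim then follows from Proposition~\ref{pro:SKTUJ0} since all of~\eqref{eq:cubicequations} is in force.
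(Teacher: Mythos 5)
Your proposal is correct and follows essentially the same route as the paper: the paper's own derivation is exactly the running text before the lemma, namely Proposition~\ref{pro:fidentityendomorphism} to fix the orthonormal basis with \( f(X_1,\any)=a\,\id \) and \( f(X_2,X_2)=b_1X_1+b_2X_2 \), the three specialisations of~\eqref{eq:F2ndordergsymmetric} giving the dichotomy on \( \Re(z_i),\Re(w_i) \) and the discriminant condition, and the linear system from the first equation of~\eqref{eq:cubicequations} solved (using \( z_i-a\neq0 \)) to obtain~\eqref{eq:hi}, with the second equation of~\eqref{eq:cubicequations} vacuous since \( \dim\mfa_r=2 \). The only cosmetic slip is the parenthetical justification of that vacuity: the reason is \( \Lambda^3\mfa_r^*=0 \), so \( \alpha_i\wedge h_2^i \) vanishes identically, rather than \( \Lambda^2\mfa_r^* \) being one-dimensional.
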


We still need to solve~\eqref{eq:AbelianSKTshear2} and remark that we
already solved it on \( \Lambda^2 \mfa_J\wedge \mfa_r\wedge U_r \).
Using Lemma~\ref{le:partsofnuequalzero} and for dimensional reasons,
we only have to consider this equation on
\( \Lambda^2 \mfa_r\wedge \Lambda^2 U_r \),
\( \mfa_J\wedge \Lambda^2 \mfa_r\wedge U_r \) and
\( \mfa_J\wedge \mfa_r\wedge \Lambda^2 U_r \).
We first look at the latter two equations.
Let \( Y\in \mfa_J \) and \( k\in \{1,2\} \) be given.
Since \( J^*\nu_1(\omega)=\nu_1(\omega) \), we have
\( \nu_1(\omega)(Y,X_1,X_2,JX_k)=-\nu_1(\omega)(iY,JX_1,JX_2,X_k) \)
and so \( \nu(\omega)(Y,X_1,X_2,JX_k)=0 \) and
\( \nu(\omega)(iY,X_k,JX_1,JX_2)=0 \) are equivalent to
\begin{equation*}
  \begin{split}
    \nu_1(Y,X_1,X_2,JX_k)+2\nu_2(Y,X_1,X_2, JX_k)&=0,\\
    \nu_2(Y,X_1,X_2,JX_k)&=-\nu_2(iY,JX_1,JX_2,X_k).
  \end{split}
\end{equation*}
In the following, we use that \( \omega(\mfa,\mfa)=0 \),
\( \omega(U_r,\mfa_J)\subseteq \mfa_J \),
\( \omega(U_r,U_r)\subseteq \mfa_J \), \( \mfa_J\perp \mfa_r \) and
the fact that \( f \) is symmetric and
\( f(f(\any,\any),\any)\in S^3 \mfa_r^*\otimes \mfa_r \) to simplify
these equations and first look at the second equation.  We have
\begin{equation*}
  \begin{split}
    \nu_1(\omega)(X_1,X_2,JX_k,Y)=\tfrac{1}{12}\, & g\left(-h(f(X_k,X_1),X_2)+h(X_2,f(X_k,X_1))\right.\\
                                                  &\quad+K_{X_2}(h(X_k,X_1))+h(f(X_k,X_2),X_1)\\
                                                  &\quad \left.-h(X_1,f(X_k,X_2))-K_{X_1}(h(X_k,X_2)),iY\right)
  \end{split}
\end{equation*}
and
\begin{equation*}
  \begin{split}
    \MoveEqLeft
    -\nu_2(\omega)(JX_1,JX_2,X_k,iY)\\
    &=\frac{1}{12}\,g\bigl(K_{X_k}(h(X_2,X_1)-h(X_1,X_2)) \eqbreak[4]
      +h(f(X_k,X_2),X_1)-h(f(X_k,X_1),X_2),iY\bigr),
  \end{split}
\end{equation*}
i.e.\
\( \nu_2(\omega)(Y,X_1,X_2,JX_k)=-\nu_2(\omega)(iY,JX_1,JX_2,X_k) \)
for all \( Y\in \mfa_J \) is equivalent to
\begin{equation*}
  \begin{split}
    &-h(f(X_k,X_1),X_2)+h(X_2,f(X_k,X_1))+K_{X_2}(h(X_k,X_1))+h(f(X_k,X_2),X_1)\\
    &-h(X_1,f(X_k,X_2))-K_{X_1}(h(X_k,X_2))\\
    &=K_{X_k}(h(X_2,X_1)-h(X_1,X_2))+h(f(X_k,X_2),X_1)-h(f(X_k,X_1),X_2).
  \end{split}
\end{equation*}
However, by~\eqref{eq:AXcommutes}, we have
\begin{equation*}
  \begin{split}
    \MoveEqLeft
    -h(f(X_k,X_1),X_2)+h(X_2,f(X_k,X_1))+K_{X_2}(h(X_k,X_1)) \eqbreak
    + h(f(X_k,X_2),X_1) -h(X_1,f(X_k,X_2))-K_{X_1}(h(X_k,X_2))\\
    &=-h(f(X_k,X_1),X_2)+h(X_k,f(X_2,X_1))+K_{X_k}(h(X_2,X_1)) \eqbreak
      +h(f(X_k,X_2),X_1) -h(X_k,f(X_1,X_2))-K_{X_k}(h(X_1,X_2))\\
    &=K_{X_k}(h(X_2,X_1)-h(X_1,X_2))+h(f(X_k,X_2),X_1)-h(f(X_k,X_1),X_2),
  \end{split}
\end{equation*}
i.e
\( \nu_2(\omega)(Y,X_1,X_2,JX_k)=-\nu_2(\omega)(iY,JX_1,JX_2,X_k) \)
is automatically satisfied.

Next, we consider
\( \nu_1(Y,X_1,X_2,JX_k)+2\nu_2(Y,X_1,X_2, JX_k)=0 \) and note that
\begin{equation*}
  \begin{split}
    6\,\nu_1(X_1,X_2,JX_k,Y)
    &= g(K_{X_2}^T(h(X_k,X_1))-K_{X_1}^T(h(X_k,X_2)) \eqbreak[4]
      +K_{X_k}^T(h(X_2,X_1)-h(X_1,X_2)),iY).
  \end{split}
\end{equation*}
Thus, using that
\( \nu_2(X_1,X_2,JX_k,Y)=-\nu_2(\omega)(JX_1,JX_2,X_k,iY) \), one sees
that \( \nu_1(X_1,X_2,JX_k,Y)+2\nu_2(X_1,X_2, JX_k,Y)=0 \) for all
\( Y\in \mfa_J \) is equivalent to
\begin{equation*}
  \begin{split}
    0&=(K_{X_k}+K_{X_k}^T)(h(X_2,X_1)-h(X_1,X_2))
       +h(f(X_k,X_2),X_1)        \eqbreak
       -h(f(X_k,X_1),X_2)
       +K_{X_2}^T(h(X_k,X_1))-K_{X_1}^T(h(X_k,X_2)).
  \end{split}
\end{equation*}
We look at these equations componentwise and for \( k=1,2 \).
For \( k=1 \), we obtain
\begin{equation}
  \label{eq:hik=1}
  0=(2\Re(z_i)+a)\bigl(h^i(X_2,X_1)-h^i(X_1,X_2)\bigr)
  -\overline{z}_i\, h^i(X_1,X_2)+\overline{w}_i\, h^i(X_1,X_1),
\end{equation}
whereas for \( k=2 \), we get
\begin{equation}
  \label{eq:hik=2}
  \begin{split}
    0&=b_1 h^i(X_1,X_1)-2\Re(w_i)h^i(X_1,X_2) \eqbreak
       +(2\Re(w_i)+b_2+\overline{w}_i) h^i(X_2,X_1)-(\overline{z}_i+a)
       h^i(X_2,X_2).
  \end{split}
\end{equation}
Now either \( \Re(z_i)=0 \) or \( \Re(z_i)=-{a}/{2} \).

In the first case, \eqref{eq:hik=1}~gives us
\begin{equation*}
  \begin{split}
    h^i(X_2,X_1)
    &=-\frac{\overline{w}_i}{a}
      h^i(X_1,X_1)+\frac{\overline{z}_i+a}{a}h^i(X_1,X_2)\\
    &=\frac{w_i}{a} h^i(X_1,X_1)+\frac{a-z_i}{a}h^i(X_1,X_2)
  \end{split}
\end{equation*}
Together with~\eqref{eq:hi}, we obtain
\begin{equation*}
  \frac{w_i}{z_i-a} h^i(X_1,X_1)-\frac{a}{z_i-a} h^i(X_1,X_2)=\frac{w_i}{a} h^i(X_1,X_1)+\frac{a-z_i}{a}h^i(X_1,X_2)
\end{equation*}
i.e.\
\begin{equation*}
  z_i h^i(X_1,X_2)= w_i h^i(X_1,X_1)
\end{equation*}
where we have used that \( 2a-z_i\neq 0 \).

So we have to discuss the two subcases \( z_i\neq 0 \) and
\( z_i=0 \).
First for \( z_i\neq 0 \): Here we get
\( h^i(X_2,X_1)=\frac{w_i}{z_i} h^i(X_1,X_1) \) and so, inserting
into~\eqref{eq:hi}, we obtain
\begin{equation*}
  h^i(X_2,X_1)=\frac{w_i}{z_i} h^i(X_1,X_1)=h^i(X_1,X_2)
\end{equation*}
and
\begin{equation*}
  h^i(X_2,X_2)=\frac{w_i^2-b_2 w_i-b_1 z_i}{z_i(z_i-a)} h^i(X_1,X_1),
\end{equation*}
and one checks that \eqref{eq:hik=1} and~\eqref{eq:hik=2} are also
satisfied then.

Second, for \( z_i=0 \): Then \( w_i h^i(X_1,X_1)=0 \) and the first
equation in~\eqref{eq:hi} gives us \( h^i(X_2,X_1)=h^i(X_1,X_2) \) and
\eqref{eq:hik=1} is automatically satisfied.
We now show \( w_{i}\ne0 \), because \( \im(\omega)=\mfa \).

Suppose \( w_i=0 \).
Since \( \omega(JX_1,Y_i)=-z_i Y_i \), \( \omega(JX_2,Y_i)=-w_i Y_i \)
for all \( i=1,\ldots,n-2 \), we may ignore those \( i \) where
\( z_i\neq 0 \) or \( w_i\neq 0 \), i.e.\ we may assume that
\( \omega(JX_i,\mfa_J)=0 \) globally for all \( i=1,2 \) and
\( \dim_{\bR}(\mfa_J)\geq 2 \).
But then we have \( \omega(JX_1,X_2)=\omega(JX_2,X_1) \) and so
\( \omega(JX_1,JX_2)=0 \).
Hence, \( \omega(JX_1,X_1) \), \( \omega(JX_1,X_2) \),
\( \omega(JX_2,X_2) \) have to span the at least four-dimensional real
vector space \( \mfa=\mfa_J\oplus \mfa_r \), a contradiction.

Thus, \( w_i\neq 0 \) and so we must have \( h^i(X_1,X_1)=0 \).
But then \( h^i(X_2,X_2)=\frac{b_2-w_i}{a} h^i(X_1,X_2) \) and again
\eqref{eq:hik=2} is satisfied.

\smallbreak Finally, we need to consider the case
\( \Re(z_i)=-{a}/{2} \), i.e.\ \( 2\Re(z_i)+a=0 \).
Then \eqref{eq:hik=1} is equivalent to
\( h^i(X_1,X_2)=\frac{\overline{w_i}}{\overline{z}_i} h^i(X_1,X_1) \)
and the first equation in~\eqref{eq:hi} yields
\begin{equation*}
  h^i(X_2,X_1)=\frac{w_i \overline{z}_i-a \overline{w}_i}{\abs{z_i}^2-a\overline{z}_i} h^i(X_1,X_1)
\end{equation*}
whereas the second equation in~\eqref{eq:hi} gives us
\begin{equation*}
  h^i(X_2,X_2)=\frac{\abs{w_i}^2-b_1 \overline{z}_i-b_2 \overline{w}_i}{\abs{z_i}^2-a\overline{z}_i} h^i(X_1,X_1)
\end{equation*}
Finally, we need to consider~\eqref{eq:AbelianSKTshear2} on
\( \Lambda^2 \mfa_r\wedge \Lambda^2 U_r \).  Here, we get
\begin{equation*}
  \begin{split}
    \MoveEqLeft
    6 \nu_1(\omega)(X_1,X_2,JX_1,JX_2)\\
    &=2 a(a-b_1)+\norm{h(X_1,X_2)-h(X_2,X_1)}^2+\norm{h(X_1,X_2)}^2 \eqbreak
      +\norm{h(X_2,X_1)}^2-2 g(h(X_1,X_1),h(X_1,X_2)),
  \end{split}
\end{equation*}
whereas
\begin{equation*}
  \begin{split}
    \MoveEqLeft
    12\, \nu_1(\omega)(X_1,X_2,JX_1,JX_2) \\
    &=-g(\omega(J\omega(X_1,JX_1),X_2),X_2)
      +g(\omega(J\omega(X_1,JX_2),X_1),X_2) \eqbreak
      +g(\omega(J\omega(X_2,JX_1),X_2),X_1)
      -g(\omega(J\omega(X_2,JX_2),X_1),X_1)\\
    &=g(f(f(X_1,X_1),X_2),X_2)-g(f(f(X_2,X_1),X_1),X_2) \eqbreak
      -g(f(f(X_1,X_2),X_2),X_1)+g(f(f(X_2,X_2),X_1),X_1)\\
    &=0,
  \end{split}
\end{equation*}
since \( f(f(\any,\any),\any) \) is totally symmetric in its
arguments.
Hence, \eqref{eq:AbelianSKTshear2} is true on
\( \Lambda^2 \mfa_r\wedge \Lambda^2 U_r \) if and only if
\begin{equation}
  \label{eq:unsolvableeqnUJ=0codim2}
  \begin{multlined}
    2 a(a-b_1)+\norm{h(X_1,X_2)-h(X_2,X_1)}^2+\norm{h(X_1,X_2)}^2 \\
    +\norm{h(X_2,X_1)}^2 -2 g(h(X_1,X_1),h(X_1,X_2))=0.
  \end{multlined}
\end{equation}
Unfortunately, \eqref{eq:unsolvableeqnUJ=0codim2} seems not to be
solvable in a nice and short form in general.

Summarizing, we have obtained:

\begin{theorem}
  \label{th:codim2notJinv}
  Let \( (\mfg,g,J) \) be a two-step solvable \( 2n \)-dimensional SKT
  Lie algebra with \( \mfg' \) being non-complex and of codimension
  two.
  Then \( (\mfg,g,J) \) is isomorphic to \( \mfg_{f,h,\alpha} \) with
  \( (X_1,X_2) \) being an orthonormal basis of \( \mfa_r \),
  \( f(X_1,X_j)=a X_j \) for \( j=1,2 \) and
  \( f(X_2,X_2)= b_1 X_1+b_2 X_2 \) for certain \( a>0 \),
  \( b_1\in \bR \), \( b_2\geq 0 \) and
  \( \alpha(X_1)=\sum_{i=1}^{n-2} z_i Y_i \),
  \( \alpha(X_2)=\sum_{i=1}^{n-2} w_i Y_i \) for
  \( (z_1,\ldots,z_{n-2}),(w_1,\ldots,w_{n-2})\in \bC^{n-2} \) such
  that \eqref{eq:unsolvableeqnUJ=0codim2} is valid and for all
  \( i=1,\ldots,n-2 \) we have one of the following three cases:
  \begin{enumerate}[(i)]
  \item\label{item:2dr-rz0} \( \Re(z_i)=0 \), \( \Re(w_i)=0 \),
    \( z_i\neq 0 \) and
    \begin{equation*}
      \begin{split}
        h^i(X_2,X_1)&=h^i(X_1,X_2)=\frac{w_i}{z_i} h^i(X_1,X_1),\\
        h^i(X_2,X_2)&=\frac{w_i^2-b_2 w_i-b_1 z_i}{z_i(z_i-a)} h^i(X_1,X_1),
      \end{split}
    \end{equation*}
  \item\label{item:2dr-z0} \( z_i=0 \), \( \Re(w_i)=0 \),
    \( w_i\neq 0 \) and
    \begin{equation*}
      h^i(X_1,X_1)=0,\quad
      h^i(X_2,X_1)=h^i(X_1,X_2), \quad h^i(X_2,X_2)=\frac{b_2-w_i}{a}
      h^i(X_1,X_2),
    \end{equation*}
  \item\label{item:2dr-rez-a} \( \Re(z_i)=-a/2 \), \( \Re(w_i) \) is a
    solution of \( 2x^2+b_2 x-\tfrac{b_1}{2} a=0 \) and
    \begin{equation*}
      \begin{split}
        h^i(X_2,X_1)&=\frac{w_i \overline{z}_i-a
                      \overline{w}_i}{\abs{z_i}^2-a\overline{z}_i}
                      h^i(X_1,X_1),\\
        h^i(X_2,X_2)&=\frac{\abs{w_i}^2-b_1 \overline{z}_i-b_2
                      \overline{w}_i}{\abs{z_i}^2-a\overline{z}_i}
                      h^i(X_1,X_1),
      \end{split}
    \end{equation*}
    where this case may only occur if \( b_1\geq -{b_2^2}/(4a) \).
  \end{enumerate}
  Conversely, all such \( \mfg_{f,h,\alpha} \) are two-step solvable
  SKT Lie algebras with \( \mfg' \) being non-complex and of
  codimension two.
\end{theorem}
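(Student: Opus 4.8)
The statement is a summary of the results established in this subsection, so the plan is mostly to collect and organize those pieces. \emph{Forward direction.} First I would apply Lemma~\ref{le:codim2notJinv} to reduce to the situation $(\mfg,g,J)=\mfg_{f,h,\alpha}$, with $(X_1,X_2)$ the orthonormal basis of $\mfa_r$ produced by Proposition~\ref{pro:fidentityendomorphism} so that $f$ has the normalized form~\eqref{eq:conditionsonfcodim2case}, with $z_i:=\alpha_i(X_1)$ and $w_i:=\alpha_i(X_2)$ subject to the dichotomy of that lemma, with $h$ constrained by~\eqref{eq:hi}, and with the almost complex structure $I$ already integrable. It then remains to impose the SKT condition, that is $\nu(\omega)=0$, on the components of $\Lambda^4\mfg^*$ not yet used.

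For this, since $U_J=0$ and $\dim\mfa_r=\dim U_r=2$, Lemma~\ref{le:partsofnuequalzero} shows that $\nu(\omega)$ can only survive on $\Lambda^2\mfa_J\wedge\mfa_r\wedge U_r$ (already encoded in~\eqref{eq:F2ndordergsymmetric}, hence in Lemma~\ref{le:codim2notJinv}), on $\mfa_J\wedge\Lambda^2\mfa_r\wedge U_r$ and $\mfa_J\wedge\mfa_r\wedge\Lambda^2 U_r$, and on $\Lambda^2\mfa_r\wedge\Lambda^2 U_r$. I would then expand $\nu_1$ and $\nu_2$ on each of these: on the middle two, the antisymmetric pairing $\nu_2(\omega)(Y,X_1,X_2,JX_k)=-\nu_2(\omega)(iY,JX_1,JX_2,X_k)$ holds automatically once~\eqref{eq:AXcommutes} does, and the surviving equation becomes, componentwise, \eqref{eq:hik=1} for $k=1$ and~\eqref{eq:hik=2} for $k=2$; on $\Lambda^2\mfa_r\wedge\Lambda^2 U_r$ one summand of $\nu_1$ vanishes because $f(f(\any,\any),\any)$ is totally symmetric, leaving precisely~\eqref{eq:unsolvableeqnUJ=0codim2}. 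Solving~\eqref{eq:hik=1} and~\eqref{eq:hik=2} together with~\eqref{eq:hi} along the trichotomy $\Re(z_i)=0$ with $z_i\neq 0$, $z_i=0$, or $\Re(z_i)=-a/2$ then produces cases~\ref{item:2dr-rz0},~\ref{item:2dr-z0} and~\ref{item:2dr-rez-a} respectively; in the branch $z_i=0$ one still has to rule out $w_i=0$, which follows from $\im(\omega)=\mfa$ since otherwise $\omega(JX_1,X_1)$, $\omega(JX_1,X_2)$, $\omega(JX_2,X_2)$ would have to span the at least four-dimensional space $\mfa$.

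\emph{Converse.} This is the same chain of equivalences read backwards: for data as listed, Lemma~\ref{le:codim2notJinv} already provides an almost Hermitian Lie algebra with integrable $I$; the explicit $h^i$-formulas in cases~\ref{item:2dr-rz0}--\ref{item:2dr-rez-a} were derived as equivalent to~\eqref{eq:hik=1} and~\eqref{eq:hik=2} (so one only checks that the other of the two holds once the displayed one is imposed), and~\eqref{eq:unsolvableeqnUJ=0codim2} supplies the remaining $\Lambda^4$-component, so $\nu(\omega)=0$ and $\mfg_{f,h,\alpha}$ is SKT; finally $a>0$ together with $\dim\mfa_r=2$ forces $\mfg'=\mfa$ to be non-complex of codimension two. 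The genuinely delicate points, all of them already handled above, are the implication $z_i=0\Rightarrow w_i\neq 0$, the non-vanishing checks ($z_i\neq a$, $\abs{z_i}^2-a\overline{z}_i\neq 0$, and so on) that keep the displayed fractions meaningful, and the fact, noted in the text, that~\eqref{eq:unsolvableeqnUJ=0codim2} does not seem to admit a clean closed-form solution; everything else is routine expansion of $\nu_1$ and $\nu_2$.
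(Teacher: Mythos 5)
Your proposal is correct and follows essentially the same route as the paper: reduce via Lemma~\ref{le:codim2notJinv} and Proposition~\ref{pro:fidentityendomorphism}, impose \( \nu(\omega)=0 \) on the remaining components \( \mfa_J\wedge\Lambda^2\mfa_r\wedge U_r \), \( \mfa_J\wedge\mfa_r\wedge\Lambda^2 U_r \) and \( \Lambda^2\mfa_r\wedge\Lambda^2 U_r \) to get \eqref{eq:hik=1}, \eqref{eq:hik=2} and \eqref{eq:unsolvableeqnUJ=0codim2}, and solve along the trichotomy on \( \Re(z_i) \), ruling out \( w_i=0 \) when \( z_i=0 \) via \( \im(\omega)=\mfa \) exactly as in the text. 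The only nitpick is that on \( \Lambda^2\mfa_r\wedge\Lambda^2 U_r \) it is the \( \nu_2 \)-contribution (the terms built from \( f(f(\any,\any),\any) \)) that vanishes by total symmetry, leaving the \( \nu_1 \)-equation \eqref{eq:unsolvableeqnUJ=0codim2}.
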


\begin{remark}
  In Theorem~\ref{th:codim2notJinv}, we may always choose
  \( h\equiv 0 \).
  Then \eqref{eq:unsolvableeqnUJ=0codim2} is valid if and only if
  \( b_1=a \).
  But then surely also \( a=b_1\geq -{b_2^2}/(4a) \) if and only if
  \( 4 a^2\geq -b_2^2 \) and so also case~\ref{item:2dr-rez-a} may
  occur.
  Moreover, setting \( b:=b_2 \).
  the solutions of \( 2x^2+bx-(a^2/2)=2x^2+b_2 x-(b_1/2) a=0 \) are
  given by \( x=(-b\pm \sqrt{b^2+4a^2})/4 \).

  In this case, there are \( 0\leq r_1\leq r_2\leq r_3\leq n-1 \),
  \( c_{r_1},\ldots,c_{n-1} \), \( d_1,\ldots,d_{n-1} \) such that the
  only non-zero Lie brackets (up to anti-symmetry and complex-linear
  extension to \( \mfa_J \)) are given by
  \begin{equation*}
    \begin{split}
      [JX_2,Y_j]&= i\, d_j Y_j,\ j=1,\ldots,r_1,\\
      [JX_1,Y_k]&= i\, c_k Y_k,\quad [JX_2,Y_k]= i\, d_k Y_k,\quad
                  k=r_1+1,\ldots,r_2,\\
      [JX_1,Y_\ell ]&=\left(-\tfrac{a}{2}+i\, c_\ell \right) Y_\ell ,\quad
                      [JX_2,Y_\ell ]= \left(\tfrac{-b+
                      \sqrt{b^2+4a^2}}{4}+i\, d_\ell \right)
                      Y_\ell , \eqbreak[20] \ell =r_2+1,\ldots,r_3,\\
      [JX_1,Y_m]&=\left(-\tfrac{a}{2}+i\, c_m\right) Y_m,\quad [JX_2,Y_m]=
                  \left(\tfrac{-b- \sqrt{b^2+4a^2}}{4}+i\, d_m\right)
                  Y_m,\eqbreak[20] m=r_2+1,\ldots,r_3,\\
      [JX_1,X_1]&=a X_1,\  [JX_1,X_2]= a X_2,\  [JX_2,X_1]= a X_2,\
                  [JX_2,X_2]=a X_1+b X_2.
    \end{split}
  \end{equation*}
\end{remark}

\section{Commutator ideal totally real}
\label{sec:totallyreal}

Here, we look at two-step SKT shear data \( (\mfa,\omega) \) with
totally real \( \mfa \), i.e.\ \( \mfa=\mfa_r \) and \( \mfa_J=0 \),
and with \( \im(\omega)=\mfa \).
We note that then the shear Lie algebra \( \mfg \) has a totally real
commutator ideal \( \derg \), i.e.\ we have
\( \derg\cap J\derg=\{0\} \).

\begin{notation}
  We simplify the notation and, for \( Y\in U_J \), define the
  endomorphisms \( \alpha_Y,\beta_Y\in \End(\mfa) \) by
  \begin{equation*}
    \alpha_Y(X):=\omega(Y,X)=(\omega_0)^r_{Jr}(Y,X),\qquad \beta_Y(X):=\omega(Y,JX)=(\omega_1)^r_{Jr}(Y,JX)
  \end{equation*}
  for \( X\in \mfa \).  Moreover, we set
  \begin{equation*}
    \nu:=(\omega_1)_{JJ}=(\omega_1)_{JJ}^r\in \Lambda^2 U_J^*\otimes \mfa.
  \end{equation*}
  Moreover, we also use some notation introduced previously without
  further notification.
\end{notation}
We observe the following:

\begin{proposition}
  \label{pro:twostepSKTsheardatatotallyreal}
  Let \( (\mfa,\omega) \) be pre-shear data with \( \mfa \) being
  totally real.
  Then \( (\mfa,\omega) \) is two-step SKT shear data if and only if
  \( f\in S^2 \mfa^*\otimes \mfa \), \( (\omega_1)_{rr}^r=0 \),
  \( \beta_Y=\alpha_{JY} \) for all \( Y\in U_J \), \( \nu \) is of
  type \( (1,1) \) and
  \begin{equation}
    \label{eq:SKTconditionomegaaJzero}
    \cA\Bigl(\omega(\omega(\any,\any),\any)\Bigr)=0,\qquad
    \cA\Bigl(g(\omega(\any,\any),\omega(\any,\any))\Bigr)=0.
  \end{equation}
\end{proposition}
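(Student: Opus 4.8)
The plan is to specialise the general conditions already assembled in this section to the totally real case \( \mfa=\mfa_r \), \( \mfa_J=0 \), and to bookkeep which of the components of \( \omega \) survive. With \( \mfa_J=0 \), the decomposition \( \mfg=\mfa_J\oplus\mfa_r\oplus U_J\oplus U_r \) collapses to \( \mfg = \mfa \oplus U_J \oplus U_r \) with \( U_r=J\mfa \), so the shearing form \( \omega\in(U^*\wedge\mfa^*+\Lambda^2U^*)\otimes\mfa \) has nonzero components only among \( (\omega_0)_{rr}^r \) (encoded by \( f \) via \eqref{eq:definitionofA}, since \( A_X=F_X \) and \( G_X=H_X=K_X=0 \)), \( (\omega_0)_{Jr}^r=\alpha \), \( (\omega_1)_{rr}^r \), \( (\omega_1)_{Jr}^r=\beta \) and \( (\omega_1)_{JJ}^r=\nu \). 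First I would invoke Lemma~\ref{le:SKTshears}: \( (\mfa,\omega) \) is two-step SKT shear data iff \eqref{eq:Abeliansheardata}, \eqref{eq:AbelianSKTshear1} and \eqref{eq:AbelianSKTshear2} hold, and the job is to rewrite each in the present notation.

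For the integrability equation \eqref{eq:AbelianSKTshear1} I would simply read off Lemma~\ref{le:Jintegrable} with \( \mfa_J=0 \): conditions \ref{item:G-zero}, \ref{item:J-KX}, \ref{item:o1-rrJ}, \ref{item:o0-JJr}, \ref{item:o01-J-val}, \ref{item:o1-JJr} concern components that are now identically zero or live in zero spaces, so they are vacuous or force nothing; what remains is \ref{item:f-sym} (\( f \) symmetric, i.e.\ \( f\in S^2\mfa^*\otimes\mfa \)), \ref{item:o1-rrr} (\( (\omega_1)_{rr}^r=0 \)), \ref{item:o1-Jrr} (\( (\omega_0)_{Jr}^r=J^*(\omega_1)_{Jr}^r \), i.e.\ \( \alpha_Y=\beta_{\cdot} \) reorganised as \( \beta_Y=\alpha_{JY} \) — I need to be careful about the exact placement of \( J \) here and match the sign/argument conventions in the \emph{Notation} block), and \ref{item:o1-JJr}, which with \( \mfa_J=0 \) reduces to \( (\omega_1)_{JJ}^r=\nu \) being of type \( (1,1) \). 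That accounts for the first four assertions in the proposition. Then \eqref{eq:Abeliansheardata} is exactly the first equation in \eqref{eq:SKTconditionomegaaJzero} by definition of the antisymmetrisation \( \cA \), and it remains to show \eqref{eq:AbelianSKTshear2} becomes the second equation in \eqref{eq:SKTconditionomegaaJzero}.

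The key simplification for \eqref{eq:AbelianSKTshear2}, i.e.\ \( \nu(\omega)=\nu_1(\omega)+2\nu_2(\omega)=0 \), is that when \( \mfa \) is totally real and \( \mfa_J=0 \), one has \( J^*\omega = \omega - J\circ J.\omega \) forcing a rigid relation between \( \omega(JX,\cdot) \) and \( \omega(X,\cdot) \) on \( \mfa \); in particular \( J^*\omega \) restricted to the relevant mixed pieces is, up to sign, just \( \omega \) composed with \( J \) on values. I would argue that the \( g(J^*\omega(\cdot,\cdot),\omega(\cdot,\cdot)) \) term and the \( g(J^*\omega(\omega(\cdot,\cdot),\cdot),\cdot) \) term each turn into \( \pm g(\omega(\cdot,\cdot),\omega(\cdot,\cdot)) \) (the \( J \) on values pairs up and disappears under the inner product after using compatibility of \( g \) and \( J \)), so that \( \nu(\omega)=0 \) is equivalent to \( \cA(g(\omega(\omega(\cdot,\cdot),\cdot),\cdot))=0 \). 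Using Lemma~\ref{le:partsofnuequalzero} I would restrict attention to the components of \( \Lambda^4\mfg^* \) on which \( \nu \) can possibly be nonzero, shrinking the computation. I expect the main obstacle to be precisely this last step: tracking the \( J \)'s, signs, and the combinatorial factor \( 2 \) between \( \nu_1 \) and \( \nu_2 \) carefully enough to see that the combination collapses to the single clean quadratic identity \( \cA(g(\omega(\omega(\cdot,\cdot),\cdot),\cdot))=0 \), rather than to something with extra cross terms — one has to verify that the \( h \)- and \( K \)-type contributions that complicated the codimension-two computation genuinely vanish here because those components of \( \omega \) are zero. Once that is checked, assembling the five conditions gives the stated equivalence, and the converse direction is automatic since every step is an equivalence.
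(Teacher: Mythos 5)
Your first two steps match the paper: reading off Lemma~\ref{le:Jintegrable} with \( \mfa_J=0 \) gives the conditions \( f\in S^2\mfa^*\otimes\mfa \), \( (\omega_1)_{rr}^r=0 \), \( \beta_Y=\alpha_{JY} \), \( \nu \) of type \( (1,1) \), and \eqref{eq:Abeliansheardata} is literally the first equation of \eqref{eq:SKTconditionomegaaJzero}. The gap is in your treatment of \eqref{eq:AbelianSKTshear2}. The paper's argument is: since \( J^*\omega \) and \( \omega \) take values in \( \mfa=\mfa_r \) while \( J\circ J.\omega \) takes values in \( J\mfa=U_r \), equation \eqref{eq:AbelianSKTshear1} splits and is equivalent to \( J^*\omega=\omega \), i.e.\ \( \omega \) is of type \( (1,1) \). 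Using this, \( \nu_1(\omega)=\cA\bigl(g(\omega(\any,\any),\omega(\any,\any))\bigr) \) and \( \nu_2(\omega)=\cA\bigl(g(\omega(\omega(\any,\any),\any),\any)\bigr) \); the decisive point, which your proposal never identifies, is that \( \nu_2(\omega) \) then vanishes \emph{automatically} as a consequence of the Jacobi condition \eqref{eq:Abeliansheardata}, because antisymmetrising over all four arguments factors through the antisymmetrisation of \( \omega(\omega(\any,\any),\any) \) over its three slots. Hence \( \nu=\nu_1 \), and \eqref{eq:AbelianSKTshear2} is exactly the second equation of \eqref{eq:SKTconditionomegaaJzero}.

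Your substitute mechanism is incorrect on both counts. You claim the \( J \)'s ``pair up and disappear under the inner product'' so that both \( g(J^*\omega(\any,\any),\omega(\any,\any)) \) and \( g(J^*\omega(\omega(\any,\any),\any),\any) \) become \( \pm g(\omega(\any,\any),\omega(\any,\any)) \); but stripping the \( J \)'s from the second term (legitimate once \( J^*\omega=\omega \)) yields \( g(\omega(\omega(\any,\any),\any),\any) \), a genuinely different tensor, and no \( J \)-bookkeeping converts one into the other. Moreover, the identity you state as the outcome, \( \cA\bigl(g(\omega(\omega(\any,\any),\any),\any)\bigr)=0 \), is not the second equation of \eqref{eq:SKTconditionomegaaJzero}: it is precisely the \( \nu_2 \)-term, which must be shown to vanish via \eqref{eq:Abeliansheardata}, whereas the equation you actually need to recover is \( \cA\bigl(g(\omega(\any,\any),\omega(\any,\any))\bigr)=0 \). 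So the essential step --- ``\( \omega \) type \( (1,1) \) plus Jacobi kills \( \nu_2 \), leaving \( \nu=\nu_1 \)'' --- is missing, and the route you sketch in its place would not close.
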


\begin{proof}
  All the conditions but \eqref{eq:SKTconditionomegaaJzero} follow
  from Lemma~\ref{le:Jintegrable}.

  Next, note that \( J^*\omega \) and \( \omega \) both have values in
  \( \mfa=\mfa_r \) while \( J\circ J.\omega \) has values in
  \( J\mfa=J\mfa_r=U_r \).
  Hence, \eqref{eq:AbelianSKTshear1} is equivalent
  \( J^*\omega=\omega \), since this equation says that \( \omega \)
  is of type \( (1,1) \) and so \( J.\omega=0 \).
  Thus,
  \( \omega(J\omega(\any,\any),J\any)=\omega(\omega(\any,\any),\any)
  \), i.e.\
  \( \nu_2(\omega)=\cA\Bigl(\omega(\omega(\any,\any),\any)\Bigr)=0 \),
  and
  \(
  \nu_1(\omega)=\cA\Bigl(g(\omega(\any,\any),\omega(\any,\any))\Bigr)
  \).
  Hence, \eqref{eq:Abeliansheardata} and~\eqref{eq:AbelianSKTshear2}
  are satisfied if and only if
  \begin{equation*}
    \cA\Bigl(\omega(\omega(\any,\any),\any)\Bigr)=0,\qquad \cA\Bigl(g(\omega(\any,\any),\omega(\any,\any))\Bigr)=0.
  \end{equation*}
\end{proof}
\begin{remark}
  Note that \( J^*\omega=\omega \) implies that the induced almost
  complex structure on the shear Lie algebra is Abelian.
\end{remark}
We first look at the exact form of \( f \).
For this, observe that \( f\in S^2 \mfa^*\otimes \mfa \) and
\( f(f(\any,\any),\any)\in S^3\mfa^*\otimes V \).
Moreover, note that the second equation
in~\eqref{eq:SKTconditionomegaaJzero} on
\( U_r^*\otimes U_r^*\otimes \mfa^*\otimes \mfa^* \) is equivalent to
\( g(f(\any,\any),f(\any,\any))\in (\mfa^*)^{\otimes 4} \) being
totally symmetric.

Such an \( f \) can always be brought into a certain ``normal form'':

\begin{lemma}
  \label{le:formoff}
  Let \( (V,g) \) be an \( m \)-dimensional Euclidean vector space and
  \( f\in S^2 V^*\otimes V \) such that both
  \( f(f(\any,\any),\any)\in S^2V^*\otimes V^*\otimes V \) and
  \( g(f(\any,\any),f(\any,\any))\in S^2 V^*\otimes S^2 V^* \) are
  totally symmetric in their \( V^* \)-factors.
  Set \( V_1:=\im(f|_{S^2\im(f)}) \),
  \( V_2:=V_1^{\perp}\cap \im(f) \) and \( r:=\dim(V_1) \).
  Then there exist an orthonormal basis \( (X_1,\dots,X_r) \) of
  \( V_1 \), non-zero
  \( \lambda_1,\ldots,\lambda_r\in \bR\setminus \{0\} \) and a
  complementary subspace \( V_3 \) of \( V_1\oplus V_2 \) in \( V \)
  which is orthogonal to \( V_2 \) satisfying the following
  conditions:
  \begin{enumerate}[(i)]
  \item\label{it:formoff-ev}
    \( f(X_a,X_b) = \delta_{ab}\lambda_{a}X_{a} \) for all
    \( a,b\in\{1,\dots,r\} \),
  \item\label{it:formoff-zero}
    \( f(V_{2}\oplus V_{3},V_{1}\oplus V_{2})=0 \)
  \item\label{it:formoff-V3} \( f(V_{3},V_{3})=V_2 \).
  \end{enumerate}
\end{lemma}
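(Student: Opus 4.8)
The plan is to analyse the commuting family of endomorphisms $f_v = f(v,\any)$ restricted to $\im(f)$, using the two symmetry hypotheses on $f$. First I would record the consequence of total symmetry of $f(f(\any,\any),\any)$: the $f_v$ commute and satisfy $f_u \circ f_v = f_{f(u,v)}$, exactly as in the proof of Proposition~\ref{pro:fidentityendomorphism}. Restricting everything to the invariant subspace $\im(f)$, the subalgebra generated by $\{f_v|_{\im(f)}\}$ is commutative. The second hypothesis, total symmetry of $g(f(\any,\any),f(\any,\any))$, says that for all $u,v,w,x$ we have $g(f(u,v),f(w,x)) = g(f(u,w),f(v,x))$; polarising and using $f_u^T f_w = $ (something symmetric in the appropriate indices) this should force each composite $f_u^T f_v$ to be $g$-symmetric on $\im(f)$, hence in particular each $f_v^T f_v$ symmetric and — more usefully — $f_u^T f_w = f_w^T f_u$. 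Combined with commutativity $f_u f_w = f_w f_u$, I expect to deduce that the family $\{f_v|_{\im(f)}\}$ consists of \emph{normal}, mutually commuting operators, hence is simultaneously orthogonally diagonalisable on $\im(f)$.

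Next I would exploit the diagonalisation. Pick a $g$-orthonormal eigenbasis $(e_1,\dots,e_p)$ of $\im(f)$ for the whole family, so $f(e_a,e_b) = 0$ for $a\neq b$ (common eigenvectors of commuting diagonalisable operators in a joint eigenbasis: $f_{e_a} e_b = \mu_b(e_a) e_b$, but also $f_{e_a} e_b = f_{e_b} e_a = \mu_a(e_b) e_a$, forcing the off-diagonal terms to vanish) and $f(e_a,e_a) = \lambda_a e_a$ for scalars $\lambda_a = \mu_a(e_a)$. Now separate the indices with $\lambda_a \neq 0$ from those with $\lambda_a = 0$: reorder so that $\lambda_1,\dots,\lambda_r \neq 0$ and $\lambda_{r+1} = \dots = \lambda_p = 0$. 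I claim $V_1 = \im(f|_{S^2 \im(f)}) = \spa{e_1,\dots,e_r}$: indeed $f(e_a,e_a) = \lambda_a e_a$ lies in it for $a \le r$, and conversely $f(\sum x_a e_a, \sum y_b e_b) = \sum_a \lambda_a x_a y_a e_a$ by the vanishing of cross terms, which is visibly in $\spa{e_1,\dots,e_r}$. Set $X_a := e_a$ and these $\lambda_a$; then \ref{it:formoff-ev} holds. Moreover $V_2 := V_1^\perp \cap \im(f) = \spa{e_{r+1},\dots,e_p}$, on which $f$ acts trivially against all of $\im(f)$.

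For \ref{it:formoff-zero} and \ref{it:formoff-V3} I would analyse $f$ on a complement of $\im(f)$. Choose any complement $V_3'$ of $V_1 \oplus V_2 = \im(f)$ in $V$. For $u \in \im(f)$ and $w$ arbitrary, $f(u,w) = f_u(w)$; since $f_u$ commutes with all $f_v$ and $\im(f)$ is $f_u$-invariant with $f_u$ diagonalised there, I need that $f_u$ kills $V_2$ and maps $V_3'$ into... here one must be careful. The cleaner route: the \emph{range} of $f_u$ lies in $\im(f)$, and for $u \in V_2$ one has $f_u|_{\im(f)} = 0$ from the eigenvalue computation; to get $f_u(w) = 0$ for $w \notin \im(f)$ as well when $u \in V_2$, use $g(f(u,w), z) = g(f(u,z), w)$ for $z \in \im(f)$ (total symmetry of the quadratic form $g(f(\any,\any),f(\any,\any))$ gives $g(f(u,w),f(z,z')) = g(f(u,z),f(w,z'))$, and surjectivity of $f$ onto $\im(f)$ lets one cancel), and $f(u,z) = f_u(z) = 0$; so $f(u,w) \perp \im(f)$, but $f(u,w) \in \im(f)$, hence $f(u,w) = 0$. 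Similarly $f(V_3', V_1) \subseteq \im(f)$ and one checks it is perpendicular to $\im(f)$ by the same pairing trick (pairing against $f(z,z')$, $z,z' \in \im(f)$, and moving $f$ around), so $f(V_3', V_1) = 0$ too. That establishes $f(V_2 \oplus V_3', V_1 \oplus V_2) = 0$, which is \ref{it:formoff-zero} for this choice of complement. Finally $f(V_3', V_3') \subseteq \im(f) = V_1 \oplus V_2$, and its $V_1$-component vanishes: for $w,w' \in V_3'$ and $a \le r$, $g(f(w,w'), X_a) = \lambda_a^{-1} g(f(w,w'), f(X_a,X_a)) = \lambda_a^{-1} g(f(w,X_a), f(w',X_a)) = 0$ since $f(w,X_a) \in f(V_3',V_1) = 0$. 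So $f(V_3',V_3') \subseteq V_2$. To upgrade $V_3'$ to a $V_3$ orthogonal to $V_2$, replace $V_3'$ by its $g$-orthogonal projection off $V_2$ inside $(V_1)^\perp$; since $f(V_2, \text{everything}) = 0$ this modification does not disturb \ref{it:formoff-zero} or the inclusion $f(V_3,V_3)\subseteq V_2$. The remaining point is the equality $f(V_3,V_3) = V_2$ rather than mere inclusion: this follows because $\im(f) = V_1 + V_2$ and the $V_2$-part of $\im(f)$ can only come from $f(V_3,V_3)$ — every other block ($f(V_1,V_1) \subseteq V_1$, $f(V_1,V_2) = f(V_2,V_2) = f(V_2,V_3) = 0$, $f(V_1,V_3) = 0$) contributes nothing to $V_2$.

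The main obstacle I anticipate is the passage from the two symmetry hypotheses to \emph{normality} (not just commutativity) of the family $\{f_v|_{\im(f)}\}$, i.e.\ extracting "$f_u^T f_v$ is $g$-symmetric on $\im(f)$" cleanly from "$g(f(\any,\any),f(\any,\any))$ is totally symmetric" together with surjectivity of $f$ onto $\im(f)$; once normality plus commutativity is in hand, the simultaneous orthogonal diagonalisation and all the bookkeeping of the three blocks $V_1, V_2, V_3$ is routine multilinear algebra. A secondary subtlety is making sure the final choice of $V_3$ can be taken orthogonal to $V_2$ while keeping all three displayed identities, but since $f$ annihilates $V_2$ in every slot this causes no real trouble.
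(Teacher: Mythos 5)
Your overall skeleton (commuting family \( f_v=f(v,\any) \), joint orthonormal eigenbasis of \( \im(f) \), then splitting off \( V_1,V_2 \) and choosing a complement \( V_3 \)) is the paper's, but the step you yourself flag as the main obstacle is a genuine gap, and it is mislocated. From the four-fold symmetry of \( g(f(\any,\any),f(\any,\any)) \) you do get that every \( f_u^Tf_w \) is \( g \)-symmetric, but (a) this together with commutativity does not imply that the \( f_v|_{\im(f)} \) are normal, and (b) even normality would not suffice: over \( \bR \), commuting normal operators are only simultaneously block-diagonalisable, and rotation-type blocks have no real eigenvectors, whereas everything that follows in your argument (the basis \( (e_a) \) with \( f(e_a,e_a)=\lambda_a e_a \), \( \lambda_a\in\bR \), and vanishing cross terms) requires an orthonormal basis of genuine real eigenvectors. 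What is actually true, and what the paper proves, is the stronger fact that \( f_X|_{\im(f)} \) is \emph{self-adjoint}; this cannot be extracted from the second hypothesis alone. The paper gets it by observing that \( g(f(\any,f(\any,\any)),f(\any,\any)) \) is totally symmetric — the swap of the first two arguments uses the \emph{first} hypothesis \( f(f(\any,\any),\any)\in S^3 \) — and then restricting to conclude that \( g(f(X,w_1),w_2) \) is symmetric in \( w_1,w_2\in\im(f) \). Your plan, which tries to reach the diagonalisation using only the four-fold symmetry plus surjectivity, cannot deliver this.

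The second gap is your claim that \( f(V_3',V_1)=0 \) for an \emph{arbitrary} complement \( V_3' \) of \( \im(f) \). This is false: on \( V=\bR^2 \) with orthonormal basis \( X,Y \) set \( f(u,v):=\ell(u)\ell(v)X \), \( \ell(aX+bY):=a+b \); both symmetry hypotheses hold, \( \im(f)=V_1=\spa{X} \), \( V_2=0 \), yet \( f(Y,X)=X\neq 0 \) for the complement \( \spa{Y}=\im(f)^{\perp} \). So no pairing trick can prove it, and this is precisely why the lemma asserts only \( V_3\perp V_2 \), not \( V_3\perp V_1 \). The paper instead constructs the complement via \( H\colon \im(f)^{\perp}\to V \), \( H(Y)=Y-\sum_{i=1}^r\lambda_i^{-1}f(Y,X_i) \), after noting \( f(Y,X_i)\in\spa{X_i} \) (a consequence of the first hypothesis), so that \( V_3:=H(\im(f)^{\perp}) \) satisfies \( f(V_3,\im(f))=0 \) and is automatically orthogonal to \( V_2 \). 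Your later steps — \( f(V_3',V_3')\subseteq V_2 \) via ``\( f(w,X_a)\in f(V_3',V_1)=0 \)'' and the projection off \( V_2 \) — all lean on the false claim. A smaller repairable point: your derivation of \( f(V_2,V)=0 \) is circular (you need \( f(u,z)=0 \) for general \( z\in V \) in order to let \( f(z,z') \) span \( \im(f) \)); the paper's one-line argument \( \norm{f(u,w)}^2=g(f(u,u),f(w,w))=0 \) avoids this.
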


\begin{proof}
  Let \( W=\im(f) \) be the image of~\( f \).
  Then our assumptions imply
  \begin{equation*}
    g(f(\any,\any),\any)|_{W^{\otimes 3}} \in S^{3}W^{*},
  \end{equation*}
  i.e.\ is totally symmetric, since
  \( g(f(\any,f(\any,\any)),f(\any,\any)) \in S^{5}(V^*) \).
  Hence, \( f_X := f(X,\any)|_{W} \in \End(W) \) is self-adjoint with
  respect to \( g \) for all \( X\in \tilde{W} \).
  Now the symmetry assumption implies that all the endomorphisms
  \( f_X \) commute.
  It follows that \( W \) has a common orthonormal basis
  \( X_{1},\dots,X_{m} \) of eigenvectors for these endomorphisms.
  Label these vectors so that \( f(X_{i},X_{i}) \ne 0 \) for
  \( i=1,\dots,r \).
  As
  \( f_{X_{a}}(X_{b}) = f(X_{a},X_{b}) = f_{X_{b}}(X_{a}) \in
  \spa{X_{a}}\cap\spa{X_{b}} \) for all \( a,b\in\{1,\dots,m\} \), we
  find that \( f(X_{a},X_{b}) \) is only non-zero for
  \( a=b \leq r \).
  Thus we get \( W = V_{1} \oplus V_{2} \) orthogonally, with
  \( V_{1}= \im(f|_{S^{2}W}) = \spa{X_{1},\dots,X_{r}} \)
  and~\( V_{2} = \spa{X_{r+1},\dots,X_{m}} \),
  giving~\ref{it:formoff-ev} and \( f(V_{2},V_{1}\oplus V_{2}) = 0 \)
  in~\ref{it:formoff-zero}.

  Note that for \( Y\in V_{2} \) and \( Z\in V \), we have
  \begin{equation*}
    g(f(Y,Z),f(Y,Z)) = g(f(Y,Y),f(Z,Z))=0,
  \end{equation*}
  which implies that \( f(Y,Z)=f(Z,Y)=0 \) for any \( Z\in V \).
  Next, let \( H\colon W^{\bot} \to V \),
  \( H(Y):=Y-\sum_{i=1}^r \frac{1}{\lambda_i}f(Y,X_i) \) and note that
  \( H \) is an injective linear map with \( f(H(Y),X_i)=0 \) for all
  \( i=1\ldots,r \).
  Hence, \( V_3:=H(W^{\bot}) \) is a complement of~\( W \) in~\( V \)
  with \( f(V_3,W)=0 \), and so~\ref{it:formoff-zero} holds.
  Note that \( V_3 \) is not necessarily orthogonal to \( V_1 \) but
  it is orthogonal to~\( V_2 \).  However, we get
  \begin{equation*}
    g(X_i,f(Y,Z))=\frac{1}{\lambda_i}
    g(f(X_i,X_i),f(Y,Z))=\frac{1}{\lambda_i} g(f(X_i,Y),f(X_i,Z))=0
  \end{equation*}
  for all \( i=1,\ldots, r \) and all \( Y,Z\in V_3 \), which implies
  that \( f(Y,Z)\in V_1^{\perp}\cap (\im(f))=V_2 \), as claimed
  in~\ref{it:formoff-V3}.
\end{proof}
In our case, i.e.\ the case of two-step shear data \( (\mfa,\omega) \)
with totally real \( \mfa \), we get \( \mfa_2=\{0\} \) in
Lemma~\ref{le:formoff} and even more the following is true:

\begin{proposition}
  \label{pro:twostepSKTtotallyreal}
  Let \( (\mfa,\omega) \) be two-step SKT shear data with totally real
  \( \mfa \).
  Then, with the notation from the beginning of this subsection, we
  have \( \mfa=\mfa_1\oplus \mfa_3 \), an orthonormal basis
  \( X_1,\ldots,X_r \) of \( \mfa_1 \), \( r:=\dim(\mfa_1) \),
  \( \lambda_1,\ldots,\lambda_r\in \bR\setminus \{0\} \) and one-forms
  \( \mu_1,\ldots,\mu_r \) such that
  \begin{equation}
    \label{eq:twostepSKTtotallyreal1}
    f(X_i,X_j)=\delta_{ij} \lambda_i X_i,\quad f(\mfa_3,\mfa)=0,\quad \alpha_Y(X_i)=\mu_i(Y) X_i,\quad \alpha_Y(\mfa_3)=0
  \end{equation}
  for all \( i,j\in \{1,\ldots,r\} \).
\end{proposition}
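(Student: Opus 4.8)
The plan is to run \( f \) through Lemma~\ref{le:formoff} and then squeeze out the remaining structure from the Jacobi identity and the scalar SKT identity, invoking \( \im(\omega)=\mfa \) only at the very end. By Proposition~\ref{pro:twostepSKTsheradatatotallyreal} together with the discussion preceding Lemma~\ref{le:formoff}, \( f\in S^{2}\mfa^{*}\otimes\mfa \), and both \( f(f(\any,\any),\any) \) and \( g(f(\any,\any),f(\any,\any)) \) are totally symmetric in their \( \mfa^{*} \)-factors (the former via the commutation relations~\eqref{eq:AXcommutes}, valid since \( J^{*}\omega=\omega \); the latter is the second equation of~\eqref{eq:SKTconditionomegaaJzero} read off the \( U_{r} \)-slots). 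Hence Lemma~\ref{le:formoff} applies to \( (\mfa,g|_{\mfa}) \) and produces the splitting \( \mfa=\mfa_{1}\oplus\mfa_{2}\oplus\mfa_{3} \), an orthonormal basis \( X_{1},\dots,X_{r} \) of \( \mfa_{1} \), nonzero \( \lambda_{i} \) with \( f(X_{a},X_{b})=\delta_{ab}\lambda_{a}X_{a} \), and \( f(\mfa_{2},\mfa)=0 \), \( f(\mfa_{3},\mfa_{1}\oplus\mfa_{2})=0 \), \( f(\mfa_{3},\mfa_{3})=\mfa_{2} \). What is left is to prove \( \mfa_{2}=\{0\} \) and to exhibit one-forms \( \mu_{i}\in U_{J}^{*} \) with \( \alpha_{Y}(X_{i})=\mu_{i}(Y)X_{i} \) and \( \alpha_{Y}(\mfa_{3})=0 \).

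Next I would record how the \( \alpha_{Y} \) interact with \( f \). In the totally real case one has \( \omega(JX,W)=-F_{X}(W)=-f(X,W) \), \( \omega(Y,JX)=\beta_{Y}(X)=\alpha_{JY}(X) \) and \( \omega(Y,W)=\alpha_{Y}(W) \) for \( X,W\in\mfa \), \( Y\in U_{J} \), while \( \omega|_{\Lambda^{2}\mfa^{*}}=0 \) and \( (\omega_{1})_{rr}^{r}=0 \). Evaluating the Jacobi identity \( \cA(\omega(\omega(\any,\any),\any))=0 \) (the first equation of~\eqref{eq:SKTconditionomegaaJzero}) on the triple \( (Y,JX,W) \), the term \( \omega(\omega(Y,JX),W)=\omega(\alpha_{JY}X,W) \) vanishes because both arguments lie in \( \mfa \), and the remainder collapses, using the symmetry of \( f \), to
\[
  \alpha_{Y}\circ F_{X}=F_{X}\circ\alpha_{Y}=F_{\alpha_{Y}X}\qquad(X\in\mfa,\ Y\in U_{J}).
\]
Since \( F_{X_{i}} \) has image \( \spa{X_{i}} \) and kernel \( \spa{X_{j}:j\neq i}\oplus\mfa_{2}\oplus\mfa_{3} \), each \( \alpha_{Y} \) preserves every line \( \spa{X_{i}} \), so \( \alpha_{Y}(X_{i})=\mu_{i}(Y)X_{i} \) with \( \mu_{i} \) linear in \( Y \), i.e.\ \( \mu_{i}\in U_{J}^{*} \). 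Likewise \( \alpha_{Y} \) preserves \( \mfa_{1} \), \( \im f=\mfa_{1}\oplus\mfa_{2} \) and \( \bigcap_{i}\ker F_{X_{i}}=\mfa_{2}\oplus\mfa_{3} \), hence also \( \mfa_{2} \); so relative to \( \mfa_{1}\oplus\mfa_{2}\oplus\mfa_{3} \) it is block diagonal on \( \mfa_{1}\oplus(\mfa_{2}\oplus\mfa_{3}) \) and upper triangular on the second summand, with diagonal blocks \( \diag(\mu_{1}(Y),\dots,\mu_{r}(Y)) \), \( A_{Y}:=\alpha_{Y}|_{\mfa_{2}} \), \( C_{Y}:=\pi_{\mfa_{3}}\circ\alpha_{Y}|_{\mfa_{3}} \).

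The main obstacle is then \( \mfa_{2}=\{0\} \) together with \( \alpha_{Y}(\mfa_{3})=0 \); here all of the remaining SKT data enters. The \( U_{r}^{*}\wedge\Lambda^{2}U_{J}^{*} \)- and \( \Lambda^{3}U_{J}^{*} \)-components of the Jacobi identity give, writing \( \sigma:=(\omega_{1})_{JJ}^{r}\in\Lambda^{2}U_{J}^{*}\otimes\mfa \),
\[
  f(X,\sigma(Y,Y'))=\alpha_{Y}\alpha_{JY'}(X)-\alpha_{Y'}\alpha_{JY}(X),\qquad \sum_{\mathrm{cyc}}\alpha_{Y''}\bigl(\sigma(Y,Y')\bigr)=0 .
\]
Because \( f(\mfa_{2},\mfa)=0 \) and \( F_{v}(\mfa_{3})\subseteq\mfa_{2} \) for \( v\in\mfa_{3} \), the only channels by which \( \omega \) reaches \( \mfa_{3} \) at all are the blocks \( C_{Y} \) and \( \pi_{\mfa_{3}}\sigma \); reading off the \( \mfa_{3} \)-parts of the first relation yields \( C_{Y}C_{JY'}=C_{Y'}C_{JY} \) (whence \( C_{Y}^{2}+C_{JY}^{2}=0 \), and the \( C_{Y} \) commute), and then feeding \( \mfa_{3} \)- and \( U_{J} \)-vectors into the scalar identity \( \cA(g(\omega(\any,\any),\omega(\any,\any)))=0 \), into the second relation, and using \( \im(\omega)=\mfa \) should force \( C_{Y}=0 \), the off-diagonal blocks of \( \alpha_{Y}|_{\mfa_{2}\oplus\mfa_{3}} \) to vanish, and \( f|_{\mfa_{3}}=0 \) — so that \( \mfa_{2}=f(\mfa_{3},\mfa_{3})=\{0\} \).

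Once \( \mfa_{2}=\{0\} \) we get \( \mfa=\mfa_{1}\oplus\mfa_{3} \), \( f(\mfa_{3},\mfa)=0 \) (since \( f(\mfa_{3},\mfa_{1})=0 \) and \( f(\mfa_{3},\mfa_{3})=\mfa_{2}=\{0\} \)), and \( \alpha_{Y}(\mfa_{3})=C_{Y}(\mfa_{3})=0 \); together with \( \alpha_{Y}(X_{i})=\mu_{i}(Y)X_{i} \) these are precisely the four identities~\eqref{eq:twostepSKTtotallyreal1}. I expect the bookkeeping in the previous step — tracking which mixed four-form components of \( \cA(g(\omega(\any,\any),\omega(\any,\any))) \) survive Lemma~\ref{le:partsofnuequalzero}, and checking that, together with the Jacobi identity and \( \im(\omega)=\mfa \), the \( \mfa_{3} \)-blocks really do die for arbitrary \( \dim\mfa_{3} \) — to be the delicate point; everything else is formal manipulation with the decomposition from Lemma~\ref{le:formoff}.
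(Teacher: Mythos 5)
Your setup coincides with the paper's up to the point where you produce the one-forms \( \mu_i \): Lemma~\ref{le:formoff} gives the splitting \( \mfa=\mfa_1\oplus\mfa_2\oplus\mfa_3 \), and the first equation of~\eqref{eq:SKTconditionomegaaJzero} shows every \( \alpha_Y \) commutes with every \( f_X \), hence preserves the lines \( \spa{X_i} \), giving \( \alpha_Y(X_i)=\mu_i(Y)X_i \). The genuine gap is exactly where you flag it yourself: the vanishing of \( \mfa_2 \) and of \( \alpha_Y|_{\mfa_3} \) is only asserted (``should force \( C_Y=0 \)'', ``I expect the bookkeeping\dots''), and the relations you actually derive do not suffice to force it. From \( C_YC_{JZ}=C_ZC_{JY} \), commutativity of the \( C_Y \) and \( C_Y^2+C_{JY}^2=0 \) alone one cannot conclude \( C_Y=0 \): on a two-dimensional \( \mfa_3 \) take \( C_Y \) a quarter turn and \( C_{JY}=\id \), which satisfies all of these. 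So a further, carefully chosen evaluation of the quadratic (norm-type) SKT identity is indispensable, and that evaluation is the actual content of the proposition; leaving it at ``should'' means the heart of the proof is missing.

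For comparison, the paper closes this as follows. For any \( X \) with \( f(X,X)=0 \) (in particular \( X\in\mfa_2 \)), inserting \( (X,JX,Y,JY) \) into the second equation of~\eqref{eq:SKTconditionomegaaJzero} collapses, because the \( f(X,X) \)-terms drop out, to \( 0=-\norm{\alpha_Y(X)}^2-\norm{\alpha_{JY}(X)}^2 \), whence \( \alpha_Y(\mfa_2)=0 \). Commutation with \( f_X \), \( X\in\mfa_3 \), then gives \( \alpha_Y(\mfa_3)\subseteq\mfa_2\oplus\mfa_3' \), where \( \mfa_3' \) is the common kernel of the maps \( f_X|_{\mfa_3} \), \( X\in\mfa_3 \); the same sum-of-squares evaluation applied to \( X\in\mfa_3' \) (again \( f(X,X)=0 \)) yields \( \alpha_Y(\mfa_2\oplus\mfa_3')=0 \). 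Now the Jacobi identity on \( (JX,Y,Z) \) with \( X\in\mfa_3 \) reduces to \( f(X,\nu(Y,Z))=0 \), so \( \im(\nu)\subseteq\mfa_1\oplus\mfa_2\oplus\mfa_3' \); since the images of \( f \), \( \alpha \), \( \beta \) also lie in that subspace, \( \im(\omega)\subseteq\mfa_1\oplus\mfa_2\oplus\mfa_3' \), and surjectivity of \( \omega \) forces \( \mfa_3'=\mfa_3 \). That kills \( f|_{\mfa_3\times\mfa_3} \), hence \( \mfa_2=f(\mfa_3,\mfa_3)=\{0\} \), and \( \alpha_Y(\mfa_3)=0 \). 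Some version of this sum-of-squares step together with the auxiliary subspace \( \mfa_3' \) and the surjectivity argument via \( \nu \) is what your sketch would need to supply before it can be called a proof.
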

\begin{proof}
  Let \( \mfa_1 \), \( \mfa_2 \), \( \mfa_3 \), \( r \), \( s \),
  \( (X_1,\ldots,X_r) \),
  \( \lambda_1,\ldots,\lambda_r\in \bR\setminus \{0\} \) be as in
  Lemma~\ref{le:formoff} and let us first look at
  \( \alpha\in U_J^*\otimes \End(\mfa) \).

  Then the first condition in~\eqref{eq:SKTconditionomegaaJzero} gives
  us that \( \alpha_Y\in \End(\mfa) \) commutes with
  \( f_X:=f(X,\any) \) for any \( X\in \mfa \).
  In particular, \( \alpha_Y \) preserves the eigenspaces of all
  \( f_{X_i} \) and so \( \spa{X_i} \) for all \( i=1,\ldots,r \) as
  well as \( \mfa_2\oplus \mfa_3 \), which is the common zero
  eigenspace of all \( f_{X_i} \), \( i=1,\ldots,r \).
  In particular, there are \( \mu_1,\ldots,\mu_r\in U_J^* \) with
  \( \alpha_Y(X_i)=\mu_i(Y) X_i \) for all \( i=1,\ldots,r \).

  Next, look at the second equation
  in~\eqref{eq:SKTconditionomegaaJzero} and insert into that equation
  \( (X,JX,\allowbreak Y,JY) \) with \( X\in \mfa_2 \) As
  \( f(X,X)=0 \), one gets
  \begin{equation*}
    \begin{split}
      0&=\cA(g(\omega(\any,\any),\omega(\any,\any)))(X,JX,Y,JY)\\
       &=-g(\omega(X,Y),\omega(JX,JY))+g(\omega(X,JY),\omega(JX,Y))\\
       &=-g(\alpha_Y(X),\beta_{JY}(X))+g(\alpha_{JY}(X),\beta_Y(JX))=-\norm{\alpha_Y(X)}^2-\norm{\alpha_{JY}(X)}^2,
    \end{split}
  \end{equation*}
  i.e.\ \( \alpha_Y(X)=0 \) and so \( \alpha_Y(\mfa_2)=0 \), where
  here and in the following we leave out irrelevant normalization
  factors coming from anti-symmetrization map \( \cA \).

  Now take \( X \), \( W\in \mfa_3 \).
  Then \( 0=[f_X,\alpha_Y](W)=f_X(\alpha_Y(W)) \) since
  \( f_X(W)\in \mfa_2\subseteq \ker(\alpha_Y) \).
  Hence, if we denote by \( \mfa_3' \) the common kernel of all
  \( f_X|_{\mfa_3} \), \( X\in \mfa_3 \), then
  \( \alpha_Y(\mfa_3)\subseteq \mfa_2\oplus \mfa_3' \).
  We are going to show that \( \mfa_3'=\mfa_3 \), which implies
  \( f_X=0 \) for all \( X\in \mfa_3 \) and so \( \mfa_2=0 \) by
  condition~\ref{it:formoff-V3} in Lemma~\ref{le:formoff}.

  For this, let \( X\in \mfa_3' \) and \( Y\in U_J \).
  Again, we have \( f(X,X) \) and so
  \begin{equation*}
    \begin{split}
      0&=\cA(g(\omega(\any,\any),\omega(\any,\any)))(X_,JX_,Y,JY)\\
       &=-g(\alpha_Y(X),\beta_{JY}(X))+g(\alpha_{JY}(X),\beta_Y(JX))=-\norm{\alpha_Y(X)}^2-\norm{\alpha_{JY}(X)}^2.,
    \end{split}
  \end{equation*}
  i.e.\ \( \alpha_Y(\mfa_3')=\{0\} \), and so
  \( \alpha_Y(\mfa_2\oplus \mfa_3')=\{0\} \).

  Next let \( X\in \mfa_3 \), \( Y,Z\in U_J \) be given.
  Then~\eqref{eq:SKTconditionomegaaJzero} gives us
  \begin{equation*}
    \begin{split}
      0&=\cA(\omega(\omega(\any,\any),\any))(JX,Y,Z)=f(X,\nu(Y,Z))-\alpha_Y(\alpha_{JZ}(X))+\alpha_{Z}(\alpha_{JY}(X))\\
       &=f(X,\nu(Y,Z))
    \end{split}
  \end{equation*}
  since \( \alpha_{JZ}(X),\alpha_{JY}(X)\in \mfa_2\oplus\mfa_3' \).

  But this implies \( \nu(Y,Z)\in \mfa_1\oplus \mfa_2\oplus\mfa_3' \)
  for any \( Y,Z\in U_J \).
  As the images of \( f \), \( \alpha_Y \) and \( \beta_Z \) are all
  \( \mfa_1\oplus \mfa_2\oplus \mfa_3' \) for any \( Y,Z\in U_J \), we
  have \( \im(\omega)\subseteq \mfa_1\oplus \mfa_2\oplus \mfa_3' \).
  However, by assumption,
  \( \im(\omega)=\mfa=\mfa_1\oplus \mfa_2\oplus \mfa_3 \), and so
  \( \mfa_3'=\mfa_3 \).
  As said above, this implies \( \mfa_2=\{0\} \).
  Note that now also \( 0=\alpha_Y(\mfa_3')=\alpha_Y(\mfa_3) \) for
  any \( Y\in U_J \), which finishes the proof.
\end{proof}
We write now \( \nu=\sum_{i=1}^r \nu_i X_i+\tilde{\nu} \) with
\( \nu_1,\ldots \nu_r\in \Lambda^2 U_J^* \) and
\( \tilde{\nu}\in \Lambda^2 U_J^*\otimes \mfa_3 \).Then, applying the
first part of~\eqref{eq:SKTconditionomegaaJzero} to \( (JX_i,Y,Z) \)
with \( i\in \{1,\ldots,r\} \) and \( Y,Z\in U_J \), one obtains
\begin{equation*}
  \begin{split}
    0&=\cA(\omega(\omega(\any,\any),\any))(JX_i,Y,Z)
       =f(X_i,\nu(Y,Z))-(\mu_i\wedge \mu_i\circ J)(Y,Z)X_i\\
     &=\bigl(\lambda_i \nu_i-(\mu_i\wedge \mu_i\circ J)\bigr)(Y,Z) X_i,
  \end{split}
\end{equation*}
so \( \nu_i=(\mu_i\wedge \mu_i\circ J)/\lambda_i \).

Next, we show that the splitting \( \mfa=\mfa_1\oplus \mfa_3 \) is
\( g \)-orthogonal.
For this, we insert \( (JX_i,X_i,Y,Z) \) with
\( i\in \{1,\ldots,r\} \) and \( Y,Z\in U_J \) into the second
equation in~\eqref{eq:SKTconditionomegaaJzero} and get
\begin{equation*}
  \begin{split}
    0&=\cA\Bigl(g(\omega(\any,\any),\omega(\any,\any))\Bigr)(JX_i,X_i,Y,Z)\\
     &=g(f(X_i,X_i),\nu(Y,Z))+g(\alpha_{JY}(X_i),\alpha_Z(X_i))-g(\alpha_{JZ}(X_i),\alpha_Y(X_i))\\
     &=(\mu_i\wedge \mu_i\circ J)(Y,Z)+\lambda_i g(X_i,\tilde{\nu}(Y,Z))-(\mu_i\wedge \mu_i\circ J)(Y,Z)\\
     &=\lambda_i g(X_i,\tilde{\nu}(Y,Z)),
  \end{split}
\end{equation*}
thus \( \tilde{\nu}(Y,Z)\perp_g \mfa_1 \) for all \( Y,Z\in U_J \).
As by assumption \( \im(\omega)=\mfa \) and so \( \mfa_3 \) has to be
spanned by \( \im(\tilde{\nu}) \), we obtain
\( \mfa_1\perp_g \mfa_3 \).
Now one checks that~\eqref{eq:SKTconditionomegaaJzero} is valid with
the exception of the second equation on \( \Lambda^4 U_J^* \), which
is equivalent to
\( \sum_{i=r+1}^m \tilde{\nu}_i\wedge \tilde{\nu}_i=0 \) if we write
\( \tilde{\nu}=\sum_{i=r+1}^m \tilde{\nu}_i X_i \) for an orthonormal
basis \( X_{r+1},\dots,X_m \) of \( \mfa_3 \) with
\( \tilde{\nu}_{r+1},\ldots,\tilde{\nu}_m\in \Lambda^2 U_J^* \).
Summarizing, we have obtained:

\begin{theorem}
  \label{th:totallyrealgeneralresult}
  Let \( (\mfg,g, J) \) be a \( 2n \)-dimensional almost Hermitian
  two-step solvable Lie algebra with totally real \( m \)-dimensional
  commutator ideal \( \derg \).

  Then \( (\mfg,g,J) \) is SKT if and only if there exists an
  orthonormal basis \( (X_1,\ldots,X_m) \) of \( \derg \),
  \( \lambda_1,\ldots,\lambda_r\in \bR\setminus \{0\} \),
  \( \mu_1,\ldots,\mu_r\in U_J^* \) with
  \( U_J:=(\derg\oplus J\derg)^{\perp} \) and
  \( \tilde{\nu}_{r+1},\ldots,\tilde{\nu}_{n}\in \Lambda^2 U_J^* \)
  such that the only non-zero Lie brackets (up to anti-symmetry) are
  given by
  \begin{equation*}
    \begin{split}
      [JX_i,X_i]&=\lambda_i X_i,\ [Y,X_i]=\mu_i(Y) X_i,\
                  [Y,JX_i]=-\mu_i(JY) X_i,\\
      [Y,Z]&=\sum_{i=1}^r \frac{1}{\lambda_i}(\mu_i\wedge \mu_i\circ
             J)(Y,Z) X_i + \sum_{j=r+1}^m \tilde{\nu}_j(Y,Z) X_j  ,
    \end{split}
  \end{equation*}
  for \( i=1,\ldots,r \), \( Y,Z\in U_J \), and such that for
  \( \tilde{\nu}:=\sum_{i=r+1}^m \tilde{\nu}_i X_i \) we have
  \( \sum_{i=r+1}^m \tilde{\nu}_i\wedge \tilde{\nu}_i=0 \) and
  \( \im(\tilde{\nu})=\spa{X_{r+1},\ldots,X_m} \).

  If this is the case, we have
  \( \mfg\cong r\,\aff_{\bR}\oplus \mfh \) for some
  \( 2(n-r) \)-dimensional nilpotent Lie algebra \( \mfh \).
\end{theorem}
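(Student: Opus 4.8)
The plan is to verify the ``only if'' direction by running the computations of the preceding lemmas and propositions in the totally real case and then to deduce the final structural statement $\mfg \cong r\,\aff_{\bR} \oplus \mfh$ purely from the displayed Lie brackets. The hypotheses assemble as follows: by Proposition~\ref{pro:twostepSKTsheardatatotallyreal} the SKT condition for totally real $\mfa$ is equivalent to $f \in S^2\mfa^* \otimes \mfa$, $(\omega_1)^r_{rr} = 0$, $\beta_Y = \alpha_{JY}$, $\nu$ of type $(1,1)$, together with the two cubic/quadratic identities in~\eqref{eq:SKTconditionomegaaJzero}. Proposition~\ref{pro:twostepSKTtotallyreal} then refines this to the normal form~\eqref{eq:twostepSKTtotallyreal1}: an orthonormal basis $X_1,\dots,X_r$ of $\mfa_1$ with $f(X_i,X_j) = \delta_{ij}\lambda_i X_i$, $\lambda_i \neq 0$, $\alpha_Y(X_i) = \mu_i(Y)X_i$, and $f$, $\alpha_Y$ both annihilating $\mfa_3$. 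The remaining computations in the text (evaluating the first equation of~\eqref{eq:SKTconditionomegaaJzero} on $(JX_i,Y,Z)$, then the second on $(JX_i,X_i,Y,Z)$) pin down $\nu_i = (\mu_i \wedge \mu_i\circ J)/\lambda_i$ and force $\mfa_1 \perp_g \mfa_3$, leaving only the single constraint $\sum_{i=r+1}^m \tilde\nu_i \wedge \tilde\nu_i = 0$ plus surjectivity $\im(\tilde\nu) = \spa{X_{r+1},\dots,X_m}$. Translating the shear data $(\mfa,\omega)$ into Lie brackets on $\mfh = \mfg$ via the recipe $[X,Z]_{\mfp} = \eta(X)Z = -\omega_0(X,\any)Z$ and $[X,Y]_{\mfp} = -\omega(X,Y)$ from Section~\ref{sec:2stepsolvable}, and recalling $A_X = -\omega_0(JX,\any)$, produces exactly the brackets displayed in the statement, with the basis vectors identified as $X_i \in \derg$, $JX_i \in U_r = J\derg$, and $U_J = (\derg \oplus J\derg)^{\perp}$. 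The converse direction is immediate: any Lie algebra with these brackets is visibly two-step solvable with $\derg = \spa{X_1,\dots,X_m}$ totally real, carries the stated almost Hermitian structure, and one checks it satisfies~\eqref{eq:SKTconditionomegaaJzero} by reversing the above computation.

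For the final isomorphism, I would argue as follows. The brackets involving $X_i$, $JX_i$ for $i \le r$ and the contribution $\sum_{i=1}^r \lambda_i^{-1}(\mu_i \wedge \mu_i \circ J)(Y,Z)X_i$ to $[Y,Z]$ all lie inside the span of $X_i, JX_i, \ker\mu_i$, and after a change of basis in $U_J$ adapted to the one-forms $\mu_i$ one recognises each index $i \in \{1,\dots,r\}$ as contributing an $\aff_{\bR}$ summand: precisely, the relation $[JX_i,X_i] = \lambda_i X_i$ together with $[Y,X_i] = \mu_i(Y)X_i$, $[Y,JX_i] = -\mu_i(JY)X_i$ and $[Y,Z] \ni \lambda_i^{-1}(\mu_i\wedge\mu_i\circ J)(Y,Z)X_i$ can be absorbed by replacing $JX_i$ with a suitable vector $\tilde e_i = JX_i - (\text{linear combination of a }U_J\text{-basis})$ so that $X_i, \tilde e_i$ span a copy of $\aff_{\bR}$ commuting with everything else. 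The hard part will be organising this simultaneously over all $i = 1,\dots,r$ — one must split off $r$ such $\aff_{\bR}$ factors at once while keeping the complementary subspace closed under the bracket, which requires a careful choice of complement to $\spa{X_1,\dots,X_r,JX_1,\dots,JX_r}$ inside $\mfg$ on which the residual bracket is $\sum_{j=r+1}^m \tilde\nu_j(Y,Z)X_j$. Once that is done, the complement $\mfh$ has derived algebra $\spa{X_{r+1},\dots,X_m}$ lying in its centre modulo itself — indeed $[\mfh,\mfh] \subseteq \spa{X_{r+1},\dots,X_m}$ and $[\mfh, \spa{X_{r+1},\dots,X_m}] = 0$ since $f(\mfa_3,\mfa) = 0$, $\alpha_Y(\mfa_3) = 0$ and $\beta_Y(\mfa_3) = 0$ — so $\mfh$ is two-step nilpotent of dimension $2n - 2r$, giving $\mfg \cong r\,\aff_{\bR} \oplus \mfh$ as claimed.

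I expect the only genuine subtlety to be bookkeeping: verifying that the vectors $\tilde e_i$ can be chosen so that the $r$ copies of $\aff_{\bR}$ are mutually commuting and commute with $\mfh$, which amounts to checking that the cross terms $[\,\tilde e_i, \tilde e_j\,]$ and $[\,\tilde e_i, \mfh\,]$ vanish after the substitution; this follows from the fact that the $X_i$, $i \le r$, only appear on the right-hand side of brackets through $\mu_i$ and $\lambda_i$, which are precisely the data used to define $\tilde e_i$. All the required vanishings are consequences of~\eqref{eq:twostepSKTtotallyreal1} and $\nu_i = (\mu_i\wedge\mu_i\circ J)/\lambda_i$, so no new computation beyond linear algebra is needed. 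The nilpotency of $\mfh$ and the constraint $\sum \tilde\nu_i \wedge \tilde\nu_i = 0$ are not needed for the $\aff_{\bR}$-splitting itself but record that $\mfh$ is a genuine two-step nilpotent SKT algebra, consistent with the abelian complex structure noted in the remark after Proposition~\ref{pro:twostepSKTsheardatatotallyreal}.
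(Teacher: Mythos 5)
Your treatment of the equivalence itself is exactly the paper's route: assemble Proposition~\ref{pro:twostepSKTsheardatatotallyreal}, Lemma~\ref{le:formoff} and Proposition~\ref{pro:twostepSKTtotallyreal}, then evaluate the two identities in~\eqref{eq:SKTconditionomegaaJzero} on \( (JX_i,Y,Z) \) and \( (JX_i,X_i,Y,Z) \) to get \( \nu_i=(\mu_i\wedge\mu_i\circ J)/\lambda_i \) and \( \mfa_1\perp_g\mfa_3 \), leaving only \( \sum_{i>r}\tilde\nu_i\wedge\tilde\nu_i=0 \) and the surjectivity condition; translating the shear data into the displayed brackets and reversing the computation for the converse is also what the paper does implicitly. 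That part is correct and needs no comment.

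The one step that would fail as written is your mechanism for the final splitting \( \mfg\cong r\,\aff_{\bR}\oplus\mfh \) (which the paper asserts without proof). You propose to replace \( JX_i \) by \( \tilde e_i=JX_i-v_i \) with \( v_i\in U_J \) so that \( \spa{X_i,\tilde e_i} \) is an \( \aff_{\bR} \) commuting with the rest. But for \( \mu_i\neq0 \) the \( X_i \)-component of \( [\tilde e_i,Y] \), namely \( \mu_i(JY)-\lambda_i^{-1}\bigl(\mu_i(v_i)\mu_i(JY)-\mu_i(Y)\mu_i(Jv_i)\bigr) \), vanishes for all \( Y\in U_J \) only if \( \mu_i(v_i)=\lambda_i \) and \( \mu_i(Jv_i)=0 \) (note \( \mu_i \) and \( \mu_i\circ J \) are independent when \( \mu_i\neq 0 \)); the first condition gives \( [\tilde e_i,X_i]=(\lambda_i-\mu_i(v_i))X_i=0 \), so your factor degenerates to an abelian one, and in addition \( v_i \) would have to lie in the common kernel of all \( \mu_j\wedge\mu_j\circ J \), \( j\neq i \), and all \( \tilde\nu_j \). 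The correct change of basis goes the other way: leave the ideals \( \spa{X_i,JX_i}\cong\aff_{\bR} \) untouched and correct the transversal directions, setting \( \tilde Y:=Y-\sum_{i=1}^r\lambda_i^{-1}\bigl(\mu_i(Y)\,JX_i+\mu_i(JY)\,X_i\bigr) \) for \( Y\in U_J \). Then \( [\tilde Y,X_i]=[\tilde Y,JX_i]=0 \) for \( i\leq r \), and in \( [\tilde Y,\tilde Z] \) the \( X_i \)-components with \( i\leq r \) cancel (the term coming from \( [Y,Z] \) is offset by the cross terms of the corrections), so \( [\tilde Y,\tilde Z]=\sum_{j>r}\tilde\nu_j(Y,Z)X_j \). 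Hence \( \mfh:=\spa{\tilde Y: Y\in U_J}\oplus\spa{X_j,JX_j: j>r} \) is a complementary two-step nilpotent ideal of dimension \( 2(n-r) \), the \( r \) ideals \( \spa{X_i,JX_i} \) commute with each other and with \( \mfh \), and the splitting follows; the rest of your final paragraph (centrality of \( X_j,JX_j \) for \( j>r \), dimension count) then goes through unchanged.
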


If \( m=r \) in Theorem~\ref{th:totallyrealgeneralresult}, then
\( \tilde{\nu} \) does not occur anymore and we have the following
result.

\begin{corollary}
  \label{co:totallyreal1}
  Let \( (\mfg,g, J) \) be a \( 2n \)-dimensional two-step solvable
  almost Hermitian Lie algebra with totally real \( m \)-dimensional
  commutator ideal \( \derg \) such that \( [\derg,J\derg]=\derg \).

  Then \( (\mfg,g,J) \) is SKT if and only if there exists an
  orthonormal basis \( (X_1,\ldots,X_m) \) of \( \derg \),
  \( \lambda_1,\ldots,\lambda_m\in \bR\setminus \{0\} \),
  \( \mu_1,\ldots,\mu_m\in U_J^* \) with
  \( U_J:=(\derg\oplus J\derg)^{\perp} \), such that the only non-zero
  Lie brackets (up to anti-symmetry) are given by
  \begin{equation*}
    \begin{split}
      [JX_i,X_i]
      &=\lambda_i X_i,\quad
        [Y,X_i]=\mu_i(Y) X_i,\quad
        [Y,JX_i]=-\mu_i(JY) X_i, \\
      [Y,Z]&=\sum_{i=1}^m \frac{1}{\lambda_i}(\mu_i\wedge \mu_i\circ J)(Y,Z)
             X_i,
    \end{split}
  \end{equation*}
  for \( i=1,\ldots,m \), \( Y,Z\in U_J \), If this is the case, we
  have \( \mfg\cong m \, \aff_{\bR} \oplus \bR^{2n-2m} \).
\end{corollary}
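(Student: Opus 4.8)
The plan is to obtain this directly from Theorem~\ref{th:totallyrealgeneralresult}, the extra hypothesis \( [\derg,J\derg]=\derg \) forcing the parameter \( r \) there to equal \( m \), after which the auxiliary two-forms \( \tilde\nu_j \) drop out and the brackets collapse to the stated ones. For the ``only if'' direction, assume \( (\mfg,g,J) \) is SKT\@. By Theorem~\ref{th:totallyrealgeneralresult} there is an orthonormal basis \( (X_1,\ldots,X_m) \) of \( \derg \), nonzero \( \lambda_1,\ldots,\lambda_r \), one-forms \( \mu_1,\ldots,\mu_r\in U_J^* \) and \( \tilde\nu_{r+1},\ldots,\tilde\nu_m\in\Lambda^2U_J^* \) realising the brackets listed there, with \( \tilde\nu:=\sum_{j=r+1}^m\tilde\nu_jX_j \) and \( \im(\tilde\nu)=\spa{X_{r+1},\ldots,X_m} \). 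Reading off that list, the only brackets with both arguments in \( \derg\oplus J\derg \) are \( [JX_i,X_i]=\lambda_iX_i \) for \( i\le r \): the brackets \( [X_i,JX_j] \) with \( i\ne j \), \( [JX_i,JX_j] \) and \( [X_i,X_j] \) are not among the listed ones and hence vanish. Since the \( \lambda_i \) are nonzero, \( [\derg,J\derg]=\spa{X_1,\ldots,X_r} \) has dimension \( r \), so \( [\derg,J\derg]=\derg \) is equivalent to \( r=m \). When \( r=m \) the index range of the \( \tilde\nu_j \) is empty, hence \( \tilde\nu=0 \), and the bracket \( [Y,Z] \) reduces to \( \sum_{i=1}^m\lambda_i^{-1}(\mu_i\wedge\mu_i\circ J)(Y,Z)X_i \) while the other brackets already have the asserted form.

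For the ``if'' direction, brackets of the stated form are exactly those of Theorem~\ref{th:totallyrealgeneralresult} with \( r=m \) and \( \tilde\nu=0 \), for which the side conditions \( \sum_{i=r+1}^m\tilde\nu_i\wedge\tilde\nu_i=0 \) and \( \im(\tilde\nu)=\spa{X_{r+1},\ldots,X_m} \) are vacuous; that theorem then gives that \( (\mfg,g,J) \) is SKT\@. (These relations also force \( \derg=\spa{X_1,\ldots,X_m} \) and \( [\derg,J\derg]=\derg \), consistent with the standing hypothesis.) For the isomorphism type I would invoke the last sentence of Theorem~\ref{th:totallyrealgeneralresult}: \( \mfg\cong m\,\aff_\bR\oplus\mfh \) for a nilpotent Lie algebra \( \mfh \) with \( \dim\mfh=2(n-m) \). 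As \( \dim[\mfg,\mfg]=\dim\derg=m \) whereas \( [m\,\aff_\bR\oplus\mfh,m\,\aff_\bR\oplus\mfh] \) has dimension \( m+\dim[\mfh,\mfh] \), we get \( [\mfh,\mfh]=0 \), so \( \mfh \) is abelian, \( \mfh\cong\bR^{2n-2m} \), and \( \mfg\cong m\,\aff_\bR\oplus\bR^{2n-2m} \).

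The proof is short because essentially all the work lies in Theorem~\ref{th:totallyrealgeneralresult}; the one point requiring attention is the computation of \( [\derg,J\derg] \) from the general bracket list, namely that only the ``diagonal'' brackets \( [JX_i,X_i] \) contribute to it, which is precisely what identifies \( r \) with \( \dim[\derg,J\derg] \) and thus triggers the simplification.
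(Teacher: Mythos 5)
Your proposal is correct and follows the same route as the paper: the paper obtains this corollary simply by specialising Theorem~\ref{th:totallyrealgeneralresult} to \( r=m \), where \( \tilde\nu \) disappears. Your extra checks — that the bracket list gives \( [\derg,J\derg]=\spa{X_1,\ldots,X_r} \), so the hypothesis forces \( r=m \), and the commutator-dimension count showing the nilpotent summand \( \mfh \) must be Abelian — just make explicit what the paper leaves implicit.
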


For \( n=m \) in Corollary~\ref{co:totallyreal1} we get:

\begin{corollary}
  \label{co:totallyreal2}
  A \( 2n \)-dimensional almost Hermitian two-step solvable Lie
  algebra \( (\mfg,g, J) \) with totally real commutator ideal
  \( \derg \) of dimension \( n \) is SKT if and only if there exists
  an orthonormal basis \( (X_1,\ldots,X_n) \) of \( \derg \) and
  \( \lambda_1,\ldots,\lambda_n\in \bR\setminus \{0\} \) such that the
  only non-zero Lie brackets (up to anti-symmetry) are given by
  \( [JX_i,X_i]=\lambda_i X_i \) for \( i=1,\ldots,n \)

  Moreover, we then have \( \mfg\cong n \,\aff_{\bR} \).
\end{corollary}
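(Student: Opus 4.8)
The plan is to obtain Corollary~\ref{co:totallyreal2} as the degenerate instance \( m = n \) of Theorem~\ref{th:totallyrealgeneralresult} (equivalently, of Corollary~\ref{co:totallyreal1}). The first step is a dimension count: since \( \derg \) is totally real we have \( \derg \cap J\derg = \{0\} \), so \( \dim(\derg \oplus J\derg) = 2\dim\derg = 2n = \dim\mfg \); hence \( \mfg = \derg \oplus J\derg \), and therefore \( U_J = (\derg \oplus J\derg)^{\perp} = \{0\} \). In the notation of Section~\ref{sec:totallyreal} this means \( \mfa = \mfa_r = \derg \), \( U = U_r = J\derg \), and every piece of data indexed by \( U_J \) lives in a space built from \( U_J^{*} = \{0\} \): the one-forms \( \mu_1,\ldots,\mu_r \) and the two-forms \( \tilde\nu_j \) of Theorem~\ref{th:totallyrealgeneralresult} all vanish, and so does \( \tilde\nu \).

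Next I would feed this back into Theorem~\ref{th:totallyrealgeneralresult}. Since \( \tilde\nu = 0 \), the condition \( \im(\tilde\nu) = \spa{X_{r+1},\ldots,X_{m}} \) appearing there forces \( r = m = n \); this condition is exactly where the standing hypothesis \( \im(\omega) = \mfa \) of Section~\ref{sec:totallyreal} enters, and it is what rules out any smaller value of \( r \). Equivalently \( [\derg,J\derg] = \derg \), so one may just as well quote Corollary~\ref{co:totallyreal1} directly. Either way, with \( U_J = 0 \), \( \mu_i = 0 \) and \( r = n \), the only surviving brackets are \( [JX_i,X_i] = \lambda_i X_i \), \( \lambda_i \in \bR\setminus\{0\} \), \( i = 1,\ldots,n \), which is the claimed normal form. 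The reverse implication --- that any almost Hermitian structure whose brackets have this shape is SKT --- is nothing but the ``if'' direction already contained in Theorem~\ref{th:totallyrealgeneralresult} (or Corollary~\ref{co:totallyreal1}).

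For the isomorphism statement, observe that the two-dimensional subspaces \( \spa{JX_i,X_i} \) are pairwise commuting ideals, and each of them, after rescaling \( JX_i \mapsto \lambda_i^{-1}JX_i \), is isomorphic to \( \aff_{\bR} \); hence \( \mfg \cong n\,\aff_{\bR} \). This also follows from the general decomposition \( \mfg \cong r\,\aff_{\bR} \oplus \mfh \) of Theorem~\ref{th:totallyrealgeneralresult}, taken at \( r = n \) with \( \mfh = \{0\} \).

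Since the argument is a pure specialization of results already established, there is no real obstacle. The only point worth a sentence of care is the step \( r = n \): without the surjectivity assumption \( \im(\omega) = \mfa \), vanishing of \( \tilde\nu \) would merely leave \( \mfa_3 \) as a (possibly nonzero) abelian complement, recovering Corollary~\ref{co:totallyreal1}'s \( \mfg \cong m\,\aff_{\bR}\oplus\bR^{2n-2m} \) rather than the sharper \( \mfg \cong n\,\aff_{\bR} \) proved here.
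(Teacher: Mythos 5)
Your proposal is correct and follows essentially the same route as the paper, which obtains the corollary by specializing Corollary~\ref{co:totallyreal1} (equivalently Theorem~\ref{th:totallyrealgeneralresult}) to \( m=n \), where \( U_J=(\derg\oplus J\derg)^{\perp}=0 \) kills the \( \mu_i \) and \( \tilde\nu_j \). You additionally spell out the point the paper leaves implicit — that \( \tilde\nu=0 \) together with \( \im(\tilde\nu)=\spa{X_{r+1},\ldots,X_m} \) forces \( r=m \), so the hypothesis \( [\derg,J\derg]=\derg \) of Corollary~\ref{co:totallyreal1} is automatic — which is a correct and welcome clarification.
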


For \( r<m \), we still need to solve
\( \sum_{i=r+1}^m \tilde{\nu}_i\wedge \tilde{\nu}_i=0 \) and
\( \im(\tilde{\nu})=\spa{X_{r+1},\ldots,\allowbreak X_m} \) in
Theorem~\ref{th:totallyrealgeneralresult}.
This is not too difficult for \( r\in \{m-2,m-1\} \), and so we will
do that in the following.
For smaller~\( r \), solving these equations seems to be more involved,
cf.\ Remark~\ref{re:highercodim}.

Let us start with \( r=m-1 \).
Then \( \tilde{\nu}_m\wedge \tilde{\nu}_m=0 \),
\( \im(\tilde{\nu})=\spa{X_m} \) and \( \tilde{\nu}_m \) being a
\( (1,1) \)-form implies \( \tilde{\nu}_m=\alpha\wedge J^*\alpha \)
for some \( \alpha\in U_J^*\setminus \{0\} \).

Next, consider the case \( r=m-2 \).
Then we need to solve the equation
\( \tilde{\nu}_{m-1}^2+\tilde{\nu}_m^2=0 \), which is done by the next
lemma.

\begin{lemma}
  \label{le:quadraticequationzero}
  Let \( V \) be a \( 2n \)-dimensional vector space with complex
  structure \( J \) and let real
  \( \nu_1,\nu_2\in \Lambda^{1,1} V^* \) be given with
  \( \nu_1^2+\nu_2^2=0 \).
  Then, up to a rotation in \( \spa{\nu_1,\nu_2} \), there are
  \( \alpha,\beta\in V^* \) such that
  \( \nu_1=\alpha\wedge J^*\alpha \) and
  \( \nu_2=\beta\wedge J^*\beta \).
\end{lemma}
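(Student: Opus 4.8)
\section*{Proof proposal for Lemma~\ref{le:quadraticequationzero}}

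The plan is to package the two forms into the single complex \((1,1)\)-form \(\mu:=\nu_1+i\nu_2\in\Lambda^{1,1}V^*\otimes\bC\). Since \(\nu_1,\nu_2\) are real we have \(\bar\mu=\nu_1-i\nu_2\), and as two-forms commute under \(\wedge\), \(\mu\wedge\bar\mu=\nu_1^2+\nu_2^2\); thus the hypothesis is exactly \(\mu\wedge\bar\mu=0\). A rotation in \(\spa{\nu_1,\nu_2}\) is precisely the replacement \(\mu\mapsto e^{i\theta}\mu\), under which \(\mu=\nu_1'+i\nu_2'\) with \(\nu_1'=\cos\theta\,\nu_1-\sin\theta\,\nu_2\), \(\nu_2'=\sin\theta\,\nu_1+\cos\theta\,\nu_2\). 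So the assertion is that after such a multiplication, \(\nu_1',\nu_2'\) become \(\alpha\wedge J^*\alpha\) and \(\beta\wedge J^*\beta\) for real one-forms \(\alpha,\beta\). First I would record the elementary fact that a real \((1,1)\)-form is decomposable (rank \(\le2\) as an alternating form) if and only if it equals \(\pm\,\alpha\wedge J^*\alpha\) for some real \(\alpha\): decomposability means the form lives on a \(2\)-plane in \(V^*\), the \((1,1)\)-condition forces that plane to be \(J^*\)-invariant, hence spanned by \(\alpha,J^*\alpha\) for any nonzero \(\alpha\) in it, and rescaling \(\alpha\) absorbs everything except a sign (which is fixed relative to \(J\), since \(\gamma\wedge J^*\gamma\) has a definite sign independent of \(\gamma\)).

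The main step is a dimension reduction. Put \(K:=\ker\nu_1\cap\ker\nu_2=\{v:\iota_v\nu_1=\iota_v\nu_2=0\}\); it is \(J\)-invariant because \(\nu_1,\nu_2\) are, and both forms descend to the complex vector space \(V/K\), still with \((\nu_1')^2+(\nu_2')^2=0\). So I may assume \(K=0\), and the claim becomes \(\dim_{\bC}V\le2\). To see this, write \(\mu\) in a rank decomposition \(\mu=\sum_{j=1}^r\zeta_j\wedge\bar\eta_j\) with \(\zeta_1,\dots,\zeta_r\) and \(\eta_1,\dots,\eta_r\) linearly independent \((1,0)\)-forms, \(r=\operatorname{rank}\mu\) as a sesquilinear form; the assumption \(K=0\) is equivalent to \(\spa{\zeta_1,\dots,\zeta_r,\eta_1,\dots,\eta_r}=(V^{1,0})^*\), so \(\dim_{\bC}V\) equals the dimension of that span. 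A direct computation gives \(\mu\wedge\bar\mu=-\sum_{j,k=1}^r(\zeta_j\wedge\eta_k)\otimes\overline{(\zeta_k\wedge\eta_j)}\) inside \(\Lambda^{2,2}V^*\). Choosing a basis of \((V^{1,0})^*\) adapted to \(\spa{\zeta_j}\) and \(\spa{\eta_k}\), the vanishing of the ``diagonal'' coefficients, i.e.\ of those of \((\zeta_j\wedge\eta_k)\otimes\overline{(\zeta_j\wedge\eta_k)}\), which receive no cancelling contribution, forces first \(r\le1\) unless \(\spa{\zeta_j}\cap\spa{\eta_k}\ne\{0\}\), and then, going through the case \(r=2\) in the same way, that the \(\zeta\)'s and the \(\eta\)'s span the same \(2\)-dimensional subspace and \(r\le2\); hence \(\dim_{\bC}V\le2\). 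This linear-algebra argument is where the real work sits.

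Once \(\dim_{\bC}V\le2\) the rest is bookkeeping. If \(\dim_{\bC}V\le1\), then \(\Lambda^2V^*\) is at most one-dimensional, both \(\nu_i\) are multiples of a single decomposable \((1,1)\)-form, a rotation kills one of them, and a further rotation by \(0\) or \(\pi\) fixes signs. If \(\dim_{\bC}V=2\), fix a volume form \(\mathrm{vol}\in\Lambda^4V^*\) and define \(Q(\nu)\in\bR\) by \(\nu\wedge\nu=Q(\nu)\,\mathrm{vol}\) for real \((1,1)\)-forms \(\nu\); then \(\nu\) is decomposable iff \(Q(\nu)=0\), and the hypothesis reads \(Q(\nu_1)+Q(\nu_2)=0\). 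With \(\nu_1',\nu_2'\) as above, \(Q(\nu_1')+Q(\nu_2')=Q(\nu_1)+Q(\nu_2)=0\) and \(Q(\nu_1')=\cos2\theta\,Q(\nu_1)-\sin2\theta\,B(\nu_1,\nu_2)\) with \(B\) the polarisation of \(Q\); one can choose \(\theta\) making this vanish, since it is a genuine trigonometric equation unless \(Q(\nu_1)=B(\nu_1,\nu_2)=0\), in which case every \(\theta\) works. Then both \(\nu_1',\nu_2'\) are decomposable, so \(\nu_i'=\varepsilon_i\cdot P_i\) with \(P_i\) a \(\le1\)-dimensional real \((1,1)\)-form of the fixed sign and \(\varepsilon_i\in\{0,\pm1\}\). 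Replacing \(\theta\) by one of \(\theta,\theta+\tfrac\pi2,\theta+\pi,\theta+\tfrac{3\pi}2\), the pair \((\nu_1',\nu_2')\) runs through \((\nu_1',\nu_2'),(-\nu_2',\nu_1'),(-\nu_1',-\nu_2'),(\nu_2',-\nu_1')\), so the sign pair \((\varepsilon_1,\varepsilon_2)\) cycles through \((\varepsilon_1,\varepsilon_2),(-\varepsilon_2,\varepsilon_1),(-\varepsilon_1,-\varepsilon_2),(\varepsilon_2,-\varepsilon_1)\); one of these has both signs \(+\) (up to zero entries), giving \(\nu_1'=\alpha\wedge J^*\alpha\) and \(\nu_2'=\beta\wedge J^*\beta\), as required. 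The hard part, as noted, is the passage \(\mu\wedge\bar\mu=0\Rightarrow\dim_{\bC}V\le2\); everything downstream only uses that on \(\bC^2\) the space \(\Lambda^4\) is one-dimensional, which turns ``decomposable member of the pencil'' into a single scalar equation.
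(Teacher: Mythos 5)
Your high-level plan is sound, and several pieces are carried out correctly: the packaging \( \mu=\nu_1+i\nu_2 \) with \( \mu\wedge\bar\mu=\nu_1^2+\nu_2^2 \) and rotation acting as multiplication by \( e^{i\theta} \), the reduction modulo the \( J \)-invariant common kernel, and the endgame in complex dimension at most two, which is essentially the paper's own final step (there one sets \( \Omega=\nu_1^2 \), diagonalises the pencil and applies a \( 45^\circ \) rotation); your quarter-turn bookkeeping of the signs \( \varepsilon_i \) is in fact more careful than the paper's, which silently writes a rank-two real \( (1,1) \)-form as \( \alpha\wedge J^*\alpha \) rather than \( \pm\alpha\wedge J^*\alpha \).

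The genuine gap is exactly the step you label as ``where the real work sits'': the claim that \( \mu\wedge\bar\mu=0 \) together with trivial common kernel forces \( \dim_{\bC}V\le 2 \) is equivalent to the whole lemma, and your justification --- that the coefficients of \( (\zeta_j\wedge\eta_k)\otimes\overline{(\zeta_j\wedge\eta_k)} \) ``receive no cancelling contribution'' --- is only valid when there are directions outside \( \spa{\zeta_1,\dots,\zeta_r}\cap\spa{\eta_1,\dots,\eta_r} \), as in the \( r\le 2 \) configurations you sketch. In the essential remaining case, where the two spans coincide and equal \( (V^{1,0})^* \) with \( n=r\ge 3 \) (i.e.\ \( \mu \) is a nondegenerate sesquilinear form), every such ``diagonal'' coefficient receives several contributions that can cancel: writing \( \mu=\sum_{j,k}M_{jk}\,e^j\wedge\bar e^k \) in a basis, the coefficient of \( (e^a\wedge e^b)\wedge(\bar e^a\wedge\bar e^b) \) in \( \mu\wedge\bar\mu \) is \( 2\Re\bigl(M_{aa}\overline{M_{bb}}\bigr)-\abs{M_{ab}}^2-\abs{M_{ba}}^2 \), which vanishes for plenty of non-zero matrices (any diagonal \( M \) with \( M_{aa}\overline{M_{bb}} \) imaginary, say), so no single coefficient yields a contradiction; the impossibility for \( n\ge 3 \) only emerges from the whole system of coefficient equations. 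Moreover your case enumeration stops at \( r=2 \) and never excludes \( r\ge 3 \) at all. This missing piece is precisely where the paper's proof does its work: it first shows each \( \nu_i \) has rank at most four by splitting \( V=V_1\oplus V_2 \) and comparing the graded components of \( \nu_1^2+\nu_2^2=0 \), then shows two rank-four forms must share their kernel, and only then runs the two-dimensional pencil argument you also use. As it stands, the central dimension bound is asserted, not proved; you need either to supply this linear algebra in full or to replace that step by an argument of the paper's type.
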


\begin{proof}
  First of all, we show that if \( \nu_1^2+\nu_2^2=0 \) implies then
  both \( \nu_1 \) and \( \nu_2 \) has rank at most four.
  For this, assume contrary, without loss of generality, that
  \( \nu_1 \) has rank at least six.
  Then we may find a decomposition \( V=V_1\oplus V_2 \) with
  \( \dim(V_1)=4 \) such that \( \nu_1=\omega_1+\tau_1 \) with
  \( \omega_1\in\Lambda^2 V_1^* \) being of rank four and
  \( 0\neq\tau_1\in \Lambda^2 V_2^* \).
  We may decompose \( \tilde{\mu}_2=\omega_2+\rho_2+\tau_2 \) with
  \( \omega_2\in\Lambda^2 V_1^* \), \( \rho_2\in V_1^*\wedge V_2^* \)
  and \( \tau_2\in \Lambda^2 V_2^* \).
  Looking at the \( \Lambda^4 V_1^* \)-component and the
  \( \Lambda^3 V_1^*\wedge V_2^* \)-component of
  \( \nu_1^2+\nu_2^2=0 \), we obtain
  \begin{equation*}
    \omega_1^2+\omega_2^2=0,\quad \omega_2\wedge \rho_2=0
  \end{equation*}
  Due to \( \omega_1^2\neq 0 \), we must have \( \omega_2\neq 0 \) and
  so the second equation implies \( \rho_2=0 \).
  But then the \( \Lambda^2 V_1^*\wedge \Lambda^2 V_2^* \)-component
  of \( \nu_1^2+\nu_2^2=0 \) gives us
  \begin{equation*}
    \omega_1\wedge \tau_1+\omega_2\wedge \tau_2=0.
  \end{equation*}
  As \( \omega_2^2=-\omega_1^2 \), the two-forms \( \omega_1 \),
  \( \omega_2 \) cannot be linearly dependent and so we must have
  \( \tau_1=\tau_2=0 \), a contradiction.
  Hence, both \( \nu_1 \) and \( \nu_2 \) have rank at most four.
  If they both have rank four, then they must have the same kernel
  since if \( X \) is in the kernel of say \( \nu_1 \), then
  \( 0=X\hook (\nu_1^2+\nu_2^2)=2(X\hook \nu_2)\wedge \nu_2 \), which
  forces \( X\hook \nu_2=0 \) as well.
  Moreover, it cannot be that exactly one of the two-form
  \( \omega_2 \), \( \omega_1 \) has rank four so the only other
  option is that both have rank two.
  So in this case, we have \( \nu_1=\alpha\wedge J^*\alpha \) and
  \( \nu_2=\beta\wedge J^*\beta \) for \( \alpha,\beta\in U_J^* \).

  Let us now come back to the case that both \( \nu_1 \) and
  \( \nu_2 \) have rank four, and so have common support.
  Setting \( \Omega:=\nu_1^2 \), we have \( \nu_2^2=-\Omega \) and
  \( \nu_1\wedge \nu_2=a\Omega \) for some \( a\in \bR \).
  As the matrix
  \( \begin{psmallmatrix} 1 & a \\ a & -1\end{psmallmatrix} \) is
  symmetric, we may apply an orthogonal transformation to
  \( \spa{X_1,X_2} \) such that with respect to the new basis we get
  \( a=0 \).
  But then
  \( (\frac{1}{\sqrt{2}}\nu_1\pm \frac{1}{\sqrt{2}}\nu_2)^2=0 \) and
  so by applying another orthogonal transformation to
  \( \spa{\nu_1,\nu_2} \), we may also here assume that both
  \( \nu_1 \) and \( \nu_2 \) have rank two.
  Hence, we always can rotate the pair \( (\nu_{1},\nu_{2}) \) to get
  \( \nu_1=\alpha\wedge J^*\alpha \) and
  \( \nu_2=\beta\wedge J^*\beta \) for certain
  \( \alpha,\beta\in V^* \)
\end{proof}

Coming back to our case, we thus have
\( \tilde{\nu}_{m-1}=\alpha\wedge J^*\alpha \) and
\( \tilde{\nu}_m=\beta\wedge J^*\beta \) for certain
\( \alpha,\beta\in U_J^* \).
Additionally, we know that we must have
\( \im(\nu)=\spa{X_{m-1},X_m} \).
But so \( \tilde{\nu}_{m-1},\tilde{\nu}_m \) have to be
\( \bR \)-linearly independent, which implies that \( \alpha,\beta \)
are \( \bC \)-linearly independent.  Hence, we have obtained:

\begin{theorem}
  \label{th:totallyreal2}
  Let \( (\mfg,g, J) \) be a \( 2n \)-dimensional almost Hermitian
  two-step solvable Lie algebra with totally real \( m \)-dimensional
  commutator ideal \( \derg \) such that \( [\derg,J\derg] \) has
  codimension \( \ell \in \{1,2\} \) in \( \derg \).
  Moreover, set \( U_J:=(\derg\oplus J\derg)^{\perp} \).
  \begin{enumerate}[(a)]
  \item If \( \ell =1 \), then \( (\mfg,g,J) \) is SKT if and only if
    there exists an orthonormal basis \( (X_1,\ldots,X_m) \) of
    \( \derg \),
    \( \lambda_1,\ldots,\lambda_{m-1}\in \bR\setminus \{0\} \),
    \( \mu_1,\ldots,\mu_{m-1},\alpha \in U_J^* \) with
    \( \alpha\neq 0 \) such that the only non-zero Lie brackets (up to
    anti-symmetry) are given by
    \begin{equation*}
      \begin{split}
        [JX_i,X_i]&=\lambda_i X_i,\ [Y,X_i]=\mu_i(Y) X_i,\ [Y,JX_i]=-\mu_i(JY)
                    X_i,\\
        [Y,Z]&=\sum_{i=1}^{m-1} \frac{1}{\lambda_i}(\mu_i\wedge
               \mu_i\circ J)(Y,Z) X_i+(\alpha\wedge
               J^*\alpha)(Y,Z)X_m,
      \end{split}
    \end{equation*}
    for \( i=1,\ldots,m-1 \), \( Y,Z\in U_J \).
    In this case, we have
    \( \mfg\cong {(m-1)}\, \aff_{\bR}\oplus \mfh_3 \oplus
    \bR^{2n-2m-1} \).
  \item If \( \ell =2 \), then \( (\mfg,g,J) \) is SKT if and only if
    there exists an orthonormal basis \( (X_1,\ldots,X_m) \) of
    \( \derg \),
    \( \lambda_1,\ldots,\lambda_{m-2}\in \bR\setminus \{0\} \),
    \( \mu_1,\ldots,\mu_{m-2},\alpha_1,\alpha_2 \in U_J^* \) with
    \( \alpha_1 \), \( \alpha_2 \) being \( \bC \)-linearly
    independent such that the only non-zero Lie brackets (up to
    anti-symmetry) are given by
    \begin{equation*}
      \begin{split}
        [JX_i,X_i]&=\lambda_i X_i,\ [Y,X_i]=\mu_i(Y) X_i,\
                    [Y,X_i]=-\mu_i(JY) X_i,\\
        [Y,Z]&=\sum_{i=1}^{m-2} \frac{1}{\lambda_i}(\mu_i\wedge
               \mu_i\circ J)(Y,Z) X_i+\sum_{i=1}^2(\alpha_i\wedge
               J^*\alpha_i)(Y,Z)X_{m-2+i},
      \end{split}
    \end{equation*}
    for \( i=1,\ldots,m-2 \), \( Y,Z\in U_J \).
    In this case, we have
    \( \mfg\cong (m-2)\, \aff_{\bR}\oplus 2\mfh_3 \oplus \bR^{2n-2m-2}
    \).
  \end{enumerate}
\end{theorem}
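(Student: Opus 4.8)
I will deduce this from Theorem~\ref{th:totallyrealgeneralresult}. Write $r:=\dim[\derg,J\derg]=m-\ell$, so $r=m-1$ in case~(a) and $r=m-2$ in case~(b), and recall $U_J:=(\derg\oplus J\derg)^{\perp}$. By Theorem~\ref{th:totallyrealgeneralresult}, $(\mfg,g,J)$ is SKT precisely when there is an orthonormal basis $(X_1,\dots,X_m)$ of $\derg$, scalars $\lambda_1,\dots,\lambda_r\in\bR\setminus\{0\}$, one-forms $\mu_1,\dots,\mu_r\in U_J^*$ and $(1,1)$-forms $\tilde{\nu}_{r+1},\dots,\tilde{\nu}_m\in\Lambda^2 U_J^*$ giving the brackets listed there, subject to $\sum_{i=r+1}^m\tilde{\nu}_i\wedge\tilde{\nu}_i=0$ and $\im(\tilde{\nu})=\spa{X_{r+1},\dots,X_m}$, where $\tilde{\nu}:=\sum_{i=r+1}^m\tilde{\nu}_i X_i$. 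So the plan has two parts: solve these residual equations in the two cases, and then identify the nilpotent factor $\mfh$ in the isomorphism $\mfg\cong r\,\aff_{\bR}\oplus\mfh$ given by that theorem.

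In case~(a), $r=m-1$, there is a single form $\tilde{\nu}_m$. It is nonzero because $\im(\tilde{\nu})=\spa{X_m}$ is one-dimensional, it is of type $(1,1)$, and $\tilde{\nu}_m\wedge\tilde{\nu}_m=0$ forces it to have rank two; restricted to its two-dimensional $J$-invariant support it is a nonzero $(1,1)$-form, hence proportional to a K\"ahler form, so after absorbing a scalar and a sign into $\alpha$, respectively into $X_m$, one gets $\tilde{\nu}_m=\alpha\wedge J^*\alpha$ for some $\alpha\in U_J^*\setminus\{0\}$. In case~(b), $r=m-2$, one has $\tilde{\nu}_{m-1}^2+\tilde{\nu}_m^2=0$; an orthogonal change of the ordered pair $(X_{m-1},X_m)$ keeps $(X_1,\dots,X_m)$ orthonormal, leaves $\im(\tilde{\nu})$ unchanged and rotates $(\tilde{\nu}_{m-1},\tilde{\nu}_m)$ by the same matrix, so Lemma~\ref{le:quadraticequationzero} lets me assume $\tilde{\nu}_{m-1}=\alpha_1\wedge J^*\alpha_1$ and $\tilde{\nu}_m=\alpha_2\wedge J^*\alpha_2$ with $\alpha_1,\alpha_2\in U_J^*$. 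Then $\im(\tilde{\nu})=\spa{X_{m-1},X_m}$ forces $\tilde{\nu}_{m-1},\tilde{\nu}_m$ to be $\bR$-linearly independent; since two decomposable forms $\alpha_j\wedge J^*\alpha_j$ are proportional exactly when the complex lines $\spa{\alpha_j,J^*\alpha_j}$ coincide, i.e.\ when $\alpha_1,\alpha_2$ are $\bC$-dependent, this is equivalent to $\alpha_1,\alpha_2$ being $\bC$-linearly independent. This gives the two displayed bracket lists. Conversely, for brackets of exactly this form $\tilde{\nu}_i\wedge\tilde{\nu}_i=0$ holds automatically and $\im(\tilde{\nu})$ has the stated dimension, so Theorem~\ref{th:totallyrealgeneralresult} yields the SKT property.

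It remains to identify $\mfh$, and for this I would make the splitting explicit. The subspace $\mathfrak{I}:=\spa{X_1,\dots,X_r}\oplus\spa{JX_1,\dots,JX_r}$ is an ideal: for every $v\in\mfg$ the operator $\ad_v$ scales each line $\spa{X_i}$ by the eigenvalue functional $\chi_i\in\mfg^*$ (with $\chi_i|_{\derg}=0$, $\chi_i(JX_j)=\delta_{ij}\lambda_i$, $\chi_i|_{U_J}=\mu_i$) while $[\any,JX_i]$ lies in $\spa{X_i}$, and the only non-zero brackets inside $\mathfrak{I}$ are $[JX_i,X_i]=\lambda_i X_i$, so $\mathfrak{I}\cong r\,\aff_{\bR}$. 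A complementary ideal commuting with $\mathfrak{I}$ is obtained by correcting $U_J$: set $\check{w}:=w-\sum_{i\le r}\tfrac{\mu_i(w)}{\lambda_i}JX_i-\sum_{i\le r}\tfrac{\mu_i(Jw)}{\lambda_i}X_i$ for $w\in U_J$ and let $\mathfrak{m}:=\spa{X_{r+1},\dots,X_m}\oplus\spa{JX_{r+1},\dots,JX_m}\oplus\{\check{w}:w\in U_J\}$. One checks $\mfg=\mathfrak{I}\oplus\mathfrak{m}$ and $\mathfrak{m}\subseteq\bigcap_i\ker\chi_i$, which gives $[\mathfrak{I},\mathfrak{m}]=0$; that $\mathfrak{m}$ is a subalgebra reduces to the computation that $[\check{w},\check{w}']$ has no component along $X_1,\dots,X_r$, and its remaining part equals $\tilde{\nu}(w,w')$, all other brackets of $\mathfrak{m}$ being zero. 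Hence $\mfh\cong\mathfrak{m}$ is the two-step nilpotent Lie algebra on $\spa{X_{r+1},\dots,X_m}\oplus J\spa{X_{r+1},\dots,X_m}\oplus U_J$, with commutator $\mfh'=\spa{X_{r+1},\dots,X_m}$ and bracket $\tilde{\nu}\colon\Lambda^2 U_J\to\mfh'$.

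Reading $\mfh$ off from the normal form of $\tilde{\nu}$ then finishes the proof. In case~(a) the bracket is the single rank-two form $\alpha\wedge J^*\alpha$: choosing $Y_1,Y_2\in U_J$ with $(\alpha\wedge J^*\alpha)(Y_1,Y_2)=1$, the span $\spa{Y_1,Y_2,X_m}$ is a copy of $\mfh_3$ and the remaining directions are central, so $\mfh\cong\mfh_3\oplus\bR^{2n-2m-1}$ and $\mfg\cong(m-1)\,\aff_{\bR}\oplus\mfh_3\oplus\bR^{2n-2m-1}$. In case~(b) the bracket is the pair $(\alpha_1\wedge J^*\alpha_1,\alpha_2\wedge J^*\alpha_2)$; the $\bC$-independence of $\alpha_1,\alpha_2$ makes the complex lines $\spa{\alpha_j,J^*\alpha_j}$ transverse, so the two two-forms are supported on complementary $J$-invariant planes of $U_J$, and picking pairs $Y_1,Y_2$ and $Y_3,Y_4$ in the respective supports exhibits two commuting copies of $\mfh_3$ with an abelian remainder, giving $\mfh\cong 2\mfh_3\oplus\bR^{2n-2m-2}$ and $\mfg\cong(m-2)\,\aff_{\bR}\oplus 2\mfh_3\oplus\bR^{2n-2m-2}$; both decompositions have total dimension $2n$. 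The step I expect to need the most care is the construction of the complement $\mathfrak{m}$ together with the verification that the $\tilde{\nu}$-bracket survives the correction intact, i.e.\ that the $X_1,\dots,X_r$-component of $[\check{w},\check{w}']$ cancels; the rank and transversality facts for $(1,1)$-forms and the residual-equation checks are routine and largely contained in the discussion preceding the statement.
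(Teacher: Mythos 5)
Your proof is correct and follows essentially the same route as the paper: reduce to the residual conditions of Theorem~\ref{th:totallyrealgeneralresult}, solve them for \( \ell=1 \) by the rank-two observation for a \( (1,1) \)-form with vanishing square, and for \( \ell=2 \) by Lemma~\ref{le:quadraticequationzero} (with the rotation absorbed into an orthogonal change of \( X_{m-1},X_m \)) together with the equivalence of \( \bR \)-independence of the two forms and \( \bC \)-independence of \( \alpha_1,\alpha_2 \). Your explicit splitting \( \mfg=\mathfrak{I}\oplus\mathfrak{m} \) with the corrected vectors \( \check{w} \) just spells out the isomorphism \( \mfg\cong r\,\aff_{\bR}\oplus\mfh \) that the paper inherits from Theorem~\ref{th:totallyrealgeneralresult}, and your identification of \( \mfh \) from the decomposable forms agrees with the paper's conclusion.
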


\begin{remark}
  \label{re:highercodim}
  Note that for higher codimensions \( \ell \) it is not clear whether
  Lemma~\ref{le:quadraticequationzero} generalizes in the natural way
  to more than two \( (1,1) \)-forms \( \nu_i \).
  So one is left with the description from
  Theorem~\ref{th:totallyrealgeneralresult} for these cases.
  Note further that even if all \( \nu_i \) are decomposable, still
  the nilpotent Lie algebra \( \mfh \) from
  Theorem~\ref{th:totallyrealgeneralresult} is, in general, not equal
  to a sum of the form
  \( r\aff_{\bR}\oplus \ell \mfh_3\oplus \bR^{2(n-r)-3s} \) as in the
  case of \( \ell \in \{1,2\} \).

  For example, if \( \ell =3 \), take \( \bC \)-linearly independent
  \( \alpha_1,\alpha_2\in U_J \) and set
  \( \nu_{m-2}:=\alpha_1\wedge J^*\alpha_1 \),
  \( \nu_{m-1}:=\alpha_2\wedge J^*\alpha_2 \),
  \( \nu_m:=(\alpha_1+\alpha_2)\wedge J^*(\alpha_1+\alpha_2) \).
  Then \( \mfh\cong r\aff_{\bR}\oplus (37D)\oplus \bR^{2(n-r)-7} \)
  with the seven-dimensional nilpotent Lie algebra \( 37D \)
  from~\cite{Gong}.
\end{remark}

Now all Lie algebras up to dimension two are solvable and the only
nilpotent Lie algebras up to this dimension are the Abelian ones.
As the commutator ideal \( \derg \) of a solvable Lie algebra is
nilpotent, one sees that any Lie algebra with one-dimensional or
two-dimensional commutator ideal is two-step solvable.
Now surely if the dimension of \( \derg \) is one, \( \derg \) is
totally real and the codimension of \( [\derg,J\derg] \) in
\( \derg \) is at most one.
Similarly, if the dimension of \( \derg \) is two and \( \derg \) is
totally real, then the codimension of \( [\derg,J\derg] \) in
\( \derg \) is at most two.
Hence, Corollary~\ref{co:totallyreal1} and
Theorem~\ref{th:totallyreal2} imply

\begin{corollary}
  \label{co:lowdimnotJinv}
  Let \( (\mfg,g, J) \) be a \( 2n \)-dimensional almost Hermitian Lie
  algebra.
  \begin{enumerate}[(a)]
  \item If \( \dim(\derg)=1 \), then \( (\mfg,g,J) \) is an SKT Lie
    algebra if and only if there exists \( 0\neq X\in \derg \) of norm
    one such that
    \begin{enumerate}[(i)]
    \item there exists \( \lambda\in \bR\setminus \{0\} \) and
      \( \mu\in U_J^* \) so that the only non-zero Lie brackets (up to
      anti-symmetry) are given by
      \begin{equation*}
        \begin{split}
          [JX,X]&=\lambda X,\ [Y,X]=\mu(Y) X,\ [Y,JX]=-\mu(JY) X
                  \eqcond{for \( Y\in U_J \),}\\
          [Y,Z]&=\frac{1}{\lambda}(\mu\wedge \mu\circ J)(Y,Z) X
                 \eqcond{for \( Y,Z\in U_J \)}
        \end{split}
      \end{equation*}
      or
    \item there exists \( \alpha\in U_J^*\setminus \{0\} \) so that
      the only non-zero Lie bracket is given by
      \begin{equation*} [Y,Z]=(\alpha\wedge J^*\alpha)(Y,Z) X
      \end{equation*}
      for \( Y,Z\in U_J \).
    \end{enumerate}
    In the first case, we have
    \( \mfg\cong \aff_{\bR}\oplus \bR^{2n-2} \) whereas in the second
    case we have \( \mfg\cong \mfh_3\oplus \bR^{2n-3} \).
  \item If \( \dim(\derg)=2 \) and \( \derg \) is totally real, then
    \( (\mfg,g,J) \) is an SKT Lie algebra if and only if there exists
    an orthonormal basis \( (X_1,X_2) \) of \( \derg \) such that
    \begin{enumerate}[(i)]
    \item there exist \( \lambda_1,\lambda_2\in \bR\setminus \{0\} \)
      and \( \mu_1,\mu_2\in U_J^* \) so that the only non-zero Lie
      brackets (up to anti-symmetry) are given by
      \begin{equation*}
        \begin{split}
          [JX_i,X_i]&=\lambda X,\ [Y,X_i]=\mu(Y) X_i,\ [Y,JX_i]=-\mu(JY) X_i
                      \eqcond[1]{for \( i=1,2 \), \( Y\in U_J \),}\\
          [Y,Z]&=\sum_{i=1}^2\frac{1}{\lambda_i}(\mu_i\wedge \mu_i\circ J)(Y,Z)
                 X_i \eqcond{for \( Y,Z\in U_J \),}
        \end{split}
      \end{equation*}
    \item there exist \( \lambda\in \bR\setminus \{0\} \) and
      \( \mu,\alpha\in U_J^* \), \( \alpha\neq 0 \) so that the only
      non-zero Lie brackets (up to anti-symmetry) are given by
      \begin{equation*}
        \begin{split}
          [JX_1,X_1]&=\lambda X_1,\ [Y,X_1]=\mu(Y) X_1,\
                      [Y,JX_1]=-\mu(JY) X_1
                      \eqcond{for \( Y\in U_J \),}\\
          [Y,Z]&=\frac{1}{\lambda}(\mu\wedge \mu\circ J)(Y,Z) X_1
                 + (\alpha\wedge J^*\alpha)(Y,Z)X_2
                 \eqcond{for \( Y,Z\in U_J \)}
        \end{split}
      \end{equation*}
      or
    \item there exist \( \bC \)-linearly independent
      \( \alpha,\beta\in U_J^* \) so that the only non-zero Lie
      bracket is given by
      \begin{equation*} [Y,Z]=(\alpha\wedge J^*\alpha)(Y,Z)
        X_1+(\beta\wedge J^*\beta)(Y,Z) X_2
      \end{equation*}
      for \( Y,Z\in U_J \).
    \end{enumerate}
    In the first case, we have
    \( \mfg\cong 2\aff_{\bR}\oplus \bR^{2n-4} \), in the second case,
    we have \( \mfg\cong \aff_{\bR}\oplus \mfh_3\oplus \bR^{2n-5} \)
    and in the third case, we have
    \( \mfg\cong 2\mfh_3\oplus \bR^{2n-6} \).
  \end{enumerate}
\end{corollary}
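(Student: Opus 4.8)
The plan is to derive Corollary~\ref{co:lowdimnotJinv} entirely from Corollary~\ref{co:totallyreal1} and Theorem~\ref{th:totallyreal2}, after one elementary preliminary remark. That remark is that any Lie algebra \( \mfg \) with \( \dim(\derg)\le 2 \) is automatically two-step solvable: \( \derg \) has dimension at most two, hence is solvable, so \( \mfg \) — an extension of the abelian algebra \( \mfg/\derg \) by \( \derg \) — is solvable; consequently \( \derg \) is nilpotent, and a nilpotent Lie algebra of dimension at most two is abelian, so \( \derg'=0 \), i.e.\ \( \mfg''=0 \). Thus in both parts the hypotheses underlying Section~\ref{sec:totallyreal} are in force, and the only remaining issue is whether \( \derg \) is totally real and what the codimension of \( [\derg,J\derg] \) in \( \derg \) is.

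For part~(a), \( \dim(\derg)=1 \) forces \( \derg \) to be totally real, since a one-dimensional real subspace cannot be \( J \)-invariant (\( J \) would restrict to a map squaring to \( -\id \) on a line). Hence \( \derg=\derg_r \) with \( m=1 \), and \( [\derg,J\derg]\subseteq\derg \) is either \( 0 \) or all of \( \derg \). If \( [\derg,J\derg]=\derg \), then Corollary~\ref{co:totallyreal1} with \( m=1 \) gives exactly the brackets of case~(i) and \( \mfg\cong\aff_{\bR}\oplus\bR^{2n-2} \). If \( [\derg,J\derg]=0 \), this is codimension \( \ell=1 \), so Theorem~\ref{th:totallyreal2}(a) with \( m=1 \), hence \( r=m-1=0 \) — so all sums over \( i \) and all \( \lambda_i,\mu_i \) disappear — leaves only the bracket \( [Y,Z]=(\alpha\wedge J^*\alpha)(Y,Z)X \), which is case~(ii), and \( \mfg\cong\mfh_3\oplus\bR^{2n-3} \).

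For part~(b), \( \derg \) is totally real by hypothesis and two-dimensional, so \( m=2 \) and \( [\derg,J\derg] \) has codimension \( \ell\in\{0,1,2\} \) in \( \derg \). The three cases correspond respectively to Corollary~\ref{co:totallyreal1} with \( m=2 \) (\( \ell=0 \), case~(i), \( \mfg\cong 2\aff_{\bR}\oplus\bR^{2n-4} \)), Theorem~\ref{th:totallyreal2}(a) with \( m=2 \) (\( \ell=1 \), \( r=1 \), case~(ii), \( \mfg\cong\aff_{\bR}\oplus\mfh_3\oplus\bR^{2n-5} \)), and Theorem~\ref{th:totallyreal2}(b) with \( m=2 \) (\( \ell=2 \), \( r=0 \), case~(iii), \( \mfg\cong 2\mfh_3\oplus\bR^{2n-6} \)). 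In every case, the ``only if'' direction is the forward implication of the cited result and the ``if'' direction is its converse part.

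There is no genuine obstacle: once the two-step-solvability remark and, in part~(a), the automatic total reality are noted, the argument is pure bookkeeping — a substitution of the small values \( m\in\{1,2\} \) into results already proved. The only point needing a moment's care is verifying that the degenerate instances of Theorem~\ref{th:totallyreal2} with \( r=0 \) (no affine summands, empty index sums) specialise exactly to the brackets and isomorphism types listed, which I would check by direct substitution rather than re-running the proof of that theorem.
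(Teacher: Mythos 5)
Your proposal is correct and follows the paper's own route: establish that \( \dim(\derg)\le 2 \) forces two-step solvability (and, in part~(a), that \( \derg \) is automatically totally real), then specialise Corollary~\ref{co:totallyreal1} and Theorem~\ref{th:totallyreal2} to \( m\in\{1,2\} \) according to the codimension of \( [\derg,J\derg] \) in \( \derg \). The paper's proof is exactly this reduction, so no further comment is needed.
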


\begin{remark}
  Note that by~\cite[Proposition 2.2]{ABD}, a \( 2n \)-dimensional Lie
  algebras \( \mfg \) with one-dimensional commutator ideal is either
  isomorphic to \( \mfh_{2k+1}\oplus \bR^{2(n-k)-1} \) or
  \( \aff_{\bR}\oplus \bR^{2n-2} \) for some \( k\in \bR \) and that
  all these Lie algebras admit an Abelian complex structure \( J \).
  So we have shown that on \( \mfh_{2k+1}\oplus \bR^{2(n-k)-1} \) with
  \( k\geq 2 \) any of these Abelian complex structures cannot come
  from an SKT structure on \( \mfg \).
\end{remark}

\begin{corollary}
  \label{co:6dtotallyreal}
  A six-dimensional two-step solvable Lie algebra \( \mfg \) admits an
  SKT structure with totally real \( \derg \) if and only if
  \( \mfg \) is isomorphic to one of the following Lie algebras:
  \begin{equation*}
    \bR^6, \aff_{\bR}\oplus \bR^4, \mfh_3\oplus \bR^3,
    2\aff_{\bR}\oplus \bR, \aff_{\bR}\oplus \mfh_3\oplus \bR,
    \mfh_3\oplus \mfh_3, 3\aff_{\bR}.
  \end{equation*}
  \begin{remark}
    In~\cite{ABD}, all six-dimensional Lie algebras with an Abelian
    complex structure have been classified.
    All these Lie algebra are two-step solvable since this is true in
    general for Lie algebras endowed with an Abelian complex
    structure.
    There are in total \( 8 \) (including the Abelian Lie algebra
    \( \bR^6) \) six-dimensional Lie algebras and \( 12 \) single and
    \( 1 \) two-parameter family of six-dimensional non-nilpotent Lie
    algebras admitting an Abelian complex structure.
    By our results, \( 3 \) out of these \( 8 \) nilpotent and \( 4 \)
    out of the \( 12 \) single non-solvable admits Abelian complex
    structures coming from an SKT structure with totally real
    commutator ideal.
    Note that such an Abelian complex structure does not exist on any
    of the non-nilpotent Lie algebras in the two-parameter family.
  \end{remark}
\end{corollary}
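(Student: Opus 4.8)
The plan is to stratify the proof according to the dimension $m:=\dim(\derg)$ of the commutator ideal. Since $\derg$ being totally real means $\derg\cap J\derg=\{0\}$, the subspace $\derg\oplus J\derg$ has dimension $2m$ and lies in the six-dimensional $\mfg$, so $m\in\{0,1,2,3\}$; I would then read off the isomorphism type of $\mfg$ in each of these four cases directly from the structure results already proved in this section. If $m=0$, then $\mfg$ is Abelian and $\mfg\cong\bR^6$. If $m=1$, then $\derg$ is automatically totally real, and according to whether $[\derg,J\derg]$ has codimension $0$ or $1$ in $\derg$, Corollary~\ref{co:lowdimnotJinv}(a) gives $\mfg\cong\aff_{\bR}\oplus\bR^4$ or $\mfg\cong\mfh_3\oplus\bR^3$. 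If $m=2$ with $\derg$ totally real, then $[\derg,J\derg]$ has codimension $\ell\in\{0,1,2\}$, and Corollary~\ref{co:lowdimnotJinv}(b) specialised to $n=3$ gives $\mfg\cong2\aff_{\bR}\oplus\bR^2$, $\mfg\cong\aff_{\bR}\oplus\mfh_3\oplus\bR$, or $\mfg\cong\mfh_3\oplus\mfh_3$ according to $\ell=0,1,2$.

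It remains to treat $m=3$. Here I would first observe that total reality of $\derg$ together with $\dim\mfg=6$ forces $\derg\oplus J\derg=\mfg$, i.e.\ $U_J=(\derg\oplus J\derg)^{\perp}=\{0\}$, so that $\mfg$ is exactly a six-dimensional almost Hermitian two-step solvable Lie algebra with totally real commutator ideal of dimension $n=3$; Corollary~\ref{co:totallyreal2} then applies verbatim and yields $\mfg\cong3\aff_{\bR}$. Equivalently, in Theorem~\ref{th:totallyrealgeneralresult} the one-forms $\mu_i$ and the two-form $\tilde\nu$ are built out of $U_J^*=\{0\}$, hence $\tilde\nu=0$, and the requirement $\im(\tilde\nu)=\spa{X_{r+1},\dots,X_m}$ forces $r=m=3$. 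Collecting the four cases produces precisely the seven Lie algebras in the statement.

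For the converse I would simply invoke the ``if'' directions of Corollary~\ref{co:lowdimnotJinv}, Corollary~\ref{co:totallyreal2}, and (behind them) Corollary~\ref{co:totallyreal1} and Theorem~\ref{th:totallyreal2}, which exhibit, for suitable parameters $\lambda_i$, $\mu_i$, $\alpha_i$, an SKT structure with totally real $\derg$ on each of the seven algebras; each of these is also manifestly two-step solvable. Finally I would note that the seven algebras are pairwise non-isomorphic, being separated by $\dim(\derg)$, by whether $\mfg$ is nilpotent, and by the isomorphism type of the nilpotent direct summand, so the list is irredundant. I expect no real obstacle: the only point requiring an argument rather than a direct citation is the reduction in the $m=3$ case, namely that $U_J=\{0\}$ collapses the general shape of Theorem~\ref{th:totallyrealgeneralresult} down to $3\aff_{\bR}$; everything else is a finite bookkeeping check against the corollaries already established.
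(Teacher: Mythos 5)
Your overall strategy is exactly how this corollary is meant to follow from the section: stratify by \( m=\dim(\derg)\in\{0,1,2,3\} \) and by the codimension \( \ell \) of \( [\derg,J\derg] \) in \( \derg \), then read off the isomorphism types from Corollary~\ref{co:lowdimnotJinv} and Corollary~\ref{co:totallyreal2}; the cases \( m=0,1,3 \) and \( m=2 \) with \( \ell\in\{0,1\} \) are handled correctly (including your reduction \( U_J=\{0\} \Rightarrow r=m \) for \( m=3 \)).

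The gap is in the case \( m=2 \), \( \ell=2 \), i.e.\ the algebra \( \mfh_3\oplus\mfh_3 \). Your converse step claims that the ``if'' direction of Corollary~\ref{co:lowdimnotJinv}(b)(iii) (equivalently Theorem~\ref{th:totallyreal2}(b)) supplies an SKT structure with totally real \( \derg \) on it for suitable parameters. But that case requires two \emph{\( \bC \)-linearly independent} one-forms \( \alpha,\beta\in U_J^* \), and here \( \dim_{\bR}U_J=2n-2m=6-4=2 \), so \( U_J^* \) is a one-dimensional complex vector space and no such pair exists; the case \( \ell=2 \) is vacuous in dimension six (it needs \( \dim\mfg\geq 2m+4=8 \)). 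The same obstruction is visible in Theorem~\ref{th:totallyrealgeneralresult}: for a nilpotent algebra one must have \( r=0 \), and then \( \im(\tilde\nu)\subseteq\spa{\tilde\nu(Y,Z)} \) is at most one-dimensional when \( \dim U_J=2 \), so it cannot equal the two-dimensional \( \derg \). One can also see directly that the step cannot be repaired: by Proposition~\ref{pro:twostepSKTsheardatatotallyreal} (and the remark following it) the complex structure of an SKT structure with totally real \( \derg \) is Abelian, an Abelian \( J \) preserves the centre, and in \( \mfh_3\oplus\mfh_3 \) the centre equals the commutator ideal, which would then be \( J \)-invariant rather than totally real. So your realization of \( \mfh_3\oplus\mfh_3 \) fails, and the forward direction of your \( m=2 \) case should likewise only produce \( 2\aff_{\bR}\oplus\bR^2 \) and \( \aff_{\bR}\oplus\mfh_3\oplus\bR \); this points to the printed list itself needing adjustment at this entry (just as \( 2\aff_{\bR}\oplus\bR \) should read \( 2\aff_{\bR}\oplus\bR^2 \), which you corrected silently), with \( \mfh_3\oplus\mfh_3 \) arising in the six-dimensional classification only via a \( J \)-invariant two-dimensional commutator as in Theorem~\ref{th:2dJinv}.
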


\section{Commutator ideal complex of real-dimension two}
\label{sec:2dcomplexcommutator}

Here, we consider two-step SKT shear data \( (\omega,\mfa) \) with
\( \mfa \) being complex and of dimension two.
Note that we then have \( \mfa=\mfa_J \), \( \mfa_r=0 \),
\( U_r=J\mfa_r=0 \) and so \( U=U_J \).

We first show that \( \omega(Z,\any)|_{\mfa} \) acts on \( \mfa \) by
multiplication with an imaginary number.
The proof will use a useful interpretation of the second part of
Lemma~\ref{le:Jintegrable}\ref{item:o0-JJr} in dimension~\( 2 \).

\begin{lemma}
  \label{lem:2d-comm-Jrel}
  Suppose \( V=\bR^{2} \), with the standard complex
  structure~\( J \).  If \( A_{1},A_{2} \in \bR^{2\times 2} \) satisfy
  \begin{enumerate*}[(a)]
  \item \( [A_{1},A_{2}] = 0 \) and
  \item \( [A_{2},J] = J[A_{1},J] \),
  \end{enumerate*}
  then both \( A_{1} \) and~\( A_{2} \) commute with~\( J \).
\end{lemma}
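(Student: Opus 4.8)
The plan is to split each $A_i$ into its parts commuting and anticommuting with~$J$ and to show the anticommuting parts vanish. Write $A_i = B_i + S_i$ with $B_i := \tfrac{1}{2}(A_i - JA_iJ)$ and $S_i := \tfrac{1}{2}(A_i + JA_iJ)$; one checks directly that $B_iJ = JB_i$ and $S_iJ = -JS_i$, and since $B_i$ already commutes with $J$ the conclusion is equivalent to $S_1 = S_2 = 0$. At the outset I would record the elementary facts for $2\times 2$ matrices and the standard~$J$: a product of two $J$-anticommuting matrices commutes with $J$; a product of a $J$-commuting and a $J$-anticommuting matrix (in either order) anticommutes with $J$; any two $J$-commuting matrices commute with each other (they form the commutative algebra $\{aI+bJ\}$); and, writing a $J$-anticommuting matrix as $S=\begin{psmallmatrix} p & q\\ q & -p\end{psmallmatrix}$, one has $[S,S'] = -2(pq'-qp')J$, so two $J$-anticommuting matrices commute if and only if they are linearly dependent.

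First I would unwind hypothesis~(b): since $B_i$ commutes with $J$ and $S_iJ = -JS_i$, we get $[A_i,J] = [S_i,J] = -2JS_i$, so (b) becomes $-2JS_2 = J(-2JS_1) = 2S_1$, that is $S_1 = -JS_2$, equivalently $S_2 = JS_1$. Next I would use only the $J$-commuting component of hypothesis~(a): decomposing $A_1A_2 - A_2A_1$ into its $J$-commuting and $J$-anticommuting summands via the product rules above, the $J$-commuting summand equals $(B_1B_2 - B_2B_1) + (S_1S_2 - S_2S_1) = S_1S_2 - S_2S_1$, because $B_1$ and $B_2$ commute; hence $[S_1,S_2] = 0$, and therefore $S_1$ and $S_2$ are linearly dependent.

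It then remains to combine $S_2 = JS_1$ with this linear dependence. If $S_1 = 0$, then $S_2 = JS_1 = 0$ and we are done; otherwise $S_2 = cS_1$ for some $c\in\bR$, so $JS_1 = cS_1$, and applying $J$ once more gives $-S_1 = J(JS_1) = c^2S_1$, i.e. $(c^2+1)S_1 = 0$, forcing $S_1 = 0$, a contradiction. Hence $S_1 = S_2 = 0$, so $A_1 = B_1$ and $A_2 = B_2$ both commute with~$J$. The computations are all short; the only steps needing a little care are the bookkeeping of the $\pm$-splitting of $2\times2$ matrices under $J$ together with the sign in $[S_i,J] = -2JS_i$, and the observation that left multiplication by $J$ has no real eigenvalue on the plane of $J$-anticommuting matrices — this is exactly what makes $S$ and $JS$ independent whenever $S\neq0$.
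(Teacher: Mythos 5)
Your proof is correct and follows essentially the same route as the paper: the same splitting of each \( A_i \) into \( J \)-commuting and \( J \)-anticommuting parts, with hypothesis (a) reduced, via commutativity of the \( J \)-commuting parts in real dimension two, to \( [S_1,S_2]=0 \), and hypothesis (b) rewritten as a relation between \( S_1 \) and \( S_2 \). The only (cosmetic) difference is the endgame: the paper simultaneously diagonalises the symmetric anti-invariant parts and compares diagonal with off-diagonal matrices, while you use that commuting \( J \)-anticommuting \( 2\times 2 \) matrices are linearly dependent together with \( S_2=JS_1 \) and \( J^2=-\id \); both are fine.
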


\begin{proof}
  Decompose \( A_i=A_i^J+A_i^{J-} \) with \( JA_i^J = A_{i}^{J}J \)
  and \( JA_{i}^{J-} = - A_{i}^{J-}J \).
  Then \( \dim_{\bR} V = 2 \), implies
  \( [A_{1}^{J},A_{2}^{J}] = 0 \), so (a)~implies the
  \( J \)-anti-invariant parts \( A_{i}^{J-} \) commute.
  These parts are symmetric matrices, so can be simultaneously
  diagonalised.
  However, (b)~gives
  \( 2 A_2^{J-} J = [A_2,J] = J[A_1,J] = 2 A_1^{J-} \), which for
  diagonal matrices implies \( A_{i}^{J-} = 0 \).
\end{proof}

\begin{lemma}
  \label{le:2dcomplex}
  Let \( (\omega,\mfa) \) be two-step SKT shear data with \( \mfa \)
  complex and of dimension two.
  Then there exists \( \alpha\in U^* \) such that
  \( \omega(Z,X)=\alpha(Z) JX \) for any \( Z\in U \),
  \( X\in \mfa \).
\end{lemma}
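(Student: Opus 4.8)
The plan is to extract the relevant pieces of the integrability data from Lemma~\ref{le:Jintegrable} and then apply Lemma~\ref{lem:2d-comm-Jrel} together with the commutativity part of Lemma~\ref{le:conditionsonFX}. Since \( \mfa = \mfa_J \) is complex of real dimension two, we have \( \mfa_r = 0 \), hence \( U_r = J\mfa_r = 0 \), so \( U = U_J \) and \( \omega \) reduces to \( \omega_0 = (\omega_0)_{JJ}^J \in (U^*\wedge\mfa^*)\otimes\mfa \) together with \( \omega_1 = (\omega_1)_{JJ}^J \in \Lambda^2 U^*\otimes\mfa \). For each \( Z\in U_J \) set \( B_Z := \omega_0(Z,\any) \in \End(\mfa) \). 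First I would record what the hypotheses give: condition~\ref{item:o0-JJr} of Lemma~\ref{le:Jintegrable} says \( (\omega_0)_{JJ}^J \) satisfies \( J^*(\omega_0)_{JJ}^J = (\omega_0)_{JJ}^J - J(J.(\omega_0)_{JJ}^J) \); I would unwind this pointwise in \( Z \) as a relation on \( B_Z \) of exactly the shape appearing in Lemma~\ref{lem:2d-comm-Jrel}(b), i.e.\ a relation \( [B_Z,J] = J[B_{Z'},J] \) after suitably pairing the evaluation on \( Z \) versus \( JZ \)-type arguments — this is the ``useful interpretation in dimension~2'' referred to just before Lemma~\ref{lem:2d-comm-Jrel}.

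Next I would invoke the commutativity. Equation~\eqref{eq:Abeliansheardata} evaluated on \( X_1,X_2\in U \), \( Y\in\mfa \) says (as noted in the paragraph after Lemma~\ref{le:partsofnuequalzero}) that \( \omega_0(X_1,\any) \) and \( \omega_0(X_2,\any) \) commute; since here \( \mfa = \mfa_J \) this is precisely \( [B_{X_1},B_{X_2}] = 0 \) for all \( X_1,X_2\in U \). With \( \dim_\bR\mfa = 2 \), Lemma~\ref{lem:2d-comm-Jrel} now applies (taking \( A_1 = B_{Z} \), \( A_2 = B_{Z'} \) appropriately via the identification supplied by condition~\ref{item:o0-JJr}) and forces every \( B_Z \) to commute with \( J \). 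So each \( B_Z \) is a complex-linear endomorphism of the one-dimensional complex vector space \( (\mfa,J) \), hence acts as multiplication by a complex scalar depending linearly on \( Z \); write \( B_Z = c(Z)\,\id_\mfa \) with \( c\in U^*\otimes\bC \). Finally I would use the remaining content of condition~\ref{item:o0-JJr} — that \( (\omega_0)_{JJ}^J \) is genuinely of the type forced by \eqref{eq:AbelianSKTshear1}, i.e.\ \( J^*\omega = \omega - J\circ J.\omega \) — to pin down the real part of \( c(Z) \): evaluating \eqref{eq:AbelianSKTshear1} on a pair \( (Z,X) \) with \( Z\in U \), \( X\in\mfa \) and using \( B_Z = c(Z)\id \), \( J.\omega = 0 \) would be incompatible unless \( c(Z) \) is purely imaginary, which is exactly the statement that \( \omega(Z,X) = \alpha(Z)JX \) for \( \alpha(Z) = \Im c(Z) \in\bR \), \( \alpha\in U^* \).

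The main obstacle I anticipate is the bookkeeping in the first step: translating the type relation in Lemma~\ref{le:Jintegrable}\ref{item:o0-JJr} — which is a statement about the \( \Lambda^2 U^*\otimes\mfa \)-valued two-form \( (\omega_0)_{JJ}^J \) tested against \( J \) acting on both the \( U \)-slot and the \( \mfa \)-slot — into the precise hypothesis ``\( [A_2,J] = J[A_1,J] \)'' of Lemma~\ref{lem:2d-comm-Jrel} requires carefully matching which endomorphism plays the role of \( A_1 \) and which of \( A_2 \); concretely, if \( Z_1,Z_2 \) is a basis of a two-dimensional subspace of \( U \), one takes \( A_i = B_{Z_i} \) and checks that the \( (1,1) \)-type condition on \( \omega_0 \) restricted to \( \mathrm{span}(Z_1,Z_2)\wedge\mfa \) is literally condition (b). Once that identification is in place the rest is immediate from Lemma~\ref{lem:2d-comm-Jrel} and Lemma~\ref{le:conditionsonFX}, and the scalar conclusion is forced by dimension count on the complex line \( (\mfa,J) \). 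One should also note that \( \omega_1 = (\omega_1)_{JJ}^J \) plays no role in this particular statement — the claim only concerns \( \omega(Z,X) \) for \( X\in\mfa \), which is governed entirely by \( \omega_0 \) — so the argument need not touch condition~\ref{item:o1-JJr} at all.
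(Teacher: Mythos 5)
Your first three steps follow the paper's proof: with \( A_1=\omega(Z,\any)|_{\mfa} \), \( A_2=\omega(JZ,\any)|_{\mfa} \), equation~\eqref{eq:Abeliansheardata} gives \( [A_1,A_2]=0 \), Lemma~\ref{le:Jintegrable}\ref{item:o0-JJr} unwinds to \( [A_2,J]=J[A_1,J] \), and Lemma~\ref{lem:2d-comm-Jrel} forces both to commute with \( J \), so each \( \omega(Z,\any)|_{\mfa} \) is multiplication by a complex scalar \( c(Z) \) depending linearly on \( Z \). Up to that point your outline is sound and matches the paper.

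The final step, however, contains a genuine gap. You claim that evaluating the integrability equation~\eqref{eq:AbelianSKTshear1} on \( (Z,X) \), ``using \( B_Z=c(Z)\id \), \( J.\omega=0 \)'', forces \( c(Z) \) to be purely imaginary. This is false: the identity \( J.\omega=0 \) is not available here (it holds in the totally real case of Section~\ref{sec:totallyreal}, where \( \omega \) takes values in \( \mfa_r \), not when \( \mfa=\mfa_J \)), and a direct check shows that for \emph{arbitrary} complex scalars \( z_1,z_2 \) the relation \( A_2(JX)=A_1(X)+JA_2(X)+JA_1(JX) \) coming from \eqref{eq:AbelianSKTshear1} is automatically satisfied, so integrability imposes no constraint on \( \Re(c(Z)) \). (Indeed, if it did, no Hermitian almost Abelian algebra with eigenvalue real part \( -a/2\neq 0 \) as in Theorem~\ref{th:AlmostAbelianSKTLAs} could exist.) The purely imaginary conclusion requires the SKT condition~\eqref{eq:AbelianSKTshear2}, which you never invoke: the paper inserts \( (Z,JZ,X_1,X_2) \) with \( X_1,X_2\in\mfa \) into~\eqref{eq:AbelianSKTshear2}, and after identifying \( A_i \) with \( z_i\in\bC \) this becomes \( 0=\abs{z_1}^2+\abs{z_2}^2+z_1^2+z_2^2=2\Re(z_1)^2+2\Re(z_2)^2 \), whence \( \Re(z_1)=\Re(z_2)=0 \). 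Without this use of the closedness of the torsion form, your argument only shows that \( \omega(Z,\any)|_{\mfa} \) is complex linear, not that it is \( \alpha(Z)J \) with \( \alpha \) real.
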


\begin{proof}
  Fix \( Z\in U \) of norm one and set
  \( A_1:=\omega(Z,\any)|_{\mfa}\in \End(\mfa) \),
  \( A_2:=\omega(JZ,\any)|_{\mfa}\in \End(\mfa) \).
  Inserting \( (Z,JZ,X_1,X_2) \), for \( X_1,X_2\in \mfa \),
  into~\eqref{eq:AbelianSKTshear2}, we obtain
  \begin{equation}
    \label{eq:shear2-A1A2}
    0 = JA_2^t A_2 + A_2^t A_2 J + JA_1^t A_1 + A_1^t A_1 J + A_1^t J
    A_1^J + A_1 J A_1 + A_2^t J A_2^J + A_2 J A_2.
  \end{equation}
  Equation~\eqref{eq:Abeliansheardata}~implies \( [A_1,A_2]=0 \) and
  Lemma~\ref{le:Jintegrable}\ref{item:o0-JJr} gives
  \( [A_2,J]=J[A_1,J] \), so by Lemma~\ref{lem:2d-comm-Jrel} both
  \( A_{i} \) commute with~\( J \).
  Considering \( \mfa \) as a complex one-dimensional space, we
  identify \( A_i \) with \( z_i\in \bC \).
  As \( J \)~becomes multiplication by~\( i \) and transpose is
  complex conjugation, equation~\eqref{eq:shear2-A1A2} is equivalent
  to
  \begin{equation*}
    0=\abs{z_1}^2+\abs{z_2}^2+z_1^2+z_2^2=\Re(z_1)^2+\Re(z_2)^2,
  \end{equation*}
  so \( \Re(z_1)=0 =\Re(z_2) \), giving the claimed statement.
\end{proof}

\begin{remark}
  Let \( (\mfa,\omega) \) be pre-shear data with \( \mfa \) being
  complex and of dimension two such that there is some
  \( \alpha\in U^* \) with \( \omega(Z,X)=\alpha(Z) JX \) for any
  \( Z\in U \), \( Y\in \mfa \).
  Then~\eqref{eq:Abeliansheardata} is fulfilled on
  \( \mfa\wedge \Lambda^2 U \), \eqref{eq:AbelianSKTshear1}~is
  satisfied on \( \mfa\wedge U \) and \eqref{eq:AbelianSKTshear2}~is
  fulfilled on \( \Lambda^2 \mfa \wedge \Lambda^2 U \).
\end{remark}
Next, we are going to show that if \( \alpha\neq 0 \), then
\( \nu_i=\lambda_i \alpha\wedge J^*\alpha \) for some
\( \lambda_i\in \bR \), \( i=1,2 \), where
\( \nu_1,\nu_2\in \Lambda^2 U^* \) are uniquely defined by
\( \nu=\sum_{i=1}^2 \nu_i X_i \).

First of all, \eqref{eq:Abeliansheardata} is valid on
\( \Lambda^3 U \) if and only if \( \alpha\wedge \nu_1=0 \),
\( \alpha\wedge \nu_2=0 \).
Next, \eqref{eq:AbelianSKTshear2}~on \( \mfa\wedge \Lambda^3 U \)
gives us
\begin{equation*}
  \begin{split}
    0&=J^*\nu_2\wedge \alpha+J^*\alpha\wedge \nu_2+\alpha \wedge \nu_1=J^*\nu_2\wedge \alpha+J^*\alpha\wedge \nu_2,\\
    0&=J^*\nu_1\wedge \alpha+J^*\alpha\wedge \nu_1+\alpha \wedge \nu_1=J^*\nu_1\wedge \alpha+J^*\alpha\wedge \nu_1.
  \end{split}
\end{equation*}
Now we may write \( \nu_i=\alpha\wedge \beta_i \) for some
\( \beta_i\in U^* \), \( i=1,2 \), and then the just obtained
equations give us \( \alpha\wedge J^*\alpha\wedge \beta_i=0 \) for all
\( i=1,2 \).
Hence, if \( \alpha\neq 0 \), then
\( \beta_i\in \spa{\alpha,J^*\alpha} \), i.e.\
\( \nu_i=\lambda_i \alpha\wedge J^*\alpha \) for some
\( \lambda_i\in \bR \), \( i=1,2 \).
In this case, one easily checks that
then~\eqref{eq:Abeliansheardata}--\eqref{eq:AbelianSKTshear2} are
fulfilled and \( \mfg\cong \mfr'_{3,0}\oplus \bR^{2n-3} \).

Otherwise, \( \alpha=0 \) and so \( \mfg \) is nilpotent.
Then~\eqref{eq:AbelianSKTshear1} evaluated on \( \Lambda^2 U \) gives
us \( \nu_1=\tau_1+\Re(\theta) \), \( \nu_2=\tau_2-\Im(\theta) \) for
certain \( \tau_1,\tau_2\in \Lambda^{1,1} U^* \),
\( \theta\in \Lambda^{2,0} U^* \).
Then, we still have to solve~\eqref{eq:AbelianSKTshear2} on
\( \Lambda^4 U^* \), which gives us
\begin{equation*}
  0=J^*\nu_1\wedge \nu_1+J^*\nu_2\wedge \nu_2=(\tau_1-\Re(\theta))\wedge (\tau_1+\Re(\theta))+(\tau_2-\Im(\theta))\wedge (\tau_2+\Im(\theta)),
\end{equation*}
which is equivalent to
\begin{equation*}
  \tau_1\wedge \tau_1+\tau_2\wedge \tau_2=\Re(\theta)\wedge \Re(\theta)+\Im(\theta)\wedge \Im(\theta)=\theta\wedge \overline{\theta}.
\end{equation*}
We will now show the following

\begin{lemma}
  \label{le:quadraticequationnonzero}
  Let \( (V,J) \) be a vector space with a complex structure.
  Suppose that \( \omega \in \Lambda^{2,0}V \) and real
  \( \sigma_i \in \Lambda^{1,1}V \) satisfy
  \begin{equation}
    \label{eq:sq-2}
    \sigma_1^2 + \sigma_2^2 = \omega \wedge \overline\omega.
  \end{equation}
  Then the complex rank of \( \omega \) is at most two and the common
  kernel of \( \sigma_1,\sigma_2,\omega \) has codimension at most
  six.
\end{lemma}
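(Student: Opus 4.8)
The plan is to derive both conclusions from the single reduced statement: \emph{if the common kernel $K:=\ker\omega\cap\ker\sigma_1\cap\ker\sigma_2$ is trivial, then $\dim_{\bC}V\le 3$.} First I would check that $K$ is $J$-invariant, since the kernel of a $(2,0)$-form and of a real $(1,1)$-form is cut out by linear conditions stable under $J$; then $\omega,\sigma_1,\sigma_2$ descend to $J$-linear forms on $V/K$ still satisfying $\sigma_1^2+\sigma_2^2=\omega\wedge\overline\omega$, so applying the reduced statement to $V/K$ gives $\codim_{\bR}K=\dim_{\bR}(V/K)\le 6$. Moreover $\omega$ descends further to $V/\ker\omega$, a quotient of $V/K$ of complex dimension at most $3$, on which $\Lambda^{4,0}$ vanishes; hence $\omega\wedge\omega=0$, i.e.\ the complex rank of $\omega$ is at most two.

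To prove the reduced statement, assume $K=0$ and induct on $\dim_{\bC}V$. If $\omega=0$ then $\sigma_1^2+\sigma_2^2=0$, so by Lemma~\ref{le:quadraticequationzero} a rotation in $\spa{\sigma_1,\sigma_2}$ brings this to $\sigma_i=\alpha_i\wedge J^*\alpha_i$, each of rank at most two, whence $\ker\sigma_1\cap\ker\sigma_2$ has real codimension at most four and $\dim_{\bR}V\le 4$. If $\omega\neq 0$, bring $\omega$ into its unitary normal form $\omega=\sum_{j=1}^{k}a_j\,\theta_{2j-1}\wedge\theta_{2j}$ with $a_j>0$ and $\theta_1,\dots,\theta_n$ a unitary coframe, let $P$ be the complex $2$-plane dual to $\{\theta_1,\theta_2\}$ and $Q$ its $J$-invariant orthogonal complement, so $\omega=a_1\theta_1\wedge\theta_2+\omega'$ with $\omega'\in\Lambda^{2,0}Q^*$ of complex rank $2(k-1)$. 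Writing $\sigma_i=\sigma_i^{PP}+\sigma_i^{PQ}+\sigma_i^{QQ}$ and expanding the equation by bidegree in $(P^*,Q^*)$, the $\Lambda^4P^*$-component gives $(\sigma_1^{PP})^2+(\sigma_2^{PP})^2=a_1^2\,\theta_1\theta_2\bar\theta_1\bar\theta_2\neq 0$, so at least one $\sigma_i^{PP}$ is nondegenerate on $P$; the $\Lambda^3P^*\wedge Q^*$- and $\Lambda^1P^*\wedge\Lambda^3Q^*$-components give the coupling relations $\sum_i\sigma_i^{PP}\wedge\sigma_i^{PQ}=0$ and $\sum_i\sigma_i^{QQ}\wedge\sigma_i^{PQ}=0$; the $\Lambda^{2,0}P^*\otimes\Lambda^{0,2}Q^*$-part of the $\Lambda^2P^*\wedge\Lambda^2Q^*$-component yields $\sum_{i=1}^{2}\xi_i\wedge\zeta_i=-\tfrac{a_1}{2}\overline{\omega'}$, where $\sigma_i^{PQ}=\mu_i+\overline{\mu_i}$ with $\mu_i=\theta_1\wedge\xi_i+\theta_2\wedge\zeta_i\in P^{*(1,0)}\wedge Q^{*(0,1)}$; and the $\Lambda^4Q^*$-component gives the self-similar relation $(\sigma_1^{QQ})^2+(\sigma_2^{QQ})^2=\omega'\wedge\overline{\omega'}$ on $Q$. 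By the inductive hypothesis applied to $(\sigma_1^{QQ},\sigma_2^{QQ},\omega')$ one gets $\operatorname{rank}_{\bC}\omega'\le 2$, hence $k\le 2$; then, using the coupling relations together with the nondegeneracy of some $\sigma_i^{PP}$, one pins the supports of $\sigma_1,\sigma_2,\omega$ inside a common complex $3$-space (equivalently $k\le 1$), which is the contradiction completing the induction.

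It is conceptually cleanest to phrase the heart of the argument as a statement about Hermitian forms: viewing $\sigma_i$ as Hermitian forms $h_i$ on $V^{1,0}$ and $\omega$ as the alternating form $\ell_\omega$ on $\Lambda^2V^{1,0}$, the identity reads $\Lambda^2h_1+\Lambda^2h_2=c\,\ell_\omega\otimes\overline{\ell_\omega}$ with $c>0$, a sum of two second-compound (``Plücker'') squares equal to a rank-one form supported at the bivector $\ell_\omega$; what must be shown is that this forces $\ell_\omega$ to be decomposable and the $h_i$ to live on a common complex subspace of dimension at most two. I expect the main obstacle to be exactly this rigidity, most acutely in the sub-case where both $h_i$ restrict to rank three or four on the support of $\omega$: here ranks and signatures do not suffice, and one must genuinely use that $\Lambda^2h_i$ lies in the image of the second-compound map, so satisfies the Plücker relations. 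The bookkeeping — choosing a basis of $Q$ (resp.\ of the support of $\omega$) simultaneously adapted to $\omega'$, $\sigma_1^{QQ}$ and $\sigma_2^{QQ}$ and playing the two coupling relations and $\sum_i\xi_i\wedge\zeta_i=-\tfrac{a_1}{2}\overline{\omega'}$ (each a sum of \emph{two} terms, not a single vanishing) against one another — is where the bulk of the computational effort lies, with the degenerate sub-cases (a block of $\omega$, or a block of some $\sigma_i$, dropping out) handled by reinvoking Lemma~\ref{le:quadraticequationzero}.
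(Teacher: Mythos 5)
There is a genuine gap. Your reduction steps are fine (the $J$-invariance of the common kernel, passing to $V/K$, and deducing the rank statement from $\dim_{\bC}\le 3$ via $\Lambda^{4,0}=0$), and your decomposition of~\eqref{eq:sq-2} by bidegree with respect to $V=P\oplus Q$ is essentially the same skeleton as the paper's proof (which splits off a $4$-real-dimensional $J$-invariant $U$ carrying a rank-two piece of $\omega$ and expands in $\Lambda^2U^*+U^*\wedge W^*+\Lambda^2W^*$). But the entire content of the lemma lies in the step you do not carry out: showing that the component equations force $\omega'=0$ and confine the supports of $\sigma_1,\sigma_2$ to a complex $3$-space containing the support of $\omega$. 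You only assert that ``one pins the supports \dots inside a common complex $3$-space'' and then explicitly defer the rigidity argument and the bookkeeping as the expected main obstacle; that is precisely where the proof happens. In the paper this is done concretely: the $\Lambda^2U^*$-parts of the $\sigma_j$ are normalised using the $\mathrm{SL}(2,\bC)\cong\mathrm{SO}_0(1,3)$ action on $\Lambda^{1,1}$ and the residual $\mathrm{SU}(2)$ action on $\Lambda^2_-U^*$, the $\Lambda^3U^*\wedge W^*$-component kills $y_1$, and the $\Lambda^2U^*\wedge\Lambda^2W^*$-component identifies $\Re(\omega')=-x_2\wedge y_2$ and $\Im(\omega')=x_2\wedge Jy_2$, forcing $\omega'\wedge\overline{\omega'}=0$ and hence $\omega'=0$; the codimension bound then follows from a case analysis ($x_2=0$ or $y_2=0$) combined with the $\Lambda^4W^*$-component, $(\sigma_1^W)^2+(\sigma_2^W)^2=0$, via Lemma~\ref{le:quadraticequationzero}, and the $U^*\wedge\Lambda^3W^*$-component. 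No equivalent of this computation (nor of your alternative ``Pl\"ucker/second compound'' route) appears in your proposal, so the two conclusions of the lemma are not established.

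Two further points. First, your appeal to the inductive hypothesis on $(\sigma_1^{QQ},\sigma_2^{QQ},\omega')$ uses the rank conclusion of the full lemma, not the ``reduced statement'' you set up the induction for; this is repairable (induct on the full statement, or redo the reduction on $Q$), but as written the logic is circular. Second, the closing claim that pinning the supports in a complex $3$-space is ``equivalently $k\le 1$'' and yields ``the contradiction completing the induction'' is confused: that containment is the desired conclusion, not a contradiction, and it is not equivalent to $k\le1$; this suggests the final step has not actually been thought through, which matches the fact that it is the step left unproved.
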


\begin{proof}
  If \( \omega=0 \), the result follows from
  Lemma~\ref{le:quadraticequationzero}.

  Otherwise, there is a decomposition \( V=U\oplus W \) with
  \( \dim(U)=4 \) such that \( \omega=\alpha\wedge \beta+\omega' \)
  with \( \alpha,\beta\in \Lambda^{1,0} U^* \) \( \bC \)-linearly
  independent and \( \omega'\in \Lambda^{2,0} W^* \).
  We then have
  \( \Lambda^2 V^* = \Lambda^2 U^*+ U^*\wedge W^* + \Lambda^2 W^* \)
  and correspondingly write elements of \( \Lambda^2 V^* \) as
  \( \gamma = \gamma^U + \gamma^{m} + \gamma^W \).

  We first show that \( \omega'=0 \), i.e.\ that \( \omega \) has
  complex rank two.
  For this, put \( a = \Re\alpha \), \( b = \Re\beta \).
  We may use the action of \( \mathrm{SL}(2,\bC) \) as
  \( \mathrm{SO}_0(1,3) \) on \( \Lambda^{1,1}(\bC^2)^* \) to get that
  \( \sigma_j=p (a\wedge Ja + b\wedge Jb) \) for some \( p\in \bR \)
  and then may use the remaining \( \mathrm{SU}(2) \)-action on
  \( \Lambda^2_-U^* \) to get
  \begin{equation*}
    \sigma_2^U = q a\wedge Ja + r b\wedge Jb,
  \end{equation*}
  for some \( q,r \in \bR \).
  As
  \( (\sigma_1^U)^2 + (\sigma_2^U)^2 = 4a\wedge Ja \wedge b \wedge Jb
  \), we get \( p^2+q r=0 \).
  Thus \( (p,q)\neq (0,0) \) and we may rotate
  \( (\sigma_1,\sigma_2) \) in such a way that
  \( \sigma_1=p\; a\wedge Ja+q\; b\wedge Jb \) and
  \( \sigma_2=r\; b\wedge Jb \) for certain \( p,q,r\in \bR \) with
  \( p>0 \).

  We may now write
  \( \sigma_j^m = a\wedge x_j + Ja\wedge Jx_j + b\wedge y_j + Jb
  \wedge Jy_j \) for some \( x_j,y_j \in W \).
  As \( \omega\wedge\overline{\omega} \) has no component in
  \( \Lambda^3U^*\wedge W^* \), we get
  \( \sigma_1^U\wedge\sigma_1^m + \sigma_2^U\wedge\sigma_2^m = 0 \).
  The \( a\wedge Ja\wedge b \) term in this equation gives
  \( p y_1 = 0 \), so \( y_1 = 0 \).

  Now the contribution to \( \Lambda^2U^*\wedge\Lambda^2W^* \) is
  \( \sum_{j=1}^2 (2\sigma_j^U\wedge\sigma_j^W + (\sigma_j^m)^2) =
  (a-iJa)\wedge(b-iJb) \wedge \overline{\omega'} + \omega' \wedge
  (a+iJa)\wedge(b+iJb) \).
  Consider the terms multiplying \( a\wedge b \).
  There is no contribution from \( \sigma_j^U \) nor \( \sigma_1^m \),
  so we get \( -x_2\wedge y_2 = \Re(\omega') \).
  Next, consider the terms multiplying \( a\wedge b \).
  In this case, there is again no contribution from \( \sigma_j^U \)
  and \( \sigma_1^m \) and we have \( x_2\wedge Jy_2=\Im(\omega') \).
  So both \( \Re(\omega') \) and \( \Im(\omega') \) have rank two and,
  consequently, \( \omega'\wedge\overline{\omega'} = 0 \).
  Thus \( \omega' = 0 \) and \( \omega \) has rank two.
  Moreover, \( x_2=0 \) or \( y_2=0 \).
  We look again at the equation
  \( \sigma_1^U\wedge\sigma_1^m + \sigma_2^U\wedge\sigma_2^m = 0 \).
  For \( x_2=0 \), the \( b\wedge Jb\wedge a \)-term gives us
  \( q x_1=0 \) and so, since \( q\neq 0 \), that \( x_1=0 \) as well.
  For \( y_2=0 \), the same term gives us \( qx_1+r x_2=0 \), i.e.\
  \( x_2=-\frac{r}{q} x_1 \).

  Next, looking at the \( \Lambda^4 U^* \)-terms, we have
  \( (\sigma_1^W)^2+(\sigma_2^W)^2=0 \) and so
  Lemma~\ref{le:quadraticequationzero} shows the existence of
  \( c,e\in W^* \) such that \( \sigma_1^W=c\wedge Jc \) and
  \( \sigma_2^W=e\wedge Je \).
  Next, look at the \( U^*\wedge \Lambda^3 W^* \)-terms, i.e.\ at the
  equation
  \( \sigma_1^m\wedge \sigma_1^W+\sigma_2^m\wedge \sigma_2^W=0 \).

  If \( x_2=0 \), then we saw above that \( \sigma_1^m=0 \) and so we
  get \( y_1\wedge e\wedge Je=0 \), i.e.\ \( y_1,e \) are
  \( \bC \)-linearly dependent.
  Then the \( a\wedge Ja \)-term in
  \( \sum_{j=1}^2 (2\sigma_j^U\wedge\sigma_j^W + (\sigma_j^m)^2)=0 \)
  gives us \( p c\wedge Jc=0 \), i.e.\ \( c=0 \).
  Altogether, we see that the common kernel of \( \sigma_1,\sigma_2 \)
  in \( W \) is contained in the kernel of \( y_1,Jy_1,e,Je \) and
  this kernel has codimension at most two since \( y_1,e \) are
  \( \bC \)-linearly dependent.

  Finally, let \( y_2=0 \).
  Then \( x_2=-\frac{r}{q} x_1 \) and the equation
  \( \sigma_1^m\wedge \sigma_1^W+\sigma_2^m\wedge \sigma_2^W=0 \)
  gives us \( x_1\wedge (c\wedge Jc-\frac{r}{q} e\wedge Je)=0 \).
  So either \( x_1=0 \) or \( c,e \) are \( \bC \)-linearly dependent.
  with \( c\wedge Jc=\frac{r}{q} e\wedge Je \).

  In the first case \( x_2=0 \) as well, i.e.\ we have
  \( \sigma_1^m=0\sigma_2^m=0 \), and we get again from
  \( \sum_{j=1}^2 2\sigma_j^U\wedge\sigma_j^W = \sum_{j=1}^2
  (2\sigma_j^U\wedge\sigma_j^W + (\sigma_j^m)^2)=0 \) that \( c=0 \)
  and the statement follows.
  Otherwise, \( c,e \) are \( \bC \)-linearly dependent with
  \( c\wedge Jc=\frac{r}{q} e\wedge Je \) and the equation
  \( \sum_{j=1}^2 (2\sigma_j^U\wedge\sigma_j^W + (\sigma_j^m)^2)=0 \)
  is equivalent to
  \( p c\wedge Jc - 2\bigl(1+\frac{r^2}{q^2}\bigr) x_1\wedge Jx_1=0
  \).
  This shows that the span of \( c,Jc,e,Je,x_1,Jx_1 \) is at most
  two-dimensional and the statement follows.
\end{proof}
The last lemma implies that the \( J \)-invariant kernel of
\( \tau_1,\tau_2\in \Lambda^{1,1} U^* \),
\( \theta\in \Lambda^{2,0} U^* \) as above, i.e.\ satisfying
\begin{equation*}
  \tau_1\wedge \tau_1+\tau_2\wedge \tau_2=\theta\wedge \overline{\theta},
\end{equation*}
is of codimension at most \( 6 \) in \( U \) and this kernel
corresponds to an Abelian SKT Lie algebra.  Thus, we have obtained:

\begin{theorem}
  \label{th:2dJinv}
  Let \( (\mfg,g,J) \) be an \( 2n \)-dimensional almost Hermitian Lie
  algebra such that \( \derg \) is two-dimensional and
  \( J \)-invariant.  Then \( (\mfg,g,J) \) is SKT if and only if
  \begin{enumerate}[(i)]
  \item\label{item:2dJ-c} there exists
    \( \alpha\in U^*\setminus \{0\} \) such that the only non-zero Lie
    brackets (up to anti-symmetry) are given by
    \begin{equation*} [Y,X]=\alpha(Y) JX
    \end{equation*}
    for \( X\in \derg \), \( Y\in \derg^{\perp} \) or
  \item\label{item:2dJ-t} there exists an orthonormal basis
    \( (X,JX) \) of \( \derg \),
    \( \tau_1,\tau_2\in \Lambda^{1,1} \left(\derg^{\perp}\right)^* \)
    and \( \theta\in \Lambda^{2,0} \left(\derg^{\perp}\right)^* \)
    with \( \tau_1\neq 0 \), \( \tau_2\neq 0 \) or \( \theta\neq 0 \)
    and
    \begin{equation*}
      \tau_1\wedge \tau_1+\tau_2\wedge \tau_2=\theta\wedge \overline{\theta}.
    \end{equation*}
    such that the only non-zero Lie brackets (up to anti-symmetry) are
    given by
    \begin{equation*}
      [Y,Z]=\bigl(\tau_1(Y,Z)+\Re(\theta)(Y,Z)\bigr)X
      +\bigl(\tau_2(Y,Z)-\Im(\theta)(Y,Z)\bigr)JX
    \end{equation*}
    for \( Y,Z\in \derg^{\perp} \).
  \end{enumerate}
  Moreover, in case~\ref{item:2dJ-c}, we have
  \( \mfg\cong \mfr'_{3,0}\oplus \bR^{2n-3} \), whereas in
  case~\ref{item:2dJ-t} \( \mfg\cong \mfh\oplus \bR^{2n-2k} \) for a
  \( 2k \)-dimensional two-step nilpotent Lie algebra \( \mfh \) with
  \( k\in \{3,4\} \) and the splitting respects the \( SKT \)
  structure.
\end{theorem}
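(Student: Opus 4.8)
The plan is to extract the theorem from the shear‑data analysis carried out in this section. By Lemma~\ref{le:SKTshears} every two‑step solvable SKT algebra arises as a shear of the flat K\"ahler structure on \( \bR^{2n} \) by two‑step SKT shear data \( (\mfa,\omega) \), and under the hypothesis that \( \derg \) is two‑dimensional and \( J \)‑invariant we may take \( \mfa=\derg \), so \( \mfa=\mfa_J \) is complex of dimension two, \( \mfa_r=U_r=0 \) and \( U=U_J=\derg^\perp \); conversely every such shear produces a \( \mfg \) of the required kind. So it suffices to list all such shear data. First I would invoke Lemma~\ref{le:2dcomplex} to write \( \omega(Z,\any)|_\mfa=\alpha(Z)\,J \) for a unique \( \alpha\in U^* \); the only remaining part of \( \omega \) is \( \nu=\nu_1X_1+\nu_2X_2\in\Lambda^2U^*\otimes\mfa \) for an orthonormal basis \( (X_1,X_2) \) of \( \derg \), and the residual equations~\eqref{eq:Abeliansheardata}--\eqref{eq:AbelianSKTshear2} split into the two cases \( \alpha\neq0 \) and \( \alpha=0 \) already treated in the text.

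Case \( \alpha\neq0 \): equation~\eqref{eq:Abeliansheardata} on \( \Lambda^3U \) forces \( \alpha\wedge\nu_i=0 \), and~\eqref{eq:AbelianSKTshear2} on \( \mfa\wedge\Lambda^3U \) then forces \( \nu_i=\lambda_i\,\alpha\wedge J^*\alpha \) with \( \lambda_i\in\bR \), after which all shear equations hold. The resulting bracket is \( [Y,X]=\alpha(Y)JX \) (\( Y\in U \), \( X\in\derg \)) together with \( [Y,Z]=-(\alpha\wedge J^*\alpha)(Y,Z)\,v \), \( v:=\lambda_1X_1+\lambda_2X_2 \). Replacing the complement \( U \) by \( \{\,Y-(J^*\alpha)(Y)Jv:Y\in U\,\} \) (permissible, since the construction allows any complement of \( \mfa \)) kills the \( [Y,Z] \)‑term and gives exactly the brackets in~\ref{item:2dJ-c}. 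Choosing \( Y_0 \) with \( \alpha(Y_0)=1 \), the subalgebra \( \spa{X,JX,Y_0} \) has \( [Y_0,X]=JX \), \( [Y_0,JX]=-X \), hence is \( \cong\mfr'_{3,0} \), while \( \ker\alpha \) is a complementary central ideal, so \( \mfg\cong\mfr'_{3,0}\oplus\bR^{2n-3} \).

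Case \( \alpha=0 \): the shear is nilpotent, \eqref{eq:AbelianSKTshear1} on \( \Lambda^2U \) forces \( \nu_1=\tau_1+\Re\theta \), \( \nu_2=\tau_2-\Im\theta \) with \( \tau_i\in\Lambda^{1,1}U^* \), \( \theta\in\Lambda^{2,0}U^* \), and~\eqref{eq:AbelianSKTshear2} on \( \Lambda^4U^* \) is precisely \( \tau_1\wedge\tau_1+\tau_2\wedge\tau_2=\theta\wedge\overline\theta \); the bracket is then \( [Y,X]=0 \), \( [Y,Z]=(\tau_1+\Re\theta)(Y,Z)X+(\tau_2-\Im\theta)(Y,Z)JX \), i.e.\ the brackets in~\ref{item:2dJ-t}. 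For the ``moreover'' I would apply Lemma~\ref{le:quadraticequationnonzero}: the common kernel \( U' \) of \( \tau_1,\tau_2,\Re\theta,\Im\theta \) in \( U \) is \( J \)‑invariant (kernels of \( (1,1) \)‑ and \( (2,0) \)‑forms are) and has codimension at most \( 6 \). Put \( V:=(U')^\perp\cap U \), again \( J \)‑invariant, so \( \dim V\in\{0,2,4,6\} \); then \( \mfh:=\derg\oplus V \) is a subalgebra with \( [\mfh,\mfh]\subseteq\derg \) and \( [\derg,\mfh]=0 \), hence two‑step nilpotent of dimension \( 2+\dim V \), and \( U' \) is a complementary central ideal, \( g \)‑orthogonal to \( \mfh \) and \( J \)‑invariant; therefore \( \mfg=\mfh\oplus U' \) with \( (g,J) \) splitting and \( U' \) carrying its flat K\"ahler structure. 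Since a direct sum of almost Hermitian algebras is SKT iff both summands are and the Euclidean factor is K\"ahler, \( \mfh \) is SKT. Finally \( \dim V\neq0 \) (else \( \mfg \) is abelian, contradicting \( \dim\derg=2 \)) and \( \dim V\neq2 \) (else, with \( V=\spa{W,JW} \): \( \theta|_V=0 \) as \( \Lambda^{2,0} \) of a complex line vanishes, so \( \theta=0 \), and \( \tau_1|_V,\tau_2|_V \) are multiples of \( W^*\wedge(JW)^* \), whence \( [\mfg,\mfg]=[V,V] \) is at most one‑dimensional, again contradicting \( \dim\derg=2 \)); so \( \dim V\in\{4,6\} \) and \( k:=1+\tfrac12\dim V\in\{3,4\} \) with \( \mfg\cong\mfh\oplus\bR^{2n-2k} \).

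The converse directions are immediate by reversing the equivalences above: data of the form~\ref{item:2dJ-c} or~\ref{item:2dJ-t} satisfies~\eqref{eq:Abeliansheardata}, \eqref{eq:AbelianSKTshear1} and~\eqref{eq:AbelianSKTshear2}, hence is two‑step SKT shear data, and the non‑triviality hypotheses (together with the linear independence of \( \tau_1+\Re\theta \) and \( \tau_2-\Im\theta \) needed in~\ref{item:2dJ-t}) guarantee that the commutator ideal is precisely the \( J \)‑invariant two‑plane \( \derg \). I expect the only substantial input to be Lemma~\ref{le:quadraticequationnonzero}, which is the real obstacle and is proved separately; within the theorem, the one delicate point is the dimension count excluding \( \dim V\in\{0,2\} \), which is exactly where the hypothesis that \( \derg \) has dimension two (equivalently \( \im\omega=\mfa \)) enters.
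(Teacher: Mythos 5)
Your proposal is correct and follows essentially the same route as the paper: reduction to two-step SKT shear data with \( \mfa=\derg=\mfa_J \), Lemma~\ref{le:2dcomplex}, the case split \( \alpha\neq 0 \) versus \( \alpha=0 \), and Lemma~\ref{le:quadraticequationnonzero} in the nilpotent case. The extra details you supply are sound and merely make explicit what the paper leaves to the reader: the (non-orthogonal) change of complement absorbing the terms \( \nu_i=\lambda_i\,\alpha\wedge J^*\alpha \) when \( \alpha\neq 0 \), which is exactly what justifies \( \mfg\cong\mfr'_{3,0}\oplus\bR^{2n-3} \) (the paper's own analysis allows \( \lambda_i\neq 0 \), so the bracket normal form in case (i) is likewise only attained after such a change of complement rather than with respect to \( \derg^{\perp} \) itself), and the parity/codimension count excluding kernel codimension \( 0 \) and \( 2 \), which is what pins down \( k\in\{3,4\} \).
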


\begin{remark}
  So to describe explicitly all examples of SKT Lie algebras with
  two-dimensional \( J \)-invariant commutator ideal, we need to know
  all six- and eight-dimensional two-step nilpotent SKT Lie algebras
  with two-dimensional $J$-invariant commutator ideal.
  Fortunately, classifications of these Lie algebras were given
  in~\cite{FPS} and~\cite{EFV}.
\end{remark}

\section{Six-dimensional two-step solvable SKT Lie algebras}
\label{sec:6d2stepsolvSKT}

In this section, we work towards classifying six-dimensional two-step
solvable SKT Lie algebras \( (\mfg,g,J) \).

For this, let us first summarize what our previous results imply for
this situation.
First of all, Corollary~\ref{co:6dtotallyreal} gives us a complete
list in the cases when \( \derg \) is totally real.
Next, Theorem~\ref{th:2dJinv} describes all the cases with \( \derg \)
two-dimensional and complex, and Theorem~\ref{th:codim2notJinv}, we
determines cases with \( \derg \) four-dimensional but non-complex.
Finally, Corollary~\ref{co:6dalmostAbelianSKTLAs} contains a
classification of the cases where \( \derg \) has dimension~\( 5 \).

So, in order to classify all six-dimensional two-step solvable SKT Lie
algebras, we still have to determine those with three-dimensional
non-totally real \( \derg \) and those with four-dimensional complex
\( \derg \).
Unfortunately, it seems to be too hard to solve the latter case
completely and we will just determine the relevant equations
in~\S\ref{subsec:4dJinvariant} below.
In contrast, the first case is treated completely in the next
subsection and we note already now the following classification, that
includes results from~\cite{FPS} for the nilpotent case and uses some
of their notation.

\begin{theorem}
  \label{th:6d2stepsolvSKT}
  Let \( (\mfg,g,J) \) be a six-dimensional two-step solvable SKT Lie
  algebra.
  Then either \( \mfg \) is isomorphic as a Lie algebra to one of the
  following Lie algebras
  \begin{gather*}
    \bR^6,\quad \aff_{\bR}\oplus \bR^4,\quad \aff_{\bR}\oplus
    \mfh_3\oplus \bR,\quad 2\aff_{\bR}\oplus \bR^2,\quad
    3\aff_{\bR},\quad \mfh_3\oplus \bR^3,\quad 2\mfh_3,\\
    \mathfrak{r}_{3,0}'\oplus \bR^3,\quad (0,0,0,0,12,14+23),\quad
    (0,0,0,0,13+42,14+23),
  \end{gather*}
  or an algebra in Corollary~\ref{co:6dalmostAbelianSKTLAs}, or
  \( (\mfg,g,J) \) is isomorphic to one of the six-dimensional SKT Lie
  algebras in Theorem~\ref{th:codim2notJinv} or
  Theorem~\ref{th:6dwith3dnottotallyrealcommutator}, or \( \mfg' \) is
  four-dimensional, \( J \)-invariant, and, for a unit vector
  \( X\in (\mfg')^{\perp} \), the endomorphisms
  \( A_1:=\ad(X)|_{\mfg'} \) and \( A_2:=\ad(JX)|_{\mfg'} \)
  satisfy~\eqref{eq:4dJinvariant}.
\end{theorem}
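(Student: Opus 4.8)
The plan is to assemble the classification as a case split on the pair $(\dim\derg, \text{type of }\derg)$, invoking the structural results already proved for each case and then reading off the resulting isomorphism types in dimension six. First I would note that $\derg$ is nilpotent (being the commutator of a solvable algebra), hence at most two-step nilpotent here since everything is two-step solvable, and I would dispose of $\dim\derg\le 2$ and $\derg$ totally real immediately: when $\dim\derg\le 1$, or $\dim\derg=2$ with $\derg$ totally real, Corollary~\ref{co:lowdimnotJinv} applies, and more globally Corollary~\ref{co:6dtotallyreal} already gives the full six-dimensional list with totally real $\derg$, namely $\bR^6$, $\aff_\bR\oplus\bR^4$, $\mfh_3\oplus\bR^3$, $2\aff_\bR\oplus\bR^2$, $\aff_\bR\oplus\mfh_3\oplus\bR$, $\mfh_3\oplus\mfh_3=2\mfh_3$, and $3\aff_\bR$. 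These are exactly the first seven algebras in the stated list together with $\mfh_3\oplus\bR^3$.

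Next I would handle the cases where $\derg$ is \emph{not} totally real, organised by dimension. If $\dim\derg=2$ and $\derg$ is $J$-invariant, Theorem~\ref{th:2dJinv} applies: case~\ref{item:2dJ-c} gives $\mfr'_{3,0}\oplus\bR^{3}$, and case~\ref{item:2dJ-t} gives $\mfh\oplus\bR^{2n-2k}$ with $\mfh$ a $2k$-dimensional two-step nilpotent SKT algebra, $k\in\{3,4\}$; in dimension six only $k=3$ survives, so $\mfh$ is one of the six-dimensional nilpotent SKT algebras with two-dimensional $J$-invariant commutator from~\cite{FPS}, which are precisely $(0,0,0,0,12,14+23)$ and $(0,0,0,0,13+42,14+23)$. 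That accounts for the last three algebras explicitly named in the list. If $\dim\derg=4$ and $\derg$ is non-complex, then $\codim\derg=2$ and $\derg$ is not $J$-invariant, so Theorem~\ref{th:codim2notJinv} applies and these algebras are exactly the six-dimensional members of that theorem's family. If $\dim\derg=5$, then $\mfg$ is almost Abelian and Corollary~\ref{co:6dalmostAbelianSKTLAs} gives the complete list. If $\dim\derg=4$ and $\derg$ is $J$-invariant, then for a unit vector $X\in(\mfg')^\perp$ the endomorphisms $A_1=\ad(X)|_{\mfg'}$, $A_2=\ad(JX)|_{\mfg'}$ must satisfy the integrability and SKT equations, which reduce precisely to~\eqref{eq:4dJinvariant}; this is the one case we do not solve, so it is recorded as such. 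The remaining case, $\dim\derg=3$ with $\derg$ not totally real, is handled separately in Theorem~\ref{th:6dwith3dnottotallyrealcommutator} and is cited as such.

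Finally I would check that the case split is exhaustive: $\dim\derg\in\{0,1,2,3,4,5\}$ (it cannot be six, as $\mfg$ is solvable hence not perfect), $\dim\derg\in\{0,1\}$ and ``$\dim\derg=2$, totally real'' fall under the totally real classification, $\dim\derg=2$ with $\derg$ $J$-invariant is the only other two-dimensional possibility, $\dim\derg=3$ splits into totally real (covered by Corollary~\ref{co:6dtotallyreal}) and not totally real (Theorem~\ref{th:6dwith3dnottotallyrealcommutator}), $\dim\derg=4$ splits into complex ($J$-invariant) and non-complex via the observation at the start of the codimension-two subsection, and $\dim\derg=5$ is almost Abelian. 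The only genuine content beyond bookkeeping is verifying that no two of the named algebras in different cases coincide as Lie algebras except where the statement already merges them, and that the nilpotent pieces are correctly pinned down using~\cite{FPS}; I expect the main obstacle to be purely organisational, namely making sure the almost Abelian list in Corollary~\ref{co:6dalmostAbelianSKTLAs} and the codimension-two list in Theorem~\ref{th:codim2notJinv} are not redundantly re-expanded here but are cited cleanly, and confirming that $\mfh_3\oplus\mfh_3$, $(0,0,0,0,12,14+23)$ and $(0,0,0,0,13+42,14+23)$ are indeed the correct six-dimensional two-step nilpotent SKT algebras with the relevant commutator structure.
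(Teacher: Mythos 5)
Your proposal is correct and follows essentially the same route as the paper, which proves the theorem precisely by this case split on \( \dim\derg \) and on whether \( \derg \) is totally real, complex, or neither, citing Corollary~\ref{co:6dtotallyreal}, Theorem~\ref{th:2dJinv} (with the nilpotent pieces pinned down via~\cite{FPS}), Theorem~\ref{th:codim2notJinv}, Corollary~\ref{co:6dalmostAbelianSKTLAs}, Theorem~\ref{th:6dwith3dnottotallyrealcommutator}, and the equations~\eqref{eq:4dJinvariant} for the unresolved four-dimensional complex case. The only inessential slip is your remark that \( \derg \) is ``at most two-step nilpotent'': two-step solvability makes \( \derg \) Abelian, which is what the case analysis actually uses.
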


\begin{remark}
  Note that Theorem~\ref{th:6d2stepsolvSKT} implies the
  four-dimensional two-step Lie algebras admitting an SKT structure
  are, in the notation of~\cite{MaSw},
  \begin{equation*}
    \bR^4,\quad \aff_{\bR}\oplus \bR^2,\quad 2\aff_{\bR},\quad
    \mfh_3\oplus \bR,\quad \mfr_{3,0}'\oplus \bR,\quad
    \mfr_{4,\lambda,0}',\quad \mfr_{4,-1/2,-1/2},\quad \mfr_{4,2\lambda,-\lambda}'.
  \end{equation*}
  agreeing with the results of that paper.
\end{remark}

\subsection{Three-dimensional commutator ideal not totally real}

Here, we consider two-step SKT shear data \( (\mfa,\omega) \) with
\( \dim(\mfa)=3 \) and \( \dim(\mfa_r)=1 \).
Then \( \mfa_J \) and \( U_J \) are real two-dimensional, and
\( U_r \) is real one-dimensional.
Take orthonormal bases \( (Y,JY) \) of~\( \mfa_J \), \( X
\)~of~\( \mfa_r \) and \( (Z,JZ \)) of~\( U_J \).
Then \( JX \) is a basis of \( U_r \) and \( (Y,JY,X) \) is an
orthonormal basis of~\( \mfa \).
Setting \( A:=-\omega_0(JX,\any)\in \End(\mfa) \),
\( B_1:=-\omega_0(Z,\any)\in \End(\mfa) \) and
\( B_2:=\omega_0(JZ,\any)\in \End(\mfa) \),
Lemma~\ref{le:Jintegrable}\ref{item:G-zero} and \ref{item:J-KX}, give
us with respect to the splitting \( \mfa=\mfa_J\oplus \mfa_r \)
\begin{equation*}
  A=\begin{pmatrix} z & u \\ 0 & a \end{pmatrix}
\end{equation*}
for some \( a\in\bR \), \( z,u\in \bC \).
Replacing \( X \) by \( -X \) if necessary we may take \( a \geq 0 \).
Equation~\eqref{eq:F2ndordergsymmetric} of
Lemma~\ref{le:conditionsonFX}, reduces to~\eqref{eq:z-a}, and we
conclude that \( \Re(z)\in \{0,-a/2\} \).

We now consider individually the cases \( A=0 \) and \( A\neq 0 \).

\begin{lemma}
  \label{le:A=0}
  With the above notation, if \( A=0 \), then the basis \( (Z,JZ) \)
  of \( U_{J} \) may be rotated so that
  \( \omega(X,\any)=\omega(JX,\any)=0 \), \( \omega(Z,Y)=0 \),
  \( \omega(JZ,Y)=ib Y \), \( b \in \bR_{>0} \), and
  \( \omega(Z,JZ)=q Y+ h X \), \( h\in \bR\setminus \{0\} \) and
  \( q\in \bC \).

  Conversely, pre-shear data of this form is two-step SKT shear data
  with \( \im(\omega)=\mfa \) and the obtained Lie algebra is almost
  Abelian and isomorphic to \( \mfg^{0}_{5,14}\oplus \bR \).
\end{lemma}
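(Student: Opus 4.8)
The plan is to prove the two implications separately, working throughout in the ordered basis \( (Y,JY,X,JX,Z,JZ) \) of \( \mfg \) and reading the structure equations \eqref{eq:Abeliansheardata}, \eqref{eq:AbelianSKTshear1}, \eqref{eq:AbelianSKTshear2} off component by component; since \( \mfa_r,U_r \) are one-dimensional and \( \mfa_J,U_J \) are complex lines this is a finite system. Because \( A=0 \) and \( G_X=0 \) always (Lemma~\ref{le:Jintegrable}\ref{item:G-zero}), one has \( F_X=H_X=K_X=0 \), while \( \dim\mfa_r=1 \) forces \( (\omega_1)^r_{rr}=(\omega_1)^J_{rr}=0 \); so \( \omega \) is carried entirely by \( B_1:=-\omega_0(Z,\any)|_\mfa \), \( B_2:=\omega_0(JZ,\any)|_\mfa\in\End(\mfa) \) and by the three vectors \( \omega(Z,JZ),\omega(Z,JX),\omega(JZ,JX)\in\mfa \). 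Note that, as \( A=0 \), both \( \omega \) and its companion \( J^*\omega=\omega-J\circ J.\omega \) are nonzero only when at least one argument lies in \( U_J \), so (since \( \dim U_J=2 \)) \( \nu(\omega) \) is supported on \( \Lambda^2(\mfa_J\oplus\mfa_r\oplus U_r)\wedge(Z\wedge JZ) \) and \eqref{eq:AbelianSKTshear2} reduces to a handful of scalar equations.

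For the forward implication, the crux is to show that the ``off-diagonal'' pieces of the surviving data vanish: the \( \mfa_J\to\mfa_r \) and \( \mfa_r\to\mfa \) parts of \( B_1,B_2 \), and the vectors \( \omega(Z,JX),\omega(JZ,JX) \). For this I would use the integrability identities~\ref{item:o1-Jrr} and~\ref{item:o01-J-val} of Lemma~\ref{le:Jintegrable} to express \( \omega_1|_{U_J\times U_r} \) through \( \omega_0|_{U_J\times\mfa_r} \), and then feed these relations, together with \eqref{eq:Abeliansheardata}, into the surviving components of \( \nu(\omega) \); the resulting system should pin all of these pieces to zero. This is the main obstacle: selecting which components and identities to combine, and tracking signs and the \( J \)-twist in \( J^*\omega=\omega-J\circ J.\omega \), is delicate, though nothing beyond careful bookkeeping is needed. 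Once it is done, \( B_1,B_2 \) vanish on \( \mfa_r \) and preserve \( \mfa_J \), and the proof of Lemma~\ref{le:2dcomplex} applies verbatim: \eqref{eq:Abeliansheardata} makes them commute, Lemma~\ref{le:Jintegrable}\ref{item:o0-JJr} gives \( [B_2,J]=J[B_1,J] \), Lemma~\ref{lem:2d-comm-Jrel} forces commutation with \( J \), and the \( (Z,JZ,\any,\any) \)-part of \eqref{eq:AbelianSKTshear2}, which is \eqref{eq:shear2-A1A2}, forces vanishing real part; so \( \omega(Z,Y')=ic_1Y' \) and \( \omega(JZ,Y')=ic_2Y' \) for all \( Y'\in\mfa_J \), with \( c_1,c_2\in\bR \). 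We also obtain \( \omega(X,\any)=\omega(JX,\any)=0 \), and \( \omega(Z,JZ)=qY+hX \) for some \( q\in\bC \), \( h\in\bR \). Since \( \mfg'=\im(\omega)=\mfa \) (as \( \mfg' \) is three-dimensional) and the \( \mfa_r \)-line is reached only through \( hX \), necessarily \( h\neq0 \); likewise \( \mfa_J\subseteq\mfg' \) forces \( (c_1,c_2)\neq(0,0) \). Rotating \( (Z,JZ) \) in the complex line \( U_J \) by the unique angle carrying \( ic_1Y \) to \( 0 \) --- which fixes \( Z\wedge JZ \), hence \( \omega(Z,JZ) \), and preserves the vanishing of the off-diagonal pieces --- gives \( \omega(Z,Y)=0 \) and \( \omega(JZ,Y)=ibY \), and replacing \( Z \) by \( -Z \) if necessary makes \( b>0 \).

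For the converse I would verify the three equations for \( \omega \) of the stated shape. Integrability \eqref{eq:AbelianSKTshear1} reduces via Lemma~\ref{le:Jintegrable} to its nine items: \( A=0 \), \( \omega(X,\any)=\omega(JX,\any)=0 \) and \( \dim\mfa_r=1 \) make items~\ref{item:G-zero}--\ref{item:o01-J-val} trivial, and~\ref{item:o0-JJr},~\ref{item:o1-JJr} hold since \( \omega(Z,JZ) \) is automatically \( (1,1) \) on the one-dimensional \( \Lambda^2 U_J^* \). For \eqref{eq:Abeliansheardata} one evaluates the cyclic sum \( \cA(\omega(\omega(\any,\any),\any)) \) on triples of basis vectors: each inner \( \omega \) either has both arguments in \( \mfa \), hence is zero, or lands in \( \mfa_J \), which then pairs with the remaining \( U_J \)-argument and cancels against another summand. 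For \eqref{eq:AbelianSKTshear2} one uses Lemma~\ref{le:partsofnuequalzero} and the remark above to restrict to the components of \( \nu(\omega) \) in \( \Lambda^2(\mfa_J\oplus\mfa_r\oplus U_r)\wedge(Z\wedge JZ) \), notes that the \( \Lambda^2\mfa_J\wedge\mfa_r\wedge U_r \)-part is the zero equation of Lemma~\ref{le:conditionsonFX} (as \( K_X=0 \)), and checks the rest directly. Finally, to identify the algebra: from the brackets \( JX \) is central, so \( \mfg=\tilde\mfg\oplus\bR\langle JX\rangle \) with \( \tilde\mfg \) five-dimensional; in \( \tilde\mfg \), the subspace \( \mfa\oplus\bR\langle Z\rangle \) is an abelian ideal of codimension one, and \( \ad(JZ) \) acts on it, in the basis \( (Y,JY,X,Z) \), by the matrix whose restriction to \( \mfa_J \) has eigenvalues \( \pm ib \), which annihilates \( X \) and sends \( Z \) to \( -(qY+hX) \). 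Its characteristic polynomial is \( t^{2}(t^{2}+b^{2}) \), and since \( b\neq0 \) and \( h\neq0 \) its kernel is one-dimensional, so \( 0 \) has a single \( 2\times2 \) Jordan block, independently of \( q \) (which only effects a change of basis of the ideal). By Corollary~\ref{co:almostAbelian}(ii) --- equivalently, the classification of almost Abelian algebras by the conjugacy class of the adjoint action of the generator up to scaling --- \( \tilde\mfg\cong\mfg^{0}_{5,14} \), and hence \( \mfg\cong\mfg^{0}_{5,14}\oplus\bR \).
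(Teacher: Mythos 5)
Your skeleton matches the paper's, but you have left the decisive step unproved. The forward implication hinges on actually showing that \( \omega(X,Z) \), \( \omega(X,JZ) \), \( \omega(JX,Z) \), \( \omega(JX,JZ) \) and the \( \mfa_J\to\mfa_r \) components of \( B_1,B_2 \) vanish; you only assert that feeding Lemma~\ref{le:Jintegrable}\ref{item:o1-Jrr}, \ref{item:o01-J-val} and \eqref{eq:Abeliansheardata} into the surviving components of \( \nu(\omega) \) ``should pin all of these pieces to zero'', and you yourself flag this as the main obstacle. That is exactly where the content of the lemma lies, and it is not generic bookkeeping: the reason the equations force vanishing, rather than merely relating the unknowns (as happens in the neighbouring case \( A\neq 0 \) of Lemma~\ref{le:formofBj}, where the very same components \( \omega(JX,Z),\omega(JX,JZ) \) survive), is a hidden definiteness. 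Concretely, one first proves \( \nu_2(\omega)(X,JX,Z,JZ)=0 \) by the chain \( \omega^r(J\omega(JX,JZ),JZ)=\omega^r(\omega(JX,JZ),Z) \) (by \eqref{eq:AbelianSKTshear1}), \( =\omega^r(\omega(JX,Z),JZ) \) (by \eqref{eq:Abeliansheardata}, using \( \omega(JX,\any)|_{\mfa}=0 \)), \( =-\omega^r(J\omega(JX,Z),Z) \) (by \eqref{eq:AbelianSKTshear1} again), so that \eqref{eq:AbelianSKTshear2} on \( (X,JX,Z,JZ) \) reduces to \( \nu_1 \) alone, which is \( -\tfrac16\bigl(\norm{\omega(X,Z)}^2+\norm{\omega(X,JZ)}^2+\norm{\omega(JX,Z)}^2+\norm{\omega(JX,JZ)}^2\bigr) \); then the \( (Y,iY,Z,JZ) \)-component of \( \nu \) collapses to \( -4\Re(c_1)^2-4\Re(c_2)^2-\abs{y_1}^2-\abs{y_2}^2 \), which kills the \( \mfa_J\to\mfa_r \) rows \( y_i \) and \( \Re(c_i) \) simultaneously. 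Without exhibiting this sum-of-squares structure the forward implication is not proved. Note also that the second computation does at one stroke what you split into ``kill the off-diagonal pieces, then apply Lemma~\ref{le:2dcomplex} verbatim'': the verbatim transfer of \eqref{eq:shear2-A1A2} is only legitimate once \( y_i=0 \) and \( \omega(JX,\any)=0 \) are already known, so your ordering silently presupposes part of what the crux step must deliver.

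Everything downstream of that point is sound and consistent with the paper: commutation of the \( C_i \) from \eqref{eq:Abeliansheardata}, \( [C_2,J]=J[C_1,J] \) from Lemma~\ref{le:Jintegrable}\ref{item:o0-JJr}, Lemma~\ref{lem:2d-comm-Jrel}, the rotation of \( (Z,JZ) \) achieving \( c_1=0 \), \( c_2=ib \) with \( b>0 \), and the constraints \( h\neq 0 \), \( (c_1,c_2)\neq(0,0) \) coming from \( \im(\omega)=\mfa \). Your converse verification and your explicit identification of the shear as \( \mfg^{0}_{5,14}\oplus\bR \) (with \( JX \) central and \( \ad(JZ) \) acting on the codimension-one Abelian ideal with characteristic polynomial \( t^2(t^2+b^2) \) and one-dimensional kernel, hence a single nilpotent Jordan block of size two) are in fact more detailed than the paper, which compresses both to ``one may now check''. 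The gap is solely, but genuinely, the unperformed computation at the heart of the forward direction.
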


\begin{proof}
  Lemma~\ref{le:Jintegrable}\ref{item:o1-rrr} and \( a = 0 \) give
  \( \omega(X,JX)=0 \) and so
  \begin{equation}
    \label{eq:A0-nu1}
    \begin{split}
      \MoveEqLeft 6\,\nu_1(\omega)(X,JX,Z,JZ)\\
      &=-\norm{\omega(JX,JZ)}^2-\norm{\omega(JX,Z)}^2
        -\norm{\omega(X,JZ)}^2-\norm{\omega(X,Z)}^2.
    \end{split}
  \end{equation}
  Moreover, in the current situation \( \mfa\perp U \), so we get
  \begin{equation*}
    12\,\nu_2(\omega)(X,JX,Z,JZ)
    = g(\omega(J\omega(JX,Z),Z),X)+g(\omega(J\omega(JX,JZ),JZ),X).
  \end{equation*}
  Now \( \omega^r(J\omega(JX,JZ),JZ)=\omega^r(\omega(JX,JZ),Z) \)
  by~\eqref{eq:AbelianSKTshear1} since the other terms in this
  equation land in~\( U_r \).
  Moreover, \( \omega^r(\omega(JX,JZ),Z)=\omega^r(\omega(JX,Z),JZ) \)
  by~\eqref{eq:Abeliansheardata} as \( \omega(JX,\any)|_{\mfa}=0 \).
  Then, \( \omega^r(\omega(JX,Z),JZ)=-\omega^r(J\omega(JX,Z),Z) \),
  again by~\eqref{eq:AbelianSKTshear1}.
  But so
  \( g(\omega(J\omega(JX,JZ),JZ),X)=-g(\omega(J\omega(JX,Z),Z),X) \),
  which is equivalent to \( \nu_2(\omega)(X,JX,Z,JZ)=0 \).
  Thus, \eqref{eq:AbelianSKTshear2} implies \eqref{eq:A0-nu1} is zero,
  so
  \begin{equation*}
    \omega(X,Z)=\omega(JX,Z)=\omega(X,JZ)=\omega(JX,JZ)=0.
  \end{equation*}
  Hence, \( \omega(X,\any)=\omega(JX,\any)=0 \).

  Next,
  \( B_i=\begin{psmallmatrix} C_i & 0\\ y_i^T & 0\end{psmallmatrix} \)
  with \( C_i\in \bR^{2\times 2} \) and \( y_i\in \bR^2 \),
  \( i=1,2 \).
  By~\eqref{eq:Abeliansheardata}, \( [B_1,B_2]=0 \), so
  \( [C_1,C_2]=0 \).
  Moreover, \eqref{eq:AbelianSKTshear1} gives us
  \( [C_2,J]=J[C_1,J] \), Hence Lemma~\ref{lem:2d-comm-Jrel} implies
  that \( C_1 \) and \( C_2 \) commute with~\( J \).
  So write \( c_i\in \bC \) instead of \( C_i\in \bR^{2\times 2} \)
  and consider also \( y_i \) as complex numbers.

  Then we observe that
  \begin{equation*}
    \begin{split}
      \MoveEqLeft
      6\,\nu_1(\omega)(Y,iY,Z,JZ) \\
      &=-\norm{\omega(iY,JZ)}^2-\norm{\omega(Y,JZ)}^2
        -\norm{\omega(iY,Z)}^2-\norm{\omega(Y,Z)}^2\\
      &=-2\abs{c_1}^2-2\abs{c_2}^2-\abs{y_1}^2-\abs{y_2}^2
    \end{split}
  \end{equation*}
  and, using \( \omega(J\omega(W_1,W_2),W_3)=0 \) for all
  \( W_1,W_2\in \bR^{2n} \), \( W_3\in \mfa_J \), we get
  \begin{equation*}
    \begin{split}
      \MoveEqLeft
      12\,\nu_2(\omega)(Y,iY,Z,JZ) \\
      &=-g(\omega(J\omega(Y,Z),Z),iY)-
        g(\omega(J\omega(Y,JZ),JZ),iY) \eqbreak
        +g(\omega(J\omega(iY,JZ),JZ),Y)+g(\omega(J\omega(iY,Z),Z),Y)\\
      &=-g(C_1JC_1Y+C_2JC_2 Y,iY)+g(C_2JC_2 iY+C_1 J C_1 iY,Y)\\
      &=-2g((c_1^2+c_2^2)Y,Y)=-2\Re(c_1^2)-2 \Re(c_2^2).
    \end{split}
  \end{equation*}
  Since \( \Re(c_i^2)+\abs{c_i}^2=2\Re(c_i)^2 \) for \( i=1,2 \),
  equation~\eqref{eq:AbelianSKTshear2} gives
  \begin{equation*}
    0=6(\nu_1(\omega)+2\nu_2(\omega))(Y,iY,Z,JZ)
    =-4\Re(c_1)^2-4\Re(c_2)^2-\abs{y_1}^2-\abs{y_2}^2.
  \end{equation*}
  Thus \( \Re(c_1)=\Re(c_2)=y_1=y_2=0 \).
  One may now check that \( \omega \) satisfies all of the
  equations~\eqref{eq:Abeliansheardata}, \eqref{eq:AbelianSKTshear1}
  and~\eqref{eq:AbelianSKTshear2} and that \( \im(\omega)=\mfa \)
  precisely when \( (c_1,c_2)\neq (0,0) \) and \( b_2\neq 0 \).
  Rotating \( (Z,JZ) \) in such a way that \( \omega(Z,Y)=0 \), we may
  assume that \( c_1=0 \) and \( c_2= i b \) for some
  \( b\in \bR_{>0} \).
  Writing \( \omega(Z,JZ)=q Y+ h X \) with \( q\in \bC \) and
  \( h\in \bR \), the condition \( \im\omega = \mfa \) gives
  \( h\ne0 \) and we have the statement.
\end{proof}

Next, we consider the case \( A\neq 0 \).  For this, let us write
\begin{equation}
  \label{eq:componentsomegaJXZJZ}
  \begin{gathered}
    \omega(JX,Z)=w_1 Y+ f_1 X,\qquad \omega(JX,JZ)=w_2 Y+ f_2 X, \\
    \omega(Z,JZ)= q Y+ h X,
  \end{gathered}
\end{equation}
for some \( w_1,w_2,q\in \bC \), \( f_1,f_2,h\in \bR \).

\begin{lemma}
  \label{le:formofBj}
  With the above notation, if \( A =
  \begin{psmallmatrix}
    z&u\\0&a
  \end{psmallmatrix}
  \neq 0 \), then up to rotations in the \( \spa{Y,iY} \)- and
  \( \spa{Z,JZ} \)-planes, we have \( u\in \bR_{\geqslant0} \) and
  either
  \begin{enumerate}[(i)]
  \item \( \Re(z)=-a/2\neq 0 \),
    \begin{equation*}
      B_1=\begin{pmatrix}  i c_1 & v_1 \\ 0 & 0 \end{pmatrix},\qquad
      B_2=\begin{pmatrix} -\frac{b_2}{2}+ i c_2 & v_2 \\ 0 & b_2 \end{pmatrix}
    \end{equation*}
    \begin{equation*}
      v_1=-\frac{i c_1}{\frac{3}{2}a-ic} u,\qquad
      v_2=\frac{\frac{3}{2}b_2-ic_2}{\frac{3}{2}a-ic} u,
    \end{equation*}
    \( z=-\tfrac{a}{2}+ic \), \( f_1=-b_2 \), \( f_2=0 \),
    \( h=b_2^2/a \) and \( w_{1}+iw_2=i(v_1+iv_2) \), or
  \item \( \Re(z)=0 \), \( (a,c = \Im(z)) \neq (0,0) \),
    \begin{equation*}
      B_1=\begin{pmatrix}  i c_1 & v_1 \\ 0 & b_1 \end{pmatrix},\qquad
      B_2=\begin{pmatrix}  i c_2 & v_2 \\ 0 & b_2 \end{pmatrix}
    \end{equation*}
    \begin{equation*}
      v_1=\frac{b_1-ic_1}{a-ic} u,\qquad v_2=\frac{b_2-ic_2}{a-ic} u,
    \end{equation*}
    \( z=ic \), \( f_1=-b_2 \), \( f_2=b_1 \) and
    \( w_{1}+iw_2=i(v_1+iv_2) \),
  \end{enumerate}
  for some \( b_1,b_2,c_1,c_2,c\in \bR \).

  Conversely, pre-shear data \( (\mfa,\omega) \) with
  \( \dim(\mfa)=3 \) and \( \mfa \) not totally real with all these
  properties satisfies \eqref{eq:AbelianSKTshear1} everywhere,
  \eqref{eq:Abeliansheardata} on \( \Lambda^2 U\wedge \mfa \)
  and~\eqref{eq:AbelianSKTshear2} on
  \( \Lambda^2 \mfa_J \wedge \Lambda^2 \bR^6 \).
\end{lemma}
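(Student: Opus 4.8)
The plan is to mimic the structure of the proof of Lemma~\ref{le:A=0}, but now with $A\ne0$, so that the interaction between $A$ and the $B_i$ becomes nontrivial. First I would set up the matrices: with respect to $\mfa=\mfa_J\oplus\mfa_r$, write $B_i=\begin{psmallmatrix}C_i&v_i\\y_i^T&b_i\end{psmallmatrix}$ with $C_i\in\bR^{2\times2}$, $v_i,y_i\in\bR^2$, $b_i\in\bR$, and note $A=\begin{psmallmatrix}z&u\\0&a\end{psmallmatrix}$ with $\Re(z)\in\{0,-a/2\}$ already established. The equation~\eqref{eq:Abeliansheardata} on $\Lambda^2U_r\wedge\mfa$, respectively on $\Lambda^2U\wedge\mfa$, gives $[A,B_i]=0$ and $[B_1,B_2]=0$; the integrability condition Lemma~\ref{le:Jintegrable}\ref{item:o0-JJr} (via Lemma~\ref{lem:2d-comm-Jrel} as used before) forces $C_1$, $C_2$ to commute with $J$, so I may write $c_i\in\bC$ for $C_i$ and treat $v_i,y_i$ as complex numbers. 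Reading off $[A,B_i]=0$ componentwise: the $\mfa_r$-row gives $a\,y_i^T=y_i^T$ (using that the $\mfa_r$-block of $A$ is the scalar $a$ and $B_i$ has zero in that spot only if... ) — more carefully, the bottom-left block of $[A,B_i]$ is $a y_i^T - y_i^T z$, which since $\Re(z)\ne a$ (as $\Re(z)\in\{0,-a/2\}$, $a>0$ after the sign normalisation, so $\Re(z)\in\{0,-a/2\}\ne a$ unless $a=0$) forces... and here I must split off the subcase $a=0$, $z=ic\ne0$ separately. Assuming $y_i=0$, the top-left block gives $[c_i,z]=0$ automatically (scalars), the top-right block gives $z v_i + u b_i = c_i v_i + b_i\cdot(\text{entry})$ — this is the equation pinning down $v_i$ in terms of $u,b_i,c_i,z$, producing the stated formulas $v_1=\tfrac{b_1-ic_1}{a-ic}u$ etc.\ in case~(ii) and the shifted versions in case~(i) (where $z=-a/2+ic$ and the relevant denominator becomes $\tfrac32 a-ic$).

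Next I would extract the constraints relating $f_1,f_2,h$ and $w_1,w_2$ to the $b_i,c_i$. The relations $\omega(JX,Z)=w_1Y+f_1X$ and $\omega(JX,JZ)=w_2Y+f_2X$ connect the off-diagonal data of $A$ (namely $\omega_0(JX,Z)=\omega(JX,Z)$ composed appropriately) with $B_i=\mp\omega_0(Z\text{ or }JZ,\any)$; concretely, $\omega(JX,Z)$ is the value $-A_X$ applied suitably versus $-B_1$ applied to... one uses $\omega(JX,Z)=-\omega(Z,JX)=-B_1(JX)$ reinterpreted, giving $f_1$ as (minus) the $\mfa_r$-component and $w_1$ as the $\mfa_J$-component. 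Matching with the $v_i$ and $b_i$ just computed yields $f_1=-b_2$, $f_2=b_1$ (case~ii) or $f_1=-b_2,f_2=0$ (case~i), and $w_1+iw_2=i(v_1+iv_2)$ from the integrability relation Lemma~\ref{le:Jintegrable}\ref{item:o1-Jrr}, $(\omega_0)_{Jr}^r=J^*(\omega_1)_{Jr}^r$. The value of $h$ comes from $\omega(Z,JZ)=qY+hX$: its $\mfa_r$-component $h$ is determined by evaluating~\eqref{eq:Abeliansheardata} on $\Lambda^3U$ (the $JX$-component), which after substituting the $v_i$-formulas collapses to $h=b_2^2/a$ in case~(i) (and $h$ is unconstrained—absorbed into $\nu$—in case~(ii), consistent with the $q$ being free). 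Throughout, the normalisations $u\in\bR_{\geqslant0}$, and the rotations in $\spa{Y,iY}$ and $\spa{Z,JZ}$ that set $C_1$ or certain phases to canonical form, are used exactly as in Lemma~\ref{le:A=0}: a rotation in $\spa{Z,JZ}$ can be used to normalise one of $b_1,b_2$ or a phase, and a $U(1)$-rotation in $\mfa_J$ to make $u$ real nonnegative and to kill $\Re(c_1)$-type freedom.

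For the converse, I would simply observe—exactly as in the converse directions of Lemmas~\ref{le:A=0} and~\ref{le:codim2notJinv}—that pre-shear data assembled from the listed normal forms satisfies~\eqref{eq:AbelianSKTshear1} (this is Lemma~\ref{le:Jintegrable}: conditions \ref{item:G-zero}, \ref{item:J-KX}, \ref{item:o1-Jrr}, \ref{item:o01-J-val} etc.\ are all visibly met since $G_X=0$ because $\mfa_r$ is one-dimensional, $K_X=z$ commutes with $J$, and the $w_i$/$v_i$ relation is precisely \ref{item:o1-Jrr}), that~\eqref{eq:Abeliansheardata} holds on $\Lambda^2U\wedge\mfa$ because $[A,B_i]=[B_1,B_2]=0$ by construction, and that~\eqref{eq:AbelianSKTshear2} holds on $\Lambda^2\mfa_J\wedge\Lambda^2\bR^6$ because that component of $\nu(\omega)$ is governed by~\eqref{eq:F2ndordergsymmetric}, which for a single generator $K_X=z$ with $\Re(z)\in\{0,-a/2\}$ and $f(X,X)=aX$ reduces to~\eqref{eq:z-a}. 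The main obstacle is purely bookkeeping: tracking signs and the precise form of the denominators $a-ic$ versus $\tfrac32a-ic$ across the two cases, and verifying that the rotations used in case~(i) are compatible with the constraint $\Re(z)=-a/2$ (so that there is less residual freedom than in case~(ii))—in particular checking that after fixing $u\ge0$ one still has enough $\spa{Z,JZ}$-rotation left to reach the displayed $B_1,B_2$. No deep idea is needed beyond Lemma~\ref{lem:2d-comm-Jrel} and the earlier structural lemmas; the work is in the careful linear algebra of $2\times2$-block commutators.
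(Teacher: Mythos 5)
Your skeleton (block decomposition of the \( B_i \), \( [A,B_i]=[B_1,B_2]=0 \), Lemma~\ref{lem:2d-comm-Jrel} for the \( \mfa_J \)-blocks, \( v_i=(C_i-b_i)u/(z-a) \), and Lemma~\ref{le:Jintegrable} for \( f_1,f_2 \) and \( w_1+iw_2 \)) matches part of the paper's proof, but the heart of the lemma is missing: you never use~\eqref{eq:AbelianSKTshear2} on the components \( \Lambda^2\mfa_J\wedge\Lambda^2 U_J \) and \( \Lambda^2\mfa_J\wedge\mfa_r\wedge U_J \), and these are precisely what produce the normal forms. In the paper, \( \nu(\omega)(Y,iY,Z,JZ)=0 \) gives \( 2\Re(C_2)^2+h\Re(z)=0 \) (after rotating \( (Z,JZ) \) to kill \( \Re(C_1) \)), and \( \nu(\omega)(Y,iY,X,Z)=0 \), \( \nu(\omega)(Y,iY,X,JZ)=0 \) give \( \Re(z)\,b_1=0 \) and \( \Re(z)(2\Re(C_2)+b_2)=0 \). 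It is only from these SKT equations that case~(i) acquires \( b_1=0 \) (hence \( f_2=0 \) and the zero lower-right entry of \( B_1 \)), \( \Re(C_2)=-b_2/2 \), and then \( h=b_2^2/a \), and that case~(ii) acquires \( \Re(C_2)=0 \). Your alternative route cannot deliver this: evaluating~\eqref{eq:Abeliansheardata} on \( \Lambda^3 U \) gives only \( ah=b_1^2+b_2^2 \) (the second equation of~\eqref{eq:lasteqnsforJacobiidentity}), which yields \( h=b_2^2/a \) only if \( b_1=0 \) is already known — and you never derive \( b_1=0 \). For the same reason your converse claim is wrong: \eqref{eq:F2ndordergsymmetric}/\eqref{eq:z-a} only governs the \( \Lambda^2\mfa_J\wedge\mfa_r\wedge U_r \) piece of \( \nu(\omega) \); the pieces involving \( U_J \) must be checked against the relations above, plus a separate computation showing the \( \Lambda^2\mfa_J\wedge U_r\wedge U_J \) components vanish identically.

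Two further gaps. First, the block-triangularity \( y_i=0 \): your invertibility argument for \( aI-z \) works whenever \( z\neq a \), but fails exactly when \( z=a=0 \) (the subcase you propose to split off, \( a=0 \), \( z=ic\neq0 \), is the unproblematic one), and you simply ``assume \( y_i=0 \)''; the paper instead notes that \( B_i \) preserves \( \ker(A-aI) \) or \( \im(A-aI) \), one of which is \( \mfa_J \) in every case (in particular when \( z=a=0 \), since then \( u\neq0 \) and \( \ker A=\mfa_J \)). There is also a mild circularity in invoking Lemma~\ref{lem:2d-comm-Jrel} before \( y_i=0 \) is known, since \( [B_1,B_2]=0 \) gives \( [C_1,C_2]=0 \) only after the lower-left blocks vanish. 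Second, the condition \( (a,c)\neq(0,0) \) in case~(ii) is part of the statement and needs an argument: the paper excludes \( z=a=0 \) by showing it forces \( b_j=c_j=0 \), then \( uh=0 \) via the Jacobi identity, hence \( \im(\omega)\subseteq\mfa_J \), contradicting \( \im(\omega)=\mfa \); your proposal does not address this at all. (Minor: the relation \( w_1+iw_2=i(v_1+iv_2) \) comes from Lemma~\ref{le:Jintegrable}\ref{item:o01-J-val}, while \ref{item:o1-Jrr} gives \( f_1=-b_2 \), \( f_2=b_1 \).)
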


\begin{proof}
  First we rotate the \( \spa{Y,iY} \)-plane to get
  \( u\in \bR_{\geqslant 0} \).

  Note that \( \Re(z) \in \{0,-a/2\} \), implies either
  \( z - a \ne 0 \) or \( z = 0 = a \).
  Consider \( j\in \{1,2\} \).
  Since \( A \) and \( B_j \) commute, \( B_j \)~preserves the image
  and kernel of~\( A - aI_{3} \).
  As \( \spa{e_1,e_2} \) is one of these spaces, the image when
  \( z - a \ne 0 \), the kernel when \( z = 0 = a \), this space is
  preserved by~\( B_{j} \).  Thus,
  \begin{equation*}
    B_j=\begin{pmatrix} C_j & v_j \\ 0 & b_j \end{pmatrix}
  \end{equation*}
  with \( C_j\in \bR^{2\times 2} \), \( v_j\in \bR^2\cong \bC \) and
  \( b_j\in \bR \).
  Now \( [B_1,B_2]=0 \) gives \( [C_1,C_2]=0 \), furthermore
  Lemma~\ref{le:Jintegrable}\ref{item:o0-JJr} means we can apply
  Lemma~\ref{lem:2d-comm-Jrel} to get that \( C_j \) are
  \( J \)-invariant and may be identified with complex numbers.
  We may rotate~\( (Z,JZ) \) so that \( \Re(C_1)=0 \), i.e.\
  \( C_1=i c_1 \) for some \( c_1\in \bR \).

  Similarly to the proof of Lemma~\ref{le:A=0}, we get
  \begin{equation*}
    6\nu_1(\omega)(Y,iY,Z,JZ)=-2 \abs{C_1}^2-2 \abs{C_2}^2
  \end{equation*}
  and
  \begin{equation*}
    \begin{split}
      12\nu_2(\omega)(Y,iY,Z,JZ)
      &=-2 \Re(C_1^2)-2 \Re(C_2^2) \eqbreak
        +g(\omega(J(\omega(Z,JZ)),iY),iY)+g(\omega(J(\omega(Z,JZ)),Y),Y)\\
      &=-2 (\Re(C_1^2)+\Re(C_2^2)- h \Re(z)).
    \end{split}
  \end{equation*}
  Thus, \eqref{eq:AbelianSKTshear2} gives us
  \begin{equation*}
    \begin{split}
      0&=3\nu_1(\omega)(Y,iY,Z,JZ)+6\nu_2(\omega)(Y,iY,Z,JZ) \\
       &=-2\Re(C_1)^2-2\Re(C_2)^2- h \Re(z)
         =-2\Re(C_2)^2-h \Re(z),
    \end{split}
  \end{equation*}
  So \( \Re(C_2)=0 \) if \( \Re(z)=0 \), and \( \Re(C_2)^2=ah/4 \) if
  \( \Re(z)=-a/2 \).

  Next,
  \begin{equation*}
    \begin{split}
      6\nu_1(\omega)(Y,iY,X,Z)
      &=-g(\omega(iY,JX),\omega(iY,Z))-g(\omega(Y,JX),\omega(Y,Z))\\
      &=-2\Re(z\overline{C}_1)
    \end{split}
  \end{equation*}
  and
  \begin{equation*}
    \begin{split}
      12\nu_2(\omega)(Y,iY,X,Z)
      &=-g(\omega(J\omega(Y,Z),JX),iY)+g(\omega(J\omega(iY,Z),JX),Y)\\
      &+g(\omega(J\omega(X,Z),iY),iY)+g(\omega(J\omega(X,Z),Y),Y)\\
      &=-2\Re(zC_1)-2b_1 \Re(z),
    \end{split}
  \end{equation*}
  so that \eqref{eq:AbelianSKTshear2} gives us
  \begin{equation*}
    \begin{split}
      0&=6\nu(\omega)(Y,iY,X,Z)=-2\Re(z\overline{C}_1)-2\Re(zC_1)-2b_1 \Re(z)\\
       &=-2\Re(z)\left(2\Re(C_1)+b_1\right)=-2\Re(z) b_1.
    \end{split}
  \end{equation*}
  Hence, if \( \Re(z)=-a/2\neq 0 \), then \( b_1=0 \).
  Similarly, we obtain
  \begin{equation*}
    0=6\nu(\omega)(Y,iY,X,JZ)=-2\Re(z)\left(2\Re(C_2)+b_2\right),
  \end{equation*}
  and so \( \Re(C_2)=-b_2/2 \) if \( \Re(z)=-a/2\neq 0 \), which
  implies \( b_2^2/4=\Re(C_2)^2=ah/4 \) and hence \( h=b_2^2/a \).

  Note that
  \(
  \nu_1(\omega)(Y,iY,JX,Z)=\nu_1(\omega)(iY,Y,X,JZ)=-\nu_1(\omega)(Y,iY,X,JZ)
  \) and that
  \begin{equation*}
    \begin{split}
      \nu_2(\omega)(Y,iY,JX,Z)
      &=g(\omega(J\omega(Y,JX),JZ),iY)-g(\omega(J\omega(iY,JX),JZ),Y)
        \eqbreak
        +g(\omega(J\omega(JX,Z),iY),iY)+g(\omega(J\omega(JX,Z),Y),Y)\\
      &=g(i\omega(\omega(Y,JX),JZ),iY)-g(i\omega(\omega(iY,JX),JZ),Y) \eqbreak
        +g(\omega(J\omega^r(JX,Z),iY),iY)+g(\omega(J\omega^r(JX,Z),Y),Y)\\
      &=g(\omega(J\omega(Y,JZ),JX),iY)-g(\omega(J\omega(iY,JZ),JX),Y) \eqbreak
        -g(\omega(J\omega(X,JZ),iY),iY)-g(\omega(J\omega(X,JZ),Y),Y)\\
      &=-\nu_2(\omega)(Y,iY,X,JZ),
    \end{split}
  \end{equation*}
  so that \( \nu(\omega)(Y,iY,JX,Z)=-\nu(\omega)(Y,iY,X,JZ)=0 \).
  Similarly, the identity
  \( \nu(\omega)(Y,iY,JX,JZ)=\nu(\omega)(Y,iY,X,Z)=0 \) follows.
  Thus, \eqref{eq:AbelianSKTshear2} is also valid on
  \( \Lambda^2 \mfa_J\wedge U_r\wedge U_J \), and so on
  \( \Lambda^2 \mfa_J\wedge \Lambda^2\bR^6 \).

  Next, we notice that~\eqref{eq:AbelianSKTshear1} is satisfied if and
  only if parts \ref{item:o1-Jrr} and~\ref{item:o01-J-val} of
  Lemma~\ref{le:Jintegrable} hold.
  Now \ref{item:o1-Jrr} is equivalent to \( f_1=-b_2 \) and
  \( f_2=b_1 \) and \ref{item:o01-J-val} is equivalent to
  \begin{equation*}
    w_{1}+iw_2=i(v_1+iv_2).
  \end{equation*}
  Moreover, by~\eqref{eq:Abeliansheardata}, we must have
  \( [A,B_i]=0 \), which is equivalent to \( zv_i+ub_i=C_i u+v_i a \),
  i.e.\ to \( (z-a) v_i=(C_i-b_i) u \), and so to
  \( v_i=\tfrac{C_i-b_i}{z-a} u \), \( i=1,2 \), if \( z-a\neq 0 \).

  If \( z-a=0 \), then \( z = 0 = a \) and \( u \ne 0 \).
  As \( C_j=ic_j \), we have \( 0=(z-a)v_j=(ic_j-b_j)u \), giving
  \( b_j=c_j=0 \) for \( j=1,2 \).
  This implies \( \omega(X,Z) \), \( \omega(JX,Z) \),
  \( \omega(X,JZ) \), \( \omega(JX,JZ) \) all lie in \( \mfa_J \) and
  \( \mfa_J\subseteq \ker(\omega(JX,\any))\cap
  \ker(\omega(Z,\any))\cap \ker(\omega(JZ,\any)) \).
  Using these properties, \eqref{eq:Abeliansheardata} implies
  \begin{equation*}
    \begin{split}
      0&=\omega(JX,\omega(Z,JZ)) + \omega(Z,\omega(JZ,JX))
         + \omega(JZ,\omega(JX,Z)) \\
       &=\omega(JX,q Y+ h X)
         = uh Y.
    \end{split}
  \end{equation*}
  Since \( u\neq 0 \), we get \( h=0 \) and thus
  \( \im(\omega)\subseteq \mfa_J \), a contradiction.
  We conclude that \( z-a\neq 0 \), which is clear if
  \( \Re(z)=-a/2\neq 0 \) but implies \( a\neq 0 \) or
  \( \Im(z)\neq 0 \) in the case \( \Re(z)=0 \).
\end{proof}

Summarizing our results to this point, we observe that we still have
to impose~\eqref{eq:Abeliansheardata} on
\( \Lambda^3 U=U_r\wedge \Lambda^2 U_J \) and
\eqref{eq:AbelianSKTshear2} on
\( \mfa_J\wedge \Lambda^3 (\mfa_r\oplus U_r\oplus U_J) \) and on
\( \Lambda^4 (\mfa_r\oplus U_r\oplus U_J)=\mfa_r\wedge U_r\wedge
\Lambda^2 U_J \).

First of all, \eqref{eq:Abeliansheardata} evaluated on
\( U_r\wedge \Lambda^2 U_J \) yields
\begin{equation*}
  \begin{split}
    0&=\omega(JX,\omega(Z,JZ)) + \omega(Z,\omega(JZ,JX)) +
       \omega(JZ,\omega(JX,Z))\\
     &=A\cdot (q,h)^T-B_1 (w_2,b_1)^T+B_2 (w_1,-b_2),
  \end{split}
\end{equation*}
where we set \( b_1=0 \) if \( \Re(z)=-{a}/{2} \).
This equation is equivalent to
\begin{equation}
  \label{eq:lasteqnsforJacobiidentity}
  % \begin{split}
  0=zq+ hu- ic_1 w_2-b_1 v_1+ C_2 w_1-b_2 v_2
  \eqand
  ah=\norm{b}^2
  % \end{split}
\end{equation}
Note that for \( \Re(z)=-{a}/{2}\neq 0 \), the second equation in
automatically satisfied as \( b_1=0 \) and \( h={b_2^2}/{a} \).

Let us now first look at \eqref{eq:AbelianSKTshear2} on
\( \mfa_J\wedge \mfa_r\wedge U_r\wedge U_J \).
For this, let \( \tilde{Y}\in \mfa_J \).  Then
\begin{equation*}
  \begin{split}
    6\nu_1(\omega)(\widetilde{Y},X,JX,Z)
    &= g(\omega(i\widetilde{Y},JX),\omega(JX,Z))
      + g(\omega(i\widetilde{Y},JZ),\omega(X,JX)) \eqbreak
      - g(\omega(JX,X),\omega(\widetilde{Y},Z))
      - g(\omega(JX,JZ),\omega(\widetilde{Y},JX))\\
    &=g(i z \tilde{Y},w_1 Y) + g(i C_2 \widetilde{Y},u Y) + g(u Y,i
      c_1 \tilde{Y}) - g(w_2 Y,z \tilde{Y})
  \end{split}
\end{equation*}
and
\begin{equation*}
  \begin{split}
    12\nu_2(\omega)(\widetilde{Y},X,JX,Z)
    &= - g(\omega(J\omega(X,JX),JZ),\widetilde{Y})
      - g(\omega(J\omega(JX,Z),JX),\widetilde{Y}) \eqbreak
      + g(\omega(J\omega(Z,X),X),\widetilde{Y})\\
    &= - g(iu C_2 Y,\tilde{Y}) - g(a w_2 Y,\tilde{Y}) - g(iw_1 z
      Y,\tilde{Y}) + g(b_1 u Y,\tilde{Y}).
  \end{split}
\end{equation*}
Thus, \( \nu(\omega)(\widetilde{Y},X,JX,Z)=0 \) for all
\( \widetilde{Y}\in \mfa_J \) is equivalent to
\begin{equation}
  \label{eq:SKTcond1}
  0 = - 2 i \Re(z) w_1 - 2i \Re(C_2) u - i c_1 u - w_2 \overline{z}
  - a w_2 + b_1 u.
\end{equation}
Similarly, \( \nu(\omega)(\widetilde{Y},X,JX,JZ)=0 \) for all
\( \widetilde{Y}\in \mfa_J \) is seen to be equivalent to
\begin{equation}
  \label{eq:SKTcond2}
  \begin{split}
    0&=-i \overline{z} w_2 + c_1 u + \overline{C}_2 u +
       \overline{z}w_1 - u c_1 + a w_1 - i w_2 z + b_2 u\\
     &=-2 i \Re(z) w_2+\overline{C}_2 u+w_1 \overline{z}+a w_1+ b_2 u.
  \end{split}
\end{equation}
Analogous computations give \( \nu(\omega)(\widetilde{Y},X,Z,JZ)=0 \)
for all \( \widetilde{Y}\in \mfa_J \) if and only if
\begin{equation}
  \label{eq:SKTcond3}
  0=2 i v_2 \Re(C_2)-2 i q\Re(z)+b_1 w_1+b_2 w_2+\overline{C}_2 w_2-i c_1 w_1
\end{equation}
and that \( \nu(\omega)(\widetilde{Y},JX,Z,JZ)=0 \) for all
\( \widetilde{Y}\in \mfa_J \) if and only if
\begin{equation}
  \label{eq:SKTcond4}
  0=2 i w_2 \Re(C_2)-\overline{C}_2 v_2+i c_1 v_1+q\overline{z}-b_2 w_1-hu+b_1 w_2
\end{equation}
Finally, we have to consider the equation
\( \nu(\omega)(X,JX,Z,JZ)=0 \).  Using
\begin{equation*}
  \begin{split}
    \omega^r(J\omega(JX,JZ),JZ)
    &=\omega^r(\omega(JX,JZ),Z)\\
    &=\omega^r(\omega(Z,JZ),JX)+\omega^r(\omega(JX,Z),JZ)\\
    &=-\omega^r(J\omega(Z,JZ),X)-\omega^r(J\omega(JX,Z),Z),
  \end{split}
\end{equation*}
one easily sees that
\begin{equation*}
  \begin{split}
    \nu_2(\omega)(X,JX,Z,JZ)=0.
  \end{split}
\end{equation*}
Thus, \( 0=\nu(\omega)(X,JX,Z,JZ)=\nu_1(\omega)(X,JX,Z,JZ) \), and a
short computation yields that this equation is equivalent to
\begin{equation}
  \label{eq:SKTcond5}
  \begin{split}
    0&= 2g(\omega(X,JX),\omega(Z,JZ)) - \norm{\omega(X,Z)}^2
       - \norm{\omega(X,JZ)}^2 \eqbreak
       -\norm{\omega(JX,Z)}^2 -\norm{\omega(JX,JZ)}^2.
  \end{split}
\end{equation}
Let us now consider individually the cases (a)~\( \Re(z)=0 \) and
(b)~\( \Re(z)=-{a}/{2}\neq 0 \).

(a) \( \Re(z)=0 \).
Here, \( \Re(C_2)=0 \) and so \( C_2=ic_2 \) for some
\( c_2\in \bR \).
Then \eqref{eq:SKTcond1} and~\eqref{eq:SKTcond2} imply
\begin{equation}
  \label{eq:Rez=0}
  w_2=\frac{b_1-ic_1}{a-ic} u=v_1,\qquad w_1=-\frac{b_2-ic_2}{a-ic} u=-v_2,
\end{equation}
which is consistent with \( w_{i}+iw_2=i(v_1+iv_2) \).
Moreover, \eqref{eq:SKTcond3} is equivalent to
\( (b_2-ic_2)w_2=-(b_1-ic_1) w_1 \), and so also automatically
satisfied.
Furthermore, the first equation
in~\eqref{eq:lasteqnsforJacobiidentity} and~\eqref{eq:SKTcond4} are
both easily seen to be equivalent to
\begin{equation*}
  \begin{split}
    0&=ic q+hu-(b_1+ic_1) v_1-(b_2+ic_2)v_2
       =ic q+h u-\tfrac{b_1^2+b_2^2+c_1^2+c_2^2}{a-ic}u\\
     &=icq+\tfrac{ha-ihc-b_1^2-b_2^2-c_1^2-c_2^2}{a-ic}u
       =icq-\tfrac{ihc+c_1^2+c_2^2}{a-ic}u.
  \end{split}
\end{equation*}
So if \( c\neq 0 \), we obtain
\begin{equation*}
  q=\tfrac{ich+c_1^2+c_2^2}{c^2+ac i}u.
\end{equation*}
If \( c=0 \), we would get \( c_1^2+c_2^2=0 \), hence \( c_1=c_2=0 \).
However, this case cannot occur as then the only non-zero Lie brackets
on the shear would be (up to anti-symmetry) \( [Z,JZ] \) and
\begin{equation*}
  [X,JX]=\tfrac{b_1}{a} [X,Z]=\tfrac{b_1}{a} [JX,JZ]
  =\tfrac{b_2}{a} [X,JZ]=-\tfrac{b_2}{a} [JX,Z]
\end{equation*}
giving \( \dim(\derg)\leq 2<3 \), a contradiction.

Thus, \( c\neq 0 \) and we still have to satisfy~\eqref{eq:SKTcond5}.
Due to \( \omega(JX,JZ)=\omega(X,Z) \) and
\( \omega(JX,Z)=-\omega(X,JZ) \) and \( ha=\norm{b}^2 \), this
equation is
\begin{equation*}
  0=2 \Re(q) u+2 h a- 2 \abs{v_1}^2- 2 b_1^2-2 \abs{v_2}^2- 2 b_2^2
  = 2(\Re(q) u-\abs{v_1}^2-\abs{v_2}^2),
\end{equation*}
so
\begin{equation*}
  \Re(q) u=\abs{v_1}^2+\abs{v_2}^2=\tfrac{b_1^2+b_2^2+c_1^2+c_2^2}{a^2+c^2} u^2.
\end{equation*}
On the other hand, \( q=\tfrac{ich+c_1^2+c_2^2}{c^2+ac i}u \) implies
\begin{equation*}
  \begin{split}
    \Re(q) u=\Re\left(\tfrac{ich+c_1^2+c_2^2}{c^2+ac i}u\right)u=\tfrac{c_1^2+c_2^2+ah}{a^2+c^2}u^2=\tfrac{c_1^2+c_2^2+b_1^2+b_2^2}{a^2+c^2}u^2,
  \end{split}
\end{equation*}
and so \eqref{eq:SKTcond5} is automatically satisfied.

(b)~\( \Re(z)=-{a}/{2}\neq 0 \).
In this case, \eqref{eq:SKTcond1} and~\eqref{eq:SKTcond2} are
explicitly given by
\begin{equation*}
  i a w_1  - \bigl(\tfrac{a}{2}-ic\bigr) w_2+ i(b_2-c_1) u =0,\qquad
  i a w_2 + \bigl(\tfrac{a}{2}-ic\bigr) w_1
  + \bigl(\tfrac{b_2}{2}-ic_2\bigr) u=0.
\end{equation*}
The solution is given by
\begin{equation*}
  \begin{split}
    w_1&=-\frac{\bigl(\tfrac{a}{2}-ic\bigr)\bigl(\tfrac{b_2}{2}-ic_2\bigr)
         -a(b_2-c_1)}{\bigl(\tfrac{a}{2}-ic\bigr)^2-a^2}u,\\
    w_2&=i\frac{\bigl(\tfrac{a}{2}-ic\bigr)(b_2-c_1)
         -a \bigl(\tfrac{b_2}{2}-ic_2\bigr)}{
         \bigl(\tfrac{a}{2}-ic\bigr)^2-a^2}u,
  \end{split}
\end{equation*}
and \( w_{1}+iw_2=i(v_1+iv_2) \) holds.

Next, using \( w_2=v_1+iv_2+iw_1 \), \eqref{eq:SKTcond4} is explicitly
given by
\begin{equation*}
  \begin{split}
    0&=-i b_2 w_2+\bigl(\tfrac{b_2}{2}+ic_2\bigr) v_2+ic_1 v_1
       -\bigl(\tfrac{a}{2}+ic\bigr)q-b_2 w_1-hu\\
     &=-i b_2 v_1+b_2 v_2+\bigl(\tfrac{b_2}{2}+ic_2\bigr) v_2
       +ic_1 v_1-\bigl(\tfrac{a}{2}+ic\bigr)q-hu\\
     &=\bigl(\tfrac{3}{2}b_2+ic_2\bigr) v_2-i (b_2-c_1)v_1
       -\bigl(\tfrac{a}{2}+ic\bigr)q-hu,
  \end{split}
\end{equation*}
and so we do get
\begin{equation*}
  \begin{split}
    q&=\frac{-i(b_2-c_1)v_1 + \bigl(\tfrac{3}{2}b_2+ic_2\bigr)v_2 -
       hu}{\tfrac{a}{2}+ic}
       =\frac{(c_1-b_2)c_1 + \tfrac{9}{4}b_2^2 + c_2^2 -
       h\bigl(\tfrac{3}{2}a-ic\bigr)}{\bigl(\tfrac{a}{2}+ic\bigr)
       \bigl(\tfrac{3}{2}a-ic\bigr)}u.
  \end{split}
\end{equation*}
A lengthy but straightforward computation shows that then the first
equation in~\eqref{eq:lasteqnsforJacobiidentity} and also
\eqref{eq:SKTcond3} and~\eqref{eq:SKTcond5} are satisfied.

\begin{theorem}
  \label{th:6dwith3dnottotallyrealcommutator}
  Let \( (\mfg,g,J) \) be a six-dimensional almost Hermitian Lie
  algebra.
  Then \( (\mfg,g,J) \) is a two-step solvable SKT Lie algebra with
  \( \derg \) three-dimensional and not totally real if and only if
  \( (\mfg,g,J) \) admits a unitary \( (Y,iY,X,JX,Z,JZ) \) basis such
  that \( Y \) spans \( \derg_J \) as a complex vector space and
  \( X \) spans the real vector space~\( \derg_r \) and the only
  non-zero Lie brackets (up to anti-symmetry and complex-linear
  extension to \( \derg_J \)) are given by either
  \begin{enumerate}[(i)]
  \item \( [JZ,Y]=ibY \), \( [Z,JZ]=qY+hX \) for
    \( b,h\in \bR\setminus \{0\} \), \( q\in \bC \),
  \item
    \begin{equation*}
      \begin{split}
        [JX,Y]&=ic Y,\qquad  [Z,Y]=ic_1 Y, \qquad [JZ,Y]=i c_2 Y,\\
        [JX,X]&= uY+a X,\qquad [Z,X]=[JZ,JX]=
                -\frac{ic_1}{\tfrac{3}{2}a-ic} u Y+ b_1 X, \\
        [JZ,X]&=-[Z,JX]= \frac{\tfrac{3}{2}b_2-ic_2}{\tfrac{3}{2}a-ic}
                u Y+ b_2 X,\qquad
                [Z,JZ]=-\tfrac{ich+c_1^2+c_2^2}{c^2+ac i}u Y-h X
      \end{split}
    \end{equation*}
    for certain \( c\in \bR\setminus \{0\} \),
    \( b_1,b_2,c_1,c_2,h\in \bR \), \( a,u \in \bR_{\geqslant 0} \)
    with \( ha=b_1^2+b_2^2 \) and at least one of
    \( a,b_{1},b_{2},h \) non-zero, or
  \item
    \begin{equation*}
      \begin{split}
        [JX,Y]&=\left(-\frac{a}{2}+ic\right) Y,\qquad  [Z,Y]=ic_1 Y,
                \qquad [JZ,Y]=\left(-\frac{b_2}{2}+i c_2\right) Y,\\
        [JX,X]&= uY+a X,\qquad [Z,X]=-\frac{i c_1}{\frac{3}{2}a-ic} u
                Y, \\
        [JZ,X]&=\frac{\frac{3}{2}b_2-ic_2}{\frac{3}{2}a-ic} u Y+ b_2
                X,\qquad [Z,JZ]=-q Y-h X,\\
        [Z,JX]&=-\frac{\left(\tfrac{a}{2}-ic\right)\left(\tfrac{b_2}{2}-ic_2\right)-a(b_2-c_1)}{\left(\tfrac{a}{2}-ic\right)^2-a^2}u
                Y- b_2 X,\\
        [JZ,JX]&=i\frac{\left(\tfrac{a}{2}-ic\right)(b_2-c_1)-a
                 \left(\tfrac{b_2}{2}-ic_2\right)
                 }{\left(\tfrac{a}{2}-ic\right)^2-a^2}u Y
      \end{split}
    \end{equation*}
    for \( a\in \bR\setminus \{0\} \), \( b_2,c,c_1,c_2\in \bR \),
    \( u \in \bR_{\geqslant 0} \), with
    \begin{equation*}
      h=\frac{b_2^2}{a},\qquad q=\frac{(c_1-b_2)c_1+\tfrac{9}{4}b_2^2+c_2^2-h\left(\tfrac{3}{2}a-ic\right)}{\left(\tfrac{a}{2}+ic\right)\left(\tfrac{3}{2}a-ic\right)}u.
    \end{equation*}
  \end{enumerate}
\end{theorem}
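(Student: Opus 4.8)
The plan is to assemble the preceding case analysis into the single classification statement, using Lemma~\ref{le:SKTshears} to pass back and forth between SKT Lie algebras and two-step SKT shear data.

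First I would invoke Lemma~\ref{le:SKTshears}: any six-dimensional two-step solvable SKT Lie algebra $(\mfg,g,J)$ is the shear of the flat K\"ahler structure on $\bR^{6}$ by two-step SKT shear data $(\mfa,\omega)$, and (shrinking $\mfa$ if necessary) we may assume $\im(\omega)=\mfa$, in which case $\derg$ is naturally identified with $\mfa$. When $\derg$ is three-dimensional and not totally real, $\mfa$ is three-dimensional with $\dim_{\bR}\mfa_J=2$ and $\dim_{\bR}\mfa_r=1$, which is exactly the setting of this subsection; thus ``$\derg$ three-dimensional, not totally real'' is equivalent to ``$\im(\omega)=\mfa$, $\dim\mfa=3$, $\mfa_J\neq0$''. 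After fixing unitary bases $(Y,iY)$ of $\mfa_J$, $X$ of $\mfa_r$, $(Z,JZ)$ of $U_J$ as above, Lemma~\ref{le:Jintegrable}\ref{item:G-zero},\ref{item:J-KX} give the upper-triangular shape of $A=-\omega_0(JX,\any)$, and \eqref{eq:F2ndordergsymmetric} via \eqref{eq:z-a} forces $\Re(z)\in\{0,-a/2\}$; normalising $X$ I take $a\geq0$. Then I split into $A=0$ and $A\neq0$.

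For $A=0$, Lemma~\ref{le:A=0} already does the work: after a rotation of $(Z,JZ)$ the data is precisely that of case~(i), and the converse direction of that lemma supplies both the SKT property and the identification $\mfg\cong\mfg^{0}_{5,14}\oplus\bR$. For $A\neq0$, Lemma~\ref{le:formofBj} normalises $B_1,B_2$ into the two shapes according to whether $\Re(z)=0$ or $\Re(z)=-a/2\neq0$, and reduces the remaining conditions to \eqref{eq:Abeliansheardata} on $U_r\wedge\Lambda^2U_J$ and \eqref{eq:AbelianSKTshear2} on $\mfa_J\wedge\Lambda^3(\mfa_r\oplus U_r\oplus U_J)$ and on $\mfa_r\wedge U_r\wedge\Lambda^2U_J$, i.e.\ to \eqref{eq:lasteqnsforJacobiidentity} and \eqref{eq:SKTcond1}--\eqref{eq:SKTcond5}. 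I would then run through the two subcases exactly as in the displayed computations. In subcase (a), $\Re(z)=0$: equations \eqref{eq:SKTcond1}--\eqref{eq:SKTcond2} give \eqref{eq:Rez=0}, \eqref{eq:SKTcond3} becomes automatic, the first equation of \eqref{eq:lasteqnsforJacobiidentity} together with \eqref{eq:SKTcond4} pins down $q$ once $c\neq0$, and \eqref{eq:SKTcond5} is then automatic using $ha=b_1^2+b_2^2$; excluding $c=0$ (which would collapse $\derg$ to dimension $\leq2$) leaves case~(ii), the condition $\im(\omega)=\mfa$ being equivalent to at least one of $a,b_1,b_2,h$ being nonzero. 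In subcase (b), $\Re(z)=-a/2\neq0$: \eqref{eq:SKTcond1}--\eqref{eq:SKTcond2} solve for $w_1,w_2$, \eqref{eq:SKTcond4} determines $q$, $h=b_2^2/a$ follows from $\Re(C_2)^2=ah/4$ and $\Re(C_2)=-b_2/2$, and the first equation of \eqref{eq:lasteqnsforJacobiidentity} as well as \eqref{eq:SKTcond3} and \eqref{eq:SKTcond5} hold automatically; this is case~(iii). In each instance the converse is immediate, since the computation verifies every SKT equation, so the shear is a two-step solvable SKT Lie algebra of the required type.

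The main obstacle is organisational rather than analytic: the hard $\nu_1,\nu_2$ evaluations have already been carried out, but one must check that the reductions in Lemma~\ref{le:partsofnuequalzero}, Lemma~\ref{le:conditionsonFX}, Lemma~\ref{le:A=0} and Lemma~\ref{le:formofBj} together exhaust all of $\Lambda^4\bR^{6}$ for \eqref{eq:Abeliansheardata}, \eqref{eq:AbelianSKTshear1} and \eqref{eq:AbelianSKTshear2}, and one must translate the hypotheses ``$\dim\derg=3$'' and ``$\im(\omega)=\mfa$'' into the precise non-degeneracy constraints on the parameters in each of (i)--(iii) --- most delicately, the exclusion of $c=0$ in subcase~(a).
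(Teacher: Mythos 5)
Your proposal is correct and follows essentially the same route as the paper: the paper's proof of Theorem~\ref{th:6dwith3dnottotallyrealcommutator} is exactly the preceding case analysis of the subsection --- Lemma~\ref{le:A=0} for \( A=0 \), Lemma~\ref{le:formofBj} for \( A\neq 0 \), and the residual equations \eqref{eq:lasteqnsforJacobiidentity}, \eqref{eq:SKTcond1}--\eqref{eq:SKTcond5} solved in the subcases \( \Re(z)=0 \) (excluding \( c=0 \)) and \( \Re(z)=-a/2\neq 0 \) --- assembled via Lemma~\ref{le:SKTshears} with \( \im(\omega)=\mfa \), just as you describe.
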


\subsection{Four-dimensional complex commutator ideal}
\label{subsec:4dJinvariant}

As announced at the beginning of~\S\ref{sec:6d2stepsolvSKT}, we only
write down the equations in this case.

First of all, note that \( \mfa=\mfa_J \), \( U=U_J \) and take an
orthonormal basis \( Y_1,Y_2=JY_{1} \) of \( U \).
Then \( \omega \) is completely determined by
\( A_i:=\omega_0(Y_i,\any)\in \End(\mfa) \), \( i=1,2 \), and by
\( X:=\omega_1(Y_1,Y_2)\in \mfa \).
Moreover, \eqref{eq:Abeliansheardata} reduces to \( [A_1,A_2]=0 \)
and~\eqref{eq:AbelianSKTshear1} reduces to \( [A_2,J]=J[A_1,J] \),
i.e.\ \( A_2^{J-}=J A_1^{J-} \), where the superscript \( J- \)
denotes the \( J \)-anti-invariant part of an endomorphism of
\( \mfa=\mfa_J \).  Finally, \eqref{eq:AbelianSKTshear2} reduces to
\begin{equation*}
  JA_1^T A_1+A_1^T A_1 J+J A_2^T A_2+A_2^T A_2 J+A_1 J A_1 +A_1^T J A_1^T+ A_2 J A_2 + A_2^T J A_2^T=0.
\end{equation*}
where the transpose is taken with respect to the Riemannian metric
\( g|_{\mfa} \) on \( \mfa \).
Note that this is equivalent to
\( J A_1^T A_1+ A_1 J A_1 + J A_2^T A_2+ A_2 J A_2 \) being symmetric.
Summarizing, \( A_1 \) and \( A_2 \) have to satisfy the following
equations:
\begin{equation}
  \label{eq:4dJinvariant}
  \begin{gathered}
    [A_1,A_2]=0,\quad A_2^{J-}=J A_1^{J-},\\
    \sum_{i=1}^2 JA_i^T A_i+A_i^T A_i J+A_i J A_i+A_i^T J A_i^T=0.
  \end{gathered}
\end{equation}
Writing \( A_{i} = (A_{i})_{+} + (A_{i})_{-} \), with
\( (A_{i})_{\varepsilon}^{T} = \varepsilon (A_{i})_{\varepsilon} \),
for \( \varepsilon \in \{\pm1\} \), this decomposition commutes with
the decomposition \( A_{i} = A_{i}^{J} + A_{i}^{J-} \) and the final
equation in~\eqref{eq:4dJinvariant} become the two equations
\begin{equation*}
  \begin{gathered}
    \sum_{i=1}^{2} [(A_{i})^{J}_{+}, (A_{i})^{J-}_{+}] -
    [(A_{i})^{J-}_{-},(A_{i})^{J}_{-}] = 0,\\
    \sum_{i=1}^{2} [(A_{i})^{J}_{+}, (A_{i})^{J-}_{-}] -
    [(A_{i})^{J-}_{-},(A_{i})^{J}_{+}] + 2 \bigl( (A_{i})^{J}_{+}
    \bigr)^{2} - 2 \bigl( (A_{i})^{J-}_{-} \bigr)^{2} = 0.
  \end{gathered}
\end{equation*}

\medbreak\noindent \textsc{Acknowledgements.} The first author was
partly supported by a \emph{Forschungs\-stipendium} (FR 3473/2-1) from
the Deutsche Forschungsgemeinschaft (DFG).

\appendix

\section{Notation for particular Lie algebras}

Table~\ref{table_LAs} contains a list of all indecomposable Lie
algebras \( \mfg \) of various dimensions which occur in this article
and which are not well-known, in contrast to the Abelian Lie algebra
\( \bR \), the two-dimensional Lie algebra \( \aff_{\bR} \) of affine
motions of the real line and the \( (2k+1) \)-dimensional Heisenberg
Lie algebra \( \mfh_{2k+1} \).
The names for Lie algebras in dimension \( 3 \) are taken
from~\cite{Bi}, in four dimensions from~\cite{ABDO}, in five
dimensions from~\cite{Mu1}, in six dimensions from~\cite{Mu2} and in
seven dimensions from~\cite{Gong}.

In all cases, the Lie bracket is encoded dually by the tuple of
differentials \( (de^1,\ldots,de^n) \) for a basis
\( e^1,\ldots,e^n \) of \( \mfg \) and we use Salamon's notation for
this tuple.
Moreover, in some cases, we do not give all the necessary conditions
on the parameters to ensure that two Lie algebras in the corresponding
parameter families are non-isomorphic.
\begin{table}[h]
  \centering
  \begin{tabular}{@{\vrule width 0pt height 2.5ex depth
    1.5ex}*{4}{l}@{}}
    \toprule
    \( \mfg \) & dim & differentials & conditions \\
    \midrule
    \( \mfr'_{3,\lambda} \) & \( 3 \) & \( (0,\lambda.21+31,-21+\lambda.31) \) & \( \lambda\geq 0 \) \\
    \( \mfr_{4,\mu,\lambda} \) & \( 4 \) & \( (0,21,\mu.31,\lambda.41) \) & \( 0<\abs{\lambda}\leq \abs{\mu}\leq 1 \) \\
    \( \mfr'_{4,\mu,\lambda} \) & \( 4 \) & \( (0,\mu.21,\lambda.31+41,-31+\lambda.41) \) & \( \mu>0 \) \\
    \( \mfg_{5,14}^{\alpha} \) & \( 5 \) & \( (0,0,21,\alpha.41+51,-41+\alpha.51) \) & \( \alpha \geq 0 \) \\
    \( \mfg_{5,17}^{\alpha,\beta,\gamma} \) & \( 5 \) & \( \begin{array}[t]{@{}l@{}}(0,\alpha.21+31,-21+\alpha.31,\\\qquad\beta.41+ \gamma.  51,-\gamma.41+\alpha.51)\end{array} \) & \( \alpha \geq 0 \), \( \gamma\neq 0 \) \\
    \( \mfg_{6,1}^{\alpha,\beta,\gamma,\delta} \) & \( 6 \) & \( (0,21,\alpha.31,\beta.41,\gamma.51,\delta.61) \) & \( 0<\abs{\delta}\leq \abs{\gamma}\leq \abs{\beta}\leq \abs{\alpha}\leq 1 \) \\
    \( \mfg_{6,8}^{\alpha,\beta,\gamma,\delta} \) & \( 6 \) & \( \begin{array}[t]{@{}l@{}}(0,\alpha.21,\beta.31,\gamma.41,\\\qquad\delta.51+61,-51+\delta.61)\end{array} \) & \( 0< \abs{\gamma}\leq \abs{\beta}\leq \abs{\alpha} \) \\
    \( \mfg_{6,11}^{\alpha,\beta,\gamma,\delta} \) & \( 6 \) & \( \begin{array}[t]{@{}l@{}}(0,\alpha.21,\beta.31+41,-31+\beta.41,\\\qquad\quad \gamma.51+\delta.61,-\delta.51+\gamma.61)\end{array} \) & \( \alpha \delta\neq 0 \) \\
    \( (37D) \) & 7 & \( (0,0,0,0,e^{12}+e^{34},e^{13},e^{24}) \) & --- \\
    \bottomrule
  \end{tabular}
  \medskip
  \caption{Notation for certain Lie algebras}
  \label{table_LAs}
\end{table}
\newpage
\providecommand{\bysame}{\leavevmode\hbox to3em{\hrulefill}\thinspace}
\providecommand{\MR}{\relax\ifhmode\unskip\space\fi MR }
% \MRhref is called by the amsart/book/proc definition of \MR.
\providecommand{\MRhref}[2]{%
  \href{http://www.ams.org/mathscinet-getitem?mr=#1}{#2}
}
\providecommand{\href}[2]{#2}

\end{document}